\newcommand\restrict[1]{\raisebox{-.0ex}{$\upharpoonright$}_{#1}}
\DeclarePairedDelimiterX{\inner}[2]{\langle}{\rangle}{#1, #2}
\DeclareMathOperator{\Ad}{Ad}
\DeclareMathOperator{\Aut}{Aut}
\DeclareMathOperator{\Hom}{Hom}
\DeclareMathOperator{\SU}{SU}
\DeclareMathOperator{\Aff}{Aff}
\DeclareFontFamily{U}{wncy}{}
\DeclareFontShape{U}{wncy}{m}{n}{<->wncyr10}{}
\DeclareSymbolFont{mcy}{U}{wncy}{m}{n}
\DeclareMathSymbol{\Sh}{\mathord}{mcy}{"58}
\newcommand{\SL}{{\rm SL}}
\newcommand{\GL}{{\rm GL}}
\newcommand{\SO}{{\rm SO}}
\renewcommand{\phi}{\varphi}
\newcommand{\Vol}{{\rm Vol}}
\newcommand{\Diff}{{\rm Diff}}
\newcommand{\Bcal}{ {\mathcal B}}
\newcommand{\Fcal}{ {\mathcal F}}
\newcommand{\Hcal}{ {\mathcal H}}
\newcommand{\Mcal}{ {\mathcal M}}
\newcommand{\Ucal}{ {\mathcal U}}
\newcommand{\Acal}{ {\mathcal A}}
\newcommand{\Ccal}{ {\mathcal C}}
\newcommand{\Lcal}{ {\mathcal L}}
\newcommand{\Abb}{ {\mathbb A}}
\newcommand{\Fbb}{ {\mathbb F}}
\newcommand{\Gbb}{ {\mathbb G}}
\newcommand{\Hbb}{ {\mathbb H}}
\newcommand{\Kbb}{ {\mathbb K}}
\newcommand{\Cbb}{\mathbb{C}}
\newcommand{\Qbb}{\mathbb{Q}}
\newcommand{\Tbb}{{\mathbb T}}
\newcommand{\Rbb}{{\mathbb R}}
\newcommand{\Zbb}{{\mathbb Z}}
\newcommand{\Nbb}{{\mathbb N}}
\newcommand{\Lbb}{{\mathbb L}}
\newcommand{\Xbb}{{\mathbb X}}
\newcommand{\Ubb}{{\mathbb U}}
\newcommand{\nlie}{ {\mathfrak n}}
\newcommand{\llie}{ {\mathfrak l}}
\newcommand{\Asc}{\mathscr {A} }
\newcommand{\Bsc}{\mathscr {B} }
\newcommand{\Csc}{\mathscr {C} }
\newcommand{\Esc}{\mathscr {E} }
\newcommand{\Ksc}{\mathscr {K} }
\theoremstyle{definition}
\newtheorem{theorem}{Theorem}[section]
\theoremstyle{definition}
\newtheorem{defn}[theorem]{Definition}
\newtheorem{nota}{\textbf{Assumption}}
\newtheorem{notaa}{\textbf{Assumption on the action}}
\newtheorem{coro}[theorem]{\textbf{Corollary}}
\newtheorem{lem}[theorem]{\textbf{Lemma}}
\newtheorem{prop}[theorem]{\textbf{Propostion}}
\newtheorem*{theorem*}{Theorem}
\newtheorem{THM}{Theorem}
\newcommand{\Sp}{\textrm{Sp}}
\newtheorem{rmk}[theorem]{\textbf{Remark}}
\theoremstyle{remark}
\newtheorem{eg}[theorem]{Example}
\numberwithin{theorem}{section}
\newtheorem{Not*}[theorem]{Standard Notation.}
\begin{document}

\title[Rigidity theorems]{Rigidity theorems\\for higher rank lattice actions}

\author{Homin Lee}
\address{Indiana University Bloomington \newline 831 East 3rd St., Bloomington, IN 47405}
\email{\href{mailto:hl63@indiana.edu}{hl63@indiana.edu}}
\date{\today}
\maketitle
\begin{abstract}
Let $\Gamma$ be a weakly irreducible higher rank lattice. In this paper, we will prove various rigidity results for the $\Gamma$-action following a philosophy of the Zimmer program. We provide new rigidity results including local and global rigidity of the $\Gamma$-action when $\Gamma$ does not have property (T).
 
The new ingredient is a \emph{dynamical cocycle super-rigidity theorem}. It can be thought as the generalization of Zimmer's cocycle super-rigidity theorem since it provides almost the same consequences dynamically and algebraically. This allows us to derive various our rigidity results using dynamical super-rigidity instead of Zimmer's cocycle super-rigidity theorem.

\end{abstract}

\tableofcontents

\section{Introduction}
Let $G$ be the connected real semisimple higher rank algebraic Lie group without compact factors. That is $G=\Gbb(\Rbb)^{0}$ the connected component of the identity of the $\Rbb$-points of a real semisimple algebraic group $\Gbb$ such that $\textrm{rank}_{\Rbb}(\Gbb)\ge 2$ such that the $\Gbb$ does not have $\Rbb$-anistropic almost simple factors, i.e. $G$ does not have compact factors.  By an algebraic group $\Gbb$, we always mean an affine algebraic subgroup of $\GL(d,\Cbb)$ for some $d>0$. Recall that, up to finite index, any lattice $\Gamma$ can be represented as $\prod\Gamma_{i}$, that is a product of irreducible lattices $\Gamma_{i}$ in connected normal subgroups $H_{i}$ in $G$ \cite[5.22 Theorem]{Rag}. We will call that $\Gamma$ be a \emph{weakly irreducible lattice} in $G$ if $H_{i}$ are all higher rank. For example, $\Gamma\simeq\SL_{2}(\Zbb[\sqrt{17}])$ can be embedded as a weakly irreducible lattice in $G=\SL(2,\Rbb)\times \SL(2,\Rbb)$ using Galois conjugation. Note that we do \emph{not} assume that every simple factor $G$ have real rank $2$ or higher. We only require that $\textrm{rank}_{\Rbb}(G)\ge 2$. When all simple factors of $G$ have rank at least $2$, any lattice is weakly irreducible.

In this paper, we study actions of $G$ or its weakly irreducible lattices on compact manifolds via diffeomorphisms. The compact manifolds will always be a smooth Riemannian compact manifold without boundary. We will prove various rigidity results under assumptions such as a dimension condition or the existence of an invariant measure. The main point of the paper is the case when $G$ has rank $1$ factors.

\subsection{Main rigidity results}

Let $G$ be the connected real semisimple algebraic Lie group without compact factors, and  $\textrm{rank}_{\Rbb}(G)\ge 2$. Let $\Gamma$ be a weakly irreducible lattice in $G$. The reader should keep in mind the example $G=\SL(2,\Rbb)\times \SL(2,\Rbb)$ and $\Gamma\simeq \SL(2,\sqrt{17})$.

For any rank $1$ factor $F$ of $G$, if it exists, we may find its complement $F^{c}$ that is the almost direct product of the other simple factors, i.e. $G=F\cdot F^{c}$. Recall that the $G$ action on a standard probability space $(S,\mu)$ is called \emph{weakly irreducible} if the $F^{c}$-action on $(S,\mu)$ is ergodic for any rank $1$ factor $F$ of $G$. For a lattice $\Gamma$, $\Gamma$ is a weakly irreducible lattice in $G$ if and only if $G$ action on $G/\Gamma$ is weakly irreducible by definition. 

We will say a $\Gamma$ action on a standard probability $\Gamma$-space $(S,\mu)$ is \emph{induced weakly irreducible}, if the induced $G$ action on $(G\times S)/\Gamma$ is weakly irreducible. Induced weakly irreducible action are ergodic.

Note that when all simple factor of $G$ have higher rank, weakly irreducibility is just ergodicity.

\subsubsection*{Local rigidity} Let $H$ be the connected component of identity in a real algebraic Lie group and let $\Lambda$ be a cocompact lattice. Let $\Acal_{0} : \Gamma\rightarrow \textrm{Aff}(H/\Lambda)$ be an affine action of $\Gamma$ on compact manifold $H/\Lambda$. Here an affine transformation is a composition of a left translation and an automorphism of $H$ preserving $\Lambda$.
 
Let $\Acal_{0}$ be an affine action of $\Gamma$ on the compact homogeneous space $H/\Lambda$. We will see there is a finite index subgroup $\Gamma'=\Gamma'(\Acal_{0})$ in $\Gamma$ that depends only on $\Acal_{0}$ so that the $\Acal_{0}\restrict{\Gamma'}$ has a nice description.

Recall that the group action is \emph{weakly hyperbolic} if there is a finite set of elements $\{\gamma_{1},\dots,\gamma_{k}\}$ in group $\Gamma$ such that $\gamma_{i}$ acts as a partially hyperbolic diffeomorphism for $i=1,\dots,k$ and the contraction subspaces of the $\gamma_{i}$ span the tangent space at every point.

Now, we state our local rigidity result.

\begin{THM}[Local rigidity]\label{thm:localrigid}   Assume that the affine action $\Acal_{0}$ on $H/\Lambda$ is weakly hyperbolic. 
Let $\Acal$ be a $C^{1}$ action on $H/\Lambda$ (resp. $C^{\infty}$ action) such that 
\begin{enumerate}
\item there is an $\Acal\restrict{\Gamma'}$ invariant fully supported Borel probability measure $\mu$ on $H/\Lambda$; and
\item $\Acal\restrict{\Gamma'}$ is weakly induced irreducible with respect to $\mu$.
\end{enumerate}
If the action $\Acal$ is sufficiently $C^{1}$ close (resp. $C^{\infty}$ close)   to $\Acal_{0}$ then $\Acal$ is $C^{0}$ conjugate (resp. $C^{\infty}$ conjugate) to $\Acal_{0}$.
\end{THM}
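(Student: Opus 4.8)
The plan is to reduce local rigidity of the perturbed action to a cocycle rigidity statement, using the dynamical cocycle super-rigidity theorem (which I may assume from earlier in the paper) as the engine in place of Zimmer's cocycle super-rigidity. The setup is the classical one of Katok--Spatzier / Fisher--Margulis for affine actions on $H/\Lambda$: first I would pass to the finite-index subgroup $\Gamma' = \Gamma'(\Acal_0)$ so that $\Acal_0\restrict{\Gamma'}$ is, up to a twist, an action by automorphisms (or by a genuinely nice affine description), and I would treat the action on $H/\Lambda$ through its lift to the universal cover, working with the suspension $(G\times H/\Lambda)/\Gamma'$ so that hypothesis (2) --- induced weak irreducibility of $\Acal\restrict{\Gamma'}$ with respect to $\mu$ --- becomes ergodicity of the relevant subgroup actions on the suspension.

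**Next I would produce the conjugacy in two stages, topological then smooth.** For the $C^0$ statement: since $\Acal_0\restrict{\Gamma'}$ is weakly hyperbolic, there are elements $\gamma_1,\dots,\gamma_k$ acting partially hyperbolically with contraction subspaces spanning the tangent space at each point; by structural stability of partially hyperbolic diffeomorphisms, the perturbed generators $\Acal(\gamma_i)$ remain partially hyperbolic, and one gets H\"older conjugacies $h_i$ along each stable foliation. The problem is to glue these into a single $\Gamma'$-equivariant homeomorphism $h$ with $h\circ\Acal_0(\gamma) = \Acal(\gamma)\circ h$. This is where the cocycle comes in: the derivative (or the lift to the cover) of $\Acal$ over $\Acal_0$ defines a cocycle over the $\Gamma'$-action, cohomologous-up-to-compact-error to the linear data of $\Acal_0$; the fully supported invariant measure $\mu$ from hypothesis (1) lets me run the cocycle super-rigidity machine on $(H/\Lambda,\mu)$ (or its suspension), and induced weak irreducibility supplies exactly the ergodicity needed for the measurable straightening. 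Standard arguments (continuity of the measurable conjugacy along stable/unstable foliations, density of periodic or recurrent orbits, weak hyperbolicity to span all directions) then upgrade the measurable conjugacy to a bi-H\"older homeomorphism, giving the $C^0$ conjugacy.

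**For the $C^\infty$ statement I would bootstrap regularity.** Once $h$ is a continuous conjugacy between $\Acal$ and the affine $\Acal_0$, the usual regularity results for conjugacies of (partially) hyperbolic systems apply: a $C^0$ conjugacy between $C^\infty$ Anosov/partially hyperbolic maps that is smooth along stable and unstable leaves is globally smooth (Journ\'e's lemma, after establishing smoothness along each invariant foliation via the non-stationary linearization / Livšic-type arguments using the higher-rank structure and the affine model). The higher-rank hypothesis is crucial here because it provides enough commuting partially hyperbolic elements that the foliations they generate jointly integrate to the full tangent bundle, so leafwise smoothness in all these directions forces global smoothness. The $C^\infty$-closeness hypothesis ensures the perturbed generators are $C^\infty$-close to the affine ones so all the hyperbolic-theory input (structural stability, smooth leaf conjugacies) is available.

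**The main obstacle** I anticipate is exactly the passage from a measurable/leafwise conjugacy to an everywhere-defined continuous one without any a priori global hyperbolicity --- the action is only \emph{weakly} hyperbolic, so no single element need be Anosov, and the invariant measure is merely fully supported rather than smooth. Closing this gap requires carefully combining the dynamical cocycle super-rigidity output with the spanning property of the contraction subspaces: one shows the measurable conjugacy is continuous along each $\Acal(\gamma_i)$-stable foliation, then uses the fact that these foliations span $T(H/\Lambda)$ together with a transitivity/accessibility argument (or a suspension-to-$G$-action argument exploiting that the induced $G$-action is genuinely ergodic and has a transitive piece) to conclude the conjugacy extends continuously everywhere and is $\Gamma'$-equivariant. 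Making the accessibility/continuity step work with only the weak-hyperbolicity and induced-weak-irreducibility hypotheses, rather than ergodicity of each single element, is the delicate technical heart of the argument.
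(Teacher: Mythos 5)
Your skeleton (build a cocycle over the perturbed action, run the dynamical cocycle super-rigidity theorem in place of Zimmer's, use weak hyperbolicity to finish) is the right one, but there are two concrete gaps at the step you yourself flag as the ``delicate technical heart.'' First, you invoke a cocycle ``cohomologous-up-to-compact-error to the linear data.'' That is precisely what is \emph{not} available here: without property (T) the error term coming from super-rigidity is only amenable-group-valued (and not even known to be integrable), and the entire point of the paper's Theorem \ref{thm:locrigidcoc} is to substitute for the compact error. The substitute is that weak hyperbolicity of $\Acal_{0}$ forces $Z=Z_{L}(\pi_{0}^{E}(D))\cap H$ to be \emph{finite} (the action would be isometric along a positive-dimensional $Z$-orbit otherwise); combined with the uniqueness and local rigidity of the super-rigidity homomorphism (closedness of $H$-orbits in $\Hom(\Fcal,L)$ for a Zariski-dense $\Fcal$ generated by polar parts), this pins the error term down to the finite homomorphism $\pi_{H}^{K}$, and one obtains an exact cohomology $\beta_{A}(g,x)=f(g.x)^{-1}\pi_{0}(g)f(x)$. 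Without this finiteness argument your plan cannot discharge the error term.

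Second, the conjugacy is not assembled by gluing leafwise H\"older conjugacies $h_{i}$ along stable foliations and then running an accessibility argument --- that step does not occur in the paper and it is unclear it would close. The cocycle used is the \emph{translation} cocycle $\beta_{A}:\Gamma'\times H/\Lambda\to A\ltimes H$ obtained by lifting $\Acal$ to the cover (not the derivative cocycle), and the conjugacy is simply $\lambda([x])=[f([x])x]$ where $f$ is the transfer map above. Continuity of $f$ comes for free from the setup: $\beta_{A}$ is a continuous cocycle, its Lyapunov subspaces determine $f$ modulo the finite stabilizer $Z$, and $\mu$ is fully supported, so $f$ varies continuously. Then $\lambda$ is $C^{0}$-close to the identity (hence onto), and injectivity follows from expansiveness of the weakly hyperbolic affine model --- no accessibility or density-of-periodic-orbits argument is needed. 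Your $C^{\infty}$ bootstrap (normal forms along contracting foliations plus Journ\'e) does match the paper's appeal to the Margulis--Qian and Fisher--Margulis regularity machinery, so that part is fine once the $C^{0}$ conjugacy is correctly produced.
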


For any affine action $\Acal_{0}$, we will say that the action $\Acal_{0}$ is \emph{$C^{0}_{r,ind}$-local rigid} (resp. \emph{$C^{\infty}_{r,ind}$-local rigid}) if the conclusion of the Theorem \ref{thm:localrigid} holds.
\begin{rmk}If the affine action $\Acal_{0}$ only consists of left translations or automorphsim as in \cite{MQ} then the subgroup $\Gamma'(\Acal_{0})$ is the full group $\Gamma$. 

 Also, we will prove the above theorem for affine weakly hyperbolic $G$ actions. 
\end{rmk}

\subsubsection*{Global rigidity} Next, we will consider $\Gamma$-actions on the nilmanifold that has an Anosov element. When the $\Gamma$-action lifts to the universal cover, we can define an associated linear data $\rho$ that is a $\Gamma$ action on a nilmanifold by automorphisms. 

\begin{THM}[Global rigidity]\label{thm:globalrigid} Let $G$ and $\Gamma$ be as above. Let $N$ be a simply connected, connected nilpotent Lie group and $\Lambda$ be a cocompact lattice in $N$. Let $\alpha$ be a $C^{1}$ $\Gamma$-action (resp. $C^{\infty}$ $\Gamma$-action) on nilmanifold the $M=N/\Lambda$ satisfying the following conditions; 
\begin{enumerate}
\item there is a $\alpha(\Gamma)$-invariant fully supported Borel probability measure $\mu$ on $M$ such that $\alpha$ is induced weakly irreducible with respect to $\mu$,
\item there is a $\gamma_{0}\in\Gamma$ such that $\alpha(\gamma_{0})$ is an Anosov diffeomorphism of $M$,
\item $\alpha$ lifts to an action on universal cover $N$ and let $\rho$ be the associated linear data of $\alpha$.
\end{enumerate}
Then $\alpha$ is $C^{0}$ conjugate to (resp. $C^{\infty}$ conjugate to) $\rho : \Gamma\rightarrow \Aut(N/\Lambda)$. 
\end{THM}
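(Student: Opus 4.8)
The plan is to run the Franks--Manning strategy for Anosov diffeomorphisms of nilmanifolds upgraded to the equivariant setting, using the dynamical cocycle super-rigidity theorem as the substitute for Zimmer's cocycle super-rigidity. First I would set up the linearization: since $\alpha$ lifts to $N$, for each $\gamma$ the lift $\tilde\alpha(\gamma)$ is at bounded distance from a unique affine map whose linear part is the automorphism $\rho(\gamma)\in\Aut(N)$ preserving $\Lambda$; the assignment $\gamma\mapsto\rho(\gamma)$ is a homomorphism (the linear data), and by the classical Franks--Manning argument the single Anosov element $\alpha(\gamma_0)$ is topologically conjugate to $\rho(\gamma_0)$ via a map $h_0\colon M\to M$ homotopic to the identity, which lifts to $\tilde h_0\colon N\to N$ commuting with $\Lambda$ and conjugating $\tilde\alpha(\gamma_0)$ to $\rho(\gamma_0)$. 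The point is then to promote $h_0$ to a map conjugating the entire $\Gamma$-action.

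Next I would produce the conjugacy for all of $\Gamma$. For $\gamma\in\Gamma$ one has that $h_0\circ\alpha(\gamma)\circ h_0^{-1}$ and $\rho(\gamma)\circ h_0\circ h_0^{-1}$ differ by something that commutes with the Anosov map $\rho(\gamma_0)$; the standard trick is that the set of $\gamma$ for which $h_0$ intertwines $\alpha(\gamma)$ and $\rho(\gamma)$ up to a translation is large, and one must rigidify the translation part. This is where an invariant measure and the dynamical super-rigidity enter: the derivative cocycle of $\alpha$ over the $\Gamma$-action on $(M,\mu)$, compared with the constant cocycle $\rho$, is measurably cohomologous to an algebraic cocycle by the dynamical cocycle super-rigidity theorem, and the induced weak irreducibility hypothesis (1) is exactly what is needed to apply it (it guarantees the relevant ergodicity so that the superrigidity output has no compact correction one cannot absorb). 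Combining the measurable conjugacy on $(M,\mu)$ with the topological conjugacy $h_0$ from the Anosov element, and using that $\mu$ is fully supported together with continuity, one upgrades to a genuine continuous conjugacy $h$ with $h\circ\alpha(\gamma)=\rho(\gamma)\circ h$ for all $\gamma\in\Gamma$; this gives the $C^0$ statement.

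For the $C^\infty$ statement I would invoke smoothness of the conjugacy along the Anosov directions: $h$ conjugates $\alpha(\gamma_0)$ to the affine Anosov $\rho(\gamma_0)$, and on a nilmanifold a topological conjugacy between an Anosov diffeomorphism and its linearization that is also equivariant under a large acting group is smooth along both the stable and unstable foliations (Journé's lemma from smoothness along a transverse pair of foliations), so $h$ is $C^\infty$. Equivalently, one can appeal to the fact that once the measurable/topological conjugacy is identified, the derivative cocycle rigidity forces the Lyapunov data of $\alpha$ to match that of $\rho$ along a spanning family of stable subspaces, and weak hyperbolicity-type arguments then give global smoothness. The main obstacle I expect is the second step: correctly feeding the derivative cocycle into the dynamical cocycle super-rigidity theorem and bookkeeping the translational (non-linear affine) ambiguity in $\tilde\alpha(\gamma)$ versus $\rho(\gamma)$, i.e. showing the measurable algebraic conjugacy produced on $(M,\mu)$ is compatible with, and can be glued to, the topological conjugacy $h_0$ coming from the single Anosov element without introducing a cocycle-valued obstruction. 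Hypotheses (1) and (3) are precisely what remove that obstruction, but verifying this carefully — in particular that the passage to the finite-index subgroup implicit in the linear data does not lose equivariance over the full $\Gamma$ — is the delicate part.
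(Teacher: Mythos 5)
Your $C^{0}$ outline is workable but differs from the paper's route: the paper does not start from the Franks--Manning conjugacy of the single Anosov element and then promote it. Instead it forms the continuous cocycle $\beta\colon\Gamma\times N/\Lambda\to A\ltimes N$ recording the lifted action (Lemma \ref{liftcoc}), whose $\Aut$-component is $\rho$, applies dynamical cocycle super-rigidity to $\beta$ (not to the derivative cocycle, as you propose --- the derivative cocycle cannot produce the conjugacy $h$, only Lyapunov data), and uses the Anosov element to show the unipotent part of the super-rigidity error is trivial (Proposition \ref{glob:unitriv}); the transfer function then \emph{is} the semiconjugacy, equivariance is automatic, and continuity plus injectivity follow from the Margulis--Qian expansiveness arguments. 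Your version would additionally have to justify the uniqueness/promotion step for the single-element conjugacy, which you leave vague.

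The genuine gap is in the $C^{\infty}$ part. The claim that ``a topological conjugacy between an Anosov diffeomorphism and its linearization that is also equivariant under a large acting group is smooth along both the stable and unstable foliations'' and that Journ\'e's lemma then finishes is false as stated: the Franks--Manning conjugacy of a single Anosov diffeomorphism to its linearization is generically only H\"older, and matching of Lyapunov exponents alone does not give smoothness along the invariant foliations. The mechanism that actually works (and that the paper uses, following \cite{BRHW}) is to locate a free abelian subgroup $\Sigma\simeq\Zbb^{r}\subset\Gamma$ containing an Anosov element such that $\rho\restrict{\Sigma}$ has no rank-one factor, and then invoke the Rodriguez Hertz--Wang global rigidity theorem for higher-rank abelian Anosov actions \cite{RHW}. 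Producing such a $\Sigma$ is where most of the work lies and is entirely absent from your proposal: one needs a Zariski-dense sub-semigroup of Anosov elements (Propositions \ref{keyglob}--\ref{keyp}), which requires applying dynamical super-rigidity to the derivative cocycle of $\rho$ over every $\rho(\Gamma_{0})$-orbit closure, and hence the new ergodicity result (Theorem \ref{keyCinf}) that the Haar measures on these orbit closures are \emph{induced weakly irreducible} --- proved via Dani's theorem and a Baker--Campbell--Hausdorff computation killing the solvable quotient. One also needs the no-rank-one-factor statement (Proposition \ref{rankone}), whose proof in the weakly irreducible setting cannot simply quote \cite{BRHW} because not every $\Rbb$-simple factor of the ambient group is higher rank; the paper replaces that step with an argument using quasi-irreducibility of the Prasad--Rapinchuk torus and the higher rank of the $\Qbb$-simple factors. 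Without these ingredients your $C^{\infty}$ argument does not close.
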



\begin{rmk}
We compare with results in \cite{MQ} and \cite{BRHW}.
\begin{enumerate}
\item When all simple factors of $G$ are higher rank, weakly irreducibility of an action is just ergodicity and a weak irreducible lattice is just a lattice. In this case, ergodic decomposition provides appropriate tool. However, we can not expect such a decomposition for weakly irreducibility. Furthermore, weakly irreducibility does \emph{not} pass to subgroups.
\item The assumption about existence of invariant measure for $\Acal$ in Theorem \ref{thm:localrigid} is due to absence of property (T). We use the assumption about measure for Theorem \ref{thm:globalrigid} in order to use dynamical super-rigidity.
\end{enumerate} 
Nevertheless, the author expects that one may be able to get rid of the additional assumptions in the statements for actions on nilmanifolds. 
\end{rmk}

When all simple factors of $G$ have real rank at least $2$, then \cite{MQ}, \cite{FM2} and \cite{BRHW} give stronger rigidity statements. As we follow their strategy, the new rigidity results of this paper is in the case when $G$ has rank $1$ factor. Indeed, when all simple factors of $G$ have real rank at least $2$, lots of rigidity results are established. We recall a few local and global rigidity results that are related to this paper. Local rigidity for affine Anosov $\Gamma$-actions is proved by Katok and Spatzier in \cite{KS97}. In \cite{MQ}, Margulis and Qian proved local rigidity of certain affine $\Gamma$-actions assuming weakly hyperbolicity. In \cite{FM1} and \cite{FM2}, Fisher and Margulis proved local rigidity results for general affine $\Gamma$-actions in full generality without hyperbolicity assumptions. In \cite{FM3}, they proved local rigidity of isometric actions even for arbitrary discrete group with property (T). 

On the other hand, in \cite{MQ} Margulis and Qian proved topological global rigidity of Anosov $\Gamma$-actions on nilmanifolds assuming that the action can be lifted and there is a fully supported invariant probability measure. In \cite{Ben}, Schmidt proved that $C^{2}$-volume preserving weakly hyperbolic action on a torus is semi-conjugate to affine actions. Recently, in \cite{BRHW}, Brown, Rodriguez Hertz and Wang proved topological and smooth global rigidity of Anosov $\Gamma$-actions on nilmanifolds only assuming action lifts. They even prove stronger results, that is global rigidity results when the action has hyperbolic linear data. Especially, they proved smooth global rigidity of Anosov actions without assuming existence of invariant probability measures.

It is worth mentioning the very recent breakthrough in the Zimmer program. The long standing Zimmer's conjecture is settled by Brown, Fisher and Hurtado in \cite{BFH1} and \cite{BFH2}. One version of their theorems is that every smooth $\Gamma$-action on compact manifold is trivial when the dimension of manifold is less than $\textrm{rank}_{\Rbb}G$. We refer the reader to surveys \cite{Fisher_loc}, \cite{Fisher_oldZim}, \cite{Fisher_recZim} by Fisher and \cite{Spatzier} by Spatzier for detailed accounts on Zimmer program and for detailed introduction to rigidity theory.

In all of the prior results mentioned above, property (T) is used in essential ways. For this reason, the authors need to assume all simple factors of $G$ have higher rank. In particular, to prove the most useful form of Zimmer's cocycle super-rigidity theorem, one needs to use property (T).

The main ingredient in the proofs of Theorem \ref{thm:localrigid} and \ref{thm:globalrigid} is our \emph{dynamical super-rigidity} theorem. We call it \emph{dynamical cocycle super-rigidity} since it is useful from a dynamical viewpoint. The author expects the dynamical cocycle super-rigidity theorem can be applied widely in place of a Zimmer's cocycle super-rigidity theorem.

\subsection{Dynamical cocycle super-rigidity} 

When $G$ is an algebraically simply connected and all simple factors of $G$ have rank $2$ or higher, the Zimmer's cocycle super-rigidity theorem says that any integrable cocycles over an ergodic $G$ action is cohomlogous to a homomorphism up to \emph{compact error}. (See \cite[Theorem 1.4, 1.5]{FM1} )  However, if $G$ does not have property (T), for example $G=\SL(2,\Rbb)\times \SL(2,\Rbb)$, then we can not expect a compact error term. This is the main obstruction to proving cocycle super-rigidity theorems without property (T). 

Note that Margulis' super-rigidity theorem still holds for $G$ under the assumption \ref{std} below.

Despite of the above discussions, we will show that the error can be controlled so that we have \emph{dynamical cocycle super-rigidity} under mild conditions. This will provide same consequences with Zimmer's cocycle super-rigidity theorem dynamically. 

There are various generalizations of Zimmer's cocycle super-rigidity theorems, such as \cite{Furmanmonod}, \cite{FuBa} and \cite{FM1}. 
In all of these papaer, one either requires an assumption about algebraic hull of the cocycle or that groups have property (T).


\subsubsection*{Setting} First, we need to fix some notations and assumptions what we will mainly deal with.
\begin{nota}\label{std} Throughout $G=\Gbb(\Rbb)$ will be the $\Rbb$-points of algebrically simply connected, connected, semisimple algebraic group $\Gbb$ defined over $\Rbb$. We will further assumet that $G$ does not have compact factors.  In this case, we have
$$G=G_{1}\times \dots \times G_{n}, \quad n\ge 1$$ for some $G_{i}=\Gbb_{i}(\Rbb)$ where $\Gbb_{i}$ is $\Rbb$-almost simple algebraic group defined over $\Rbb$ with $\textrm{rank}_{\Rbb}(\Gbb_{i})\ge 1$.  Also throughout  $\Gamma$ will be a weakly irreducible lattice in $G$.

\begin{rmk} When we say that a group $G$ satisfies \ref{std}, $G$ need not be higher rank. For example, Theorems \ref{thm:Rvalue}, \ref{RvalueG}, \ref{Hilbertcase}, \ref{prophil} are all true when $G$ is a rank $1$ group.

 On the other hand, when we say $\Gamma$ satisfies \ref{std}, the envelop $G$ should have higher rank. Especially, if $n=1$ then $G=G_{1}$ should be higher rank simple algebraic group so that has property (T).
\end{rmk}

\end{nota}

Furthermore, we will mainly focus on group action satisfies the following assumption.
\begin{notaa}\label{std2} Let $D=G$ or $\Gamma$ satisfies \ref{std}.
Throughout $D$-action on standard probability space $(S,\mu)$ are assumed to satisfy 
\begin{enumerate}
\item when $D=G$, the $G$ action on $(S,\mu)$ is weakly irreducible, 
\item when $D=\Gamma$, the $\Gamma$ action is induced weakly irreducible, i.e. the induced $G$ action on $(G\times S)/\Gamma$ is weakly irreducible. \end{enumerate}
Note that the $D$ action on $(S,\mu)$ is ergodic in both cases.
\end{notaa}

\subsubsection*{Lyapunov exponents and Lyapunov Spectrum}

Let a topological locally compact second countable group $T$ act measurably on a standard probability space $(S,\mu)$. We will call the standard probability space $(S,\mu)$ a \emph{$T$-ergodic space}, when the $T$ action on $(S,\mu)$ is measurable, $\mu$-preserving and ergodic.

Now let $T$ be a locally compact second countable group and $(S,\mu)$ be a $T$-ergodic space. Motivated by \cite{FM1}, we will say a measurable cocycle $\beta:T\times S\rightarrow \GL(d,\Rbb)$ is \emph{$L^{2}$-integrable} if following holds : For any compact subset $Q\subset T$,  $$\sup_{t\in Q}\ln||\beta(t,-)||\in L^{2}(S,\mu).$$

From now on, $G$ and $\Gamma$ satisfies \ref{std}. Let $D=G$ or $\Gamma$. Let $(S,\mu)$ be a $D$-ergodic space. Further assume \ref{std2} for the $D$ action. For any $g\in D$, we can define a $\Zbb$-cocycle $\beta_{g} :\Zbb\times S\rightarrow \GL(d,\Rbb)$ as $$\beta_{g}(m,x)=\beta(g^{m},x).$$

For $A\in\GL(d,\Rbb)$, we will denote $||A||$ as $||A||=\textrm{max} \left( ||A||_{op}, ||A^{-1}||_{op} \right)$ where $||\cdot||_{op}$ is operator norm with respect to some norm on $\Rbb^{d}$.

If $\beta$ is $L^{2}$-integrable, then $\beta_{g}$ is also $L^{2}$-integrable for any $g\in D$ so that we can apply Oseledet's Multiplicative Ergodic Theorem. Therefore, for generic $x_{0}\in S$, there is number $k(x_{0})\le d$, a measurable decomposition $$\Rbb^{d}=V_{1}^{\beta_{g}}(x_{0})\oplus\dots\oplus V_{k(x_{0})}^{\beta_{g}}(x_{0}) $$  and  $\lambda_{1}^{\beta_{g}}(x_{0})>\dots >\lambda_{k(x_{0})}^{\beta_{g}}(x_{0})$ such that  for $v\in V_{i}(x_{0})^{\beta_{g}}\setminus \{0\}$, 
$$ \lim_{m\rightarrow \pm\infty}\frac{1}{m}\ln|\beta_{g}(m,x_{0})v|=\lambda_{i}^{\beta_{g}}(x_{0})$$
 for all $i=1,\dots,k(x_{0})$, where $|\cdot|$ is a norm on $\Rbb^{n}$. Denote $k_{i}^{\beta_{g}}(x_{0})=\dim V_{i}^{\beta_{g}}(x_{0})$ and
 define the \emph{Lyapunov spectrum at $x_{0}$} as  $\Sp_{x_{0}}(\beta_{g})$ as the collection of the above datum, i.e. $$\Sp_{x_{0}}(\beta_{g})=\{(\lambda_{i}^{\beta_{g}}(x_{0}),k_{i}^{\beta_{g}}(x_{0}),k(x_{0})):i=1,\dots, k(x_{0})\}.$$
Furthermore, we collect Lyapunov spectrums for every generic point and denote $\Sp(\beta_{g})=\left\{\Sp_{x}(\beta_{g}):x \textrm{ is generic in $S$}\right\}.$ For two $\GL(d,\Rbb)$-valued cocycles $\Asc$ and $\Bsc$ over same $\Zbb$-action, we will write $\Sp(\Asc)=\Sp(\Bsc)$ if $\Sp_{x}(\Asc)=\Sp_{x}(\Bsc)$ for almost every $x\in S$.

Finally, for fixed $M\in \GL(d,\Rbb)$,  Denote $\Sp(M)$ as the Lyapunov spectrum of cocycle $\Asc_{M}$, $\Asc_{M} :\Zbb\times (S,\mu)\rightarrow \GL(d,\Rbb)$, $(n,x)\mapsto M^{n}$ for any $\mu$-preserving $\Zbb$ action on $(S,\mu)$.

\subsubsection*{Simple version of Dynamical super-rigidity} Now we can state a simple version of our dynamical cocycle super-rigidity theorem. 
\begin{THM}[Theorems \ref{thm:superrigidG} and \ref{thm:superrigidGamma}]\label{Intsuperrigid}
Let $G$ and $\Gamma$ satisfy \ref{std} with $\textrm{rank}_{\Rbb}(G)\ge 2$. Let $D=G$ or $\Gamma$. Let $(S,\mu)$ be a $D$-ergodic space. Assume \ref{std2} for the $D$-action. Let $\beta:D\times S\rightarrow \GL(d,\Rbb)$ be an $L^{2}$-integrable measurable cocycle. Then there is a continuous representation $\pi:G\rightarrow \GL(d,\Rbb)$ such that for any $g\in D$, we have $\Sp(\beta_{g})=\Sp(\pi(g)).$

\end{THM}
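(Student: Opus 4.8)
This is the dynamical consequence of the stronger Theorems \ref{thm:superrigidG} and \ref{thm:superrigidGamma}, and I indicate the argument directly. The strategy is to move $\beta$ into its algebraic hull, run Margulis super-rigidity on the semisimple core of the hull, and then show that the remaining pieces of the hull --- the compact, toral and unipotent parts that property (T) would normally eliminate --- are invisible to Lyapunov exponents. First reduce to $D=G$: when $D=\Gamma$, form the induced $G$-space $X=(G\times S)/\Gamma$ and the induced cocycle $\tilde\beta:G\times X\to\GL(d,\Rbb)$; assumption \ref{std2} says precisely that this $G$-action is weakly irreducible, and over the fibre $\{e\}\times S$ the restriction of $\tilde\beta$ to $\Gamma$ is measurably cohomologous to $\beta$, so $\Sp(\beta_\gamma)=\Sp(\tilde\beta_\gamma)$ for all $\gamma\in\Gamma$. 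Hence it suffices to produce $\pi$ for the $G$-cocycle, and I assume $D=G$.

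By Zimmer's algebraic hull theorem there are a minimal $\Rbb$-algebraic subgroup $H\le\GL(d)$ and a measurable transfer function $\phi:S\to\GL(d,\Rbb)$ such that $\beta'(g,x):=\phi(gx)\,\beta(g,x)\,\phi(x)^{-1}$ is valued in $H(\Rbb)$; since $\beta$ is $L^2$-integrable this cohomology can be taken \emph{tempered} along $\Zbb$-orbits, so $\Sp(\beta_g)=\Sp(\beta'_g)$ and I work with $\beta'$. Fix a Levi decomposition $H=L\ltimes U$ with $U$ the unipotent radical and $L$ reductive, and write, up to isogeny, $L=L_{\mathrm{nc}}\cdot L_{\mathrm{c}}\cdot Z$ with $L_{\mathrm{nc}}$ the product of the noncompact almost-simple factors, $L_{\mathrm{c}}$ compact, and $Z$ the central torus. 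Now apply the cocycle form of Margulis super-rigidity, which holds under \ref{std}: here the $G$-ergodicity of $S$ together with weak irreducibility is used to absorb the rank-one factors of $G$, exactly as weak irreducibility of a lattice replaces higher rank of the individual simple factors in the classical statement. One obtains that the projection of $\beta'$ to the adjoint group $\overline{L_{\mathrm{nc}}}$ is cohomologous to the restriction of a continuous homomorphism $G\to\overline{L_{\mathrm{nc}}}$, and lifting it (the central ambiguity is absorbed in the next step) gives a continuous representation $\pi:G\to L_{\mathrm{nc}}\hookrightarrow\GL(d,\Rbb)$, which is the desired $\pi$.

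It remains to verify $\Sp(\beta'_g)=\Sp(\pi(g))$, i.e.\ that the $L_{\mathrm{c}}$-, $Z$- and $U$-components contribute nothing. A cocycle into the compact group $L_{\mathrm{c}}$ is bounded, hence tempered, hence contributes no exponent. For $Z$, and for each graded piece of the descending central series of $U$, taking logarithms produces an $L^2$ additive $\Rbb^m$-valued cocycle over the \emph{measure-preserving} ergodic $G$-action; its Birkhoff drift is conjugation-invariant and, again by weak irreducibility together with property (T) of the higher-rank factors, it must arise from an algebraic character of $G$ --- of which there are none, $G$ being semisimple --- so the drift vanishes. A zero-drift $L^1$ additive cocycle grows sublinearly along almost every orbit, so its exponential, a polynomial map of bounded degree, is subexponential; thus the $Z$- and $U$-parts are tempered as well. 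Writing $\beta'_g$ in block-triangular form adapted to the Levi decomposition, every Lyapunov exponent then comes from the $L_{\mathrm{nc}}$-block, that is from $\pi(g)$, which is the claim.

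The step I expect to be the main obstacle is this last one: without property (T) one cannot simply declare the toral and unipotent parts cohomologically trivial, so one must extract their drift, recognize it as a character of $G$ and hence zero, and upgrade ``zero drift'' to ``Lyapunov-invisible'', all while tracking temperedness carefully so that none of the measurable cohomologies used alters $\Sp$. A secondary difficulty is making Margulis super-rigidity for cocycles go through when $G$ has rank-one factors, where the weak irreducibility of the action must do the work normally done by higher rank of each simple factor.
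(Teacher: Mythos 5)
Your skeleton (algebraic hull, Levi decomposition, cocycle super-rigidity on the semisimple part via weak irreducibility, then showing the compact, toral and unipotent parts are Lyapunov-invisible) is the paper's skeleton, but the step you yourself flag as the main obstacle is where your argument actually fails, and the fix you propose does not work. For a fixed $g\in D$ what must vanish is the \emph{pointwise} drift $\lim_{m}\frac{1}{m}\delta(g^{m},x)$ of the $\Rbb$-valued (determinant/toral) cocycles. By Birkhoff/Kingman this limit exists a.e.\ and is a $g$-invariant function; since the $\Zbb$-action generated by a single element is in general not ergodic, it need not be constant. Your ``no algebraic characters of $G$'' argument only shows that the \emph{space average} $g\mapsto\int_{S}\delta(g,\cdot)\,d\mu$ is a homomorphism $G\to\Rbb$ and hence vanishes --- which is trivial and uses neither property (T) nor weak irreducibility --- but it says nothing about the a.e.\ limit along orbits, which has zero mean yet can be nonzero on a set of positive measure. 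Killing that pointwise drift without property (T) is exactly the content of the paper's new ingredient (Theorems \ref{thm:Rvalue}, \ref{RvalueG}, \ref{RvalueGamma}): one proves $\|\delta(g,\cdot)\|_{L^{2}(S)}\le\epsilon|g|_{G}$ for $|g|_{G}$ large, using Shalom's super-rigidity for reduced cohomology to split the $1$-cocycle along the simple factors, Delorme's classification of irreducible unitary representations with nontrivial $H^{1}$, and the growth $\|c(g)\|^{2}\sim C\,d(gK,K)$ of the associated harmonic cocycles on the rank-one factors; only then does the subadditive ergodic theorem upgrade sublinear $L^{2}$-growth to vanishing pointwise drift. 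Your appeal to ``property (T) of the higher-rank factors'' is also misplaced: the dangerous contributions are precisely those factoring through the rank-one factors, where no (T) is available.

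Two further claims are unjustified. First, you assert the transfer to the algebraic hull ``can be taken tempered along $\Zbb$-orbits'' so that $\Sp(\beta_{g})=\Sp(\beta'_{g})$; the hull transfer function is merely measurable, and the paper controls it instead by an Iwasawa decomposition $\phi=oan$ together with Poincar\'e recurrence to sets where $a$ and $n$ are bounded, combined with the $L^{2}$-integrability of the corrected cocycles $\widetilde{\lambda^{i}_{j}}$ proved by induction over the irreducible $F$-submodules. Second, your reduction of $D=\Gamma$ to $D=G$ via induction does not give $\Sp(\beta_{\gamma})=\Sp(\tilde\beta_{\gamma})$: the spectrum of $\tilde\beta_{\gamma}$ is computed over the $\gamma$-action on $(G\times S)/\Gamma$, where off the null fiber over the identity coset the relevant return word is not $\gamma^{m}$, so the exponents over the induced space are not obviously those of $\beta_{\gamma}$ over $(S,\mu)$. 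The paper avoids this by proving the spectral statement for $\Gamma$ directly, re-running the computation with the lattice version (Theorem \ref{RvalueGamma}) of the sublinear-growth theorem, and using the induced cocycle only for the commutation statement (Part 2 of Theorem \ref{thm:superrigidGamma}).
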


The above simple version of dynamical super-rigidity already says that the Lyapunov spectrum of the action has algebraic origin.

Note that if the group $G$ has property (T), for example all $\Gbb_{i}$ has $\Rbb$-rank at least $2$, then the Theorem \ref{Intsuperrigid} is a direct consequence of the cocycle super-rigidity theorems in \cite{FM1}.

\begin{rmk} In Theorems \ref{thm:superrigidG} and \ref{thm:superrigidGamma}, we will address the full version of dynamical super-rigidity. That is the cocycle is cohomologous to the product of the homomorphism $\pi$ and a \emph{commuting cocycle}. \end{rmk}


\subsection{Outline of the paper}
In this subsection, we will briefly discuss the outline of the paper including the strategy of proofs for Theorems \ref{thm:localrigid},\ref{thm:globalrigid} and \ref{Intsuperrigid}. We start by collecting some preliminaries in Section \ref{sec:pre}.

Through Sections \ref{sec:Rvalue} and \ref{sec:dynsuperrigid}, we will prove our dynamical super-rigidity theorems. The theorems in Section \ref{sec:Rvalue} can be thought of as dynamical super-rigidity for $\Rbb$-valued cocycles.

Now we state main result in Section \ref{sec:Rvalue}, which has independent interest. As before, $D=G$ or $\Gamma$ will satisfy \ref{std}. And we assume the $D$ action on $(S,\mu)$ is measure preserving. 
	\begin{theorem}\label{thm:Rvalue} 
With the assumptions as above, let $\delta$ be a measurable cocycle $\delta:D\times S\rightarrow \Rbb$. Assume that $\delta$ satisfies $L^{2}$-integrability, i.e. 
$$(L^{2}) :\quad \forall g \in D,\quad |\delta(g,-)|\in L^{2}(S).$$
Then for every $g\in D$, we have $$\lim_{m\rightarrow\pm\infty} \frac{|\delta(g^{m},x)|}{m}=0$$ almost every $x\in S$.
	\end{theorem}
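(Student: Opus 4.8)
We want to show that for an $L^2$-integrable $\Rbb$-valued cocycle $\delta$ over the $D$-action, every element $g \in D$ acts with zero Lyapunov exponent, i.e., $\delta(g^m, x)/m \to 0$ a.e. Since $\Rbb$ is abelian, the $\Zbb$-cocycle $\delta_g(m,x) = \delta(g^m,x)$ satisfies $\delta_g(m,x) = \sum_{j=0}^{m-1} \delta(g, g^j x)$, a Birkhoff-type sum. So by the subadditive/additive ergodic theorem (Kingman or just Birkhoff applied to $\delta(g,-)$, which lies in $L^1$ since $L^2 \subset L^1$ on a probability space, once we fix an invariant component for the $\Zbb$-action generated by $g$), the limit $\lim_m \delta(g^m,x)/m$ exists a.e. and equals the conditional expectation of $\delta(g,-)$ with respect to the $\sigma$-algebra of $g$-invariant sets.

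**The plan.** The key point is to rule out a nonzero drift. Let me think about what tools are available. We have Margulis superrigidity holding for $G$ under assumption (A) (stated in the excerpt: "Margulis' super-rigidity theorem still holds for $G$ under the assumption (A)"). The cocycle $\delta$ is $\Rbb$-valued, hence lands in an abelian (amenable) group. The strategy should be: first handle $D = G$. Use the $L^2$-integrability to run some Fourier/spectral argument, or better, use the structure of $G$. Actually the cleanest route: an $\Rbb$-valued cocycle over an ergodic action, when $L^2$, can be studied via the Mackey range / associated $G$-action on $S \times \Rbb$. Since $\Rbb$ is amenable, combined with higher-rank (Howe–Moore / vanishing of matrix coefficients or Kazhdan-type arguments where available)... but $G$ need not have property (T) here — that's precisely the difficulty. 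So property (T) is unavailable; instead we must use weak irreducibility.

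**Main obstacle and the route around it.** The hard part is exactly the absence of property (T): we cannot directly say an $\Rbb$-cocycle is a coboundary plus bounded. Instead I would argue: suppose for contradiction $\lim_m \delta(g^m,x)/m = c(x) \neq 0$ on a positive measure set. The function $c$ is $g$-invariant. For $D = G$ one should leverage that $g$ sits inside $G$ and use the $G$-equivariance: the "drift" defines, via restriction to one-parameter or cyclic subgroups, a measurable homomorphism-like datum. The real engine should be: restrict attention to a rank-$1$ factor $F$ and its complement $F^c$; by weak irreducibility the $F^c$-action is ergodic, and one plays the drift along $F$ against the ergodicity along $F^c$ — a drift along $F$ would have to be $F^c$-invariant by a commutation argument, hence constant, but a constant nonzero drift of an $\Rbb$-cocycle over a probability-preserving action contradicts the recurrence/Poincaré (the cocycle $\delta(g^m,\cdot)$ would escape to infinity, incompatible with $\delta(g,-) \in L^1$ having conditional expectation that must integrate to a quantity constrained by measure-preservation — indeed $\int_S \delta(g,x)\,d\mu = \int_S \delta(g, g^{-1}x)\,d\mu$, and for the drift $c = \EE[\delta(g,-) \mid \mathcal{I}_g]$ one gets $\int c\, d\mu = \int \delta(g,-)\,d\mu$, which need not vanish — so I need more). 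The genuinely correct mechanism: for $D = G$, a nonzero Lyapunov exponent for some $g$ forces, by considering the $\Zbb$-action generated by $g$ and invariance, a nontrivial $G$-equivariant map to $\Rbb$, but then Margulis superrigidity (valid under (A)) forces it to extend to a continuous homomorphism $G \to \Rbb$, which is trivial since $G$ is semisimple with no compact factors (perfect group, $\Hom(G,\Rbb) = 0$). For $D = \Gamma$: pass to the induced $G$-action on $(G\times S)/\Gamma$, which is weakly irreducible by (B); an induced cocycle has the same Lyapunov data, reduce to the $G$ case; finiteness of $\Hom(\Gamma, \Rbb)$ (from Margulis, $\Gamma$ has finite abelianization in the higher-rank envelope) closes it. I expect the technical heart — and main obstacle — to be making rigorous the passage "nonzero drift $\Rightarrow$ nontrivial equivariant/cohomological datum amenable to superrigidity" without property (T), which is presumably why the $L^2$ (rather than $L^1$) hypothesis is imposed: $L^2$-integrability should let one run an $L^2$-cohomology / harmonic-cochain argument (à la Shalom or the Fisher–Margulis $L^2$ framework) to produce the needed homomorphism.
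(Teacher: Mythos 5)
Your reduction of the problem is sound as far as it goes: since $\Rbb$ is abelian, $\delta(g^m,x)$ is a Birkhoff sum, the a.e.\ limit $c(x)=\lim_m \delta(g^m,x)/m$ exists and equals $\EE[\delta(g,-)\mid \mathcal I_g]$, and the whole content is to show $c=0$. You also correctly diagnose that the absence of property (T) is the obstacle and you gesture at the right general framework (Shalom's $L^2$/reduced-cohomology machinery). But the mechanism you actually propose for killing the drift has a genuine gap.

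The step ``nonzero drift $\Rightarrow$ a nontrivial homomorphism-like datum $G\to\Rbb$, killed by $\Hom(G,\Rbb)=0$ or by Margulis superrigidity'' only controls the \emph{mean} of the drift, not the drift itself. Indeed $g\mapsto\int_S\delta(g,x)\,d\mu(x)$ is a homomorphism $G\to\Rbb$ and hence vanishes, so $\int_S c\,d\mu=0$; but $c$ is merely $\langle g\rangle$-invariant, and a single element $g$ need not act ergodically even when the $G$-action is ergodic, so $c$ can a priori be positive on one $g$-invariant set and negative on another. Your fallback — playing a rank-one factor against ergodicity of its complement — is not how the theorem is proved and cannot be essential: the paper's Theorem \ref{RvalueG} requires neither ergodicity nor higher rank (it holds for measure-preserving actions of rank-one $G$). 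What the paper actually proves is a quantitative statement about the associated $1$-cocycle $b:G\to L^2(S)$, $b(g)(s)=\delta(g^{-1},s)$: \emph{every} $1$-cocycle over \emph{every} unitary representation of $G$ has sublinear growth (Theorem \ref{Hilbertcase}). This rests on three ingredients you do not supply: (i) Shalom's superrigidity for reduced cohomology, splitting $b$, up to an almost coboundary, into pieces factoring through the simple factors $G_i$; (ii) Delorme's classification showing each rank-one $G_i$ has only finitely many irreducible unitary representations with nontrivial $H^1$; and (iii) the harmonic-map asymptotic $\|c(g)\|^2=C\,d(gK,K)+o(d(gK,K))$, i.e.\ square-root growth, for the corresponding cocycles, combined with the fact that almost coboundaries grow sublinearly. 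Sublinearity of $\|b(g)\|$ gives $\|\delta(g^m,-)\|_{L^2}=o(m)$, and since $\frac1m\delta(g^m,-)\to c$ in $L^2$ by the mean ergodic theorem, this forces $c=0$. Without (i)--(iii) your outline does not close.
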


\begin{rmk} We will prove slightly general results in Theorem \ref{RvalueG} and \ref{RvalueGamma}. In these theorems, we do not require ergodicity only measure preserving action.  Furthermore, when $D=G$, Theorem \ref{thm:Rvalue} does \emph{not} require that $G$ is higher rank. Theorem \ref{RvalueG} is true when $D=G$ is locally isomorphic to $\SO(m,1)$ or $\SU(m,1)$. 
\end{rmk}

In Section \ref{sec:Rvalue}, we convert Theorem \ref{RvalueG} into a problem about unitary representation. We solve this problem using theorems from \cite{Delrome}, \cite{Shalom} and growth of harmonic maps on symmetric spaces.  
 
	In the section \ref{sec:dynsuperrigid},  we will state the dynamical super-rigidity Theorem \ref{thm:superrigidG}, in full generality and prove it using Theorem \ref{thm:Rvalue}. Indeed, we will calculate Lyapunov exponents carefully using functorial properties. This calculation will give Theorem \ref{Intsuperrigid} for $D=G$. See \textbf{Part 1} of the proof of Theorem \ref{thm:superrigidG}.  This part is highly inspired by \cite{FM1} and \cite{Zimalg}. We will follow same strategy with in \cite{FM1}. When $D=\Gamma$, we will use the standard technique of inducing cocycle.

 	In the section \ref{sec:locrigid} and \ref{sec:globrigid}, we will prove theorems \ref{thm:localrigid} and \ref{thm:globalrigid} respectively. We use tools developed in \cite{BRHW}, \cite{MQ}, \cite{FM1} and \cite{FM2}. The dynamical super-rigidity theorem will replace those authors use of Zimmer's cocycle super-rigidity theorem. 
	
	The section \ref{sec:locrigid} relies on \cite{FM1}, \cite{FM2} and \cite{MQ}. Especially, in the section \ref{sec:locrigid}, we will prove local rigidity for constant cocycles as in \cite{MQ} and \cite{FM1}. That will provide theorem \ref{thm:localrigid}. On the other hand, the section \ref{sec:globrigid} relies on \cite{MQ} and \cite{BRHW}.

Finally, in the section \ref{sec:other},  we collect a few almost direct consequences of dynamical super-rigidity. The results are analogue of statements in \cite{QZ} and \cite{Zimbook}.

\subsubsection*{\rm{\bf{Acknowledgement}}} The author thanks Alex Furman for useful discussion. The author deeply appreciates to Aaron Brown for his continuous attention to this project and fruitful discussions. Thanks to his kindness, the author could have delightful discussions with him about various mathematical subjects.  Lastly, but most importantly the author deeply appreciates his Ph.D advisor David Fisher. The author learned numerous interesting math from enjoyable and supportive discussions with him. Thanks to his encouragement, the author was able to finish this paper.

\section{Preliminaries}\label{sec:pre}

\subsection{Algebraic group}\label{sec:prealg}

 We will say an\emph{ algebraic group} $\Lbb <\GL(d,\Cbb)$, for some $d$,  is group which is a zero locus of polynomials for our purpose, i.e. an affine algebraic group over $\Cbb$. When the algebraic group is defined over $\Rbb$, we will denote $\Lbb(\Rbb)=\Lbb \cap \GL(n,\Rbb).$ We will assume that the algebraic group and rational homomorphism are all defined over $\Rbb$ if there is no mention about it. Indeed, except the section \ref{sec:Cinfglobrigid}, we only care about algebraic group and rational homomorphism defined over $\Rbb$.
 
  We will denote $\Lbb^{0}$ be the connected component containing identity of $\Lbb$. On the other hand, $\Lbb(\Rbb)^{0}$ is the connected component of identity of $\Lbb(\Rbb)$ with respect to Hausdorff topology.

We will say connected real algebraic Lie group $L$ if there is a connected algebraic group $\Lbb$ defined over $\Rbb$ such that $L=\Lbb(\Rbb)^{0}$.

Note that we can find algebraic universal cover $\Gbb$ for connected semisimple real algebraic Lie group without compact factor $G_{0}=\Gbb_{0}(\Rbb)^{0}$.  $\Gbb$ satiesfies \ref{std} and there is a central isogeny $\Gbb\rightarrow \Gbb_{0}$. Let $\Gamma_{0}$ be an weakly irreducible lattice in $G_{0}$. Then $\Gamma=p^{-1}(\Gamma_{0})$ is still a weakly irreducible lattice in $\Gbb(\Rbb)$ where $p:\Gbb(\Rbb)\rightarrow G_{0}$ be a projection.

The facts in algebraic groups used in this paper can be found various literatures such as \cite{M}, \cite{Zimbook}, \cite{Borel}, \cite{OVbook} and \cite{OVlie3}.

\subsection{Unitary representation and $1$-cocycle}
\begin{defn} For LCSC group $G$, the \emph{unitary representation} $\pi:G\rightarrow \Ucal(\Hcal)$ on the Hilbert space $\Hcal$ is group homomorphism and continuous with respect to strong operator topology on $\Ucal(\Hcal)$. We will say the continuous map $b:G\rightarrow \Hcal$ is \emph{$1$-cocycle over the unitary representation $\pi$} if we have $b(gh)=b(g)+\pi(g)(b(h))$ for all $g,h\in G$. We will denote $Z(G,\pi)$ be the space of all $1$-cocycle. If $1$-cocycle $b\in Z(G,\pi)$ is called \emph{$1$-coboundary} if there is a $v\in \Hcal$ such that $b(g)=\pi(g)v-v$ for all $g\in G$. We will denote $\Bcal(G,\pi)$ be the space of all $1$-coboundary. We also define $1$-cohomology $H^{1}(G,\pi)$ and $1$-almost cohomology $\overline{H^{1}}(G,\pi)$ as $H^{1}(G,\pi)=Z(G,\pi)/\Bcal(G,\pi)$ and $\overline{H^{1}}(G,\pi)=Z(G,\pi)/\overline{\Bcal(G,\pi)}$ respectively. We will call the element in $\overline{\Bcal(G,\pi)}$ as \emph{almost coboundary}.
\end{defn}
Recall that the measurable group homomorphism between two LCSC group is continuous. We can get continuity from the cocycle relation too.
\begin{lem}\label{conti} 
 Let $\pi : G\rightarrow \mathcal{U(H)}$ be a unitary representation of $G$. 
Let $b : G\rightarrow \mathcal{H}$ be a weakly measurable and satisfy following condition. 
$$b(gh)=b(g)+\pi(g)[b(h)]$$ for all $g,h\in G$. 
Then $b\in Z^{1}(G,\Hcal)$.
\end{lem}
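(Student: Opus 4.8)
The plan is to upgrade weak measurability of $b$ to strong continuity by exploiting the cocycle identity, following the classical argument that a measurable homomorphism between Polish groups is continuous, adapted to the affine (cocycle) setting. First I would observe that, since $\pi$ is strongly continuous and $b$ is weakly measurable, for every $v\in\Hcal$ the scalar function $g\mapsto\inner{b(g)}{v}$ is measurable; combining this with the cocycle relation $b(gh)=b(g)+\pi(g)[b(h)]$ and the fact that $\pi(g)$ is unitary, one checks that $g\mapsto\norm{b(g)}$ is measurable and that $b$ is in fact \emph{strongly} measurable (separability of $\Hcal$, via Pettis's measurability theorem, reduces weak to strong measurability). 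The next step is a local boundedness / local integrability statement: using a Baire-category or Lusin-type argument on a set of positive Haar measure, there is a neighbourhood $U$ of the identity on which $\norm{b}$ is essentially bounded, hence (shrinking $U$) $b$ is locally in $L^2$ against Haar measure.

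With local $L^2$-control in hand, the main step is a convolution-smoothing argument. Fix a nonnegative $\phi\in C_c(G)$ with $\int\phi\,dg=1$ supported near the identity, and consider the $\Hcal$-valued integral $b_\phi(h):=\int_G \phi(g)\,\pi(g)^{-1}\big(b(gh)-b(g)\big)\,dg=\int_G\phi(g)\,b(h)\,dg=b(h)$ — that is, the cocycle identity makes the regularized object reproduce $b$ exactly, while the right-hand side, written as $\int_G \phi(g)\,\pi(g)^{-1}b(gh)\,dg-\int_G\phi(g)\,\pi(g)^{-1}b(g)\,dg$, is manifestly continuous in $h$ by continuity of translation in $L^1$/$L^2$ and strong continuity of $\pi$. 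Hence $b$ agrees almost everywhere with a continuous function; a final use of the cocycle relation (evaluating $b(gh)=b(g)+\pi(g)[b(h)]$ for a fixed generic $g$ and letting $h$ vary, or averaging again) promotes the a.e.\ equality to an everywhere equality, so $b$ itself is continuous and therefore $b\in Z^1(G,\Hcal)=Z(G,\pi)$.

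I expect the main obstacle to be the bookkeeping in the convolution step: one must justify that the $\Hcal$-valued Bochner integrals make sense (which is where strong measurability plus the local $L^2$ bound are needed), that Fubini/translation-continuity applies to move the smoothing onto the argument $h$, and that the resulting continuous representative actually coincides with $b$ everywhere rather than merely almost everywhere — the latter being exactly the point where the cocycle identity, rather than mere measurability, is essential. An alternative, perhaps cleaner, route is to package everything through the associated \emph{unitary representation on $\Hcal\oplus\RR$} (or $\Hcal\oplus\CC$) given by the affine action $g\cdot(w,t)=(\pi(g)w+t\,b(g),\,t)$: weak measurability of $b$ makes this a weakly measurable homomorphism into the unitary (more precisely, isometric affine) group, and then the standard theorem that weakly measurable homomorphisms of LCSC groups into $\Ucal(\Hcal)$ are strongly continuous applies directly, yielding continuity of $b$ as a component. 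I would present the convolution argument as the main line and mention this representation-theoretic repackaging as a remark.
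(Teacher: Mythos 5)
Your argument is correct, and it supplies an actual proof where the paper gives none: the paper's ``proof'' of Lemma \ref{conti} is only a pointer to \cite{BHV}, Exercise 2.14.3. Your main line --- Pettis's theorem to pass from weak to strong measurability, a Steinhaus-type argument on the sets $\{g:\norm{b(g)}\le n\}$ giving local boundedness of $\norm{b}$, and then the observation that the regularization $b_\phi(h)=\int_G\phi(g)\,\pi(g)^{-1}\bigl(b(gh)-b(g)\bigr)\,dg$ reproduces $b(h)$ exactly while the right-hand side is visibly continuous in $h$ --- is the standard automatic-continuity argument for measurable $1$-cocycles and is sound. Three small points to tighten. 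First, Pettis's theorem requires $b$ to be essentially separably valued, so either assume $\Hcal$ separable (harmless in this paper, where $\Hcal=L^{2}(S)$ for a standard probability space) or first replace $\Hcal$ by the closed $\pi(G)$-invariant subspace generated by the range of $b$. Second, since the cocycle identity is assumed for \emph{all} $g,h$, the identity $b_\phi(h)=b(h)$ holds for every $h$, not merely almost every $h$, so the a.e.-to-everywhere upgrade you worry about at the end is unnecessary; what you do need to track is the modular function when substituting $g\mapsto gh^{-1}$ in $\int\phi(g)\pi(g)^{-1}b(gh)\,dg$ if $G$ is not unimodular. Third, the affine extension $g\cdot(w,t)=(\pi(g)w+t\,b(g),t)$ on $\Hcal\oplus\Cbb$ is \emph{not} unitary, so your alternative route must be phrased as automatic continuity for weakly measurable homomorphisms into the Polish group of affine isometries $\Ucal(\Hcal)\ltimes\Hcal$ rather than a literal application of the theorem for $\Ucal(\Hcal\oplus\Cbb)$; with that rephrasing it also works and is probably the route intended by the cited exercise.
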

\begin{proof}  \cite{BHV}, Exercise 2.14.3.  \end{proof}
We define a sublinear growth of $1$-cocycle with respect to unitary representation as follows. 
\begin{defn} Let cocycle $b:G\rightarrow \Hcal$ which is associated to unitary representation $\pi:G\rightarrow \mathcal{U(H)}$ has \emph{sublinear growth} if for any $\epsilon>0$, there is $N\in \Nbb$ such that for any $g\in G$ with $|g|_{G}>N$, $|| b(g)||<\epsilon |g|_{G}$ where $||\cdot||$ denotes norm on $\Hcal$
\end{defn}
The followings are some properties of sublinear growth.
\begin{lem}[\cite{CTV} Corollary 3.3]\label{almsub} Almost coboundary has sublinear growth. \end{lem}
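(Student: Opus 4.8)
Almost coboundary has sublinear growth.

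The plan is to unpack the definitions and reduce to a standard estimate about coboundaries. Recall that $b \in \overline{\Bcal(G,\pi)}$ means there is a sequence of vectors $v_n \in \Hcal$ such that the coboundaries $b_n(g) := \pi(g)v_n - v_n$ converge to $b$ in the topology on $Z^1(G,\pi)$, which is uniform convergence on compact subsets of $G$. First I would fix $\epsilon > 0$. The key observation is that a genuine coboundary $g \mapsto \pi(g)v - v$ is \emph{bounded}: $\|\pi(g)v - v\| \le 2\|v\|$ for all $g \in G$, since $\pi$ is unitary. So each approximating $b_n$ is bounded, hence trivially has sublinear growth. The difficulty is that the bound $2\|v_n\|$ may blow up as $n \to \infty$, so I cannot simply pass to the limit naively; I need to balance "how close $b_n$ is to $b$ on a large compact set" against "how large $\|v_n\|$ is."

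The key steps, in order: (1) Choose a compact generating neighborhood $Q$ of the identity in $G$ and use the word-length-type structure: every $g$ with $|g|_G = m$ can be written $g = q_1 \cdots q_m$ with $q_i \in Q$; then the cocycle relation gives $b(g) = \sum_{i} \pi(q_1\cdots q_{i-1})\, b(q_i)$, so $\|b(g)\| \le m \cdot \sup_{q \in Q}\|b(q)\| =: m \cdot C_b$ — i.e. $b$ itself grows at most linearly with slope $C_b$. The same holds for each $b_n$ with slope $C_n := \sup_{q\in Q}\|b_n(q)\|$, but more importantly $b_n$ is \emph{globally} bounded by $2\|v_n\|$. (2) Fix $n$ large enough that $\sup_{q \in Q}\|b(q) - b_n(q)\| < \epsilon/2$; this is possible since $b_n \to b$ uniformly on the compact set $Q$. (3) For such $n$, for any $g$ with $|g|_G = m$, write $g = q_1\cdots q_m$ and estimate $\|b(g)\| \le \|b_n(g)\| + \|b(g) - b_n(g)\| \le 2\|v_n\| + m \cdot \sup_{q\in Q}\|b(q) - b_n(q)\| < 2\|v_n\| + \tfrac{\epsilon}{2} m$. (4) Now take $N$ large enough that $2\|v_n\| < \tfrac{\epsilon}{2} N$; then for $|g|_G = m > N$ we get $\|b(g)\| < \tfrac{\epsilon}{2} m + \tfrac{\epsilon}{2} m = \epsilon m = \epsilon |g|_G$, which is exactly sublinear growth.

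The main obstacle — really the only subtle point — is justifying that the topology on $Z^1(G,\pi)$ with respect to which one takes the closure $\overline{\Bcal(G,\pi)}$ is (or can be taken to be) the topology of uniform convergence on compact sets, so that step (2) is legitimate; this is the standard convention (and matches the reference \cite{CTV}), and for a compactly generated $G$ it suffices to have convergence uniformly on a single compact generating set $Q$. Everything else is the telescoping cocycle estimate plus the triviality that unitary coboundaries are bounded. I would also note the minor point that the word length $|g|_G$ must be the one induced by $Q$ (all such are bi-Lipschitz equivalent for compactly generated $G$, so the notion of sublinear growth is independent of the choice), which is why it is harmless to choose $Q$ adapted to the argument.
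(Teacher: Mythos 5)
Your proof is correct, and it is exactly the argument behind the citation the paper relies on: \cite{CTV} Corollary 3.3, whose content the paper summarizes as ``the set of sublinear-growth cocycles is closed in $Z^{1}$'' — your telescoping estimate over a compact generating set, combined with the uniform bound $2\|v_n\|$ on the approximating coboundaries and the $\epsilon/2$-split between the two error terms, is precisely how that closedness is proved. Nothing further is needed; your identification of the topology on $Z^{1}(G,\pi)$ (uniform convergence on compacts, equivalently on a compact generating set) as the only point requiring care is also accurate.
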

\begin{proof} See \cite{CTV} Corollary 3.3. Indeed the set of sublinear growth cocycle forms closed set in $1$-cocycle $Z^{1}$.  \end{proof}

 \begin{lem}\label{summ} For compactly generated LCSC metrizable group $H$ with some unitary representation $(\pi,\mathcal{H})$, consider two 1-cocycles $c_{1}, c_{2}:H \rightarrow \mathcal{H}$ which both have sublinear growth. Then $c_{1}+c_{2}$ has also sublinear growth.
\end{lem}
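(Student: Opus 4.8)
The plan is entirely elementary. First I would check that $c_1+c_2$ is again a $1$-cocycle over the same unitary representation $\pi$: for all $g,h\in H$,
\[
(c_1+c_2)(gh)=c_1(g)+\pi(g)[c_1(h)]+c_2(g)+\pi(g)[c_2(h)]=(c_1+c_2)(g)+\pi(g)\big[(c_1+c_2)(h)\big],
\]
and weak measurability (hence, by Lemma \ref{conti}, continuity) is inherited from $c_1$ and $c_2$, so $c_1+c_2\in Z^1(H,\pi)$. This makes the statement ``$c_1+c_2$ has sublinear growth'' meaningful.

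Next I would fix $\epsilon>0$. By the sublinear growth of $c_1$ there is $N_1\in\Nbb$ with $\|c_1(g)\|<\tfrac{\epsilon}{2}\,|g|_H$ whenever $|g|_H>N_1$, and similarly there is $N_2\in\Nbb$ with $\|c_2(g)\|<\tfrac{\epsilon}{2}\,|g|_H$ whenever $|g|_H>N_2$; here $|\cdot|_H$ is the word-length with respect to a fixed compact generating set, the same for both cocycles. Setting $N=\max(N_1,N_2)$, the triangle inequality gives, for every $g\in H$ with $|g|_H>N$,
\[
\|(c_1+c_2)(g)\|\le\|c_1(g)\|+\|c_2(g)\|<\tfrac{\epsilon}{2}\,|g|_H+\tfrac{\epsilon}{2}\,|g|_H=\epsilon\,|g|_H,
\]
which is exactly the definition of sublinear growth for $c_1+c_2$.

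There is essentially no serious obstacle here; the statement is a soft closure property. The only points that require a moment's care are purely bookkeeping: confirming that the sum of two cocycles over $\pi$ is still a cocycle over $\pi$ (so that the notion of sublinear growth applies verbatim), and observing that the word metric $|\cdot|_H$ used in the two sublinear-growth hypotheses is one and the same, so that the two thresholds $N_1,N_2$ can be combined. Once these are noted, the triangle inequality finishes the argument. (This lemma is in fact the additivity half of the remark in the proof of Lemma \ref{almsub} that sublinear-growth cocycles form a linear subspace of $Z^1$, and could alternatively be cited from there.)
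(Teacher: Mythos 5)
Your proof is correct and follows essentially the same argument as the paper: fix $\epsilon$, obtain thresholds $N_1,N_2$ for the $\epsilon/2$ bounds, take the maximum, and apply the triangle inequality. The preliminary check that $c_1+c_2$ is again a $1$-cocycle is a harmless addition the paper omits.
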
 

\begin{proof}
Let denotes $|\cdot|$ as word metric on $H$ and $||\cdot||$ as norm on $\mathcal{H}$. 
Fix $\epsilon >0$ and choose $N_{1},N_{2}\in \Nbb$ such that for any $h\in H$ with $|h|>N_{i}$ then $||c_{i}(h)|| <\frac{\epsilon}{2} |h|$ for $i=1,2$.
Since $c_{1}$ and $c_{2}$ have sublinear growth, we can find such $N_{1}$ and $N_{2}$. Now define $N=\max(N_{1}, N_{2})$. Then for any $h\in H$ with $|h|>N$, $||c_{1}(h)+c_{2}(h)||\le ||c_{1}(h)||+||c_{2}(h)|| \le \epsilon |h|.$ This means that $c_{1}+c_{2}$ has sublinear growth as we required. \end{proof}

Combining the above facts, we can get following lemma. It helps us to ignore almost coboundary when we calculate a growth of the cocycle.

\begin{lem}\label{samegrowth}
If $b\in Z^{1}(G,\Hcal)$ has sublinear grwoth
 then for any cocycle $b'\in Z^{1}(G,\Hcal)$ which is almost cohomologous to $b$ has sublinear growth.
\end{lem}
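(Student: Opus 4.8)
The plan is to reduce the claim to Lemma~\ref{summ} together with Lemma~\ref{almsub}. Suppose $b\in Z^{1}(G,\Hcal)$ has sublinear growth and $b'\in Z^{1}(G,\Hcal)$ is almost cohomologous to $b$, meaning that $b'-b\in\overline{\Bcal(G,\pi)}$, i.e.\ $b'-b$ is an almost coboundary. First I would observe that $b'-b$ is itself an element of $Z^{1}(G,\pi)$: the space of $1$-cocycles is a linear subspace of the continuous maps $G\to\Hcal$ (the cocycle relation $b(gh)=b(g)+\pi(g)b(h)$ is linear in $b$), so the difference of two cocycles is a cocycle. By Lemma~\ref{almsub}, since $b'-b$ lies in $\overline{\Bcal(G,\pi)}$, it has sublinear growth.

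Next I would write $b'=b+(b'-b)$ and apply Lemma~\ref{summ} to the two cocycles $b$ and $b'-b$, both of which have sublinear growth; the lemma then yields that $b'=b+(b'-b)$ has sublinear growth, which is exactly the conclusion. The only point requiring a word of care is the hypothesis of Lemma~\ref{summ} that $G$ be a compactly generated (equivalently, finitely generated as a topological group with word metric) LCSC metrizable group; this should be part of the standing hypotheses on $G$ here, or else can be added to the statement, since the notion of ``sublinear growth'' already presupposes a word metric $|\cdot|_{G}$ on $G$.

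I do not anticipate a genuine obstacle: this lemma is a purely formal consequence of the two preceding lemmas, and the proof is three lines. The only thing to be vigilant about is matching the precise meaning of ``almost cohomologous'' used elsewhere in the paper with ``$b'-b\in\overline{\Bcal(G,\pi)}$,'' and making sure the ambient group-theoretic hypotheses (LCSC, compactly generated, metrizable) under which $|\cdot|_{G}$ and hence ``sublinear growth'' make sense are in force, so that Lemma~\ref{summ} applies verbatim.

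\begin{proof}
Since $b'$ is almost cohomologous to $b$, the difference $b'-b$ lies in $\overline{\Bcal(G,\pi)}$, i.e.\ it is an almost coboundary; note that $b'-b\in Z^{1}(G,\pi)$ because the cocycle relation is linear in the cocycle. By Lemma~\ref{almsub}, $b'-b$ has sublinear growth. By hypothesis $b$ has sublinear growth, so by Lemma~\ref{summ} the sum $b'=b+(b'-b)$ has sublinear growth.
\end{proof}
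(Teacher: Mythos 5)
Your proof is correct and is exactly the argument the paper intends: the paper's own proof of this lemma simply cites Lemma \ref{almsub} and Lemma \ref{summ}, and you have spelled out the decomposition $b'=b+(b'-b)$ that makes that citation work. No issues.
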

\begin{proof} This comes from Lemma \ref{almsub} and \ref{summ} directly. \end{proof}

  \subsection{Cocycles over the group action}\label{precoc}
  \subsubsection*{Measurable cocycles} Let $D$ and $T$ be LCSC group. Let $(S,\mu)$ be a standard probability space with $D$ action. Assume that $D$ action preserves $\mu$.
 \begin{defn} We will call the measurable (resp. continuous) map $\alpha:D\times S\rightarrow T$ is a mearsurable (resp. continuous) cocycle, if for any $g_{1},g_{2}\in D$, we have $\alpha(g_{1}g_{2},x)=\alpha(g_{1},g_{2}.x)\alpha(g_{2},x)$ for almost every $x\in S$. We will call two cocycles $\alpha_{1},\alpha_{2}:D\times S\rightarrow T$ is \emph{cohomologous} if there is a measurable map $\phi:S\rightarrow T$ such that $$\alpha_{1}(g,x)=\phi(g.x)^{-1}\alpha_{2}(g,x)\phi(x)$$ for any $g\in D$ and almost every $x\in S$. We sometimes denote the cocycle $\alpha_{1}^{\phi}$ as $\alpha_{1}^{\phi}(g,x)=\phi(g.x)^{-1}\alpha_{1}(g,x)\phi(x)$
 \end{defn}

 When we have group homomorphism $\pi : D\rightarrow T$, we can make constant cocycle $\pi:D\times S\rightarrow T$ for any $D$ action on $S$. Therefore, we abuse notations so that the group homomorphism as cocycle when the context is clear enough.
 \begin{lem}\label{conticoc} Let $G$ be LCSC metrizable group. Let $\alpha:G\times S\rightarrow \Rbb$ be a cocycle. Assume that $$(L^{2}) : \quad \alpha(g,-)\in L^{2}(S) \quad \forall g\in G$$ then the map $b:G\rightarrow L^{2}(S), b(g)(s)=\alpha(g^{-1}, s)$ is $1$-cocycle with respect to unitary representation $\pi:G\rightarrow \Ucal(L^{2}(S))$ where $\pi(g)(f)= f^{g}$ and $f^{g}(s)=f(g^{-1}.s).$ In other words, $b\in Z^{1}(G,\pi)$.
 \end{lem}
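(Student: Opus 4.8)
The plan is to verify directly that the map $b:G\to L^2(S)$ defined by $b(g)(s)=\alpha(g^{-1},s)$ satisfies the $1$-cocycle identity with respect to the Koopman-type representation $\pi$, and then invoke Lemma \ref{conti} to upgrade weak measurability to membership in $Z^1(G,\pi)$. First I would record that $\pi$ is a genuine unitary representation: since the $G$-action preserves $\mu$, each $\pi(g)$ is an isometry of $L^2(S,\mu)$, the relation $\pi(g_1g_2)=\pi(g_1)\pi(g_2)$ follows from $(g_1g_2)^{-1}=g_2^{-1}g_1^{-1}$ and the associativity of the action, and strong continuity is the standard fact that for measure-preserving actions of an LCSC group the Koopman representation on $L^2$ is strongly continuous. (If one wishes, this last point also follows from Lemma \ref{conti} once the cocycle identity is in place, but it is cleaner to note it directly.)

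Next comes the computational heart: fix $g,h\in G$ and compute, for almost every $s\in S$,
\[
b(gh)(s)=\alpha((gh)^{-1},s)=\alpha(h^{-1}g^{-1},s).
\]
Applying the cocycle relation $\alpha(g_1g_2,x)=\alpha(g_1,g_2.x)+\alpha(g_2,x)$ (written additively, since $T=\Rbb$) with $g_1=h^{-1}$, $g_2=g^{-1}$ and $x=s$, this equals
\[
\alpha(h^{-1},g^{-1}.s)+\alpha(g^{-1},s)=b(h)(g^{-1}.s)+b(g)(s)=\bigl(\pi(g)b(h)\bigr)(s)+b(g)(s),
\]
which is exactly $b(gh)=b(g)+\pi(g)b(h)$ as elements of $L^2(S)$. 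One subtlety to handle with care is that the cocycle identity for $\alpha$ holds only for a.e.\ $s$ with the exceptional null set depending on the pair $(g_1,g_2)$; but for fixed $g,h$ this is a single such identity, so the displayed equality holds as an identity in $L^2(S)$, which is all that is required.

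Finally, I would check weak measurability of $b$: for $f\in L^2(S)$, the function $g\mapsto \langle b(g),f\rangle=\int_S \alpha(g^{-1},s)\overline{f(s)}\,d\mu(s)$ is measurable because $\alpha$ is a measurable cocycle and hence jointly measurable in $(g,s)$ (by Fubini), and $g\mapsto g^{-1}$ is continuous; the $L^2$-integrability hypothesis $(L^2)$ guarantees $b(g)\in L^2(S)$ so each integral is finite. With $b$ weakly measurable and satisfying the cocycle relation, Lemma \ref{conti} gives $b\in Z^1(G,\pi)$. I do not expect a serious obstacle here — the only thing demanding attention is the bookkeeping of null sets and the order of composition, i.e.\ making sure the inverses are inserted so that the $\pi(g)$ (translation by $g^{-1}$) lands on the $b(h)$ term rather than the $b(g)$ term. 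Everything else is routine verification plus the cited lemma.
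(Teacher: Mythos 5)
Your proposal is correct and follows essentially the same route as the paper: a direct calculation verifying $b(gh)=b(g)+\pi(g)b(h)$, followed by an appeal to Lemma \ref{conti} (after checking weak measurability) to obtain $b\in Z^{1}(G,\pi)$; the paper notes the same two steps, offering Moore's theorem only as an alternative for the continuity part. Your explicit bookkeeping of inverses and null sets is a correct filling-in of the "direct calculation" the paper leaves to the reader.
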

\begin{proof} The map $b$ satisfies cocycle identity $b(gh)=b(g)+\pi(g)(b(h))$ comes from direct calculation. The remaining duty is proving continuity. It can be proved by Lemma \ref{conti}. Or we can think $\alpha\in F(G,L^{2}(S))$ where $F(G,L^{2}(S))$ is the space measurable map from $G$ to $L^{2}(S)$, and use Theorem 3 in \cite{Moore}.  \end{proof}

\begin{rmk} Following \cite{Zimbook}, the measurable cocycle can be realized by \emph{strict} measurable cocycle almost everywhere if $D$ and $T$ is a locally compact second countable metrizable group. (See \cite{Zimbook} Chapter 4 and Appendix.) Therefore, we will not distinguish measurable cocycle with strict cocycle. Indeed, the every group that will be appeared in this paper will be LCSC group except the unitary operator group $\Ucal(\Hcal)$ of infinite dimension Hilbert space.
\end{rmk}

Note that the following lemma is easily deduced by direct calculation.
\begin{lem}\label{comcoc} Let $\beta,\delta :D\times S\rightarrow T$ be measurable cocycles and $\Delta:D\times S\rightarrow T$ be a measurable map. Assume that that $\delta(g,x)=\beta(g,x)\Delta(g,x)$ for all $g\in D$ and almost every $x\in S$. If $\beta(D\times S)$ commutes with $\Delta(D\times S)$ then $\Delta$ is indeed a cocycle.
\end{lem}

\subsubsection*{Algebraic hull}

For a measurable cocycle $\beta$ over an ergodic action, there may not exist smallest subgroup among target groups of all cohomologous cocycles. However, if we restrict to our attention to algebraic groups, then there is a such group. More precisely, we can find a  $\Rbb$-algebraic group $\Lbb(\Rbb)$ so that there is no algebraic subgroup $\Lbb'$ of $\Lbb$ such that 
$\beta$ is cohomologous to a cocycle takes values in $\Lbb'(\Rbb)$. Such a minimal $\Rbb$-algebraic group $\Lbb$ is unique up to conjugation and called $\Lbb$ or $\Lbb(\Rbb)$ as an \emph{algebraic hull} of $\alpha$.  (See, \cite{Zimbook} Chapter 9.)   If the cocycle $\beta:G\times S\rightarrow \Lbb(\Rbb)$ takes values in an algebraic hull then we will say $\beta$ is \emph{minimal}.

\subsubsection*{Finite extension}\label{finext}
Let $G$ satisfy \ref{std} with $n\ge 1$. Let $\beta : G\times X \rightarrow L$ be measurable cocycle under ergodic $G$-action on standard probability space $X$. Further assume that $L=\Lbb(\Rbb)$ be a $\Rbb$ point of real algebraic group $\Lbb$ and algebraic hull of the $\beta$. 

Let $\Lbb_{0}$ be a algebraic normal subgroup of $\Lbb$ such that $C=L/L_{0}$ is compact group where $L_{0}=\Lbb_{0}(\Rbb)$. We have a cocycle $i:L\times X\rightarrow C$ as $i=pr\circ \alpha$, where $C=D/D_{0}$ is compact group and $pr:L\rightarrow C$ is projection. We define action of $G$ on $\widehat{X}=X\times_{i} C$ as $g.(x,c)=(g.x,i(g,x)c).$ Note that we can give measure on $\widehat{X}$ as product of $\mu$ and Haar measure on $C$. Now define cocycle $$\overline{\beta} : G\times (X\times C) \rightarrow L,\quad\overline{\beta}(g,(x,f))= \beta(g,x).$$ over the $G$ action on $\widehat{X}$.
It is easy to see that there is bounded measurable function $\psi:\widehat{X}\rightarrow L$ such that $\delta :G\times \widehat{X} \rightarrow L_{0}$, $$\delta(g,(x,f))=\psi(g.(x,f))^{-1} \overline{\beta}(g,(x,f))\psi(x,f)$$ is $L_{0}$ valued cocycle. Indeed, one can define $\psi(x,f)=\overline{\psi}(f)$ there $\overline{\psi}$ is measurable section from $C$ to $L$.
  
 Assume that $\beta$ is $L^{2}$-integrable cocycle. Then $\delta$ is also $L^{2}$-integrable cocycle and for any $g\in G$, $\Sp(\delta_{g})=\Sp(\beta_{g})$ when we apply MET to trivial bundle $S\times \Rbb^{d}$ for some $d$.

 The following theorem allows us to assume that the algebraic hull is connected algebraic group without loss of generality.
 	\begin{theorem}[Cf. \cite{Stuck} Theorem 2.1.]\label{thm:finextirred} Under notations and settings  as above, we have following.
 	\begin{enumerate}
	\item[(a)] $\Lbb_{0}$ is algebraic hull of $\overline{\beta}$, so that $\delta$ is a minimal cocycle.
	\item[(b)] Moreover, if $G$ action on $X$ is weakly irreducible then $G$ action on $\widehat{X}$ is also weakly irreducible.
 	\end{enumerate}
	In particular, when $\Lbb_{0}=\Lbb^{0}$ is connected component of identity of $\Lbb$, $C$ is finite and $G$ action on $\widehat{X}$ is weakly irreducible if $G$ action on $X$ is weakly irreducible. In this case, we will call $\widehat{X}$ as \emph{finite extenstion}.
 
 \end{theorem}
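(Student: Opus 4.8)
The plan is to prove the three assertions in order: (a) that $\Lbb_0$ is the algebraic hull of $\overline{\beta}$ on $\widehat X$, (b) preservation of weak irreducibility, and then the ``in particular'' clause as an immediate specialization. Throughout I would exploit the projection $p : \widehat X = X \times_i C \to X$, which is $G$-equivariant, measure preserving (product of $\mu$ and Haar), and has fibers $C$; note also that $G$ acts on the fibers through the cocycle $i$ and, since $C$ is compact, this fiberwise action is by measure-preserving homeomorphisms of a compact group. The cocycle $\overline\beta$ is by construction the pullback $p^*\beta$, and $\delta$ is obtained from $\overline\beta$ by the explicit coboundary $\psi(x,f) = \overline\psi(f)$; since $\overline\psi$ takes values in $L$ (and is bounded, $C$ being compact), $\delta$ is cohomologous to $\overline\beta$ over $G \lacts \widehat X$, so (a) reduces to showing that $\Lbb_0$ is the algebraic hull of $\delta$, equivalently of $\overline\beta$.

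For (a): first, $\Lbb_0$ certainly \emph{contains} a conjugate of the algebraic hull of $\overline\beta$, since $\delta$ takes values in $L_0 = \Lbb_0(\Rbb)$; the content is that it is not smaller. Suppose $\overline\beta$ were cohomologous over $\widehat X$ to a cocycle valued in $\Mbb(\Rbb)$ for some proper algebraic subgroup $\Mbb \lneq \Lbb_0$; I would push this down to $X$. The key mechanism is Zimmer's / Stuck's averaging argument: given a measurable conjugating function $\Phi : \widehat X \to \Lbb$, restrict it to fibers to get, for a.e.\ $x$, a map $C \to \Lbb/\Mbb$ on the (compact, hence finite-invariant-measure) fiber over $x$; because $C$ acts transitively on itself and $\Lbb/\Mbb$ carries no $C$-invariant finite measure beyond what comes from an algebraic subgroup normalized appropriately, one uses the compactness of $C$ together with the algebraicity of $\Mbb$ to conclude that $\beta$ itself is cohomologous over $X$ to an $\Mbb'$-valued cocycle for some algebraic $\Mbb'$ with $\Mbb'{}^0 \subseteq$ a conjugate of $\Mbb$, contradicting minimality of $\Lbb$ as the algebraic hull of $\beta$ on $X$. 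This is the argument of \cite[Theorem 2.1]{Stuck}, and I would cite it for the structural input while spelling out the fiber-over-the-base reduction. Hence $\Lbb_0$ is the algebraic hull of $\overline\beta$, and therefore $\delta$ is minimal. The $L^2$-integrability and equality of Lyapunov spectra $\Sp(\delta_g) = \Sp(\beta_g)$ follow because $\delta$ and $\overline\beta$ differ by the \emph{bounded} coboundary $\psi$ (so $L^2$-integrability is preserved) and $\overline\beta(g,(x,f)) = \beta(g,x)$ with $p$ measure preserving (so the MET data pull back), as already noted in the text preceding the statement.

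For (b): I must show that if $F^c$ acts ergodically on $X$ for every rank-one factor $F$ of $G$, then $F^c$ acts ergodically on $\widehat X = X \times_i C$. Fix such an $F$ and let $i^c : F^c \times X \to C$ be the restriction of $i$; then $\widehat X$, as an $F^c$-space, is the skew product $X \times_{i^c} C$ over the ergodic base $(X,\mu)$ with compact fiber group $C$. A skew product of an ergodic action by a \emph{compact} group fiber is ergodic if and only if the projection of $i^c$ to every nontrivial unitary character quotient of $C$ is not a measurable coboundary; but if $i^c$ projected to a coboundary modulo a closed subgroup, the corresponding reduction of the cocycle $\overline\beta$ would contradict part (a) (the algebraic hull of $\overline\beta$ would drop below $\Lbb_0$ after passing to that subextension) — more directly, one observes that ergodicity of $F^c \lacts \widehat X$ is exactly the statement that the $F^c$-invariant $\sigma$-algebra on $\widehat X$ is trivial, and any nontrivial invariant function, being $F^c$-invariant and fiber-equivariant, would descend to give a proper $F^c$-invariant sub-$C$-bundle, again contradicting that $\Lbb_0$ is the hull over the ergodic $F^c$-space $X$. (Here I use that the algebraic hull of $\beta\restrict{F^c}$ relative to the ergodic $F^c$-action equals $\Lbb_0$-conjugacy class as well, which is part of what weak irreducibility buys us via Stuck's theorem applied factor by factor.) Finally, the ``in particular'' clause: when $\Lbb_0 = \Lbb^0$, the quotient $C = \Lbb(\Rbb)/\Lbb^0(\Rbb)$ is finite, so the Haar-measure construction is just the counting measure on a finite set, $\widehat X \to X$ is a finite measure-preserving extension, and (a)+(b) give that $\overline\beta$ has connected algebraic hull $\Lbb^0$ and $G$ acts weakly irreducibly on $\widehat X$.

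I expect the main obstacle to be part (b) — or rather, the precise bookkeeping that weak irreducibility (an ergodicity condition imposed only on the complements $F^c$ of rank-one factors, and which does \emph{not} pass to arbitrary subgroups, as the paper emphasizes) genuinely survives the finite/compact extension. The danger is that passing to $\widehat X$ could, a priori, destroy ergodicity of some $F^c$; the resolution is that $\widehat X \to X$ is built from the cocycle $i$ which is a quotient of the hull cocycle, so a loss of $F^c$-ergodicity upstairs would manifest as a proper algebraic reduction of $\overline\beta$, which part (a) forbids. Making that implication airtight — essentially a fiberwise Mautner/ergodicity argument for compact-group skew products married to the algebraic-hull minimality — is where the real work lies; the rest is formal manipulation of cocycles and coboundaries.
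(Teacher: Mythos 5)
Your part (a) is fine and is essentially what the paper does: it cites \cite{Stuck} (Proposition 1.4) for the fact that the hull cannot drop below $\Lbb_{0}$ after the compact extension, and your observation that $\delta$ and $\overline{\beta}$ differ by the bounded coboundary $\psi$ correctly handles minimality of $\delta$ and the preservation of integrability and spectra. The problem is in part (b). You correctly reduce ergodicity of $F^{c}\lacts\widehat{X}$ to the statement that $i^{c}=i\restrict{F^{c}\times X}$ is not cohomologous, over the ergodic $F^{c}$-action on $X$, to a cocycle into a proper closed subgroup of $C$. But you then rule this out by asserting that the algebraic hull of $\beta\restrict{F^{c}}$ relative to the $F^{c}$-action on $X$ is again (a conjugate of) $\Lbb_{0}$, and you attribute this to weak irreducibility. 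That assertion is false in general: take $G=\SL(2,\Rbb)\times\SL(2,\Rbb)$ acting on $G/\Gamma$ for an irreducible lattice, and let $\beta$ be the constant cocycle given by projection to the first factor. Then $F^{c}$ (the second factor) acts ergodically, yet $\beta\restrict{F^{c}}$ is the trivial cocycle with trivial hull, while the hull of $\beta$ over $G$ is not trivial. Weak irreducibility is a hypothesis on the \emph{action}, not on the cocycle, and it does not prevent the hull from collapsing under restriction to $F^{c}$; so the contradiction you want to invoke is not available as stated.

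The missing step is precisely the content of Theorem 2.1 in \cite{Stuck}, which the paper invokes: a putative $F^{c}$-reduction of $i$ must first be \emph{promoted to a $G$-reduction of $\beta$} before minimality (part (a)) can be contradicted. Concretely, if $\phi:X\rightarrow C$ and a proper closed subgroup $C'<C$ satisfy $\phi(h.x)^{-1}i(h,x)\phi(x)\in C'$ for all $h\in F^{c}$, one uses the normality of $F^{c}$ in $G=F\cdot F^{c}$ together with ergodicity of $F^{c}$ on $X$ to show that the reduction datum $x\mapsto \phi(x)C'\in C/C'$ is essentially $G$-equivariant up to a continuous homomorphism of $G$ into a compact group; such a homomorphism is trivial because $G=\Gbb(\Rbb)$ with $\Gbb$ simply connected and without anisotropic factors has no nontrivial compact quotients, and only then does pulling back through $pr:L\rightarrow C$ contradict minimality of $\Lbb$. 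Equivalently — and this is the route the paper's proof sketches — one replaces the $F^{c}$-action on $\widehat{X}$ by the induced $G$-action on $F\times\widehat{X}$, which is the compact extension of the ergodic $G$-space $F\times X$ by the pulled-back cocycle, and runs the $G$-version of the argument there. Without one of these two mechanisms (normality plus promotion, or induction to $G$), your proof of (b) does not close; your instinct that (b) is ``where the real work lies'' is right, but the work is not the one you describe.
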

\begin{proof}  For (a), see \cite{Stuck}  Proposition1.4. For (b), the proof of Theorem 2.1 in \cite{Stuck} can be applied same way. By the definition of weakly irreducible action, $G$ action on $F\times X$ is ergodic for any rank one factor $F$ of $G$. This is enough to prove that the $F^{c}$ action on $\widehat{X}$ is ergodic where $F^{c}$ is complement of $G$, i.e. $G=F\times F^{c}$. \end{proof}
\subsection{Lyapunov subspaces and exponents} \label{Lyapsec}\label{funct}
We need funtorial property of Lyapunov exponents and subspaces for integrable cocycle and quasi-integrable cocycle. For more informations of it, see \cite{M} V.2.3., \cite{Feres} 9.3.7. and \cite{Zimalg}.

 Let $(S,\mu)$ be a standard probability space. Assume that $\theta:S\rightarrow S$ gives $\mu$-preserving $\Zbb$-action on $(S,\mu)$. Let $\Csc:\Zbb\times S\rightarrow \GL(W)$ be a measurable cocycle defined over $\theta$. 
 
  When $\Csc$ is integrable, we will denote $E_{0}^{\Csc}(x)$,$E_{+}^{\Csc}(x)$,$E_{-}^{\Csc}(x)$,$E_{\le 0}^{\Csc}(x)$ and $E_{\ge 0}^{\Csc}(x)$ for the subspaces in $W$ corresponding signs for Lyapunov exponents of the cocycle $\Csc$. 
 
 Let $Q$ be a linear subspace in $W$ such that $\Csc(m,x)Q=Q$ for all $m\in\Zbb$. Let $F=W/Q$ and denote by $p:W\rightarrow F$ the natural projection. Then we can define two induce cocycles $$\Csc\restrict{Q} : \Zbb\times S\rightarrow \GL(Q),\quad(m,x)\mapsto \Csc(m,x)\restrict{Q}$$ and $$\Csc\restrict{F}:\Zbb\times S\rightarrow \GL(W),\quad\Csc\restrict{F}(m,x)\overline{z}=p(\Csc(m,x))z$$ for any $m\in \Zbb$, $\overline{z}\in F$, $p(z)=\overline{z}$ and $\mu$-almost every $x\in S$. Note that $\Csc\restrict{F}$ is well-defined.

\subsubsection*{Functoriality of the Integrable cocycle} Assume that $\Csc$ is $L^{2}$ integrable so that $\Csc\restrict{Q}$ and $\Csc\restrict{F}$ are $L^{2}$ integrable. Using Oseledet Multiplicative ergodic theorem, find Lyapunov subspaces and exponents for $\Csc$, $\Csc\restrict{Q}$ and $\Csc\restrict{F}$. For $\Csc$,  for $\mu$-almost every $x\in S$, $$\chi_{1}(x)>\dots>\chi_{t(x)}(x),\quad W=\bigoplus_{i=1}^{t(x)} W_{\chi_{i}(x)}(x)$$ be Lyapunov exponents and subspaces respectively.
For $\Csc_{Q}$ and $\Csc_{F}$, for $\mu$-almost every $x\in S$, $$\chi^{Q}_{1}(x)>\dots>\chi^{Q}_{q(x)}(x),\quad Q=\bigoplus_{j=1}^{q(x)}Q_{\chi^{Q}_{j}(x)}(x)$$ and $$\chi^{F}_{1}(x)>\dots>\chi^{F}_{f(x)}(x),\quad F=\bigoplus_{j=1}^{f(x)}F_{\chi^{F}_{j}(x)}(x)$$ be Lyapunov exponents and subspaces respectively. 
Fix generic $x\in S$ for all the above ergodic theorems. 
\begin{enumerate}
\item The decomposition $$Q=\bigoplus_{i=1}^{t(x)} \left(W_{\chi_{i}(x)}(x)\bigcap Q\right)$$ is Lyapunov decomposition for cocycle $\Csc\restrict{Q}$ at $x$. The Lyapunov exponents for $\Csc\restrict{Q}$ are exponents $\chi_{i}(x)$ of $\Csc$ such that $Q\cap W_{\chi_{i}(x)}(x)\neq \emptyset$. 

\item For each $j=1,\dots,f(x)$, $\chi_{j}^{F}(x)=\chi_{i}(x)$ and $F_{\chi^{F}_{j}}(x)=p(W_{\chi_{i}(x)}(x))$ for some $i\in\{1,\dots,t(x)\}$.
\item If, for some $i\in \{1,\dots,t(x)\}$, $\chi_{i}(x)\neq \chi^{F}_{j}(x)$ for all $j$, then $W_{\chi_{i}(x)}(x)$ is a subspace of $Q$. 

\end{enumerate}

Especially, if two cocycles $\Csc,\Csc':\Zbb\times S\rightarrow \GL(W)$ over same $\Zbb$-action preserve $Q<W$ and $\Sp_{x}(\Csc\restrict{Q})=\Sp_{x}(\Csc'\restrict{Q})\textrm{ and }  \Sp_{x}(\Csc\restrict{F})=\Sp_{x}(\Csc'\restrict{F})$ almost every $x\in S$ then we have $\Sp(\Csc)=\Sp(\Csc').$
The converse is also true.
\subsubsection*{Functoriality of the exponential cocycle}  

Define the cocycle $\Csc'$ is called \emph{quasi $L^{2}$-integrable} if it is cohomologous to a $L^{2}$-integrable cocycle $\Csc$, i.e. there is a measurable map $\phi:X\rightarrow \GL(W)$ such that $\Csc'=\Csc^{\phi}$. In this case, we can still say about Lyapunov exponent using $\{\phi<N\}$ for any $N\in \Nbb$. Especially  $E_{*}^{\Csc'}(x)=\phi(x)^{-1}E_{*}^{\Csc}(x)$ is desired Lyapunov subspaces with respect to $\Csc'$.

For these subspaces, we also have same analogue of funtoriality. See \cite{Zimalg} Lemma 2.5.

 \begin{rmk} Note that quasi-$L^{2}$ integrable cocycle is exponential in the sense of \cite{Zimalg}. Also, integrable and quasi-integrable conditions are enough for functoriality. 
\end{rmk}

 \subsection{Norm on groups}\label{prenorm}
 
 \begin{defn} 
We will call the \emph{pseudo-norm} on group $H$, the map $||\cdot||:H\rightarrow [0,\infty)$ such that $||gh||\le ||g||+||h||$ for any $g,h\in H$. We will call pseudo-norm as norm if $||h||=0$ if and only if $h$ is identity.
We will call the map $j:H_{1}\rightarrow H_{2}$ between two groups with pseudo-norm $(H_{1},||\cdot||_{1})$ and $(H_{2},||\cdot||_{2})$ is called \emph{Quasi-isometry} if there are constant $C_{1}\ge1$ and $C_{2}\ge0$ such that $$C_{1}^{-1}||h||_{1}-C_{2}\le ||j(h)||_{2}\le C_{1}||h||_{1}+C_{2}$$ for any $h\in H_{1}$.
\end{defn} 
\begin{eg} 
Let $F$ be compactly generated LCSC metrizable group. Fix compact symmetric generating set $\Omega$ containing identity of $F$. Then we can define \emph{word norm with respect to $\Omega$} on $F$ as $|f|_{\Omega}=\inf\{n:f\in \Omega^{n}\}.$ Any two compact generating set $\Omega_{1}$ and $\Omega_{2}$ give quasi-isometric word norm on $F$. We will call that the norm $|\cdot |_{F}$ is \emph{canonical norm} on $F$ if $|\cdot|_{F}$ is quasi-isometric to word norm with respect to some, hence any compact generating set.
\end{eg}
\begin{eg} Let $F< \GL(m,\Rbb)$ be closed subgroup of $\GL(m,\Rbb)$. Then we can define norm  $\ln||\cdot||_{op}$ on $F$ using operator norm. Since any two norm on $\Rbb^{n}$ is compatible, $||\cdot||_{op}$ are all compatible. Now, for some fixed norm on $\Rbb^{n}$, we can define pseudo-norm $$||b||=\max(\ln||b||_{op}, \ln||b^{-1}||_{op}) $$ for any $b\in F$. \end{eg}
For reductive algebraic group, they are quasi-isometric.
\begin{theorem}[See \cite{Abel04}]\label{rednorm}
If $F<\GL(m,\Rbb)$ is $\Rbb$-points of a reductive algebraic group defined over $\Rbb$, then $||\cdot||$ is quasi isometric to canonical norm on $F$. 
\end{theorem}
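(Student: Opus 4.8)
The plan is to establish the two inequalities of the quasi-isometry separately; the trivial content-free direction is submultiplicativity of the operator norm, while the real work is a reduction to a maximal $\Rbb$-split torus via the Cartan decomposition.

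\textbf{The easy direction.} Fix a compact symmetric generating set $\Omega$ of $F$ containing the identity (recall $F$, being the $\Rbb$-points of an algebraic group, is a Lie group with finitely many components, hence compactly generated). If $g\in\Omega^{n}$ then submultiplicativity of the operator norm gives $||g||_{\op}\le C^{n}$ with $C=\max_{\omega\in\Omega}||\omega||_{\op}\ge 1$, and the same bound holds for $g^{-1}$ since $\Omega$ is symmetric; hence $||g||\le (\log C)\,|g|_{\Omega}$. So $||\cdot||$ is dominated by the word norm with zero additive constant, which gives one of the two inequalities.

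\textbf{The hard direction.} Here I would use the Cartan decomposition $F=KAK$, where $K$ is a maximal compact subgroup of $F$ and $A\cong(\Rbb_{>0})^{r}$ is the identity component of the $\Rbb$-points of a maximal $\Rbb$-split torus of the ambient reductive group (all $\Rbb$-anisotropic directions, of the centre and of the semisimple part alike, being absorbed into $K$). Since $K$ is compact, $K\subset\Omega^{N}$ for some $N$ and $K$ has bounded algebraic norm; after enlarging $\Omega$ to contain a compact generating set of $A$ and paying additive constants, the claim $|g|_{\Omega}\le C_{1}||g||+C_{2}$ for $g=k_{1}ak_{2}$ thus reduces to proving $|a|_{\Omega}\le C_{1}'||a||+C_{2}'$ for $a\in A$. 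Now identify $A=\exp\afk$ with $\afk\cong\Rbb^{r}$ via $\exp$: the intrinsic word norm of the group $A$ with respect to a compact generating set is equivalent to any vector-space norm $||H||$ on $\afk$, and $|a|_{\Omega}$ is at most this intrinsic word norm. On the other hand, choosing the norm on $\Rbb^{m}$ adapted to the decomposition $\Rbb^{m}=\bigoplus_{\chi}V_{\chi}$ into $\Rbb$-weight spaces of $A$ (legitimate because replacing the norm on $\Rbb^{m}$ changes $\log||\cdot||_{\op}$ only by a bounded additive amount, harmless for a quasi-isometry statement), one computes for $a=\exp H$ that $\log||a||_{\op}=\max_{\chi}\langle d\chi,H\rangle$ and $\log||a^{-1}||_{\op}=-\min_{\chi}\langle d\chi,H\rangle$, so $||a||=\max_{\chi}|\langle d\chi,H\rangle|$. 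Since $F\hookrightarrow\GL(m,\Rbb)$ is faithful, the weights $d\chi$ span $\afk^{*}$ and this is a genuine norm on $\afk$. Two norms on the finite-dimensional space $\afk$ are equivalent, giving $|a|_{\Omega}\le C_{1}'||a||+C_{2}'$; reassembling via the $KAK$ decomposition and compactness of $K$ yields $|g|_{F}\le C_{1}''||g||+C_{2}''$ for all $g\in F$.

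\textbf{The main obstacle.} The only substantive point is this hard direction, and within it the heart of the matter is the reduction to $A$ through the Cartan decomposition together with the observation that \emph{both} norms restrict on $A$ to equivalent vector-space norms on $\afk$. The remaining issues — that the ambient group need not be semisimple and $F$ need not be connected, and that the matrix norm defining $||\cdot||_{\op}$ is canonical only up to equivalence — are absorbed by the repeated ``up to a bounded additive error'' remarks and are routine; the complete argument (valid over any local field) is in \cite{Abel04}.
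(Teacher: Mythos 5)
Your argument is correct, and since the paper itself offers no proof of this statement (it simply defers to Abels), there is nothing to contrast it with: the reduction via the Cartan decomposition $F=KAK$ to the maximal $\Rbb$-split torus, where faithfulness of the embedding forces the weights to span $\afk^{*}$ so that $\max_{\chi}|\langle d\chi,H\rangle|$ is a genuine norm equivalent to the word norm on $A\cong\Rbb^{r}$, is exactly the standard route and is the one taken in \cite{Abel04} (there in the generality of local fields). Both directions as you present them are sound, including the bookkeeping of additive constants coming from compactness of $K$ and from changing the norm on $\Rbb^{m}$.
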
 

\begin{eg} Let $G$ be connected semisimple Lie group with finite center.  Then, we can define pseudo norm $||\cdot||_{G/K}$ on $G$ as following. Fix maximal compact subgroup $K<G$. Define pseudo norm $||\cdot||_{G/K}$ on $G$ as $$||g||_{G/K}=d_{G/K}(eK,bK)$$ where $d_{G/K}$ be left $G$-invariant Riemannian structure on a symmetric space $G/K$. 
\end{eg}
It can be verified that $||\cdot||_{G/K}$ is quasi isometric to $|\cdot|_{G}$, where $|\cdot|_{G}$ is canonical norm on $G$ using Cartan decomposition of $G$. 
We can state as follows.  

\begin{lem}\label{comp} If $G$ is connected semisimple Lie group with finite center, there is constant $D_{1}, D_{2}>0$ and compact generating set $Q$ such that $ D_{1}|g|_{G} \le d(gK, K) \le D_{2} |g|_{G}$
for any $g\in G$ with $|g|_{G}>3$ .
\end{lem}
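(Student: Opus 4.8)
This is the quantitative form of the (compactly generated) Milnor--\v{S}varc lemma: $G$ acts by isometries on the proper, complete, geodesic metric space $X=G/K$, properly (because $K$ is compact, so the orbit map $\pi\colon G\to X$, $g\mapsto gK$, is proper) and cocompactly (the action is transitive), so the orbit map is a quasi-isometry. To pin down the constants $D_1,D_2$ and the range stated, the plan is to run the proof with an explicit compact generating set, namely
$$Q=\{g\in G : d(gK,K)\le 1\}.$$
(Here $|\cdot|_G$ will be the word norm attached to $Q$; any other canonical norm differs from it by a quasi-isometry, which only rescales $D_1,D_2$.) First I would record that $Q$ is a compact generating set: $X$ is a complete homogeneous Riemannian manifold, so by Hopf--Rinow the closed ball $\overline B(eK,1)\subset X$ is compact, hence $Q=\pi^{-1}\!\big(\overline B(eK,1)\big)$ is compact since $\pi$ is proper; and $Q$ generates because $K\subset Q$ and $X$ is connected (this also drops out of the subdivision argument below).

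\textbf{Upper bound.} If $|g|_G=n$, write $g=q_1\cdots q_n$ with $q_i\in Q$. Using the triangle inequality and the $G$-invariance of $d$,
$$d(gK,K)\le\sum_{i=1}^{n} d\big(q_1\cdots q_i K,\ q_1\cdots q_{i-1}K\big)=\sum_{i=1}^{n} d(q_iK,K)\le n,$$
since $d(q_iK,K)\le 1$ for $q_i\in Q$. Hence $d(gK,K)\le |g|_G$, so $D_2=1$ works for this $Q$.

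\textbf{Lower bound.} Given $g$, set $R=d(gK,K)$ and $n=\lceil R\rceil$. Using completeness, join $eK$ to $gK$ by a unit-speed geodesic and sample points $p_0=eK,p_1,\dots,p_n=gK$ along it with $d(p_{i-1},p_i)\le 1$. I would then build $g_1,\dots,g_n\in G$ inductively with $(g_1\cdots g_i)\cdot eK=p_i$: if $h=g_1\cdots g_{i-1}$ already satisfies $h\cdot eK=p_{i-1}$, then $d(h^{-1}p_i,eK)=d(h^{-1}p_i,h^{-1}p_{i-1})=d(p_i,p_{i-1})\le 1$ because $h$ acts isometrically, so any $g_i$ with $g_i\cdot eK=h^{-1}p_i$ lies in $Q$. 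Then $g_1\cdots g_n\cdot eK=gK$, so $g=g_1\cdots g_n k$ with $k\in K\subset Q$, giving $|g|_G\le n+1\le R+2=d(gK,K)+2$. Consequently, whenever $|g|_G>3$ we get $d(gK,K)\ge |g|_G-2\ge\tfrac13|g|_G$, so $D_1=\tfrac13$ works.

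\textbf{Where the difficulty lies.} Nothing here is deep: the content is the \v{S}varc--Milnor argument made explicit. The only points that need a little care are the properness of $\pi$ and the Hopf--Rinow input (completeness of $G/K$, compactness of closed balls, existence of geodesics) — both standard for $G$ semisimple with finite center and $K$ maximal compact — and the elementary bookkeeping that turns the additive error $+2$ into the multiplicative bound valid for $|g|_G>3$. Alternatively, one may simply cite the \v{S}varc--Milnor lemma for the quasi-isometry $g\mapsto gK$ and then absorb its additive constants into the range $|g|_G>3$, which is exactly what the statement is set up to allow.
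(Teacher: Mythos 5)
Your proof is correct and complete. The paper itself offers no detailed argument for this lemma — it merely remarks that the quasi-isometry between $\lVert\cdot\rVert_{G/K}$ and $|\cdot|_{G}$ "can be verified \dots using Cartan decomposition of $G$", i.e.\ the intended route is to write $g=k_{1}\exp(H)k_{2}$ with $H\in\overline{\mathfrak{a}^{+}}$, note that $d(gK,K)=\lVert H\rVert$ for the symmetric-space metric, and compare $\lVert H\rVert$ with the word length via the structure theory of $G$. You instead run the \v{S}varc--Milnor argument with the explicit generating set $Q=\pi^{-1}(\overline{B}(eK,1))$, which is more elementary: it uses only properness of $\pi$, Hopf--Rinow for the homogeneous space $G/K$, and transitivity of the action, and it produces explicit constants ($D_{2}=1$, $D_{1}=\tfrac13$) together with a transparent reason for the threshold $|g|_{G}>3$ (absorbing the additive error $+2$). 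The trade-off is that your constants are tied to your particular $Q$, whereas the Cartan route gives the comparison directly for the norms the paper actually uses; but since the lemma only asserts the existence of \emph{some} compact generating set, and word norms for any two compact generating sets are bi-Lipschitz, your argument fully establishes the statement. Two small points you leave implicit but which hold automatically: $Q$ is symmetric (since $d(g^{-1}K,K)=d(K,gK)$ by left-invariance) and contains the identity, as the paper's definition of a word norm requires.
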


\subsection{The space of group actions and cocycles}\label{preact}

   Let $D$ be LCSC groups. Let $(S,\mu)$ be a standard probability space with $\mu$-preserving $D$ action. Let $T<\GL(d,\Rbb)$ be real algebraic Lie group with pseudo-norm $\ln||\cdot||$ inherited from $\GL(d,\Rbb)$. 
Fix compact generating set $\Omega$ of $D$.  

Let $C^{0}(D,S,T)$ (resp. $\Mcal(D,S,T))$ be the space of continuous (resp. measurable) cocycle from $D\times S$ to $T$. We will say $\beta,\delta\in C^{0}(D,S,T)$ is \emph{$C^{0}$-closed} if for any $g\in \Omega$ we have $\sup_{x\in S} \ln||\delta(g,x)\beta(g,x)^{-1}||<\epsilon$ for some $\epsilon>0$. On the other hand, two cocycles $\beta,\delta \in \Mcal(D,S,T)$ is \emph{$L^{\infty}$-closed} if for any $g\in \Omega$ we have $\textrm{esssup}_{x\in S}\ln||\delta(g,x)\beta(g,x)^{-1}||<\epsilon$ for some $\epsilon>0$.

For the space of the actions, let $M$ be a compact manifold. We will call $C^{k}$-action of $D$ on $M$ for homomorphism $D\rightarrow \Diff^{k}(M)$.  We can endow $C^{r}$-topology on $\Diff^{k}(M)$ for any $r\le k$.  Then we can define $C^{k}(D,M)$ be the space of $C^{k}$-action of $D$ on $M$.

For any $\Acal_{0}\in C^{r}(D,M)$ we define $U(\Acal_{0},W)\subset C^{r}(D,M)$ for each $W$ that is open in $C^{k}$-topology in $\Diff^{r}(M)$ containing $id_{M}$ as $\Acal\in U(\rho_{0},W)$ if and only if $\Acal_{0}^{-1}(g)\rho(g)\in W$ for any $g\in \Omega$ and all $x\in M$. We can give a topology on $C^{r}(D,M)$ as $\{U(\Acal_{0},W)\}_{W}$ be a open neighborhood basis of $\Acal_{0}$. This allows us to say $C^{k}$-closed notion between two actions $\Acal,\Acal_{0}\in C^{r}(D,M)$.

\subsection{Group actions on manifolds}\label{sec:prehyp}
\subsubsection*{Affine actions} 
   Let $H$ be a connected real algebraic Lie group. Let $\Lambda$ be the cocompact lattice in $H$. We will denote $\Aff(H/\Lambda)$ as the group of affine transformations on $H/\Lambda$. Also, we will denote $\Aut(H/\Lambda)=\{L\in \Aut(H):L(\Lambda)\subset \Lambda\}$. Recall that any $f\in\Aff(H/\Lambda)$ can be written as $f=h_{f}\circ L_{f}$ where $h_{f}\in H$ and $L_{f}\in \Aut(H/\Lambda)$. Therefore,  we have the map $\Phi:\Aut(H/\Lambda)\ltimes H\rightarrow \Aff(H/\Lambda)$ given by $f\mapsto (L_{f},h_{f})$.
   
   In \cite{FM1}, Fisher and Margulis provided full description and properties of $\Aff(H/\Lambda)$ as follows.
   \begin{prop}[\cite{FM1}, Proposition 6.1] Define a map $\Delta^{-1}:\Lambda\rightarrow \Aut(H/\Lambda)\ltimes H$ given by $\lambda\mapsto  (c_{\lambda},h_{\lambda})$ where $c_{\lambda}\in\Aut(H/\Lambda)$ be a conjugation map by $\lambda$. Then the kernel of the map $\Phi$ is $\Delta^{-1}(\Lambda)$.
   \end{prop}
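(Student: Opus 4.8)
The plan is to compute $\ker\Phi$ by hand from the definitions; the only substantive input is the observation that, for $\lambda\in\Lambda$, the inner automorphism $c_\lambda$ of $H$ and left translation by $\lambda$ induce \emph{the same} transformation of $H/\Lambda$, since $\lambda^{-1}\Lambda=\Lambda$. Recall that $\Phi$ sends a pair $(L,h)\in\Aut(H/\Lambda)\ltimes H$ to the affine diffeomorphism $h\circ L$, i.e.\ $x\Lambda\mapsto h\,L(x)\,\Lambda$, and that the group law is $(L_1,h_1)(L_2,h_2)=(L_1L_2,\,h_1L_1(h_2))$. For the inclusion $\Delta^{-1}(\Lambda)\subseteq\ker\Phi$ I would simply evaluate: here the translation part is $h_\lambda=\lambda^{-1}$, and since $c_\lambda(\Lambda)=\lambda\Lambda\lambda^{-1}=\Lambda$ the element $\Delta^{-1}(\lambda)=(c_\lambda,\lambda^{-1})$ genuinely lies in $\Aut(H/\Lambda)\ltimes H$; then for every $x\in H$ one computes $\Phi(\Delta^{-1}(\lambda))(x\Lambda)=\lambda^{-1}c_\lambda(x)\Lambda=\lambda^{-1}\lambda x\lambda^{-1}\Lambda=x\lambda^{-1}\Lambda=x\Lambda$, so $\Phi(\Delta^{-1}(\lambda))=\id$. (A one-line check with the semidirect-product law, using $\lambda^{-1}c_\lambda(\mu^{-1})=\mu^{-1}\lambda^{-1}$, also shows $\Delta^{-1}$ is a homomorphism, so $\Delta^{-1}(\Lambda)$ is a subgroup of $\ker\Phi$.)

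For the reverse inclusion I would take $(L,h)\in\ker\Phi$, so that $h\,L(x)\,\Lambda=x\Lambda$ for every $x\in H$. Evaluating at $x=e$ gives $h\in\Lambda$. Then I would consider the map $\psi\colon H\to H$, $\psi(x)=x^{-1}h\,L(x)$: by hypothesis $\psi$ takes values in $\Lambda$, it is continuous (this uses that $L$ is a \emph{continuous} automorphism of $H$), and $\Lambda$ is discrete, so --- since $H$ is connected --- $\psi$ is constant, equal to $\psi(e)=h$. Hence $L(x)=h^{-1}xh$ for all $x$, i.e.\ $L=c_{h^{-1}}$, and therefore $(L,h)=(c_{h^{-1}},(h^{-1})^{-1})=\Delta^{-1}(h^{-1})\in\Delta^{-1}(\Lambda)$, which completes the identification $\ker\Phi=\Delta^{-1}(\Lambda)$.

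I do not expect a real obstacle: the argument is a short direct computation. The only points needing care are keeping the composition/semidirect-product conventions consistent --- so that $\Phi$ is a homomorphism and the translation part of $\Delta^{-1}(\lambda)$ comes out as $\lambda^{-1}$ rather than $\lambda$ --- and invoking the two standing hypotheses, that elements of $\Aut(H/\Lambda)$ are genuine continuous automorphisms of $H$ and that $H$ is connected, which together force the discrete-valued continuous map $\psi$ to be constant. (Cocompactness of $\Lambda$ is not used; only its discreteness matters. And since the statement is \cite{FM1}, Proposition~6.1, one could alternatively just cite it.)
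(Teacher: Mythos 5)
Your argument is correct, and the only care it needs — fixing the convention $\Phi(L,h)=h\circ L$ and the translation part $h_\lambda=\lambda^{-1}$, which the paper's statement leaves implicit (the displayed formula ``$f\mapsto(L_f,h_f)$'' is evidently a typo for the map in the other direction) — is handled properly. Note that the paper gives no proof of this proposition at all; it is quoted verbatim from \cite{FM1}, and your direct verification (evaluate at $e\Lambda$ to get $h\in\Lambda$, then use that the continuous $\Lambda$-valued map $x\mapsto x^{-1}hL(x)$ on connected $H$ is constant) is exactly the standard argument one would find there. No gaps.
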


   \begin{lem}[\cite{FM1}, Lemma 6.2]  Let $H$ be a real algebraic Lie group and $\Lambda<H$ be a cocompact lattice. Let $p:H'\rightarrow H$ be a covering and $\Lambda'=p^{-1}(\Lambda)$. Then
   	\begin{enumerate}
		\item $H/\Lambda$ is diffeomorphic to $H'/\Lambda'$
		\item $\Aff(H/\Lambda)<\Aff(H'/\Lambda')$.
	\end{enumerate}
   \end{lem}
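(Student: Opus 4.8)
The statement to prove is the two-part claim about the covering $p : H' \to H$ with $\Lambda' = p^{-1}(\Lambda)$: that $H/\Lambda$ is diffeomorphic to $H'/\Lambda'$, and that $\Aff(H/\Lambda)$ embeds in $\Aff(H'/\Lambda')$. The plan is to build both facts directly from the covering map $p$ and the description of $\Aff(H/\Lambda)$ recalled just above.

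\textbf{Part (1): the diffeomorphism.} First I would observe that since $p : H' \to H$ is a covering homomorphism of Lie groups, $\ker p$ is a discrete central subgroup of $H'$, and $\Lambda' = p^{-1}(\Lambda)$ is a closed subgroup containing $\ker p$ with $\Lambda'/\ker p \cong \Lambda$. The map $p$ then descends to a well-defined smooth map $\bar p : H'/\Lambda' \to H/\Lambda$ sending $h'\Lambda'$ to $p(h')\Lambda$. I would check this is a bijection: surjectivity is immediate from surjectivity of $p$, and injectivity follows because $p(h'_1) \in p(h'_2)\Lambda$ forces $h'_1 \in h'_2 \Lambda' \ker p = h'_2\Lambda'$ (using $\ker p \subset \Lambda'$). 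Since $\bar p$ is a smooth bijection between manifolds of the same dimension that is a local diffeomorphism (because $p$ is), it is a diffeomorphism. One should also remark that $\Lambda'$ is indeed a cocompact lattice in $H'$, so $H'/\Lambda'$ is a genuine compact homogeneous space; this is inherited from the fact that $H'/\Lambda' \to H/\Lambda$ is a homeomorphism onto the compact space $H/\Lambda$, together with discreteness of $\Lambda'$.

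\textbf{Part (2): the embedding of affine groups.} Using the description $\Aff(H/\Lambda) = (\Aut(H/\Lambda) \ltimes H)/\Delta^{-1}(\Lambda)$ from the two displayed results, I would proceed as follows. Given $f \in \Aff(H/\Lambda)$, write $f = h_f \circ L_f$ with $h_f \in H$ and $L_f \in \Aut(H/\Lambda)$. Lift $h_f$ to some $h'_f \in H'$ via $p$ (possible since $p$ is surjective; the ambiguity is in $\ker p \subset \Lambda'$), and lift the automorphism $L_f$ of $H$ to an automorphism $L'_f$ of $H'$ — this is the standard lifting of automorphisms through a covering of connected Lie groups, using the functoriality of the universal cover, and one checks $L'_f(\ker p) = \ker p$ so that $L'_f$ preserves $\Lambda' = p^{-1}(\Lambda)$, i.e. $L'_f \in \Aut(H'/\Lambda')$. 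Set $f' = h'_f \circ L'_f \in \Aff(H'/\Lambda')$. One verifies $\bar p \circ f' = f \circ \bar p$, so that under the identification of Part (1) the transformation $f'$ induces exactly $f$; this also shows $f'$ is independent (modulo $\Delta^{-1}(\Lambda')$) of the choices of lifts, making $f \mapsto f'$ a well-defined map $\Aff(H/\Lambda) \to \Aff(H'/\Lambda')$. It is a group homomorphism because lifting respects composition, and it is injective because $f'$ determines $f = \bar p f' \bar p^{-1}$.

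\textbf{Main obstacle.} The routine parts are the set-theoretic bijection in (1) and the homomorphism property in (2). The one point needing genuine care is the lifting of an automorphism $L_f$ of $H$ preserving $\Lambda$ to an automorphism $L'_f$ of $H'$ preserving $\Lambda'$, together with checking that the resulting assignment is well-defined modulo the kernels $\Delta^{-1}(\Lambda)$ and $\Delta^{-1}(\Lambda')$ appearing in the two presentations of the affine groups — i.e. that changing the lift $h'_f$ by an element of $\ker p$, or changing $L'_f$, only changes $f'$ within $\Delta^{-1}(\Lambda')$. Since $H$ is a connected real algebraic Lie group and $p$ is a covering, this lifting is standard, but the bookkeeping with $\Delta^{-1}$ on both sides is where I would spend most of the writing; everything else is formal.
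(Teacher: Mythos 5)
The paper does not actually prove this lemma; it is quoted from [FM1] without argument, so there is no in-paper proof to compare against and your write-up has to stand on its own. Part (1) does: $\ker p\subset\Lambda'$ makes $\bar p$ a well-defined smooth bijection which is a local diffeomorphism, hence a diffeomorphism, and $\Lambda'$ is discrete and cocompact as you say.

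The gap is in Part (2), at precisely the step you flag as the main obstacle and then dismiss as "standard": the lifting of $L_f\in\Aut(H)$ to $L_f'\in\Aut(H')$. This is standard only for the universal cover (or a characteristic cover). For a general covering $p:H'\to H$ of connected Lie groups, an automorphism $L$ of $H$ lifts to $H'$ if and only if $L_*$ preserves $p_*\pi_1(H')\le\pi_1(H)$; equivalently, writing $H'=\tilde H/N'$ and $H=\tilde H/N$ with $N'\le N\le Z(\tilde H)$, the lift $\tilde L$ of $L$ to the universal cover $\tilde H$ must satisfy $\tilde L(N')=N'$, whereas only $\tilde L(N)=N$ comes for free. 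This can genuinely fail: take $H=\SO(2)\times\SO(2)$ with $\Lambda=\{e\}$ and let $p:H'\to H$ be the squaring map on the first factor times the identity (a finite covering, with $\Lambda'=\ker p$ of order two). The swap automorphism of $H$ lies in $\Aut(H/\Lambda)$ but its action on $\pi_1(H)=\ZZ^2$ does not preserve $p_*\pi_1(H')=2\ZZ\times\ZZ$, so it admits no lift to $\Aut(H')$, and one checks that the corresponding diffeomorphism of $H'/\Lambda'$ is not induced by any element of $\Aut(H')\ltimes H'$. So the assertion you rely on is not a formality, and for an arbitrary covering the argument does not close. The statement has to be read with the specific covers it is applied to in mind: the universal cover, or the cover of [FM1, Proposition 6.3], where one moreover only retains a finite-index subgroup $\Aut^{A}(H')$ of automorphisms — which is exactly why finite-index subgroups $\Gamma'$ appear throughout this section of the paper. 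If you take $H'$ to be the universal cover your proof is correct as written; otherwise you must verify $\tilde L(N')=N'$ for the automorphisms in question, or restrict to the subgroup for which it holds. The $\Delta^{-1}$ bookkeeping you worry about is, by comparison, routine once the lifts exist.
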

   \begin{theorem}[\cite{FM1}, Proposition 6.3] There is a cover $p:H'\rightarrow H$ and a realization of $H'$ as $\Hbb'(\Rbb)$ for a connected real algebraic group $\Hbb'$ such that 
   	\begin{enumerate}
		\item there is a finite index subgroup $\Aut^{A}(H')<\Aut(H')$ such that all elements of $\Aut^{A}(H')$ are rational automorpihisms of $H'$ and 
		\item $\Aut^{A}(H)\ltimes H'$ is the real points of a real algebraic group.
	\end{enumerate}

   \end{theorem}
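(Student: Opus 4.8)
The statement is purely structural, so the plan is to realize $H'$ as a finite cover of $H$ chosen to kill the two usual obstructions to algebraicity of an automorphism group --- compact central tori and non--simply-connected semisimple factors --- and then to read off $\Aut^{A}(H')$ from the automorphism groups of the pieces. Concretely, I would fix a realization $H=\Hbb(\Rbb)^{0}$ and take the Levi decomposition $\Hbb=\Ubb\rtimes\Lbb$ over $\Rbb$, with $\Ubb$ the unipotent radical and $\Lbb$ reductive, and then $\Lbb=\Tbb\cdot\Sbb$ with $\Sbb=[\Lbb,\Lbb]$ semisimple and $\Tbb=Z(\Lbb)^{0}$ a central torus. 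I would then set $H'=\Hbb'(\Rbb)^{0}$, where $\Hbb'$ is the connected real algebraic group built from $\Hbb$ by: keeping $\Ubb$ and all action data; replacing $\Tbb$ by a vector group $\Ga^{m}$, namely the universal cover of $\Tbb(\Rbb)^{0}$ realized additively (so that \emph{all} of its Lie-group automorphisms become linear, hence rational); and replacing $\Sbb$ by its simply connected algebraic cover $\widetilde{\Sbb}$. The covering $p\colon H'\to H$ is the composite of the torus covering with the central isogeny $\widetilde{\Sbb}\to\Sbb$. By Lemma 6.2 of \cite{FM1} recalled above, $H/\Lambda\cong H'/\Lambda'$ with $\Lambda'=p^{-1}(\Lambda)$, and $\Aff(H/\Lambda)<\Aff(H'/\Lambda')$, so it suffices to prove the two assertions for $H'$.

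For (1), I would dissect $\Aut(H')$ via the characteristic subgroups of $H'$. Any Lie-group automorphism of $H'$ preserves the nilradical and the solvable radical $R'=\Ubb(\Rbb)\rtimes\Ga^{m}$, hence induces an automorphism of $R'$ and one of the semisimple quotient $H'/R'\cong\widetilde{\Sbb}(\Rbb)^{0}$, compatibly with the action of the latter on the former. Now: $R'$ is simply connected solvable, so $\Aut(R')\cong\Aut(\operatorname{Lie}(R'))$ is the group of $\Rbb$-points of an algebraic group (the bracket-preserving subgroup of $\GL(\operatorname{Lie}(R'))$); every automorphism of $\widetilde{\Sbb}$ is rational, and $\Aut(\widetilde{\Sbb})$ is itself $\Rbb$-algebraic, an extension of the finite group of diagram-and-permutation automorphisms by the adjoint group; and the automorphisms that are trivial on both $R'$ and $H'/R'$ form a unipotent algebraic group. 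Assembling these (and checking that the compatibility constraints coupling $R'$ to $H'/R'$ are algebraic) exhibits a finite-index subgroup $\Aut^{A}(H')<\Aut(H')$ --- the deficit being a finite phenomenon coming from the choice of $\Rbb$-structure on $\Hbb'$ and from components of the semisimple part's automorphism group --- that consists of rational automorphisms of $\Hbb'$ and is an open finite-index subgroup of the $\Rbb$-points of an $\Rbb$-algebraic group acting rationally on $\Hbb'$.

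Granting this, (2) is formal: a rational action of one $\Rbb$-algebraic group on another yields an algebraic semidirect product, so $\Aut^{A}(H')\ltimes H'$ is the $\Rbb$-points of the semidirect product of $\Hbb'$ with the algebraic group underlying $\Aut^{A}(H')$. I expect the real work to be in the middle step: one must verify that the cover has been chosen so that \emph{each} building block --- vector group, simply connected unipotent group, simply connected semisimple algebraic group --- has algebraic automorphism group, and that the data gluing the radical to the semisimple quotient is algebraic too. This is exactly where the classical structure theory of automorphism groups of unipotent and reductive algebraic groups (e.g.\ \cite{OVbook}, \cite{Borel}) is needed, together with a careful check that only finitely much is lost in passing from $\Aut(H')$ to $\Aut^{A}(H')$.
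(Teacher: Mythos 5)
You should first be aware that the paper offers no proof of this statement: it is quoted verbatim from \cite{FM1}, Proposition 6.3 and used as a black box, so there is no argument in the text to compare yours against. Judged on its own terms, your outline has the right general shape (pass to a cover adapted to a Levi decomposition, make the semisimple part algebraically simply connected, unroll the central torus so that its automorphisms become linear), but the construction of $\Hbb'$ contains a genuine gap. Replacing $\Tbb$ by the additive universal cover of $\Tbb(\Rbb)^{0}$ while ``keeping all action data'' does not produce an algebraic group when $\Tbb$ acts nontrivially on $\Ubb$: the covering homomorphism $\Rbb^{m}\rightarrow\Tbb(\Rbb)^{0}$ is not a morphism of algebraic groups, so the induced action of the vector group on $\Ubb$ is not rational. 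Worse, the resulting Lie group is frequently not isomorphic to the real points of \emph{any} algebraic group. For example, take $H=\Rbb^{2}\rtimes \SO(2)$, the identity component of the Euclidean motion group, which carries cocompact lattices (e.g.\ $\Zbb^{2}$); your recipe yields $H'=\widetilde{E(2)}=\Rbb^{2}\rtimes\Rbb$. A connected solvable algebraic group over $\Rbb$ is a semidirect product of a torus with its unipotent radical, and its real points are connected and simply connected only if the torus is trivial, i.e.\ only if the group is unipotent, hence nilpotent; since $\widetilde{E(2)}$ is connected, simply connected, solvable and not nilpotent, it is not $\Hbb'(\Rbb)$ for any $\Hbb'$. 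So the proposed $H'$ does not even satisfy the hypotheses of the statement, and the same failure occurs for any $H$ whose central Levi torus acts nontrivially and non-unipotently on the unipotent radical.

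A secondary issue: the finite-index assertion in (1) is the actual content of the proposition, and you essentially assert it (``the deficit being a finite phenomenon\dots''). The dissection of $\Aut(H')$ through $R'$ and $H'/R'$ is a reasonable start, but automorphisms need not preserve your chosen Levi subgroup, the map $\Aut(H')\rightarrow\Aut(R')\times\Aut(H'/R')$ is neither injective nor surjective in general, and controlling its image and the rationality of the assembled automorphism up to finite index is exactly where the work lies. Any correct proof must also explain why the cover can be chosen so that no $\GL(k,\Zbb)$-type quotient of $\Aut(H')$ survives, since such a quotient admits no finite-index subgroup that is the real points of an algebraic group; your unrolling of the compact central torus is aimed precisely at this, but because the unrolling itself breaks algebraicity of the ambient group, the argument as written does not close.
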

   
\subsubsection*{Weak hyperbolic action} In this subsection, we will collect a few facts about weakly hyperbolic actions. Let $\Acal$ be a $C^{1}$-action of $D$ on $M$.  
   \begin{defn}[Weak hyperbolic]
   We will say $\Acal$ is \emph{weakly hyperbolic} if there is finite number of elements $g_{1},\dots, g_{l}\in D$ such that 
   \begin{enumerate}
   \item for all $i=1,\dots,l$, $\Acal(g_{i})$ is a partially hyperbolic diffeomorphism, and 
   \item $TM=\sum_{i=1}^{l} W_{-}^{i}$ where  $W_{-}^{i}$ be strong stable subbundle of $TM$ for $g_{i}$. (not necessarily direct sum.)
\end{enumerate}
   \end{defn}
   For certain types of affine actions, weakly hyperbolic is characterized in \cite{MQ}.

In \cite{Benth,Ben}, Schmidt proved that $C^{2}$-volume preserving weakly hyperbolic action is ergodic, indeed weakly mixing. 
\begin{theorem}[\cite{Benth,Ben}]\label{thm:weakhypmix} Any $C^{2}$-volume preserving weakly hyperbolic $\Gamma$-action is weakly mixing. 
\end{theorem}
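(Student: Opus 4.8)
The statement to prove is Theorem~\ref{thm:weakhypmix}: any $C^2$ volume-preserving weakly hyperbolic $\Gamma$-action is weakly mixing. The plan is to reduce to the single-diffeomorphism case and invoke the ergodic/mixing theory of partially hyperbolic systems. First I would fix the finite set $g_1,\dots,g_l \in \Gamma$ from the definition of weak hyperbolicity, so that each $\Acal(g_i)$ is partially hyperbolic with strong stable bundles $W_-^i$ spanning $TM$ at every point, and the action preserves a smooth volume $\vol$. The key observation is that each $\Acal(g_i)$, being $C^2$, volume-preserving, and partially hyperbolic, is \emph{accessible} or at least has good ergodic properties along stable/unstable laminations; more to the point, the stable and unstable foliations of the $g_i$ together generate (via the spanning condition) enough directions that any $\Acal(\Gamma)$-invariant $L^2$ function must be constant along all of them.

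The main technical engine is the Hopf-type argument / the work of Pugh--Shub and Burns--Wilkinson on stable ergodicity, but in fact the cleanest route here is the one Schmidt actually uses: a measurable function invariant under a partially hyperbolic $C^{1+\alpha}$ volume-preserving diffeomorphism is (after modification on a null set) constant along the strong stable and strong unstable manifolds (this uses absolute continuity of the strong foliations). Applying this to a $\Gamma$-invariant function $\phi \in L^2(M,\vol)$: since $\phi$ is invariant under every $\Acal(g_i)$, it is essentially constant along each $W_-^i$ and each $W_+^i$. Because $\sum_i W_-^i = TM$, the union of these laminations is "spanning," so a standard connectedness/holonomy argument (the leaves generate a connected equivalence relation with full leaves in every tangent direction) forces $\phi$ to be essentially constant on $M$. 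This gives ergodicity. For weak mixing one runs the same argument on $M \times M$ with the diagonal action: the product diffeomorphisms $\Acal(g_i)\times\Acal(g_i)$ are again partially hyperbolic and volume-preserving, and the stable bundles of the product span $T(M\times M)$, so the product action is ergodic, which is equivalent to weak mixing of the original action (here one should be slightly careful: ergodicity of $M\times M$ under the product $\Gamma$-action implies weak mixing by the standard characterization, since a nonconstant eigenfunction would produce a nonconstant invariant function on the product).

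I expect the main obstacle to be the spanning-to-ergodicity step: knowing that an invariant function is constant along each individual strong stable/unstable lamination $W_\pm^i$ does not immediately give constancy on $M$, because the $W_-^i$ need not be jointly integrable and the leaves are typically non-compact immersed submanifolds. One needs an accessibility-type statement: the equivalence relation generated by "lying on a common $W_\pm^i$-leaf" is ergodic, i.e. has only trivial invariant sets. This follows because the spanning condition $\sum W_-^i = TM$ implies that $\vol$-a.e. pair of nearby points can be joined by a path that is piecewise inside leaves of the various laminations (an open-mapping / Chow--Rashevskii style argument at the level of the laminations, combined with absolute continuity so that "a.e." statements propagate correctly). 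Once accessibility is in hand, the Hopf argument finishes ergodicity, and the product trick finishes weak mixing. Since this is cited to \cite{Benth,Ben}, I would present the proof as an application of those results, reconstructing the argument above only to the extent needed; the reference carries the burden of the absolute-continuity and accessibility technicalities.

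\begin{proof}[Proof sketch]
Let $g_1,\dots,g_l\in\Gamma$ witness weak hyperbolicity, so each $\Acal(g_i)$ is a $C^2$ volume-preserving partially hyperbolic diffeomorphism with strong stable bundle $W_-^i$ and $TM=\sum_{i=1}^l W_-^i$. Let $\phi\in L^2(M,\vol)$ be $\Acal(\Gamma)$-invariant. For each $i$, $\phi$ is $\Acal(g_i)$-invariant, hence by the Hopf argument and absolute continuity of the strong (un)stable foliations of a $C^{1+\alpha}$ volume-preserving partially hyperbolic diffeomorphism, $\phi$ agrees a.e.\ with a function constant along the leaves of $W_-^i$ and along the leaves of $W_+^i$. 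The spanning condition $\sum_i W_-^i=TM$ implies that the laminations $\{W_\pm^i\}_i$ are accessible in the sense that the equivalence relation generated by "belonging to a common leaf of some $W_\pm^i$" is ergodic with respect to $\vol$; therefore $\phi$ is $\vol$-a.e.\ constant, proving ergodicity. Applying the same reasoning to the diagonal $\Gamma$-action on $(M\times M,\vol\times\vol)$ — where $\Acal(g_i)\times\Acal(g_i)$ is again $C^2$, volume-preserving, partially hyperbolic with stable bundles spanning $T(M\times M)$ — yields ergodicity of the product action, which is equivalent to weak mixing of $\Acal$. See \cite{Benth,Ben} for the full argument.
\end{proof}
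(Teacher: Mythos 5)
The paper does not actually prove Theorem~\ref{thm:weakhypmix}: it is stated as a black-box import from Schmidt's thesis and paper \cite{Benth,Ben}, with no argument given. So there is no in-paper proof to compare against line by line; your sketch is a reconstruction of the argument that the citation is carrying, and since you too ultimately defer the technical burden to \cite{Benth,Ben}, your treatment is consistent with the paper's. As a reconstruction it is a reasonable outline of the standard Hopf-argument route: invariant $L^{2}$ functions are essentially constant along the strong stable laminations of each $\Acal(g_i)$ (this is where $C^{2}$, i.e.\ absolute continuity of the strong foliations, enters), the spanning condition $TM=\sum_i W_-^i$ upgrades leafwise constancy to global constancy, and weak mixing follows from ergodicity of the diagonal action on $M\times M$.

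Two points deserve sharper treatment if you were to write this out in full. First, the step from ``essentially constant along each lamination'' to ``essentially constant'' is not a formal consequence of accessibility --- for genuinely partially hyperbolic systems this is exactly the delicate point of the Pugh--Shub program and requires extra hypotheses in general. What saves you here is that the hypothesis is stronger than accessibility: the tangent distributions of the laminations literally span $TM$ at every point (no bracket-generating or center directions are needed, so invoking Chow--Rashevskii is the wrong analogy), and this permits a local Fubini argument over leaf parameters in which absolute continuity controls the null sets; you should say this rather than gesture at ``accessibility implies ergodicity.'' Second, the product $\Acal(g_i)\times\Acal(g_i)$ need not be partially hyperbolic in the pointwise sense (the stable rate at $x$ need not dominate the center rate at $y$), so you should not route the product step through partial hyperbolicity of $\Acal(g_i)\times\Acal(g_i)$; instead, apply the Hopf argument directly to the product lamination $W_-^i(x)\times W_-^i(y)$, whose leaves still contract under $\Acal(g_i)\times\Acal(g_i)$ and which is absolutely continuous as a product of absolutely continuous laminations, and whose tangent bundles again span $T(M\times M)$. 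With those repairs the sketch matches what \cite{Benth,Ben} actually establish.
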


The following is easily deduce from the definition of weakly hyperbolic action. (See e.g. \cite{MQ} and \cite{FM2}.)
\begin{lem} 
The weakly hyperbolic affine action $\Acal_{0}$ is expansive. \end{lem}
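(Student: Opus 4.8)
The statement to prove is: \emph{The weakly hyperbolic affine action $\Acal_0$ is expansive.} Here expansivity means there is a constant $c>0$ (an \emph{expansivity constant}) such that if two points $p,q \in H/\Lambda$ satisfy $d(\Acal_0(\gamma)p, \Acal_0(\gamma)q) < c$ for all $\gamma \in \Gamma$, then $p = q$.

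\medskip

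The plan is to extract expansivity of the action from the partial hyperbolicity of finitely many generators $g_1,\dots,g_l$, whose strong stable subbundles $W_-^i$ span $TM$. First I would recall the basic fact about a single partially hyperbolic diffeomorphism $f = \Acal_0(g_i)$ acting on a compact manifold: there are constants $\epsilon_i > 0$ and $\delta_i > 0$ together with a local product structure so that if $d(f^n x, f^n y) < \epsilon_i$ for all $n \in \ZZ$, then $y$ lies in the local center-stable-meets-center-unstable set of $x$; more precisely $y$ lies in $\Wloc^{c,i}(x)$, the local center manifold through $x$ for $g_i$ (the strong stable and strong unstable contributions are killed by the two-sided boundedness). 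For an affine map on a homogeneous space this is especially clean: $\Acal_0(g_i)$ lifts to an affine map $\tilde f_i(h\Lambda) = a_i \Ad(L_i)(h)\,\Lambda$ of $H$, the invariant subbundles are translates of fixed subalgebras $\hfk = \hfk^{s,i} \oplus \hfk^{c,i} \oplus \hfk^{u,i}$ of $\hfk = \mathrm{Lie}(H)$, and the strong stable/unstable directions are exactly the directions on which $\Ad(L_i)$ (composed with the translation's linearization) expands or contracts uniformly. So two nearby points staying boundedly close under all iterates of $g_i$ must differ by a vector in the center direction $\hfk^{c,i}$.

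\medskip

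Next I would combine the generators. Take $c = \min_i \epsilon_i$ (shrunk further if needed so that all the local manifold structures are simultaneously valid inside a ball of radius $c$), and suppose $p, q$ with $d(\Acal_0(\gamma)p, \Acal_0(\gamma)q) < c$ for every $\gamma \in \Gamma$. In particular this holds for $\gamma = g_i^n$, all $n \in \ZZ$, for each $i = 1, \dots, l$. Working in a chart (or in the universal cover $H$ with the lattice), write $q = \exp_p(v)$ for a small tangent vector $v \in T_pM$. The single-generator analysis forces $v$ (after transporting along the appropriate center manifolds) to lie in the center distribution $W^{c,i}_-(p) \oplus W^{c,i}_+(p)$ of $g_i$, i.e. in the kernel of the hyperbolic part of $g_i$ — concretely, $v \perp W_-^i(p)$ once we set things up with an $\Acal_0$-adapted metric, or more robustly: $v$ has no component in $W_-^i(p)$ and none in $W_+^i(p)$. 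Intersecting over all $i$, and using hypothesis (2) of weak hyperbolicity, $\sum_i W_-^i(p) = T_pM$ (and similarly the unstable bundles, obtained by running the $g_i^{-1}$), we conclude $v = 0$, hence $p = q$.

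\medskip

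The main obstacle — and the step that needs care rather than being purely formal — is making the "differ only in the center direction" conclusion genuinely rigorous and, crucially, \emph{uniform} over the base point and compatible across the different generators. For a general partially hyperbolic $f$ one invokes the stable/center/unstable manifold theorem and a shadowing/local-product-structure argument; here one must check that the local center manifolds of distinct $g_i$ are transverse enough, and that the two-sided bound on orbit distance really does confine $v$ to the center and not merely to center-stable or center-unstable. For affine actions this is substantially easier: everything linearizes on $\hfk$, the subbundles are genuine smooth distributions coming from $\Ad$-invariant subspaces, and the confinement statement becomes a linear-algebra fact about $\Ad(L_i)$ together with controlling how $\Lambda$-translates interact (one works in $H$, notes the orbit segment of $v$ stays in a fixed compact neighborhood of $0$ in $\hfk$, and reads off that its $\hfk^{s,i}$- and $\hfk^{u,i}$-components must vanish). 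So my plan is to state the single-generator center-confinement lemma, prove it in the affine setting by this linearization, then assemble the finitely many constraints and invoke the spanning hypothesis; I would cite \cite{MQ} and \cite{FM2}, as the excerpt suggests, for the details of the affine partially hyperbolic structure.
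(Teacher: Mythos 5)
Your reduction of the problem to linear algebra on $\hfk$ is the right move for affine actions, and the per-generator step is fine: a two-sided bound on $d(\Acal_0(g_i^n)x,\Acal_0(g_i^n)y)$ does confine the displacement vector $v$ to the neutral subspace $C^i$ of the splitting $\hfk=W_-^i\oplus C^i\oplus W_+^i$ determined by $D\Acal_0(g_i)$. The gap is in the last step. From "$v\in C^i$ for every $i$" together with "$\sum_i W_-^i=\hfk$" you cannot conclude $v=0$ by linear algebra, because the splittings for different $i$ need not be compatible. Concretely, in $\RR^2$ let $T_1$ fix $e_2$ and contract $e_1+e_2$, and let $T_2$ fix $e_2$ and contract $e_1-e_2$: the two strong stable lines span $\RR^2$, yet $e_2$ lies in both neutral subspaces and has bounded (indeed constant) orbit under the whole group generated by $T_1,T_2$. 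The variant "$v\perp W_-^i$ for all $i$" has the same defect, since an adapted metric is adapted to one $g_i$ at a time and there is no single metric making all the splittings simultaneously orthogonal.

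What is missing is a genuinely global input, i.e. use of the hypothesis for all $\gamma\in D$ and of the algebraic structure of the linear data. The set $E$ of displacement vectors with bounded orbit under the full linear data $D\Acal_0(D)$ is a subspace invariant under the whole group, on which the linear data has relatively compact image. One then needs complete reducibility of the linear data (which holds here because, by superrigidity, the Zariski closure of the linear part is reductive) to produce an invariant complement $E'$. Since $E$ and $E'$ are $D\Acal_0(g_i)$-invariant and $W_-^i\cap E=0$, each generalized eigenspace of $D\Acal_0(g_i)$ splits along $E\oplus E'$, so $W_-^i\subseteq E'$; spanning then forces $E=0$ and expansiveness follows. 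Equivalently, one can quote the characterization in \cite{MQ} of weak hyperbolicity as the absence of a nonzero invariant subspace on which the linear data is bounded. Without one of these global arguments, your intersection step does not close, and this is precisely the point the references \cite{MQ} and \cite{FM2} are carrying.
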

Recall that if $\Acal_{0}$ is expansive then any sufficiently $C^{1}$-close action $\Acal$ is still expansive.

As a special case of weakly hyperbolic action, we can consider Anosov action.  Let $\alpha : D\rightarrow \Diff^{1}(M)$ be a $C^{1}$ $D$-action on compact manifold $M$. The element $g\in D$ is called \emph{Anosov element} if $\alpha(g)$ is an Anosov difeomorphism. The action $\alpha$ is called \emph{Anosov} if there is an Anosov element. It is clear that the Anosov action is weakly hyperbolic. 

We will consider Anosov action on nilmanifolds later. Let $N$ be a connected, simply connected nilpotent Lie group and $\Lambda$ be a lattice in $N$. Recall that $\Lambda$ should be a cocompact. Let $D$ be a finitely generated group and $\alpha:D\rightarrow \Diff^{1}(M)$ is Anosov $D$ action on nilmanifold $M=N/\Lambda$. 

Assume that $\alpha$ lifts on the universal cover $N$, then we can obtain well defined homomorphism $\rho:D\rightarrow \Aut(\Lambda)$ by identifying Deck transformation group and $\Lambda$. As well known, every element in $\Aut(\Lambda)$ can be uniquely extended to the element in $\Aut(N)$. (e.g. \cite{Rag}) Therefore $\rho$ induces the homomorphism $\rho:D\rightarrow \Aut(N)$ and $\rho: D\rightarrow \Aut(N/\Lambda)$ where $\Aut(N/\Lambda)$ is a group of automorphisms of $N$ that preserving $\Lambda$.

Using linear data, we can define continuous cocycle $\beta$ that contains information of the action $\alpha$. We will identify $N/\Lambda$ with the space $\left(\rho(\Gamma)\ltimes N\right)/\left(\rho(\Gamma)\ltimes \Lambda \right)$ canonically. Note that $\rho(\Gamma)$ is discrete subgroup of $\Aut(N
)$ and $\rho(\Gamma)\ltimes N$ is a subgroup of $\Aut(N)\ltimes N$.
\begin{lem}[\cite{MQ} Example 2.4 case 2]\label{liftcoc}
There exist a continuous cocycle $\beta:\Gamma\times N/\Lambda\rightarrow \rho(\Gamma)\ltimes N$ such that 
\begin{enumerate}
\item For every $\gamma\in \Gamma$ and $[x]\in N/\Lambda$, $\alpha(\gamma)([x])=\beta(\gamma,[x])[(1,x)]$ where $1$ is the identity element in $\rho(\Gamma)$.
\item There is a continuous map $u:\Gamma\times N/\Lambda\rightarrow N$ such that $\beta(\gamma,[n])=\rho(\gamma)u(\gamma,[n])$ for any $\gamma\in \Gamma$ and $[n]\in N/\Lambda$. In other words, $\Aut(N)$-component of $\beta$ is same as $\rho$. 
\end{enumerate}
\end{lem}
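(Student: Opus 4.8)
The plan is to construct $\beta$ by lifting the action to the universal cover $N$ and reading off a ``twisting'' term that measures the failure of the lift to coincide with the linear data $\rho$. Since $\alpha$ lifts to $N$ by hypothesis, fix once and for all a lifted action $\widetilde{\alpha}:\Gamma\to\Diff(N)$, so that $\widetilde{\alpha}(\gamma)$ covers $\alpha(\gamma)$, $\widetilde{\alpha}(\gamma_1\gamma_2)=\widetilde{\alpha}(\gamma_1)\widetilde{\alpha}(\gamma_2)$, and, identifying the deck group of $N\to N/\Lambda$ with $\Lambda$ acting by translations, $\widetilde{\alpha}(\gamma)$ satisfies the equivariance relation $\widetilde{\alpha}(\gamma)(x\lambda)=\widetilde{\alpha}(\gamma)(x)\,\rho(\gamma)(\lambda)$ for all $\lambda\in\Lambda$, $x\in N$; here $\rho(\gamma)\in\Aut(N)$ is exactly the linear data, since it is the automorphism of $\pi_{1}(N/\Lambda)\cong\Lambda$ induced by $\alpha(\gamma)$, extended to $N$.

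First I would define the $N$-valued part. Set $u(\gamma,x):=\widetilde{\alpha}(\gamma)(x)\,\rho(\gamma)(x)^{-1}\in N$. Using the equivariance relation together with the fact that $\rho(\gamma)$ is an automorphism preserving $\Lambda$, a one-line computation gives $u(\gamma,x\lambda)=u(\gamma,x)$ for every $\lambda\in\Lambda$, so $u$ descends to a continuous map $u:\Gamma\times N/\Lambda\to N$ (continuity being inherited from that of $\widetilde{\alpha}$ and the regularity of $\rho$). This is the step where the precise ordering in the formula matters: the ``opposite'' normalization $\rho(\gamma)(x)^{-1}\widetilde{\alpha}(\gamma)(x)$ descends only up to conjugation by $\rho(\gamma)(\Lambda)$, which is why one keeps $\rho(\gamma)(x)^{-1}$ on the right.

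Then I would set $\beta(\gamma,[x]):=\bigl(\rho(\gamma),\,u(\gamma,[x])\bigr)\in\rho(\Gamma)\ltimes N$. Property (2) is then immediate by construction: the $\Aut(N)$-component of $\beta(\gamma,[x])$ is $\rho(\gamma)$, independent of $[x]$, and $u$ is the promised map. For property (1), identify $N/\Lambda$ with $\bigl(\rho(\Gamma)\ltimes N\bigr)/\bigl(\rho(\Gamma)\ltimes\Lambda\bigr)$ via $[x]\mapsto[(1,x)]$; from the identity $\widetilde{\alpha}(\gamma)(x)=u(\gamma,[x])\,\rho(\gamma)(x)$ and the definition of the left-translation action of $\rho(\Gamma)\ltimes N$ on this quotient, one checks directly that $\beta(\gamma,[x])\cdot[(1,x)]=[(1,\widetilde{\alpha}(\gamma)(x))]=\alpha(\gamma)([x])$. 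Finally, the cocycle identity $\beta(\gamma_1\gamma_2,[x])=\beta(\gamma_1,\alpha(\gamma_2)[x])\,\beta(\gamma_2,[x])$ follows by substituting the definitions, using $\widetilde{\alpha}(\gamma_1\gamma_2)=\widetilde{\alpha}(\gamma_1)\widetilde{\alpha}(\gamma_2)$ and expanding via the semidirect-product law $(A,a)(B,b)=(AB,a\,A(b))$: the $\Aut(N)$-components multiply correctly because $\rho$ is a homomorphism, and the $N$-components telescope.

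There is no essential difficulty in this lemma; its content lies entirely in the bookkeeping needed to pin down conventions (which side $\Aut(N)$ acts on $N$, left versus right cosets in $N/\Lambda$, which translation direction realizes the deck group) so that $u$ genuinely descends to $N/\Lambda$ and the cocycle identity comes out with no stray inverses or conjugations. Once those are fixed, every verification above is a single group-theoretic identity. One may alternatively simply invoke \cite{MQ}, Example 2.4, case 2, where exactly this construction is recorded.
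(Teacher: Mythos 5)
Your construction is correct and is exactly the standard one recorded in \cite{MQ}, Example 2.4 case 2, which the paper simply cites without giving its own proof: lift $\alpha$ to $N$, set $u(\gamma,x)=\widetilde{\alpha}(\gamma)(x)\,\rho(\gamma)(x)^{-1}$, check descent via the $\rho(\gamma)$-equivariance of the lift, and verify the cocycle identity by telescoping in the semidirect product. Nothing further is needed.
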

\begin{rmk} Note that the map $u$ may \emph{not} satisfy cocycle equation. It is a \emph{twisted cocycle} in the sense of \cite{F}. In addition, in the \cite{MQ}, they do not use the term linear data. However, the construction shows that the desired automorphism is a linear data of $\alpha$. 
   \end{rmk}

      

  \section{Sublinear growth of $\Rbb$-valued cocycles}\label{sec:Rvalue}

In this section we will prove Theorem \ref{thm:Rvalue}. Indeed, we prove more general theorems. Throughout this section, $G$ or $\Gamma$ will always satisfy \ref{std}. We denote $|\cdot|_{w}$, $|\cdot|_{i}$ and $|\cdot|_{G}$ as canonical symmetric norm on $\Gamma$, $G_{i}$ and $G$ respectively.  

Firstly, when $D=G$ case, we have a following theorem. 

\begin{theorem}\label{RvalueG} 
Let $G$ satisfy \ref{std} and $(S,\mu)$ be the standard probability space with measure preserving $G$-action. Let $\delta:G\times S\rightarrow \Rbb$ be  $L^{2}$-integrable cocycle in the following sense. $$(L^{2}) :\quad \forall g \in G,\quad |\delta(g,-)|\in L^{2}(S)$$

Then for any $\epsilon>0$, there is $N>0$ such that
$$||\delta(g,-)||_{L^{2}(S)}=\left( \int_{S} |\delta(g,s)|^{2} d\mu(s)\right)^{\frac{1}{2}} \le \epsilon|g|_{G}$$  for $|g|_{G}>N$. Especially, for every $g\in G$, we have $$\lim_{m\rightarrow\pm\infty} \frac{|\delta(g^{m},x)|}{m}=0$$ almost every $x\in S$.
\end{theorem}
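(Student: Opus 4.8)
The plan is to reduce the statement about the $\Rbb$-valued cocycle $\delta$ to a statement about the associated $1$-cocycle $b \colon G \to L^2(S)$ with respect to the Koopman (quasi-regular) unitary representation $\pi$ on $L^2(S,\mu)$, as set up in Lemma \ref{conticoc}. Under this dictionary, $b(g)(s) = \delta(g^{-1},s)$, so $\|b(g)\|_{L^2(S)} = \|\delta(g^{-1},-)\|_{L^2(S)}$, and the desired inequality $\|\delta(g,-)\|_{L^2(S)} \le \epsilon |g|_G$ for $|g|_G$ large is exactly the assertion that $b$ has \emph{sublinear growth} in the sense defined just before Lemma \ref{almsub}. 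So the entire content is: every $L^2$-integrable $\Rbb$-valued cocycle over a measure-preserving $G$-action gives rise to a sublinear-growth $1$-cocycle over $\pi$.

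To prove sublinearity I would first decompose the unitary representation $\pi$ on $L^2(S,\mu)$ into the constant functions (the trivial subrepresentation) and its orthogonal complement $L^2_0(S,\mu)$, and treat the two pieces separately. On the constant part, the cocycle $b$ lands in the (one-dimensional) space of invariant vectors, where the cocycle relation forces it to be a homomorphism $G \to \Rbb$; but $G$ is semisimple with no compact factors and $\mathrm{rank}_\Rbb(G) \ge 2$ is not even needed here — $[G,G] = G$ for $G$ semisimple so there are no nonzero homomorphisms to $\Rbb$, hence this component vanishes. (This is where one uses that $G$ satisfies \ref{std}: $G = \Gbb(\Rbb)$ with $\Gbb$ semisimple.) On $L^2_0(S,\mu)$, the representation has no invariant vectors, and the key input is a result on reduced $1$-cohomology and sublinear growth for representations of higher-rank semisimple groups: by the work of Shalom \cite{Shalom} (together with Delorme \cite{Delrome} for the relevant vanishing results, and the analysis of growth of harmonic maps into symmetric spaces alluded to in the outline), any $1$-cocycle over a unitary representation of such a $G$ without invariant vectors is an \emph{almost coboundary}, and by Lemma \ref{almsub} almost coboundaries have sublinear growth. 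Combining the two components via Lemma \ref{summ} (finite sums of sublinear-growth cocycles are sublinear), $b$ itself has sublinear growth. Converting back, this yields the stated $L^2$ estimate.

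For the final ``especially'' clause: fix $g \in G$ and note that $|g^m|_G \le |m| \cdot |g|_G$ grows at most linearly in $m$, while $|g^m|_G \to \infty$ unless $g$ generates a precompact subgroup (in which case $|\delta(g^m,-)|_{L^2}$ is bounded and the conclusion is immediate since then $\tfrac1m \delta(g^m,x) \to 0$ trivially after passing to a subsequence — actually one argues directly via Birkhoff/Kingman). In the generic case, sublinearity gives $\|\delta(g^m,-)\|_{L^2(S)} = o(|g^m|_G) = o(m)$, so $\tfrac1m \delta(g^m,-) \to 0$ in $L^2(S)$, hence along a subsequence almost everywhere; to upgrade to the full limit one invokes the subadditive (Kingman) ergodic theorem applied to the $\Zbb$-cocycle $m \mapsto |\delta(g^m,x)|$, which has a well-defined a.e. limit $\lim_m \tfrac1m |\delta(g^m,x)|$ equal to its $L^1$-limit along the full sequence, and that limit must be $0$ by the $L^2$ estimate.

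The main obstacle I anticipate is the step on $L^2_0(S,\mu)$: justifying that every $L^2$-integrable cocycle is an almost coboundary for a unitary representation of $G$ without invariant vectors. This is not elementary — it requires property $(T)$-type vanishing when all simple factors have rank $\ge 2$, but the whole point of the paper is the rank-$1$-factor case, so one really needs the sharper reduced-cohomology and harmonic-map growth estimates (Shalom's theorem on sublinear growth of cocycles, which holds for all semisimple $G$ with $\mathrm{rank}_\Rbb(G) \ge 2$, even with rank-one factors). Pinning down exactly which theorem from \cite{Shalom} and \cite{Delrome} applies, and verifying its hypotheses (e.g. that the Koopman representation restricted to $L^2_0$ genuinely has no invariant vectors — which uses ergodicity of the $G$-action, part of \ref{std2}, though the remark says ergodicity can be dropped, requiring a decomposition argument instead), is the delicate part; the rest is soft functional analysis and a standard subadditive-ergodic-theorem argument.
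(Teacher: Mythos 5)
Your reduction of the theorem to sublinear growth of the $1$-cocycle $b(g)=\delta(g^{-1},\cdot)$ over the Koopman representation is exactly the paper's first step, and your treatment of the ``especially'' clause (sublinearity $\Rightarrow$ $L^{2}$-convergence of $\frac{1}{m}\delta(g^{m},\cdot)$ to $0$, a.e.\ convergence along a subsequence, then Kingman's subadditive ergodic theorem to get the full limit) matches the paper. The gap is in the core step. Your claim that, on the orthogonal complement of the invariant vectors, \emph{every} $1$-cocycle is an almost coboundary is false precisely in the situation this theorem is designed for. For $G$ locally isomorphic to $\SO(n,1)$ or $\SU(n,1)$ (and hence for products with such factors), Delorme's theorem exhibits irreducible unitary representations $\sigma_{i}$ with \emph{non-vanishing} $1$-cohomology, and the associated cocycles $c_{i}$ are not almost coboundaries: their reduced cohomology classes are non-trivial, and they are unbounded, growing like $\|c_{i}(g)\|^{2}\sim C\,d(gK,K)$, i.e.\ $\|c_{i}(g)\|\sim |g|_{G}^{1/2}$. (Vanishing of reduced $1$-cohomology for all representations is equivalent to property (T), which $G$ is assumed not to have in the interesting case; note also that the theorem holds for rank-one $G$ itself, per the remark after Theorem \ref{thm:Rvalue}, so no higher-rank vanishing result can be the engine of the proof.)

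What the paper actually does on this step cannot be bypassed: after reducing to a single simple factor via Shalom's splitting theorem for reduced cohomology of products (Theorem \ref{shalG}), it decomposes $\pi$ as a direct integral of irreducibles, separates the fibers where $b_{x}$ is a coboundary (whose direct integral is an almost coboundary, hence sublinear by Lemma \ref{almsub}) from the fibers carrying non-trivial cohomology, identifies the latter with scalar multiples $\lambda_{y}c_{i(y)}$ of the finitely many model cocycles from Delorme's classification, proves $\lambda\in L^{2}$ by evaluating at a fixed $g_{0}$, and only then uses the harmonic-map growth estimate $\|c_{i}(g)\|^{2}=O(d(gK,K))$ to conclude that the aggregate grows like $|g|_{G}^{1/2}$, which is sublinear but far from bounded. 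Your decomposition into constants plus $L^{2}_{0}$ does not see this structure, and the statement you would need about $L^{2}_{0}$ is simply not true. (Two minor further points: the splitting you do need is by simple factors, not by invariant vectors versus their complement; and ergodicity is not assumed in Theorem \ref{RvalueG}, so an argument relying on absence of invariant vectors in $L^{2}_{0}$ would need repair anyway, whereas the paper's route never uses it.)
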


The first observation is that it is enough to prove following theorem.
\begin{theorem}\label{Hilbertcase} Under assumption \ref{std} (possibly $n=1$), for any unitary representation $\pi :G\rightarrow \mathcal{U}(\Hcal)$, every $1$-cocycle $b\in Z^{1}(G,\pi)$ has sublinear growth.
\end{theorem}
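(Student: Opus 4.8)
The plan is to reduce the statement to a single $\Rbb$-almost simple factor and then split according to whether that factor has property (T). Write $G=G_{1}\times\dots\times G_{n}$ as in \ref{std}, and for $g\in G$ write $g=g_{1}\cdots g_{n}$ with $g_{i}$ lying in the $i$-th factor. Since the factors commute, iterating the cocycle relation gives $b(g)=\sum_{i=1}^{n}\pi(g_{1}\cdots g_{i-1})\,b(g_{i})$, hence $\|b(g)\|\le\sum_{i=1}^{n}\|b(g_{i})\|$ because $\pi$ is unitary; here each $g_{i}\mapsto b(g_{i})$ is a $1$-cocycle of $G_{i}$ over $\pi\restrict{G_{i}}$. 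The canonical norm on a product satisfies $|g|_{G}\asymp\sum_{i}|g_{i}|_{i}$, and $b$, being continuous, is bounded on compact sets; so a routine argument (compare Lemma \ref{summ}) shows that if each restriction $g_{i}\mapsto b(g_{i})$ has sublinear growth on $G_{i}$, then $b$ has sublinear growth on $G$. Thus we may assume $n=1$, i.e. $G$ is $\Rbb$-almost simple with $\textrm{rank}_{\Rbb}(G)\ge 1$.

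If $G$ has property (T) --- which holds whenever $\textrm{rank}_{\Rbb}(G)\ge 2$, and also when $G$ is locally isomorphic to $\Sp(m,1)$ or $\mathrm{F}_{4}^{-20}$ --- then by the Delorme--Guichardet theorem $H^{1}(G,\pi)=0$, so $b(g)=\pi(g)v-v$ for some fixed $v\in\Hcal$ and $\|b(g)\|\le 2\|v\|$ is bounded; a bounded cocycle plainly has sublinear growth. The only remaining case is $G$ locally isomorphic to $\SO(m,1)$ or $\SU(m,1)$.

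In this rank-one case, since $G$ is a connected semisimple Lie group it coincides with its commutator subgroup, so the orthogonal projection of $b$ onto the subspace $\Hcal^{G}$ of $\pi(G)$-invariant vectors is a continuous homomorphism into an abelian group and therefore vanishes; replacing $\pi$ by its restriction to $(\Hcal^{G})^{\perp}$ we may assume $\pi$ has no nonzero invariant vectors. By the computation of the $1$-cohomology of rank-one semisimple groups in \cite{Delrome}, together with the treatment of reduced cohomology of general unitary representations in \cite{Shalom}, one has $\overline{H^{1}}(G,\pi)=0$ unless $\pi$ contains one of finitely many distinguished ``cohomological'' irreducible representations of $G$ (roughly, the endpoints of its complementary series). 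If $\overline{H^{1}}(G,\pi)=0$ then $b$ is an almost coboundary and we are done by Lemma \ref{almsub}. Otherwise we represent the reduced class of $b$ by a harmonic cocycle $b_{0}$, i.e. an equivariant harmonic map $\phi\colon G/K\to\Hcal$ with $b_{0}(g)=\phi(g x_{0})-\pi(g)\phi(x_{0})$ for $x_{0}=eK$ (such a representative exists for these cohomological representations); then $b-b_{0}$ is an almost coboundary, so by Lemma \ref{samegrowth} it suffices to prove that $b_{0}$ has sublinear growth.

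The heart of the matter --- and the main obstacle --- is estimating the growth of this harmonic cocycle. Because the metric on $G/K$ is $G$-invariant and the target action is affine isometric, the energy density of $\phi$ is $G$-invariant, hence a positive constant; combined with Lemma \ref{comp} this only gives the crude bound $\|b_{0}(g)\|\le \mathrm{const}\cdot d_{G/K}(x_{0},g x_{0})=O(|g|_{G})$, i.e. at most linear growth, which is not enough. To upgrade it to $\|b_{0}(g)\|=o(|g|_{G})$ (one expects even a power bound $O(|g|_{G}^{1/2})$) is where the rank-one geometry of the symmetric space $G/K$ must be used: the cohomological representations of $\SO(m,1)$ and $\SU(m,1)$ have decaying matrix coefficients (they are tempered away from the borderline exponent), and feeding this into a Bochner / sub-mean-value estimate for $\|\phi-c\|^{2}$ over geodesic spheres in $G/K$ forces $\|b_{0}(g)\|^{2}$ to grow at most linearly in $|g|_{G}$. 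Granting this estimate, Lemma \ref{samegrowth} completes the proof.
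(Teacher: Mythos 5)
Your overall architecture is right, and your reduction to a single almost simple factor is actually a legitimately different (and simpler) route than the paper's: you restrict $b$ to each $G_{i}$, use the cocycle identity $b(g)=\sum_{i}\pi(g_{1}\cdots g_{i-1})b(g_{i})$ and unitarity to get $\|b(g)\|\le\sum_{i}\|b(g_{i})\|$, and combine with $|g_{i}|_{i}\lesssim|g|_{G}$. The paper instead invokes Shalom's super-rigidity for reduced cohomology (Theorem \ref{shalG}) to split $b$, up to an almost coboundary, into a sum of cocycles each factoring through one $G_{i}$, plus a piece valued in invariant vectors. For the group case your direct restriction works and avoids Shalom entirely; the paper's heavier machinery is what survives to the lattice version (Theorem \ref{Hilgamma}), where no such restriction argument is available. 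Your dispatch of the property (T) factors via Delorme--Guichardet matches the paper.

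However, there is a genuine gap at exactly the point you flag yourself: the estimate $\|b_{0}(g)\|^{2}=O(d_{G/K}(gK,K))$ for the non-coboundary part in the $\SO(m,1)$/$\SU(m,1)$ case is asserted (``Granting this estimate\dots'') rather than proved, and your proposed mechanism (decay of matrix coefficients plus a Bochner/sub-mean-value argument) is a sketch of a proof strategy, not a proof. The paper does not prove this from scratch either, but it reduces it to a concrete cited statement: by Delorme (Theorem \ref{del}) there are only finitely many irreducible unitary representations with nonzero $H^{1}$, each with one-dimensional cohomology, and for a non-coboundary cocycle $c$ over such a representation with $c\restrict{K}\equiv 0$ one has $\|c(g)\|^{2}=C\,d(gK,K)+o(d(gK,K))$ (Theorem \ref{irredcase}, from \cite{BHV}). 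Moreover, your single ``harmonic representative $b_{0}$'' for a general $\pi$ glosses over the direct-integral bookkeeping that the paper carries out: one must decompose $\pi=\int^{\oplus}\pi_{x}\,d\nu$ and $b=\int b_{x}\,d\nu$, check that the set $Z_{1}$ of indices where $b_{x}$ is not a coboundary is measurable, write $b_{y}=\lambda_{y}c_{i(y)}$ (the coboundary term vanishes because $b\restrict{K}\equiv 0$ forces the relevant $K$-invariant vectors to be zero), and prove $|\lambda|\in L^{2}(Z_{1},\nu)$ in order to integrate the pointwise bound $\|c_{i}(g)\|^{2}\lesssim|g|_{G}$ into $\bigl\|\int_{Z_{1}}b_{y}(g)\,d\nu\bigr\|\lesssim|g|_{G}^{1/2}$. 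Without the $L^{2}$ control on the multiplicities $\lambda_{y}$, the growth of the aggregate cocycle does not follow from the growth of the individual $c_{i}$. To close your proof you should either cite the growth theorem for cocycles over the cohomological irreducible representations and then run the direct-integral argument, or supply the harmonic-map computation in full.
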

\begin{proof}[Proof of Theorem \ref{RvalueG} from Theorem \ref{Hilbertcase}]
We define unitary representation $\pi:G\rightarrow L^{2}(S,\Cbb)$ and $b:G\rightarrow L^{2}(S,\Cbb)$ associated with $\pi$ as $b(g)(s)=\delta(g^{-1},s).$ By Lemma \ref{conti}, we may assume that $b$ is in $Z^{1}(G,\pi)$. Now we can use Theorem \ref{Hilbertcase}. So that $b$ has sublinear growth so for any $\epsilon>0$, there is a $N$ such that for any $g\in G$ with $|g|_{G}>N$, we have $||b(g)||<\epsilon|g|_{G}$ where $||\cdot||$ is norm on $L^{2}(S,\Rbb)$. Then, 
$$||b(g)||=\left( \int_{S} |\delta(g^{-1},s)|^{2} d\mu(s)\right)^{\frac{1}{2}} \le \epsilon|g|_{G}=\epsilon|g^{-1}|_{G}$$  for $|g|_{G}>N$. This proves first assertions.

For second assertions, fix  $g\in G$. Then we have
$$\lim_{m\rightarrow\infty} \frac{||\delta(g^{m},-)||_{L^{2}(S)}}{m}=0.$$
 Here we use the fact that $|g^{m}|_{G}\le m|g|_{G}$ for all  $m$. So there is sequence of $m_{k}\rightarrow \infty$ as $k\rightarrow \infty$ so that for almost every $x\in S$, 
$$\lim_{k\rightarrow\infty} \frac{|\delta(g^{m_{k}},x)|}{m_{k}}=0.$$ 
However, subadditivity ergodic theorem tells that for almost every $x\in S$, the limit $$\lim_{m\rightarrow \infty}\frac{|\alpha(g^{m},x)|}{m}$$ exists. Therefore for almost every $x\in S$, we have $$\lim_{m\rightarrow \infty}\frac{|\delta(g^{m},x)|}{m}=0. $$  Using $g^{-1}$ instead of $g$, we can get same limit when $m\rightarrow-\infty$. This proves second assertion. \end{proof}

In order to prove \ref{Hilbertcase}, we first prove following proposition which is $n=1$ case for Theorem \ref{Hilbertcase}.
\begin{prop}\label{prophil}
Let $G=\Gbb(\Rbb)$ be $\Rbb$ point of algebraically simply connected, $\Rbb$ simple, $\Rbb$ algebraic group $\Gbb$. Let $b:G\rightarrow \Hcal$ is cocycle associated with unitary representation $\pi:G\rightarrow \mathcal{U(H)}$. Then $b$ has sublinear growth. 
\end{prop}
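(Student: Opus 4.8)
The plan is to reduce first to the case where $\pi$ has no non-zero $G$-invariant vectors. Splitting $\Hcal=\Hcal^{G}\oplus(\Hcal^{G})^{\perp}$ and $b=b_{0}\oplus b_{1}$ accordingly, $b_{0}$ is a continuous homomorphism from $G$ into the abelian group $(\Hcal^{G},+)$; since $G=\Gbb(\Rbb)$ has finitely many connected components and perfect identity component, $\mathrm{Hom}_{\mathrm{cont}}(G,\Rbb)=0$, so $b_{0}=0$ and we may assume $\Hcal^{G}=0$. Next I would split on property (T): if $G$ has property (T) — e.g. if $\textrm{rank}_{\Rbb}(G)\ge 2$, or $G$ is locally isomorphic to $\Sp(n,1)$ with $n\ge2$ or to $F_{4}^{-20}$ — then $H^{1}(G,\pi)=0$, a fortiori $\overline{H^{1}}(G,\pi)=0$, so $b$ is an almost coboundary and Lemma \ref{almsub} finishes the proof. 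It therefore remains to treat the case where $G$ is locally isomorphic to $\SO(m,1)$ or $\SU(m,1)$, so in particular $\textrm{rank}_{\Rbb}(G)=1$.

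In that case I would use the Cartan (polar) decomposition $G=K\,\overline{A^{+}}\,K$, where $K$ is a maximal compact subgroup and $A=\{a_{t}=\exp(tH):t\in\Rbb\}$ is the one-parameter $\Rbb$-split Cartan subgroup. Writing $g=k_{1}a_{t_{g}}k_{2}$ with $t_{g}\ge0$ and $t_{g}\asymp |g|_{G}$ (Lemma \ref{comp}), the cocycle identity together with the boundedness of $b$ on the compact set $K$ gives
$$\|b(g)\|\ \le\ 2\sup_{k\in K}\|b(k)\|\ +\ \|b(a_{t_{g}})\|,$$
so it suffices to prove $\|b(a_{t})\|=o(|t|)$ as $t\to\pm\infty$. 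Put $\psi(t)=\|b(a_{t})\|^{2}$. Since $b(g^{-1}h)=\pi(g^{-1})\bigl(b(h)-b(g)\bigr)$ and $A$ is abelian, $\psi(s-t)=\|b(a_{s})-b(a_{t})\|^{2}$, so $\psi$ is a continuous conditionally negative definite function on $\Rbb$ with $\psi(0)=0$; by the Lévy–Khinchine/Schoenberg formula,
$$\psi(t)\ =\ c\,t^{2}\ +\ \int_{\Rbb^{*}}\bigl(1-\cos(\lambda t)\bigr)\,d\mu(\lambda),\qquad c\ge0,\quad \int_{\Rbb^{*}}\min(1,\lambda^{2})\,d\mu(\lambda)<\infty .$$

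The ``jump part'' of $\psi$ is automatically $o(t^{2})$: splitting the integral at $|\lambda|=1$, on $\{|\lambda|\le1\}$ one has $\tfrac{1-\cos(\lambda t)}{t^{2}}\le\tfrac{\lambda^{2}}{2}$, which is $\mu$-integrable there and tends to $0$ pointwise, while on $\{|\lambda|>1\}$ the measure $\mu$ is finite and the integrand is $\le 2/t^{2}$; dominated convergence then gives $\tfrac1{t^{2}}\int_{\Rbb^{*}}(1-\cos\lambda t)\,d\mu\to0$. So everything reduces to showing that the Gaussian coefficient $c$ vanishes. Howe–Moore (matrix coefficients of $\pi$ vanish at infinity, as $\pi$ has no $G$-invariant vectors) rules out a non-zero $A$-invariant vector, and this already pins down $c=0$ through the structure of $1$-cocycles of $\Rbb$; more robustly, $c>0$ would turn the orbit map $g\mapsto b(g)$ into a $G$-equivariant quasi-isometric embedding of the rank-one symmetric space $G/K$ into $\Hcal$, which cannot exist. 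This last impossibility is the analytic core, and it is where I would invoke Delorme's computation of $H^{1}(G,\sigma)$ for $\SO(m,1)$ and $\SU(m,1)$ (non-vanishing only for $\sigma$ in a specific part of the spherical complementary series), Shalom's analysis of reduced $1$-cohomology, and the sharp growth bound for $G$-equivariant harmonic maps out of $\mathbb{H}^{m}_{\Rbb}$ and $\mathbb{H}^{m}_{\Cbb}$; together these yield $\psi(t)=O(|t|)$, hence $\|b(a_{t})\|=O(\sqrt{|t|})$, in particular $c=0$.

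The step I expect to be the main obstacle is exactly the vanishing of $c$ in the rank-one case: unlike the property (T) case it is not formal, and unlike the jump part it is not automatic — it encodes the genuinely analytic fact that $\SO(m,1)$ and $\SU(m,1)$ admit no $G$-equivariant quasi-isometric embedding into a Hilbert space, equivalently that every reduced $1$-cohomology class of these groups is represented by a cocycle of growth $\lesssim\sqrt{|g|_{G}}$. If one prefers not to cite this as a black box, the alternative is to do the harmonic analysis on $\SO(m,1)$/$\SU(m,1)$ directly: decompose $\pi$ into irreducibles, discard the factors with $H^{1}=0$ using Delorme, and for the surviving complementary-series factors estimate the relevant spherical matrix coefficient (equivalently the Poisson–Busemann cocycle) along $A$ to obtain $\psi(t)=O(|t|)$ by hand.
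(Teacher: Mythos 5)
Your proof is correct, but it takes a genuinely different route from the paper's. The paper decomposes $\pi$ as a direct integral of irreducibles, invokes Delorme's classification of the one or two irreducible representations of $\SO(m,1)$ and $\SU(m,1)$ with non-vanishing $H^{1}$, and uses the $\sqrt{d(gK,K)}$ asymptotics of the corresponding harmonic cocycles together with an $L^{2}$ bound on the coefficient function $\lambda_{y}$ to get $\|b(g)\|^{2}\lesssim |g|_{G}$ modulo an almost coboundary. You instead restrict $b$ to the one-parameter Cartan subgroup $A$ after the $KAK$ reduction, apply Schoenberg/L\'evy--Khinchine to the conditionally negative definite function $\psi(t)=\|b(a_{t})\|^{2}$, note that the jump part is automatically $o(t^{2})$, and kill the Gaussian part via Howe--Moore. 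That last step is sound: if $c>0$, then the pairwise distances $\|b(a_{t})/t-b(a_{s})/s\|^{2}$, which are computable from $\psi$ alone, tend to $0$, so $b(a_{t})/t$ converges to a non-zero $A$-invariant vector of norm $\sqrt{c}$, contradicting the decay of matrix coefficients once you have reduced to $\Hcal^{G}=0$. Two comments on the comparison. First, your closing paragraph undersells your own argument: the vanishing of $c$ does \emph{not} require Delorme, Shalom, or harmonic-map growth --- the Howe--Moore argument you sketch is already complete, and it applies uniformly to every non-compact $G$ as in the statement, so the preliminary dichotomy on property (T) and the restriction to $\SO(m,1)$ and $\SU(m,1)$ are in fact unnecessary detours. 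Second, the trade-off: your route is more elementary and self-contained but only yields $\|b(g)\|=o(|g|_{G})$, whereas the paper's decomposition into irreducibles yields the sharper estimate $\|b(g)\|=O(|g|_{G}^{1/2+\delta})$ up to an almost coboundary, which the paper exploits in the remark following the proposition; if that quantitative refinement is wanted, the harmonic-analytic route cannot be avoided.
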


\begin{proof}[Proof of Proposition \ref{prophil}]First of all, we may assume $G$ is not compact. Furthermore, if $G$ has property (T) then, $b$ should be a coboundary, so it is bounded and it implies sublinear growth. Therefore, we can assume that $G$ does not have property (T). Since $G=\Gbb(\Rbb)$ is a real point of an algebraically simply connected real algebraic group, we may assume that that $G$ is connected Lie group which is locally isomorphic to $\SO(n,1)^{\circ}$ or $\SU(n,1)$ with finite center.

 In addition, we may assume $b|_{K} \equiv 0$ where $K$ is maximal compact subgroup of $G$. Indeed, for an affine action $A$ as $A(g)(w)=\pi(g)w +b(g)$ on $\Hcal$  we may find $A(K)$ invariant vector $v\in\Hcal$ due to compactness. This implies that $b'(g)=b(g)+\pi(g)v-v$ is cohomologous to $b$ and $b'|_{K}\equiv 0$. Now Lemma \ref{samegrowth} tells that we can assume $b|_{K}\equiv 0$.

We can decompose $\pi$ and $b$ into irreducible representations. (e.g. \cite{PS}.) More precisely, there is a Borel space $Z$, $\sigma$-finite measure $\nu$ on $Z$ with a measurable field of Hilbert space $(\Hcal_{x})_{x\in Z}$ such that 

\begin{eqnarray}  \pi=\int_{Z}^{\oplus} \pi_{x} d\nu(x) \end{eqnarray} 
\begin{equation}\label{decompc} 
b(g)=\int_{Z} b_{x}(g) d\nu(x)\quad \textrm{ a.e. } g\in G\end{equation} such that $\nu$-almost every $x$, $\pi_{x}$ is an irreducible unitary representation of $G$ on $\Hcal_{x}$ and $b_{x}\in Z^{1}(G,\pi_{x}).$ Note that for generic $g$, we have $$||b(g)||^{2}=\int_{Z}||b_{x}(g)||^{2}d\nu(x).$$ 

We claim that it is enough to show that for any $\epsilon>0$ there is $N$ such that for any $h\in G_{gen}$ with $|h|_{G}>N$, we have $||b(h)|| <\epsilon|h|_{G}$ where $$G_{gen}=\left\{ h\in G : b(h)=\int_{Z}b_{x}(h) d\nu(x) \right\}.$$ 
Indeed there is a constant $r>0$ such that for any $g\in G$, we can find a $h\in G_{gen}$ such that $||b(g)-b(h)||<r$ and $|g|_{G}=|h|_{G}$, since Haar measure is fully supported and $||b(\cdot)||$ gives continuous norm on $G$ so that bounded on compact set.

 For irreducible representations, we have following facts due to Delorme.
\begin{theorem}[P.Delorme \cite{Delrome}]\label{del} ${}$
\begin{itemize} 
\item Let $H$ is connected simple Lie group with Lie algebra $\mathfrak{su}(n,1)$, $n\ge 2$ or $\mathfrak{so}(2,1)$ then there exist exactly 2 irreducible unitary representations, up to unitarily equivalent, with non trivial 1-cohomology. 
\item Let $H$ is connected simple Lie group with Lie algebra $\mathfrak{so}(n,1)$, $n\ge 3$ then there exist exactly one irreducible unitary representation, up to unitarily equivalent, with non trivial 1-cohomology. 
\end{itemize}
Moreover, the dimension of $1$-cohomology $H^{1}$ of these representations are all equal to $1$.
\end{theorem}

On the other hand, a non-trivial cocycle over an irreducible unitary representation vanished on a maximal compact subgroup can be interpreted by a harmonic map on the related symmetric space and we can control growth of it.
\begin{theorem}[See \cite{BHV}]\label{irredcase} Let $G$ be connected Lie group with finite center and locally isometric to $\SO(n,1)^{\circ}$ ($n\ge 2$) or $\SU(m,1)$ ($m\ge 1$). For any irreducible unitary representation $(\rho,\mathcal{L})$, assume that $1$-cocycle $c:G\rightarrow \mathcal{L}$ under $(\rho,\mathcal{L})$ satisfies following. \begin{enumerate}
\item $c|_{K}\equiv 0$
\item $c$ is not coboundary.
\end{enumerate}
Fix left $G$-invariant Riemannian metric $d=d_{G/K}$ on $G/K$. Then $$||c(g)||^{2} =C_{c,\rho,d} d(gK,K)+o(d(gK,K))\quad \textrm{as} \quad d(gK,K)\rightarrow \infty$$  
for some constant $C_{c,\rho,d}$ which depends on $c$, $\rho$ and $d$.

\end{theorem}
Recall that Riemannian metric on the symmetric space and canonical word length on $G$ is quasi-isomteric as in Lemma \ref{comp}.

Due to Theorem \ref{del}, we can find all concrete unitary representation $\sigma_{i} : G \rightarrow \mathcal{U}(\mathcal{L}_{i})$ which has non trivial $1$-cohomology. Here $i=1$ when $G$ is locally isomorphic to $\SO(n,1)$ with $n\ge 3$ and $i=1,2$ when $G$ is locally isomorphic to $\SU(n,1)$ or $SO(2,1)$.  Also fix the $1$-cocycle $c_{i}: G\rightarrow \mathcal{L}_{i}$ which is not coboundary such that $c_{i}$ satisfies $c_{i}|_{K}=0.$

In (\ref{decompc}), fix $Z_{0}\subset Z$ such that $\nu(Z\setminus Z_{0})=0$ and for any $x\in Z_{0}$, we have $b_{x}\in Z^{1}(G,\pi_{x})$. Let $Z_{1} =\{x\in Z_{0} : b_{x}\not \in B^{1}(G,\pi_{x})\}.$
We know that $Z_{1}$ is measurable set in $Z$ by \cite{Delrome} Lemma 1.1. Especially for $y\in Z_{1}$, $\pi_{y}$ admits non-trivial $1$-cohomology, so that $b_{y}$ is cohomologous to $c_{i}$ up to unitarily equivalent for some $i$. For each $y\in Z_{1}$, we can take unitary operator $L_{y}: \Lcal_{i}\rightarrow \mathcal{H}_{y}$ for some $i$ such that $L_{y}(\sigma_{i}(g)v)=\pi_{y}(g)(L_{y}(v))$ for every $v\in \Lcal_{i}$.
Therefore there exists $v_{x} \in \Hcal_{x}$ for each $x\in Z_{1}$ such that we can write $b$ as $$b=\int_{Z_{1}}b_{y}d\nu(y)+\int_{Z\setminus Z_{1}} \delta_{x} d\nu(x)$$ where $\delta_{x}\in B^{1}(G,\pi_{x})$. Now $\int_{Z\setminus Z_{1}} \delta_{x} d\nu(x) $  is direct integral of coboundaries, so it is almost coboundary (for example, see \cite{BHV}).

On the other hand,
 Theorem \ref{irredcase} says that there exist measurable function $\lambda : Z_{1} \rightarrow \Cbb$, $y\mapsto \lambda_{y}$ and  vector $v_{y}\in \Lcal_{i(y)}$ for $i(y)\in \{1, 2\}$ for each $y\in Z_{1}$ such that $$b_{y}(g)=L_{y}(\lambda_{y} c_{i(y)}(g) + \sigma_{i(y)}(g)v_{y} -v_{y})\quad g\in G_{gen}.$$ For $g\in G_{gen}$, we have
\begin{eqnarray*}
\int_{Z_{1}} ||b_{y}(g)||^{2} d\nu(y) &=& \int_{Z_{1}} ||L_{y}(\lambda_{y} c_{i(y)}(g) + \sigma_{i(y)}(g)v_{y} -v_{y})||^{2} d\nu(y)\\
&=& \int_{Z_{1}}||\lambda_{y} c_{i(y)}(g) + \sigma_{i(y)}(g)v_{y} -v_{y}||^{2} d\nu(y) 
\end{eqnarray*}
We already assume that $b|_{K}\equiv 0$, so $$\int_{Z_{1}}||b_{y}(k)||^{2} d\nu(y)=0$$ for any $k\in K\cap G_{gen}$. Furthermore, since we also choose $c_{i}$ as $c_{i}|_{K}\equiv 0$, we have $$\int_{Z_{1}} ||\sigma_{i(y)} (k)v_{y}-v_{y}||^{2}d\nu(y)=0 \quad \forall k\in K\cap G_{gen}.$$ 
This implies that $v_{y}$ is $\sigma_{i(y)}(K)$ invariant vector in $\Lcal_{i(y)}$ for a.e. $y\in Z_{1}$ since Haar measure is fully supported again. However, irreducible unitary representations $(\Lcal_{i},\sigma_{i})$ has non-trivial $1$-cohomology, we know that the only $\sigma_{i}(K)$-invariant vector in $\Lcal_{i}$ is $0$. (\cite{BHV}.) This implies that $v_{y}\equiv 0$ for a.e. $y\in Z_{1}$. Therefore, $$ \int_{Z_{1}} ||b_{y}(g_{0})||^{2} d\nu(y)= \int_{Z_{1}}|\lambda_{y}|^{2}||c_{i(y)}(g_{0})||^{2} d\nu (y).$$

We claim that $|\lambda|\in L^{2}(Z_{1},\nu|_{Z_{1}})$.  Indeed, $|\lambda_{z}|=\frac{||b_{z}(g)||}{||c_{i(z)}(g)||}$ implies that $|\lambda|$ is measurable function.  First, find $g_{0}\in G_{gen}$ such that $c_{i}(g_{0})\neq 0$ for every $i=1,2$. Let $0<m_{0}=\min\{||c_{i}(g_{0})||:i=1,2\}$. 
Then

\begin{eqnarray*}
\int_{Z_{1}}m_{0}^{2}|\lambda_{y}|^{2} d\nu(y) &\le& \int_{Z_{1}}|\lambda_{y}|^{2}||c_{i(y)}(g_{0})||^{2} d\nu (y)\\
&= & \int_{Z_{1}} ||b_{y}(g_{0})||^{2} d\nu(y) \\
&\le & \int_{Z} ||b_{x}(g_{0})||^{2} d\nu(x) \\
&<&\infty
\end{eqnarray*}
This implies $\lambda \in L^{2}(Z_{1}, \nu|_{Z_{1}})$

Now fix left $G$ -invariant Riemannian metric $d$ on $G/K$ then Theorem \ref{irredcase} says that  for any $\epsilon>0$, there is $T>0$ and constants $C_{c_{i},\sigma_{i},d}$ such that for any $d(gK,K)>T$, $$||c_{i}(g)||^{2}<C_{c_{i},\sigma_{i},d}d(gK,K)+\epsilon d(gK,K).$$ Since $i=1$ or $i=1,2$ we can find  constant $C_{1}$ such that $C_{c_{i},\sigma_{i},d}<C_{1}$ for all $i$.  Due to Lemma \ref{comp}, there is constant $C_{2}$ and $T'$ such that for any $g\in G, |g|_{G}>T'$, we have  $$||c_{i}(g)||^{2}<C_{1}d(gK,K)+\epsilon d(gK,K)<C_{1}C_{2}|g|_{G}+\epsilon C_{2}|g|_{G}.$$  Using this inequality, for $g\in G_{gen}$ with $|g|_{G}>T'$, we have
\begin{eqnarray*}
\int_{Z_{1}} ||b_{y}(g)||^{2} d\nu(y) &=& \int_{Z_{1}}|\lambda_{y}|^{2}||c_{i(y)}(g)||^{2} d\nu(y)\\
&\le & \int_{Z_{1}} |\lambda_{y}|^{2}( C_{1}C_{2}|g|_{G} +\epsilon C_{2}|g|_{G}) d\nu(y)\\ 
&\le & M( C_{1}|g|_{g}+\epsilon C_{1}C_{2}|g|_{G}) 
\end{eqnarray*} where $M=\int_{Z_{1}}|\lambda_{y}|^{2} d\nu(y).$
This implies that for any $\epsilon>0$ there is $N$ such that for any $g\in G_{gen}$ with $|g|_{G}>N$, we have $$\left|\left|\int_{Z_{1}} b_{y}(g) d\nu(y)\right|\right| < \epsilon|g|^{\frac{1}{2}+\delta}$$ for any $\delta>0$.  This implies sublinearity of $\int_{Z_{1}}b_{y}d\nu(y)$.

As we saw before, $b$ is almost cohomologous to $\int_{Z_{1}}b_{y}d\nu(y)$. This proved Proposition \ref{prophil} due to Lemma \ref{samegrowth}.
 \end{proof}
\begin{rmk} The above proof shows that if unitary representation $\pi$ has spectral gap, i.e. almost coboundary is actually coboundary, then we have more strong conclusion, \emph{$\left(\frac{1}{2}+\delta\right)$-growth} as follows. For any $\epsilon >0$ , there is $N$ such that every $g\in G$ with $|g|_{G}>N$, we have $$||b(g)||<\epsilon |g|_{G}^{\frac{1}{2}+\delta}$$ for any $\delta >0$. As a special case, we can prove that the cocycle defined by the composition of surjective homomorphism from $\Gamma$ to $\Zbb$ with return cocycle, $G\times G/\Gamma\rightarrow \Gamma\twoheadrightarrow \Zbb$ has $\left(\frac{1}{2}+\delta\right)$ growth provided that the $\Gamma$ is cocompact lattice. Indeed, the integrability is automatically satisfied and $G$ action on $G/\Gamma$ has a spectral gap.

\end{rmk}
Now, we are ready to prove Theorem \ref{Hilbertcase}.
\begin{proof}[Proof of Theorem \ref{Hilbertcase}] 
We want to prove that for any $\epsilon>0$, there is $N\in \Nbb$ such that for any $g \in G$ with $|g|_{G}>N$, $||b(g)||_{2}<\epsilon|g|_{G}.$

\underline{\textbf{Step 1.}} Shalom\textquoteright s super-rigidity theorem for reduced cohomology 
allow us to reduce problems to each simple factors.
\begin{theorem}[\cite{Shalom}Theorem 3.1]\label{shalG}
Retaining notations, $b$ is  almost cohomologous to a cocycle of the form $b_{0}+b_{1}+\dots+b_{n}$, where $b_{0}$ takes values in the subspace $G$-invariant vectors $\mathcal{H}_{0}$, and each $b_{i}$ for $1\le i \le n$ is a cocycle depending only on $G_{i}$, and takes values in a $G$-invariant subspace $\mathcal{H}_{i}$, on which is $G$-unitary representation which factors through a unitary representation of $G_{i}$.  Note that this theorem also holds for a product of at least two LCSC groups.
\end{theorem}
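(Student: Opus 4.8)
This is \cite[Theorem 3.1]{Shalom}, so the proof is not ours; I indicate the structure of the argument. First, by induction on $n$ it is enough to treat $n=2$: writing $G=G_{1}\times G'$ with $G'=G_{2}\times\cdots\times G_{n}$, the case $n=2$ yields $b$ almost cohomologous to $b_{1}+b'$ with $b_{1}$ depending only on $G_{1}$ and $b'$ depending only on $G'$ and taking values in a $G$-invariant subspace on which $G$ acts through $G'$; that subspace, regarded as a representation of $G'$, is again of the kind to which the inductive hypothesis applies, and one checks that the further subspaces it produces remain $G$-invariant. So fix $G=G_{1}\times G_{2}$, a unitary representation $(\pi,\Hcal)$, and $b\in Z^{1}(G,\pi)$. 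Since $G_{1}$ and $G_{2}$ commute, the spaces of $G_{1}$- and of $G_{2}$-fixed vectors are $G$-invariant, and we get an orthogonal $G$-invariant decomposition $\Hcal=\Hcal_{0}\oplus\Hcal_{1}\oplus\Hcal_{2}\oplus\Hcal'$, where $\Hcal_{0}=\Hcal^{G}$, $\Hcal_{1}=\Hcal^{G_{2}}\ominus\Hcal_{0}$ (so $G$ acts through $G_{1}$, with no $G_{1}$-fixed vectors), $\Hcal_{2}=\Hcal^{G_{1}}\ominus\Hcal_{0}$ symmetrically, and $\Hcal'$ is the remaining part, on which neither $G_{1}$ nor $G_{2}$ has nonzero fixed vectors. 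Write $b=b_{0}+b_{1}+b_{2}+b'$ accordingly; each summand is a cocycle. Now $b_{0}$ already takes values in $\Hcal^{G}$. For $b_{1}$: since $\pi(g_{2})=\mathrm{id}$ on $\Hcal_{1}$, comparing $b_{1}((g_{1},e)(e,g_{2}))$ with $b_{1}((e,g_{2})(g_{1},e))$ gives $\pi(g_{1})\,b_{1}(e,g_{2})=b_{1}(e,g_{2})$ for all $g_{1},g_{2}$; as $\Hcal_{1}$ has no $G_{1}$-fixed vectors, $b_{1}|_{G_{2}}\equiv 0$, hence $b_{1}(g_{1},g_{2})=b_{1}(g_{1},e)$ depends only on $G_{1}$; symmetrically $b_{2}$ depends only on $G_{2}$. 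Thus $b_{0}+b_{1}+b_{2}$ is already of the desired form.

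The substantive point is that the mixed summand $b'$ is an \emph{almost coboundary}. Here I would use that each $\Rbb$-almost simple factor $G_{i}$ is type $I$, so Mackey's theory gives a disintegration $\pi|_{\Hcal'}\cong\int^{\oplus}_{\widehat{G_{1}}\setminus\{1\}}\sigma\otimes\rho_{\sigma}\,d\nu(\sigma)$ over the unitary dual of $G_{1}$, in which $\sigma$ is a nontrivial irreducible representation of $G_{1}$, $\rho_{\sigma}$ is a representation of $G_{2}$ on the multiplicity space $\mathcal{M}_{\sigma}$, and $\rho_{\sigma}$ has no nonzero $G_{2}$-fixed vectors for $\nu$-a.e.\ $\sigma$ (this encodes that $\Hcal'$ has no $G_{2}$-fixed vectors). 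The cocycle $b'$ disintegrates compatibly as $\int^{\oplus}b'_{\sigma}\,d\nu(\sigma)$ with $b'_{\sigma}\in Z^{1}(G,\sigma\otimes\rho_{\sigma})$ for $\nu$-a.e.\ $\sigma$. On a fixed fiber, $(\sigma\otimes\rho_{\sigma})|_{G_{1}}$ is an amplification of the nontrivial irreducible $\sigma$, so either $G_{1}$ has property (T) and $\overline{H}^{1}(G_{1},\cdot)=0$, or $G_{1}$ is locally $\SO(m,1)$ or $\SU(m,1)$ and by Delorme's theorem \ref{del} the class of $b'_{\sigma}|_{G_{1}}$ is, modulo a $G_{1}$-almost coboundary, of the form $c_{\sigma}\otimes w_{\sigma}$ for the fixed Delorme cocycle $c_{\sigma}$ of $\sigma$ and some $w_{\sigma}\in\mathcal{M}_{\sigma}$. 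Subtracting that coboundary and feeding the result into the commutation identity $(\pi(e,g_{2})-\mathrm{id})\,b'_{\sigma}(g_{1},e)=(\pi(g_{1},e)-\mathrm{id})\,b'_{\sigma}(e,g_{2})$ gives $c_{\sigma}(g_{1})\otimes(\rho_{\sigma}(g_{2})-\mathrm{id})w_{\sigma}=((\sigma(g_{1})-\mathrm{id})\otimes\mathrm{id})\,b'_{\sigma}(e,g_{2})$; pairing with any functional on $\mathcal{M}_{\sigma}$ not annihilating $(\rho_{\sigma}(g_{2})-\mathrm{id})w_{\sigma}$ would exhibit $c_{\sigma}$ as a coboundary for $\sigma$, a contradiction, so $(\rho_{\sigma}(g_{2})-\mathrm{id})w_{\sigma}=0$ for all $g_{2}$ and hence $w_{\sigma}=0$ since $\rho_{\sigma}$ has no fixed vectors. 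Then the (modified) $b'_{\sigma}$ vanishes on $G_{1}$, and the cocycle and commutation relations force $b'_{\sigma}(e,g_{2})$ to be $G_{1}$-fixed, hence $0$ because $\sigma$ is a nontrivial irreducible; so $b'_{\sigma}$ is an almost coboundary (indeed a coboundary). Reassembling the disintegration — a direct integral of almost coboundaries being an almost coboundary, exactly as in the paper's handling of direct integrals of coboundaries — shows $b'$ is an almost coboundary, and $b$ is almost cohomologous to $b_{0}+b_{1}+b_{2}$, which is of the claimed form.

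The main obstacle is making the fiberwise analysis of the last paragraph rigorous: the measurability of $\sigma\mapsto b'_{\sigma}$ and $\sigma\mapsto w_{\sigma}$, the bookkeeping of fixed vectors at atoms of $\nu$, and — most delicately — the uniformity needed to promote the fiberwise conclusions to a single almost coboundary for $b'$. This is precisely the technical heart of \cite[Theorem 3.1]{Shalom}, which for our purposes we simply invoke; note also that when every $G_{i}$ has property (T) the entire mixed step collapses, which is why the difficulty is genuinely present only because $G$ is allowed to have rank-one factors.
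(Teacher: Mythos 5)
The paper contains no proof of this statement: Theorem \ref{shalG} is imported verbatim as \cite[Theorem 3.1]{Shalom} and used as a black box, so your decision to ultimately invoke Shalom is exactly what the paper does, and on that level the proposal is consistent with the paper. The elementary part of your sketch is also fine: the orthogonal $G$-invariant decomposition $\Hcal=\Hcal_{0}\oplus\Hcal_{1}\oplus\Hcal_{2}\oplus\Hcal'$ and the commutation identity $(\pi(g_{1})-\mathrm{id})b(g_{2})=(\pi(g_{2})-\mathrm{id})b(g_{1})$ showing that $b_{1}$, $b_{2}$ factor through the respective factors is correct and is the standard first step.

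However, the heart of your sketch is not Shalom's argument and could not prove the theorem as stated. The statement explicitly holds for a product of arbitrary LCSC (compactly generated) groups; Shalom's proof is carried out in that generality and therefore makes no use of a type I disintegration of $\pi$ over $\widehat{G_{1}}$ nor of Delorme's classification — it works directly in reduced cohomology by Hilbert-space arguments (projections onto invariant subspaces and weak limits of coboundary corrections, exploiting the commuting factor). Your route is confined to the type I case, and even there the steps you flag are genuine gaps rather than bookkeeping: (i) Delorme's theorem concerns the irreducible $\sigma$ itself, so asserting that a reduced class for the amplification $\sigma\otimes\rho_{\sigma}$ restricted to $G_{1}$ is, modulo almost coboundaries, of the form $c_{\sigma}\otimes w_{\sigma}$ for a single vector $w_{\sigma}\in\mathcal{M}_{\sigma}$ needs an argument when $\mathcal{M}_{\sigma}$ is infinite-dimensional; (ii) subtracting a ``$G_{1}$-almost coboundary'' fiberwise must be implemented by measurably chosen vectors so that it modifies the full $G$-cocycle; and (iii) a direct integral of fiberwise almost coboundaries is not automatically an almost coboundary of the ambient representation — the paper's own remark in the proof of Proposition \ref{prophil} covers direct integrals of genuine coboundaries, and upgrading it requires uniform control of the approximations across fibers. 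Since both you and the paper treat Theorem \ref{shalG} as an external citation, none of this affects the logic of the paper; just present your sketch as a heuristic for the special case at hand rather than as a proof of the cited theorem.
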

We can think $b=b_{0}+b_{1}+\dots+b_{n} +b_{r}$ where $b_{0},b_{1},\dots,b_{n}$ as above and for some almost coboundary $b_{r}:G\rightarrow \Hcal$.

\underline{\textbf{Step 2.}} Due to Lemma \ref{samegrowth}, we only need to care about 1-cocycle which is almost cohomologous to $b$. Especially, it is enough to show that $b_{0}+b_{1}+\dots+b_{n}:G \rightarrow \Hcal$ has sublinear growth as $1$-cocycle defined on $G$. 
\begin{enumerate}
\item For $b_{0}$ : We know that $b_{0}$ takes values at $G$-invariant vectors. This implies that $b_{0}:G\rightarrow \Hcal$ is a homomorphism. Since $\Hcal$ is abelian, $b_{0}$ should factor through $G/[G,G]\rightarrow \Hcal.$ Since $G$ is unimodular, it should be trivial. \\

\item For $b_{i}$, $i\ge 1$ : For the cocycle $b_{i}:G\twoheadrightarrow G_{i}\rightarrow \mathcal{H}_{i}$ as above, $b_{i}$ has sublinear growth as $1$-cocycle defined on $G_{i}$,  due to Proposition \ref{prophil}.

\end{enumerate}
Indeed, $b_{i}$ has sublinear growth  as $1$-cocycle defined on $G$ as follows.
 \begin{lem}\label{glob} $b_{i}:G\twoheadrightarrow G_{i} \rightarrow \mathcal{H}_{i}$ has sublinear growth as $1$-cocycle defined on $G$. 
 \end{lem}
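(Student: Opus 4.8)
The plan is to transfer sublinear growth from the factor $G_i$ to the ambient group $G = G_1 \times \dots \times G_n$ using the fact that the canonical norm $|\cdot|_G$ dominates the canonical norm $|\cdot|_i$ of the projection. Write $p_i \colon G \twoheadrightarrow G_i$ for the $i$-th projection, so that $b_i = c \circ p_i$ for a genuine $1$-cocycle $c \in Z^1(G_i, \pi_i)$ that, by Proposition \ref{prophil} (applied to the $\Rbb$-simple factor $G_i$), has sublinear growth as a $1$-cocycle on $G_i$. The first step is the elementary but crucial metric observation: for a suitable choice of compact generating sets, $p_i$ is $1$-Lipschitz, i.e.\ $|p_i(g)|_i \le |g|_G$ for all $g \in G$, because the image under $p_i$ of a generating set of $G$ generates $G_i$, so a word of length $m$ in $G$ maps to a word of length at most $m$ in $G_i$; up to the quasi-isometry constants built into the phrase ``canonical norm'' this gives $|p_i(g)|_i \le C(|g|_G + 1)$ for an absolute constant $C$.

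The second step is then just book-keeping. Fix $\epsilon > 0$. Since $c$ has sublinear growth on $G_i$, there is $N_i \in \Nbb$ such that $\|c(h)\| < \tfrac{\epsilon}{C}\,|h|_i$ whenever $|h|_i > N_i$. Take $N$ large enough (depending on $N_i$, $C$ and on the finite supremum of $\|b_i(g)\|$ over the compact set $\{g : |g|_G \le N_0\}$ for an appropriate threshold $N_0$, using that $b_i$ is continuous) so that for $|g|_G > N$ we have both $|p_i(g)|_i > N_i$ and $|p_i(g)|_i \le C(|g|_G + 1) \le 2C|g|_G$. Then
\[
\|b_i(g)\| = \|c(p_i(g))\| < \frac{\epsilon}{C}\,|p_i(g)|_i \le \frac{\epsilon}{C}\cdot 2C |g|_G = 2\epsilon |g|_G ,
\]
and since $\epsilon$ was arbitrary this is exactly sublinear growth of $b_i$ as a $1$-cocycle on $G$.

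I expect no real obstacle here; the only point requiring minor care is confirming that $b_i = c\circ p_i$ is indeed a $1$-cocycle over the $G$-representation obtained by pulling back $\pi_i$ along $p_i$ — this is immediate from the cocycle identity since $p_i$ is a homomorphism — and tracking the quasi-isometry constants relating the chosen word norms to the canonical norms $|\cdot|_G$, $|\cdot|_i$, which only changes the final $\epsilon$ by a bounded factor and hence does not affect the conclusion. Combining this lemma with Step 1 and Step 2 above (the triviality of $b_0$ and Lemma \ref{summ} applied to the finite sum $b_0 + b_1 + \dots + b_n$, together with Lemma \ref{samegrowth} to discard the almost coboundary $b_r$) completes the proof of Theorem \ref{Hilbertcase}.
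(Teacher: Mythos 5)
Your overall strategy is the one the paper uses: observe that the projection $p_i\colon G\to G_i$ is norm non-increasing for the product generating set $Q=Q_1\times\dots\times Q_n$ (the paper proves $|p_i(g)|_i\le|g|_G$ by commuting the factors in a minimal-length word), then pull back the sublinear growth of $c$ on $G_i$ established in Proposition \ref{prophil}. However, one step as written is false and would fail: you cannot "take $N$ large enough so that for $|g|_G>N$ we have $|p_i(g)|_i>N_i$." No such $N$ exists, since $g$ may lie entirely in the complementary factors, making $|g|_G$ arbitrarily large while $p_i(g)=e$. Your displayed chain of inequalities therefore only treats the case $|p_i(g)|_i>N_i$, and the assertion that this case is exhaustive for large $|g|_G$ is wrong.

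The repair is exactly the paper's two-case argument, which your parenthetical gestures at but misstates (the relevant compact set is $\{h\in G_i: |h|_i\le N_i\}=(Q_i)^{N_i}$ inside $G_i$, not $\{g:|g|_G\le N_0\}$ inside $G$). Set $M=\sup\{\|c(h)\|: h\in (Q_i)^{N_i}\}<\infty$ by continuity of $c$ and compactness, and choose $N>M/\epsilon$ in addition to your other requirements. Then for $|g|_G>N$: if $|p_i(g)|_i\le N_i$ one gets $\|b_i(g)\|=\|c(p_i(g))\|\le M<\epsilon N<\epsilon|g|_G$, while if $|p_i(g)|_i>N_i$ your displayed estimate applies. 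With that case added, the argument is correct and coincides with the paper's proof; the quasi-isometry bookkeeping you mention is harmless, as you say.
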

\begin{proof}[Proof of Lemma \ref{glob}.]
For any $g\in G$, let denotes $g^{i}=pr_{i}(g)$ where $pr : G\rightarrow G_{i}$ is projection onto $G_{i}$. Now choose any $g\in G$ and write $g=g_{1}\dots g_{l}$ with minimal word length with respect to generating set $Q=Q_{1}\times \dots \times Q_{n}$. In this case, $|g|_{G}=l$. Then we can rewrite $$g=(g_{1}^{1}\dots g_{l}^{1})(g_{1}^{2}\dots g_{l}^{2})\dots(g_{1}^{n} \dots g_{l}^{n})$$  where $g_{j}^{k} \in Q_{k}$ for each $j=1,\dots, l$ and $k=1,\dots ,n$. This comes from the fact that elements in $G_{k}$ and $G_{l}$ commute each other if $k\neq l$. Now $g^{i}=g_{1}^{i}\dots g_{l}^{i}$ so that $|g^{i}|_{i} \le l$. This gives the proof of the fact that for any $g\in G$, $|g^{i}|_{i} \le |g|_{G}$. For any $\epsilon>0$ choose $l(\epsilon) \in \Nbb$ such that 
$$||b_{i}(g^{i})|| <\epsilon |g^{i}|_{i} \quad \textrm{ if } \quad g^{i}\notin (Q_{i})^{l(\epsilon)}$$
We can find such $l(\epsilon)$ because we assume that the $b_{i}$ has sublinear growth as $1$-cocycle defined on $G_{i}$. 
On the other hand, because $Q_{i}$ is compact, we can find $$M(\epsilon)=\max(||b_{i}(q)||:q\in (Q_{i})^{l(\epsilon)})<\infty$$
Find $L(\epsilon) \in \Nbb$ such that $L(\epsilon) >\frac{M(\epsilon)}{\epsilon}$.
For any $g\in G$ with $|g|_{G} >L(\epsilon)$, either 
\begin{enumerate}
\item $g^{i}\in (Q_{i})^{l(\epsilon)}$ or
\item $g^{i}\notin (Q_{i})^{l(\epsilon)}$
\end{enumerate}
For the first case, since $g^{i}\in (Q_{i})^{l(\epsilon)}$, $$||b_{i}(g)||_{2}=||b_{i}(g^{i})||_{2} \le M(\epsilon) <\epsilon \times L(\epsilon) < \epsilon |g|_{G}$$ by construction and due to the part of the Theorem \ref{shalG}. On the other hand, for the second case, since $g^{i}\notin (Q_{i})^{l(\epsilon)}$, $$||b_{i}(g)||_{2}=||b_{i}(g^{i})||_{2} <\epsilon|g^{i}|_{i} \le \epsilon |g|_{G}$$
Therefore, for any cases, $b_{i}$ has sublinear growth as $1$-cocycle defined on $G$.  \end{proof}

\underline{\textbf{Step 3.}}  Finally, we know that the sum of two sublinear growth cocycle has sublinear growth, due to Lemma \ref{summ}.
More precisely, Lemma \ref{summ} and \ref{glob} give us that $b_{1}+b_{2}+\dots+b_{n}$ has sublinear growth as cocycle defined on $G$. 
So we proved Theorem \ref{Hilbertcase}. \end{proof}

For a weakly irreducible lattice $\Gamma$ case, we also prove slightly general theorem as follows. Recall that $|\cdot|_{w}$ is symmetric canonical norm on $\Gamma$.
\begin{theorem}\label{RvalueGamma}

Let $\Gamma$ satisfy \ref{std} and $(S,\mu)$ be the standard probability space with measure preserving $\Gamma$-action. Let $\delta:\Gamma\times S\rightarrow \Rbb$ be  $L^{2}$-integrable cocycle in the following sense. $$(L^{2}) :\quad \forall \gamma \in \Gamma,\quad |\delta(\gamma,-)|\in L^{2}(S)$$

Then for any $\epsilon>0$, there is $N>0$ such that
$$||\delta(\gamma,-)||_{L^{2}(S)}=\left( \int_{S} |\delta(\gamma,s)|^{2} d\mu(s)\right)^{\frac{1}{2}} \le \epsilon|\gamma|_{w}$$  for $|\gamma|_{w}>N$. Especially, for every $\gamma\in \Gamma$, we have $$\lim_{m\rightarrow\pm\infty} \frac{|\delta(\gamma^{m},x)|}{m}=0$$ almost every $x\in S$.

\end{theorem}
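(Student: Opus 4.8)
The plan is to deduce Theorem~\ref{RvalueGamma} from Theorem~\ref{RvalueG} by inducing the cocycle $\delta$ from $\Gamma$ up to the ambient group $G$. Exactly as in the deduction of Theorem~\ref{RvalueG} from Theorem~\ref{Hilbertcase}, it suffices to show that the $1$-cocycle $b\colon\Gamma\to L^{2}(S)$, $b(\gamma)(s)=\delta(\gamma^{-1},s)$, attached to $\delta$ by Lemma~\ref{conticoc} has sublinear growth with respect to the canonical norm $|\cdot|_{w}$: the $L^{2}$-estimate and then the pointwise conclusion follow from this as in the proof of Theorem~\ref{RvalueG} (using $|\gamma^{m}|_{w}\le m|\gamma|_{w}$, extracting a subsequence, and Kingman's subadditive ergodic theorem, which applies since $|\delta(\gamma,-)|\in L^{1}(S)$). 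To run the induction, set $X=(G\times S)/\Gamma$ with $\Gamma$ acting by $\gamma\cdot(g,s)=(g\gamma^{-1},\gamma\cdot s)$ and $G$ acting on $X$ by left translation in the first coordinate; normalizing the finite invariant measure on $G/\Gamma$ and taking its product with $\mu$ gives a $G$-invariant probability measure on $X$. A Borel section $\sigma\colon G/\Gamma\to G$ of the projection yields the return cocycle $\alpha\colon G\times G/\Gamma\to\Gamma$, $\alpha(g,y)=\sigma(gy)^{-1}g\,\sigma(y)$, under which $X$ is identified measurably with $G/\Gamma\times S$ via $g\cdot(y,s)=(gy,\alpha(g,y)\cdot s)$, and one forms the induced cocycle $\widehat\delta\colon G\times X\to\Rbb$, $\widehat\delta(g,(y,s))=\delta(\alpha(g,y),s)$, whose cocycle identity is immediate from those of $\alpha$ and $\delta$.

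Next I would verify that $\widehat\delta$ is $L^{2}$-integrable over the $G$-action and apply Theorem~\ref{RvalueG}. When $\Gamma$ is cocompact the section may be taken with relatively compact image, so for each fixed $g$ the map $\alpha(g,\cdot)$ takes only finitely many values $\gamma'\in\Gamma$ and $\|\widehat\delta(g,-)\|_{L^{2}(X)}^{2}$ is a finite sum of the finite numbers $\|\delta(\gamma',-)\|_{L^{2}(S)}^{2}$. For a non-uniform $\Gamma$ one uses in addition that $\gamma'\mapsto\|\delta(\gamma',-)\|_{L^{2}(S)}$ is subadditive in the word metric (by the cocycle relation and $\mu$-invariance), hence grows at most linearly in $|\gamma'|_{w}$, together with the reduction theory of $G/\Gamma$ and the fact that $\Gamma$ is undistorted in the higher-rank group $G$, to see that the resulting weighted sum over $\Gamma$ converges; alternatively one cites the standard fact that inducing preserves $L^{p}$-integrability of cocycles. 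Either way Theorem~\ref{RvalueG} applies to $\widehat\delta$: for every $\epsilon>0$ there is $N$ with $\|\widehat\delta(g,-)\|_{L^{2}(X)}\le\epsilon|g|_{G}$ whenever $|g|_{G}>N$; equivalently, the induced $1$-cocycle $\widehat b\colon G\to\Ind_{\Gamma}^{G}L^{2}(S)\cong L^{2}(X)$ has sublinear growth with respect to $|\cdot|_{G}$.

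It remains to descend, i.e.\ to pass from sublinearity of $\widehat b$ over $G$ back to sublinearity of $b$ over $\Gamma$. The cleanest route is through reduced cohomology: Shapiro's lemma, in the reduced-cohomology form due to Shalom, identifies $\overline{H^{1}}(\Gamma,\pi_{S})$ with $\overline{H^{1}}(G,\Ind_{\Gamma}^{G}\pi_{S})$ and sends the class of $b$ to the class of $\widehat b$; since sublinear growth is an invariant of the reduced class (Lemma~\ref{samegrowth}) and $\Gamma$ is undistorted in $G$, so that $|\cdot|_{w}$ and the restriction of $|\cdot|_{G}$ are comparable, sublinearity transfers back from $\widehat b$ to $b$, which gives the first assertion of Theorem~\ref{RvalueGamma}. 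An alternative descent, paralleling the proof of Theorem~\ref{RvalueG} more literally, is to write $\Gamma=\prod_{i}\Gamma_{i}$ as in \cite{Rag}, split $b$ by Shalom's product decomposition (Theorem~\ref{shalG}) into cocycles each depending only on a single $\Gamma_{i}$ plus an invariant part and an almost-coboundary, kill the invariant part using finiteness of $\Gamma_{i}^{\ab}$, and for each irreducible $\Gamma_{i}\le H_{i}$ apply Shalom's cohomological superrigidity for irreducible lattices to reduce to a $1$-cocycle over a simple factor of $H_{i}$ --- handled by property (T) if that factor has higher rank and by Proposition~\ref{prophil} (growth of harmonic maps) if it has rank one --- then reassemble using Lemmas~\ref{summ} and \ref{almsub}.

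I expect the main obstacle to be precisely this descent step, together with the $L^{2}$-integrability of the induced cocycle in the non-uniform case. The delicacy is that the Shapiro correspondence is a purely algebraic identification of cohomology groups, whereas sublinear growth is a metric notion; equivalently, the obvious way to read $b$ off from $\widehat b$ --- restriction to the distinguished fibre $\{y_{0}\}\times S$ over the identity coset, where $\alpha(\gamma,y_{0})=\gamma$ and hence $\widehat\delta(\gamma,(y_{0},\cdot))=\delta(\gamma,\cdot)$ --- recovers values only on a $\mu$-null subset of $X$, so a genuine argument (or the factorwise route above) is needed to make the transfer legitimate. Everything else --- the construction of $X$ and $\widehat\delta$, the invocation of Theorem~\ref{RvalueG}, and the passage from the $L^{2}$-bound to the almost-everywhere statement --- is routine.
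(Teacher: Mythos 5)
Your primary route (induce $\delta$ to a $G$-cocycle on $(G\times S)/\Gamma$, apply Theorem~\ref{RvalueG}, descend) has a genuine gap exactly where you flag it: the descent. Knowing that $\widehat b$ has sublinear growth over $G$ tells you something about its reduced cohomology class only in the degenerate case $[\widehat b]=0$; in general the Shapiro identification $\overline{H^{1}}(\Gamma,\pi_S)\cong\overline{H^{1}}(G,\Ind_\Gamma^G\pi_S)$ is an isomorphism of quotient groups and carries no metric information, and the concrete inverse map (evaluation of $\widehat b(\gamma)\in L^{2}(G/\Gamma,L^{2}(S))$ on the fibre over the identity coset) is an unbounded, only-a.e.-defined operation, as you note. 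The gap is closable --- e.g.\ for $y$ in a fixed compact positive-measure set the values $\alpha(\gamma,y)\in\Gamma$ differ from $\gamma$ by elements of the finite set $\Gamma\cap(\text{bounded set})$, so $\|\delta(\gamma,-)\|\le\|\delta(\alpha(\gamma,y),-)\|+M_{0}$ by subadditivity, and integrating over that set converts sublinearity of $\|\widehat\delta(\gamma,-)\|_{L^{2}(X)}$ into sublinearity of $\|\delta(\gamma,-)\|_{L^{2}(S)}$ --- but you did not supply this (or any) argument, and you would additionally need square-integrability of the return cocycle $\alpha$ (a nontrivial theorem of Shalom) to even apply Theorem~\ref{RvalueG} in the non-uniform case.

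The route the paper actually takes is the ``alternative descent'' you mention in one sentence, and it sidesteps both difficulties: reduce to $\Gamma=\prod_j\Gamma_j$ a product of irreducible lattices (finite index is harmless for sublinearity), apply Shalom's reduced-cohomology superrigidity for irreducible lattices (Theorem~\ref{shalGam}) directly to the $\Gamma$-cocycle $b$, which already produces, up to an almost coboundary, pieces $b_i$ that \emph{extend continuously to $G$-cocycles} factoring through the simple factors $G_i$; each piece is handled by Proposition~\ref{prophil} (property (T) factors included) and Lemma~\ref{glob}, and the only ``descent'' needed is the trivial restriction Lemma~\ref{finlem}, which uses the easy inequality $|\gamma|_{G}\le|\gamma|_{w}$ rather than undistortedness. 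So no measurable induction of $\delta$ and no integrability of the return cocycle ever enter. As written, your proposal correctly identifies the reduction to sublinear growth of the associated $1$-cocycle and correctly names the working strategy, but the route you actually develop is left with an acknowledged, unfilled hole at its crucial step.
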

We need following theorem corresponding to Theorem \ref{Hilbertcase}.
\begin{theorem}\label{Hilgamma} Let $\Gamma$ satisfy \ref{std}. Let $(\pi,\Hcal)$ be unitary representation of $\Gamma$ and $b:\Gamma\rightarrow \Hcal$ be $1$ cocycle. Then $b$ has sublinear growth.
\end{theorem}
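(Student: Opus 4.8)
The plan is to deduce Theorem \ref{Hilgamma} from Theorem \ref{Hilbertcase} by inducing the cocycle up to $G$, in the spirit of the proof of Theorem \ref{Hilbertcase} itself. First I would form the induced unitary representation $\tilde\pi=\Ind_{\Gamma}^{G}\pi$ on $\tilde{\mathcal{H}}=L^{2}(G/\Gamma,\mathcal{H})$, built from a return cocycle $\alpha:G\times G/\Gamma\to\Gamma$, together with the induced $1$-cocycle $\tilde b\in Z^{1}(G,\tilde\pi)$, which satisfies $\|\tilde b(g)\|_{\tilde{\mathcal{H}}}^{2}=\int_{G/\Gamma}\|b(\alpha(g^{-1},x))\|_{\mathcal{H}}^{2}\,d\mu(x)$. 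This is a genuine element of $Z^{1}$: the integrand lies in $L^{1}$ because $b$ grows at most linearly (finite generation of $\Gamma$ together with unitarity of $\pi$) and because the return cocycle of a lattice satisfies $|\alpha(g,\cdot)|_{w}\in L^{p}(G/\Gamma)$ for all $p$ by reduction theory (indeed $\Gamma$ is undistorted in $G$, by Lubotzky--Mozes--Raghunathan), and continuity follows from Lemma \ref{conti}.

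By Theorem \ref{Hilbertcase}, $\tilde b$ has sublinear growth with respect to $|\cdot|_{G}$. Unwinding the proof of that theorem (via Theorem \ref{shalG} and Proposition \ref{prophil}), $\tilde b$ is almost cohomologous to $\tilde b_{0}+\tilde b_{1}+\cdots+\tilde b_{n}$, where $\tilde b_{0}$ takes values in the $G$-invariant vectors (hence is trivial, as $G$ is semisimple) and each $\tilde b_{i}$ factors through a single simple factor $p_{i}:G\to G_{i}$, i.e. $\tilde b_{i}=c_{i}\circ p_{i}$ with $c_{i}\in Z^{1}(G_{i},\sigma_{i})$ for a unitary representation $\sigma_{i}$ of $G_{i}$, and each $c_{i}$ has sublinear growth by Proposition \ref{prophil}.

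I would then transfer this back to $\Gamma$. Writing $\Gamma$, up to finite index, as $\prod_{j}\Gamma_{j}$ with $\Gamma_{j}$ an irreducible lattice in the higher-rank group $H_{j}$, Shalom's superrigidity for reduced $1$-cohomology of irreducible lattices in products (combined with the product form of Theorem \ref{shalG}, and using Margulis' superrigidity, valid under \ref{std}) shows that $b$ is itself almost cohomologous to a sum $b_{0}+b_{1}+\cdots+b_{n}$ in which $b_{0}$ takes values in the $\Gamma$-invariant vectors and each $b_{i}=c_{i}\circ(p_{i}|_{\Gamma})$ with $c_{i}\in Z^{1}(G_{i},\sigma_{i})$, where $p_{i}|_{\Gamma}$ has dense image in $G_{i}$ (see \cite{Shalom}). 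The piece $b_{0}$ is a homomorphism into an abelian group, so it factors through the abelianization of $\Gamma$ and hence vanishes, since $\Hom(\Gamma,\RR)=0$ for a higher-rank lattice. For $i\ge 1$, Proposition \ref{prophil} gives that $c_{i}$ has sublinear growth on $G_{i}$ and is bounded on compact sets; since $|p_{i}(\gamma)|_{G_{i}}\le C|\gamma|_{w}$, the elementary splitting argument of Lemma \ref{glob} (treat $|p_{i}(\gamma)|_{G_{i}}$ bounded and unbounded separately) yields $\|b_{i}(\gamma)\|=o(|\gamma|_{w})$. Summing via Lemma \ref{summ} and discarding the almost-coboundary via Lemma \ref{samegrowth} gives sublinear growth of $b$ with respect to $|\cdot|_{w}$; passing from the finite-index subgroup $\prod_{j}\Gamma_{j}$ back to $\Gamma$ is harmless since finite-index inclusions are quasi-isometries.

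The main obstacle is exactly the transfer step. Merely restricting the induced cocycle $\tilde b$ to $\Gamma$ does not work: the averaging formula for $\|\tilde b(\gamma)\|^{2}$ concentrates on the set $\{x:\alpha(\gamma^{-1},x)=\gamma^{-1}\}$, whose measure tends to $0$ as $|\gamma|_{w}\to\infty$, and there is no $\Gamma$-equivariant isometric embedding $\mathcal{H}\hookrightarrow\tilde{\mathcal{H}}$ compatible with $b$ and $\tilde b|_{\Gamma}$. One genuinely needs the lattice form of Shalom's superrigidity for reduced cohomology, recognizing the pieces of the decomposition as pulled back from the simple factors via the dense-image projections $\Gamma\to G_{i}$; this is where the weak irreducibility of $\Gamma$ — equivalently, ergodicity of the $F^{c}$-actions on $G/\Gamma$ for rank-one factors $F$ — enters, through Howe--Moore-type mixing arguments.
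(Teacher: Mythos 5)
Your operative argument (your third paragraph) — applying Shalom's reduced-cohomology superrigidity directly to $\Gamma$ (the product form of Theorem \ref{shalG} for $\prod_{j}\Gamma_{j}$ together with the irreducible-lattice form, the paper's Theorem \ref{shalGam}), killing $b_{0}$ via finiteness of the abelianization, using Proposition \ref{prophil} on each simple factor and the comparison $|p_{i}(\gamma)|_{G_{i}}\le|\gamma|_{w}$ exactly as in Lemmas \ref{glob} and \ref{finlem}, then summing with Lemmas \ref{summ} and \ref{samegrowth} and reducing to the finite-index subgroup — is precisely the paper's proof of Theorem \ref{Hilgamma}, up to the cosmetic point that factors $\Gamma_{j}$ lying in a single simple higher-rank group are handled there by property (T) (their cocycles are almost coboundaries) rather than by Margulis superrigidity. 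Your induction-to-$G$ preamble is, as you yourself note, never used in the final argument and can simply be dropped.
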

Exactly same arguments with $D=G$ case, it is enough to prove Theorem \ref{Hilgamma}.

\begin{proof}[Proof of Theorem \ref{Hilgamma}] ${}$

\textbf{$\Gamma$ is an irreducible lattice case:} We will prove when $\Gamma$ is an irreducible lattice first. When $G$ is a simple higher rank algebraic group, in other words $n=1$, $\Gamma$ has property (T). In this case, every $1$-cocycle is bounded so we are done. Therefore, we may assume that $G$ has at least $2$ simple factors, in other words $n\ge 2$. We want to prove that for any $\epsilon>0$, there is $N\in \Nbb$ such that for any $\gamma \in \Gamma$ with $|\gamma|_{w}>N$, $||b(\gamma)||_{2}<\epsilon|\gamma|_{w}$.

\underline{\textbf{Step 1.}} For the irreducible lattice $\Gamma$, Shalom\textquoteright s super-rigidity for reduced cohomology (See \cite{Shalom} Theorem 4.1)   states follow. 
\begin{theorem}[\cite{Shalom}]\label{shalGam}
Let $\Gamma, G_{i}$ and $G$ be as above, $b$ is  almost cohomologous to a cocycle of the form $b_{0}+b_{1}+\dots+b_{n}$, where $b_{0}$ takes values in the subspace of $\Gamma$-invariant vectors $\mathcal{H}_{0}\subset L^{2}(S)$, and each $b_{i}$ for $1\le i \le n$, takes values in a $\Gamma$-invariant subspace $\mathcal{H}_{i}\subset L^{2}(S)$, on which the $\Gamma$ representation extends continuously to $G$-representation which factors through a representation of $G_{i}$. Furthermore, each $b_{i}$ extends continuously to a cocycle depending only on $G_{i}$ for $1\le i \le n$ and $b_{0}$ extends continuously to a cocycle of $G$.
\end{theorem}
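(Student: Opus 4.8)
The plan is to proceed in two tiers: establish the statement first for irreducible lattices, then deduce the general weakly irreducible case from the product decomposition of $\Gamma$. For an irreducible lattice $\Gamma$ in $G=G_{1}\times\dots\times G_{n}$ with $n=1$, the envelope $G$ is a higher rank almost simple group, so $\Gamma$ has property (T); hence every $1$-cocycle $b$ is a coboundary, in particular bounded, in particular of sublinear growth, and nothing more is needed. The substantive case is $n\ge 2$, where I would apply Shalom's super-rigidity for reduced cohomology (Theorem \ref{shalGam}, in the version valid for an arbitrary unitary representation $\pi$, not only for subrepresentations of some $L^{2}(S,\mu)$) to write $b$ as almost cohomologous to $b_{0}+b_{1}+\dots+b_{n}$, where $b_{0}$ is valued in the $\Gamma$-invariant vectors $\Hcal_{0}$ and each $b_{i}$ for $1\le i\le n$ is the restriction to $\Gamma$ of a continuous cocycle $\widetilde{b}_{i}$ over a unitary representation of $G_{i}$. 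By Lemma \ref{samegrowth} it suffices to estimate the growth of $b_{0}+\dots+b_{n}$, and by Lemma \ref{summ} it suffices to treat each summand separately.

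Next I would dispose of the summands. Since $\pi$ acts trivially on $\Hcal_{0}$, the cocycle identity makes $b_{0}:\Gamma\to(\Hcal_{0},+)$ a homomorphism into an abelian group, so it factors through $\Gamma/[\Gamma,\Gamma]$; by Margulis's normal subgroup theorem $[\Gamma,\Gamma]$ is normal and not finite (otherwise $\Gamma$ would be virtually abelian), hence of finite index, so $b_{0}$ has finite image and is bounded. For $b_{i}$ with $i\ge 1$, write $b_{i}(\gamma)=\widetilde{b}_{i}(\mathrm{pr}_{i}(\gamma))$ with $\mathrm{pr}_{i}:\Gamma\hookrightarrow G\to G_{i}$ the coordinate projection; Proposition \ref{prophil} (equivalently Theorem \ref{Hilbertcase}) says $\widetilde{b}_{i}$ has sublinear growth on $G_{i}$. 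To move this to $\Gamma$ I would combine the elementary Lipschitz bound $|\mathrm{pr}_{i}(\gamma)|_{i}\le |\gamma|_{G}\le C|\gamma|_{w}$ (the second inequality because a finite generating set of $\Gamma$ lies in a $G$-ball) with the fact that $\widetilde{b}_{i}$ is bounded on every (pre-compact) ball of $G_{i}$, and then run exactly the two-case argument of Lemma \ref{glob}; this gives sublinear growth of $b_{i}$ on $\Gamma$. Hence $b_{0}+\dots+b_{n}$, and therefore $b$, has sublinear growth, which finishes the irreducible case.

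For a general weakly irreducible lattice $\Gamma$, I would use \cite[5.22 Theorem]{Rag} to pass to a finite index subgroup $\Gamma'=\Gamma_{1}\times\dots\times\Gamma_{m}$ of $\Gamma$ with each $\Gamma_{j}$ an irreducible lattice in a connected normal subgroup $H_{j}$ of $G$; by weak irreducibility each $H_{j}$ is higher rank, and after replacing $H_{j}$ by its algebraically simply connected cover (Section \ref{sec:prealg}) each $\Gamma_{j}\subset H_{j}$ meets the hypotheses of the irreducible case. Applying Shalom's super-rigidity in its form for a product of at least two LCSC groups to $b|_{\Gamma'}$ over $\Gamma'=\prod_{j}\Gamma_{j}$, one obtains that $b|_{\Gamma'}$ is almost cohomologous to $b_{0}'+\sum_{j}b_{j}'$, where $b_{0}'$ is valued in the $\Gamma'$-invariant vectors (bounded, since $\Gamma'/[\Gamma',\Gamma']=\prod_{j}\Gamma_{j}/[\Gamma_{j},\Gamma_{j}]$ is finite by Margulis) and each $b_{j}'$ depends only on $\Gamma_{j}$. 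By the irreducible case each $b_{j}'$ has sublinear growth as a $\Gamma_{j}$-cocycle, and since the projection $\Gamma'\to\Gamma_{j}$ is $1$-Lipschitz for the word metrics, the Lemma \ref{glob} argument upgrades this to sublinear growth on $\Gamma'$; Lemmas \ref{summ} and \ref{samegrowth} then give sublinear growth of $b|_{\Gamma'}$. Finally, since $\Gamma'$ has finite index in $\Gamma$ the two word metrics are quasi-isometric: writing $\gamma=\gamma'c$ with $\gamma'\in\Gamma'$ and $c$ ranging over a fixed finite transversal, $\|b(\gamma)\|\le\|b(\gamma')\|+\max_{c}\|b(c)\|$ while $|\gamma'|_{w}\le |\gamma|_{w}+\max_{c}|c|_{w}$, so sublinear growth descends from $b|_{\Gamma'}$ to $b$.

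The hard part is essentially already in hand: the genuinely analytic input --- the linear-in-distance growth estimate for harmonic maps into the irreducible unitary representations with non-trivial cohomology, i.e.\ Theorem \ref{irredcase} and hence Proposition \ref{prophil} --- has been established, so what remains is organizational. The main thing to get right is invoking Shalom's reduced-cohomology super-rigidity in each of the two forms needed (an irreducible lattice in a product of simple factors, and a finite direct product of such lattices) for arbitrary unitary representations, together with the metric bookkeeping carrying sublinear bounds among $\Gamma$, its finite index subgroup $\Gamma'$, the factors $\Gamma_{j}$, and the ambient simple groups $G_{i}$ --- each step being an instance of the argument already recorded in Lemma \ref{glob}. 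The only external non-formal ingredient beyond Shalom and Proposition \ref{prophil} is Margulis's normal subgroup theorem, used solely to kill the invariant-vector summand $b_{0}$.
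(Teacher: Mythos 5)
There is a genuine mismatch here: your proposal does not prove the statement in question, it assumes it. The statement is Theorem \ref{shalGam} itself, i.e.\ the assertion that a $1$-cocycle $b$ of the irreducible lattice $\Gamma$ is almost cohomologous to a sum $b_{0}+b_{1}+\dots+b_{n}$ whose summands take values in $\Gamma$-invariant subspaces on which the representation extends to $G$ and factors through the $G_{i}$, and which themselves extend continuously to cocycles of $G$ (resp.\ of $G_{i}$). In your first paragraph you explicitly invoke ``Shalom's super-rigidity for reduced cohomology (Theorem \ref{shalGam})'' as the tool producing exactly this decomposition, and nowhere in the proposal is the decomposition established; as an argument for Theorem \ref{shalGam} this is circular. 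For the record, the paper does not prove this statement either: it is imported from Shalom \cite{Shalom} (Theorem 4.1 there, with Theorem 3.1 the version for the ambient product group), so a blind proof would have to reconstruct Shalom's reduced-cohomology super-rigidity argument --- decomposing $\Hcal$ according to invariance under the factors, using the density of the projections of the irreducible lattice into the partial products to extend the cocycle continuously from $\Gamma$ to $G$ modulo the closure of the coboundaries, and so on --- none of which appears in your text.

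What you actually wrote is, in substance, the paper's proof of the downstream result, Theorem \ref{Hilgamma} (sublinear growth of every $1$-cocycle of a weakly irreducible lattice): property (T) when $n=1$; Shalom's decomposition when $n\ge 2$; disposing of $b_{0}$ (you via the normal subgroup theorem and finiteness of the abelianization, the paper via the corresponding argument on $G$); sublinearity of each $b_{i}$ from Proposition \ref{prophil} transported to $\Gamma$ by the bookkeeping of Lemmas \ref{glob} and \ref{finlem}; then Lemmas \ref{summ} and \ref{samegrowth}; and finally the reduction of the weakly irreducible case to a finite-index product of irreducible lattices. That material is essentially sound and parallels the paper, but it answers a different question: for the statement actually at hand, the key content (Shalom's theorem) is taken as given rather than proved, so the proposal has no bearing on Theorem \ref{shalGam} beyond citing it.
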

Now due to above theorem, we have $b=b_{0}+b_{1}+\dots+b_{n} +b_{r}$ where $b_{0},b_{1},\dots,b_{n}$ as above and for some almost coboundary $b_{r}:\Gamma\rightarrow \Hcal$. We will think $b_{0},b_{1},\dots,b_{n}$ and $b$ as 1-cocycle defined on $G$, i.e. 
$b_{0}:G\rightarrow \Hcal$ and 
$b_{i}:G\twoheadrightarrow G_{i} \rightarrow \Hcal$ for $i=1,\dots,n. $

\underline{\textbf{Step 2.}} This step is same as the proof when $D=G$ case.\\

\underline{\textbf{Step 3.}} Finally, we claim that the $1$-cocycle $b$ has sublinear growth as the cocycle defined on $\Gamma$ using the following argument as we desired. 
Indeed, for cocycle $b_{1}+b_{2}+\dots+b_{n} : G\rightarrow \Hcal$, then we have that $b_{1}+b_{2}+\dots+b_{n}\restrict{\Gamma} : \Gamma \rightarrow \Hcal$ has a sublinear growth as a $1$-cocycle defined on $\Gamma$ by the following Lemma. 

\begin{lem}\label{finlem}
For a cocycle $c:G\rightarrow \Hcal$ which has a sublinear growth, then $c\restrict{\Gamma} : \Gamma \rightarrow \Hcal$ has sublinear growth as $1$-cocycle defined on $\Gamma$.
\end{lem}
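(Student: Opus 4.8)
The plan is to exploit the fact that a cocompact lattice $\Gamma$ in $G$ is quasi-isometrically embedded: the canonical word norm $|\cdot|_w$ on $\Gamma$ is quasi-isometric to the restriction of the canonical norm $|\cdot|_G$ of $G$. Since $\Gamma$ is finitely generated (a cocompact lattice in a compactly generated group is compactly generated, hence finitely generated as it is discrete), fix a finite symmetric generating set and let $|\cdot|_w$ be the associated word norm. By the Milnor--Švarc lemma (or directly: $\Gamma$ acts cocompactly and properly discontinuously by isometries on $G$ equipped with a left-invariant metric realizing $|\cdot|_G$ up to quasi-isometry, cf. Lemma \ref{comp}), there are constants $A\ge 1$, $B\ge 0$ with
\[
A^{-1}|\gamma|_w - B \;\le\; |\gamma|_G \;\le\; A|\gamma|_w + B
\qquad\text{for all }\gamma\in\Gamma.
\]
In particular $|\gamma|_G \le A|\gamma|_w + B$, so $|\gamma|_G \to\infty$ as $|\gamma|_w\to\infty$, and conversely $|\gamma|_w$ is bounded on any set where $|\gamma|_G$ is bounded.

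Now let $\epsilon>0$. Since $c:G\to\Hcal$ has sublinear growth, choose $N_0\in\Nbb$ so that $\|c(g)\| < \tfrac{\epsilon}{2A}\,|g|_G$ whenever $|g|_G > N_0$. Using the quasi-isometry inequality, there is $N_1\in\Nbb$ such that $|\gamma|_w > N_1$ forces $|\gamma|_G > N_0$ and moreover $A|\gamma|_w + B \le 2A|\gamma|_w$ (the latter holds as soon as $|\gamma|_w \ge B/A$). Then for every $\gamma\in\Gamma$ with $|\gamma|_w > N_1$,
\[
\|c\restrict{\Gamma}(\gamma)\| \;=\; \|c(\gamma)\| \;<\; \frac{\epsilon}{2A}\,|\gamma|_G \;\le\; \frac{\epsilon}{2A}\bigl(A|\gamma|_w + B\bigr) \;\le\; \frac{\epsilon}{2A}\cdot 2A\,|\gamma|_w \;=\; \epsilon\,|\gamma|_w .
\]
Hence $c\restrict{\Gamma}$ has sublinear growth as a $1$-cocycle on $\Gamma$. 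Note the cocycle identity for $c\restrict{\Gamma}$ over $\pi\restrict{\Gamma}$ is inherited verbatim from that of $c$, so there is nothing to check there.

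I expect the only genuine content here to be the quasi-isometric embedding $\Gamma\hookrightarrow G$; once that is in hand (it is standard for cocompact lattices, and the paper has already recorded the relevant comparison of norms on $G$ in Lemma \ref{comp}), the growth estimate is a routine two-line chase through the inequalities as above. The one point to be careful about is that this argument uses cocompactness of $\Gamma$ — which is exactly the standing hypothesis in Theorems \ref{thm:localrigid} and \ref{thm:globalrigid} — so the lemma should be stated for cocompact $\Gamma$, or else one must invoke a more delicate distortion estimate. After this lemma, combining with Step 3's observation that $b$ and $b_1+\dots+b_n\restrict{\Gamma}$ are almost cohomologous on $\Gamma$, Lemma \ref{samegrowth} finishes the irreducible case, and the general weakly irreducible case follows by decomposing $\Gamma$ (up to finite index) as a product of irreducible lattices and applying Lemma \ref{summ}.
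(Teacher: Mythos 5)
Your estimate in the cocompact case is fine, but the proposal as written has a genuine gap for the lemma as the paper needs it. You invoke the two-sided quasi-isometry $A^{-1}|\gamma|_w-B\le|\gamma|_G\le A|\gamma|_w+B$ via Milnor--\v{S}varc, and you justify restricting to cocompact $\Gamma$ by saying cocompactness is "exactly the standing hypothesis in Theorems \ref{thm:localrigid} and \ref{thm:globalrigid}." That is a misreading: in those theorems the cocompact lattice is $\Lambda<H$ (the target homogeneous space), while $\Gamma<G$ is only assumed to be a weakly irreducible lattice and is allowed to be non-uniform --- indeed the paper's motivating example $\Gamma\simeq\SL_{2}(\Zbb[\sqrt{17}])$ in $\SL(2,\Rbb)\times\SL(2,\Rbb)$ is a non-uniform lattice. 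Lemma \ref{finlem} is used inside the proof of Theorem \ref{Hilgamma} for such general $\Gamma$, so a proof valid only for cocompact $\Gamma$ (or one that imports the Lubotzky--Mozes--Raghunathan non-distortion theorem as the "more delicate distortion estimate") does not cover the case the paper actually needs, and the only place you use the lower bound $A^{-1}|\gamma|_w-B\le|\gamma|_G$ is to force $|\gamma|_G>N_0$ from $|\gamma|_w>N_1$.

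The paper's proof shows this lower distortion bound is unnecessary. Choosing the finite generating set $W$ of $\Gamma$ inside the compact generating set $Q$ of $G$ gives the trivial inequality $|\gamma|_G\le|\gamma|_w$, which is all that is needed for the final estimate $\|c(\gamma)\|<\epsilon|\gamma|_G\le\epsilon|\gamma|_w$. For the step you handled with Milnor--\v{S}varc, the paper instead uses discreteness: $A=Q^{N(\epsilon)}\cap\Gamma$ is finite, so $P=\max\{|\gamma|_w:\gamma\in A\}<\infty$, and any $\gamma$ with $|\gamma|_w>P$ lies outside $Q^{N(\epsilon)}$, i.e.\ $|\gamma|_G>N(\epsilon)$, at which point sublinearity of $c$ on $G$ applies. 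In other words, only properness of the inclusion $\Gamma\hookrightarrow G$ (bounded $|\cdot|_G$-norm forces bounded $|\cdot|_w$-norm, via finiteness of lattice points in a compact set) is needed, not a linear lower bound; replacing your quasi-isometry input by this finite-set argument repairs the proof and makes it work for arbitrary lattices, uniform or not.
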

\begin{proof}[Proof of Lemma \ref{finlem}.]
Now assume that $c:G \rightarrow \mathcal{H}$ has sublinear growth as $1$-cocycle defined on $G$. For any $\gamma \in \Gamma$, write $\gamma=a_{1}a_{2}\dots a_{l}$ with minimal word length, where $a_{1},\dots,a_{l}\in W$. In this case $|\gamma|_{w}=l$. Since we may assume that $W\subset Q$, $|\gamma|_{G} \le l=|\gamma|_{w}$. Therefore, for any $\gamma\in \Gamma$, $|\gamma|_{G}\le |\gamma|_{w}$. Now fix arbitrary $\epsilon>0$ then there exist $N(\epsilon) \in \Nbb$ such that for any
$$g\in G-(Q)^{N(\epsilon)}=G-(Q_{1}\times Q_{2}\times \dots \times Q_{n})^{N(\epsilon)} \Rightarrow ||c(g)||<\epsilon|g|_{G}$$
Equivalently, $|g|_{G} >N(\epsilon) \Rightarrow ||c(g)||<\epsilon|g|_{G}$. Since we assume that $c$ has a sublinear growth as a $1$-cocycle defined on $G$, we can find such a $N(\epsilon)$. Now take $A=Q^{N(\epsilon)}\cap \Gamma$. Since $\Gamma$ is discrete, $Q^{N(\epsilon)}$ is compact, $A$ is finite. So, we have $P=\max(|\gamma|_{w} : \gamma \in A) <\infty$.
Moreover, for any $\gamma\in \Gamma$ with $|\gamma|_{w}>P$, 
\begin{eqnarray*}
|\gamma|_{w}> P &\Rightarrow & \gamma \notin A=\Gamma \cap Q^{N(\epsilon)}\\
 &\Rightarrow & \gamma \notin Q^{N(\epsilon)} , \gamma \in G\\
&\Rightarrow   & ||c(\gamma)||<\epsilon |\gamma|_{G} \le \epsilon |\gamma|_{w}\\ 
\end{eqnarray*} 
This proves the lemma. \end{proof}

Using Lemma \ref{samegrowth} and \ref{summ} again, $b=b_{1}+\dots+b_{n}+b_{r} : \Gamma \rightarrow \Hcal$ has a sublinear growth as a $1$-cocycle defined on the $\Gamma$.

\textbf{$\Gamma$ is a weakly irreducible lattice case:} Now we can prove the theorem for a weakly irreducible lattice $\Gamma$. Let $\Gamma$ be a weakly irreducible lattice in $G$.  Recall that we can find a finite index subgroup $\prod_{j=1}^{k} \Gamma_{j}$ where $\Gamma_{j}$ is an irreducible lattice in some semisimple normal subgroup of $G$. Note that if $b\restrict{\prod_{j=1}^{k}\Gamma_{j}}$  has a sublinear growth, then $b$ has a sublinear growth since $\prod_{j=1}^{k}\Gamma_{j}$ is a finite index subgroup. Also, if $k=1$ then $\Gamma$ is irreducible. Therefore, we may assume that $k\ge 2$ and $\Gamma=\prod_{j=1}^{k}\Gamma_{j}$ for some irreducible lattices $\Gamma_{j}$. In this case, we can use Theorem \ref{shalG} for $\prod_{j=1}^{k}\Gamma_{j}$ since the theorem holds for product of LCSC groups. Therefore, it is enough to prove that any $1$-cocycle has sublinear growth for each irreducible lattices $\Gamma_{j}$ using same arguments in the proof of Theorem \ref{Hilbertcase}. This case is already proved as above.  Therefore, we prove Theorem \ref{Hilgamma} for a general weakly irreducible lattice $\Gamma$. 
\end{proof}

\begin{rmk} Let $D=G$ or $\Gamma$ satisfy \ref{std}. 
The above theorems allow us to prove that an amenable algebraic group valued cocycle over $D$ action is subexponential under $L^{2}$-integrability. 
We will not use this results in the remaining paper. 


\end{rmk}

\section{Dynamical Cocycle super-rigidity}\label{sec:dynsuperrigid}
Now we are in the position to prove our main ingredients, dynamical cocycle super-rigidity theorem.

Let $D=G$ or $\Gamma$ satisfy \ref{std} with $\textrm{rank}_{\Rbb}(G)\ge 2$. Let $(S,\mu)$ be the standard $D$-space. Assume that $D$ action satisfies \ref{std2}.

Let $\Fbb$ be a connected reductive algebraic group, then there are connected semisimple algebraic groups $\Hbb$, $\Kbb$ and central torus $\Tbb$ so that \begin{enumerate}
\item all of them defined over $\Rbb$ and $\Fbb$ is almost direct product of $\Hbb$, $\Kbb$ and $\Tbb$, i.e. 
$\Fbb=\Hbb\cdot \Kbb\cdot \Tbb.$ 
\item $H=\Hbb(\Rbb)$ is semisimple with non-compact factor. In other words, every almost $\Rbb$-simple factors of $\Hbb$ is $\Rbb$-isotropic,
\item $K=\Kbb(\Rbb)$ is compact semisimple. In other words, every almost $\Rbb$-simple factors of $\Kbb$ is $\Rbb$-anisotropic.
\end{enumerate}

Recall that the following is a direct consequence of Zimmer's cocycle super-rigidity theorem in the semisimple algebraic hull. It is proved in the \cite{FM1} Theorem 3.16 implicitly. 
	\begin{theorem}\label{thm:redzim} 
	Under same notations as above, let $\alpha$ be the measurable cocycle $\beta:D\times S\rightarrow F$ with an algebraic hull $F=\Fbb(\Rbb)$.  Then there is a rational homomorphism $\pi:\Gbb\rightarrow \Hbb$ defined over $\Rbb$, a cocycle $$\Esc:D\times S\rightarrow A=\left(Z(\Hbb)\cdot\Kbb\cdot\Tbb\right)(\Rbb)$$ taking values in an amenable group $A$, a measurable map $\phi:D\rightarrow F$ such that 
\begin{enumerate}
\item[(a)] for any $g\in D$, $\beta(g,x)=\phi(g.x)^{-1}\pi(g)\Esc(g,x)\phi(x)$ almost every $x\in S$,
\item[(b)] $\pi(G)$ and $A$ commutes. Especially, $\pi$ and $\Esc$ commutes.
\end{enumerate}
	\end{theorem}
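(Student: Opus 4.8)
The plan is to follow the argument that is implicit in \cite[Theorem 3.16]{FM1}: push $\beta$ forward to a cocycle into the adjoint semisimple part of the hull, apply cocycle super-rigidity there, lift the resulting homomorphism through the isogeny $\Hbb\to\Ad(\Hbb)$, and read off the amenable remainder. First, the subgroup $A=\left(Z(\Hbb)\cdot\Kbb\cdot\Tbb\right)(\Rbb)$ is the group of $\Rbb$-points of a normal algebraic subgroup of $\Fbb$ (its three factors pairwise commute and each is normalized by $\Hbb,\Kbb,\Tbb$), and it is amenable: $Z(\Hbb)(\Rbb)$ is finite, $\Kbb(\Rbb)$ is compact, and $\Tbb(\Rbb)$ is abelian. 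Up to finite central subgroups $\Fbb/(Z(\Hbb)\cdot\Kbb\cdot\Tbb)\cong\Ad(\Hbb)$, which is semisimple, defined over $\Rbb$, with trivial center and no compact $\Rbb$-factors (every almost $\Rbb$-simple factor of $\Hbb$ is $\Rbb$-isotropic). Composing $\beta$ with the quotient $F\to F/A$ yields a cocycle $\bar\beta:D\times S\to\Ad(\Hbb)(\Rbb)$, and since $\beta$ takes values in its algebraic hull $\Fbb$ and $F\to F/A$ is surjective, $\bar\beta$ takes values in its algebraic hull $\Ad(\Hbb)$ (the push-forward of a minimal cocycle along a surjective $\Rbb$-morphism is again minimal; cf.\ \cite{Zimbook}).

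\textbf{Cocycle super-rigidity for $\bar\beta$.} For $D=G$, apply Zimmer's cocycle super-rigidity to $\bar\beta$ over the $G$-action on $(S,\mu)$, which is ergodic by \ref{std2}. Since the target $\Ad(\Hbb)(\Rbb)$ is adjoint semisimple with no compact factors, the would-be compact error term is trivial, and one obtains a continuous — hence, the source being semisimple, rational over $\Rbb$ — homomorphism $\bar\pi:\Gbb\to\Ad(\Hbb)$ together with a measurable $\phi_{0}:S\to\Ad(\Hbb)(\Rbb)$ such that $\bar\beta(g,x)=\phi_{0}(g.x)^{-1}\bar\pi(g)\phi_{0}(x)$ for a.e.\ $x$. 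Only Margulis super-rigidity (valid under \ref{std}) and amenability of a minimal parabolic of $G$ enter here, not property (T). For $D=\Gamma$, induce $\beta$ to a cocycle over the induced $G$-action on $(G\times S)/\Gamma$, which is weakly irreducible by \ref{std2} and has the same reductive algebraic hull $\Fbb$ (cf.\ Theorem \ref{thm:finextirred} and the induction dictionary of \cite{Zimbook}); run the previous sentence for the induced cocycle and transfer the conclusion back to $\Gamma$ by the standard restriction argument, obtaining the same $\bar\pi$ and a measurable $\phi_{0}:S\to\Ad(\Hbb)(\Rbb)$ satisfying the analogous identity for $\gamma\in\Gamma$.

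\textbf{Lifting, assembly, and the main obstacle.} Since $\Gbb$ is algebraically simply connected (\ref{std}), $\bar\pi:\Gbb\to\Ad(\Hbb)$ lifts uniquely to a rational homomorphism $\pi:\Gbb\to\Hbb$ defined over $\Rbb$ through the central isogeny $\Hbb\to\Ad(\Hbb)$. Choosing a measurable section of $F\to F/A$ and lifting $\phi_{0}$ to a measurable $\phi:S\to F$, the cocycle $\beta^{\phi}(g,x)=\phi(g.x)^{-1}\beta(g,x)\phi(x)$ has image $\bar\pi(g)$ in $F/A$, so $\beta^{\phi}(g,x)\in\pi(g)A$ for a.e.\ $x$; set $\Esc(g,x)=\pi(g)^{-1}\beta^{\phi}(g,x)\in A$. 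As $\pi(G)\subseteq\Hbb(\Rbb)$ commutes with $Z(\Hbb)$ and with the almost-direct-product factors $\Kbb,\Tbb$, the group $\pi(G)$ commutes with $A$, so Lemma \ref{comcoc} shows that $\Esc:D\times S\to A$ is a cocycle; replacing $\phi$ by $\phi^{-1}$ gives (a), and this commutation gives (b). The main difficulty is the previous paragraph: one must verify that the part of the proof of \cite[Theorem 3.16]{FM1} which produces the homomorphism together with an amenable remainder uses only Margulis super-rigidity and amenability of minimal parabolics — so that it survives the failure of property (T) when $G$ has rank-$1$ factors — and, in the lattice case, that induction preserves both weak irreducibility and the reductive algebraic hull, i.e.\ the bookkeeping underlying Theorem \ref{thm:finextirred} and the standard induction/restriction correspondence.
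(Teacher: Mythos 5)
Your proposal is correct and follows the route the paper intends: the paper offers no proof of Theorem \ref{thm:redzim} beyond the citation of \cite{FM1} Theorem 3.16, and your reduction — quotient by the amenable normal subgroup $A$, apply cocycle super-rigidity to the induced cocycle into the adjoint semisimple group $\Ad(\Hbb)$ with no compact $\Rbb$-factors, lift the resulting homomorphism through the central isogeny using that $\Gbb$ is algebraically simply connected, and extract the $A$-valued remainder via Lemma \ref{comcoc} — is exactly the derivation that citation encodes. You also correctly isolate the one point that genuinely needs checking against \cite{FM1}, namely that the semisimple-adjoint-hull case of cocycle super-rigidity goes through without property (T) under weak irreducibility of the action; the paper asserts this without further argument, so your treatment is on par with (indeed more explicit than) the source.
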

	\begin{rmk} There are remarks on the above theorem.
		\begin{enumerate}
		\item Indeed, one can show that the amenable group valued error term $\Esc$ is indeed $L^{2}$-integrable provided that $\beta$ is $L^{2}$-integrable. Furthermore, the cocycle $\Esc$ is itself subexponential. 
		\item When $G$ has property (T), one can deduce Zimmer's cocycle super-rigidity under reductive algebraic hull condition from Theorem \ref{thm:redzim} due to the fact that the amenable group valued cocycle is cohomologous to the compact group valued cocycle when the action is ergodic. When $G$ does not have property (T), we can not appeal to use that fact. This is the main reason that we can \emph{not} expect compact error. 
		\end{enumerate}
	\end{rmk}

\subsection{Dynamical cocycle super-rigidity}

The following is the dynamical cocycle super-rigidity when $G$ is the source group.

\begin{theorem}[Dynamical cocycle super-rigidity for $G$.]\label{thm:superrigidG}
Let $G$ satisfy \ref{std} with $\textrm{rank}_{\Rbb}(G)\ge 2$. Let $(S,\mu)$ be a weakly irreducible $G$-ergodic standard probability space. Let $\beta:G\times S\rightarrow \GL(d,\Rbb)$ be $L^{2}$-integrable cocycle, i.e. $$\forall g\in G,\quad \ln||\beta(g,-)||\in L^{2}(S).$$  After replacing $S$ to finite extension $\widehat{S}=S\times_{i}I$,  there is a measurable map $\phi: \widehat{S} \rightarrow \GL(d,\Rbb),$ a rational homomorphism $\pi: \Gbb\rightarrow \GL(d,\Cbb),$ an amenable real algebraic group $A_{0}$ and $A_{0}$-valued cocycle $\Esc_{\circ}:G\times \widehat{S} \rightarrow A_{0}$ such that 
	\begin{enumerate}
		\item[(a)] We have for any $g\in G$, $$\beta(g,x)=\phi(g.(x,j))\pi(g)\Esc_{\circ}(g,x,j)\phi(x,j)^{-1}$$ almost every $(x,i)\in \widehat{S}$.

		\item[(b)] For any $g\in G$, $\Sp(\beta_{g})=\Sp(\pi(g))$. Especially, the Lyapunov spectrum does not depend on $x$. 
		\item[(c)] For almost every $x\in S$ and all $j\in I$,  the Lyapunov subspaces $E_{\lambda}(x)$ of $\beta_{g}$ corresponding the Lyapunov exponent $\lambda$ is $\phi(x,j)^{-1}E_{\lambda}$ where $E_{\lambda}$ is the Lyapuonv subspace of $\pi(g)$ corresponding the Lyapunov exponent $\lambda$. 
	\item[(d)] We can write $\Esc_{\circ}=\Esc\cdot u$ where $\Esc$ is the cocycle that takes values on an amenable subgroup $A$ in the Levi subgroup of the algebraic hull and $u$ is the twisted cocycle that takes values on the unipotent radical of the algebraic hull. Furthermore, $\pi(G)$ commutes with $A$.
	\item[(e)]  $\pi(G)$ commutes with the unipotent radical of the algebraic hull so that $\pi$ and $u$ commutes. In particular, $\pi$ and $\Esc_{0}$ commutes. 
	\end{enumerate}
\end{theorem}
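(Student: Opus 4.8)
The plan is to follow the strategy of \cite{FM1} Theorem~3.16 (and the exponent computations of \cite{Zimalg}), but to replace every appeal to \emph{compactness} of the error term, which is unavailable here because $G$ need not have property (T), by the sublinear-growth input of Theorem~\ref{thm:Rvalue}. First I would reduce to the case of a connected algebraic hull: passing to a finite extension $\widehat S=S\times_{i}I$, Theorem~\ref{thm:finextirred} guarantees that the $G$-action on $\widehat S$ is still weakly irreducible and ergodic, and by the discussion in \S\ref{finext} replacing $\beta$ by its minimal cohomologous cocycle into the now connected hull $\Lbb(\Rbb)$ changes neither $L^{2}$-integrability nor the Lyapunov spectrum. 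Working over $\widehat S$, I would take a Levi decomposition $\Lbb=\Mbb\ltimes\Ubb$ with $\Ubb$ the unipotent radical and $\Mbb$ reductive, apply Theorem~\ref{thm:redzim} to the cocycle obtained by composing $\beta$ with $\Lbb\twoheadrightarrow\Mbb$ to extract the rational homomorphism $\pi:\Gbb\to\GL(d,\Cbb)$, the amenable cocycle $\Esc$ commuting with $\pi(G)$, and a conjugating map; then, exactly as in \cite{FM1}, absorb the unipotent-radical contribution into a twisted cocycle $u$ valued in $\Ubb(\Rbb)$ and combine the conjugating maps into a single $\phi:\widehat S\to\GL(d,\Rbb)$. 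This produces (a), with $\Esc_{\circ}=\Esc\cdot u$ valued in an amenable real algebraic group $A_{0}$; the commutation statements (d) and (e) come from the normalization in Theorem~\ref{thm:redzim} together with the structure of the algebraic hull, as in \cite{FM1}, and (e) is in particular exactly what makes $(g,x)\mapsto\pi(g)\Esc_{\circ}(g,x)$ a cocycle.

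The heart of the argument, and the step where the absence of property (T) really costs something, is to show that the error cocycle $\Esc_{\circ}$ is \emph{subexponential} with all Lyapunov exponents zero. By the remarks following Theorem~\ref{thm:redzim} the error cocycle is $L^{2}$-integrable. Since $A_{0}$ is an amenable $\Rbb$-algebraic group, after a conjugation it has a compact part and a triangularizable part; composing $\Esc_{\circ}$ with each diagonal character $\chi_{i}$ and with $\log|\cdot|$ yields $\Rbb$-valued $L^{2}$-integrable additive cocycles $\log|\chi_{i}\circ\Esc_{\circ}|$ over the measure-preserving $G$-action on $\widehat S$. Theorem~\ref{thm:Rvalue} then forces $\tfrac1m\log|\chi_{i}(\Esc_{\circ}(g^{m},x))|\to0$ almost everywhere, for every $g$ and $i$; combined with the polynomial bound on the strictly-upper-triangular (unipotent) part and the boundedness of the compact part, this gives $\tfrac1m\log\|\Esc_{\circ}(g^{m},x)^{\pm1}\|\to0$, i.e. the $\ZZ$-cocycle $(m,x)\mapsto\Esc_{\circ}(g^{m},x)$ grows subexponentially in both directions.

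It then remains to compute the Lyapunov data, giving (b) and (c). On $\widehat S$ the cocycle $\beta_{g}$ is cohomologous to $(m,x)\mapsto\pi(g)^{m}\,\Esc_{\circ}(g^{m},x)$, a product of the constant cocycle $\pi(g)^{m}$ and the subexponential cocycle just analyzed, and these commute by (e). I would decompose $\Cbb^{d}$ into the $\pi(g)$-invariant subspaces $E_{\lambda}$ (the sums of generalized eigenspaces of $\pi(g)$ of log-modulus $\lambda$); since $\Esc_{\circ}(g^{m},x)$ commutes with $\pi(g)$ it preserves each $E_{\lambda}$, and on $E_{\lambda}$ the product has norm $e^{\lambda m+o(m)}$ in both directions, so its Lyapunov exponent there is exactly $\lambda$ and the Lyapunov subspace is all of $E_{\lambda}$. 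Hence $\Sp(\beta_{g})=\Sp(\pi(g))$ and the Lyapunov splitting of $\pi(g)^{\bullet}\Esc_{\circ}(g^{\bullet},-)$ is the $x$-independent splitting $\bigoplus_{\lambda}E_{\lambda}$, which is (b); transporting this splitting back through the conjugacy $\phi$ of part (a) gives the description of the Lyapunov subspaces $E_{\lambda}(x)$ in (c). The $\Gamma$-case, Theorem~\ref{thm:superrigidGamma}, is then obtained from this by the standard inducing construction, reducing it to the $G$-statement just proved.

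The step I expect to be the main obstacle is the second paragraph: establishing that the amenable error cocycle has vanishing Lyapunov exponents. In the classical setting this is immediate because the error is compact-valued; here it is genuinely a theorem, and it is precisely what Theorem~\ref{thm:Rvalue} is designed to supply. The remaining difficulties are technical bookkeeping in the spirit of \cite{FM1} and \cite{Zimalg}: organizing the absorption of the unipotent radical and the commutation normalizations, and, for the Lyapunov identifications in Step~3, the usual care of restricting to sublevel sets $\{\|\phi\|<N\}$ and using recurrence so that the conjugating factor $\phi(g^{m}.x)$ does not affect the exponential growth rates.
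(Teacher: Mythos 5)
Your overall architecture (finite extension to connect the hull, Theorem~\ref{thm:redzim} on the Levi quotient, absorbing the unipotent radical, then a Lyapunov computation driven by Theorem~\ref{thm:Rvalue}) matches the paper's, but the central step of your second paragraph has a genuine gap. You assert that $\Esc_{\circ}=\Esc\cdot u$ is $L^{2}$-integrable ``by the remarks following Theorem~\ref{thm:redzim}'' and then that it is subexponential. The remark there only concerns the reductive amenable part $\Esc$; the paper explicitly disclaims integrability, and even quasi-integrability, of the full error $\Esc_{\circ}$ (Remark~\ref{rmk:errorint} and Remark~\ref{diff}). The obstruction is the unipotent factor $u$: it is a \emph{twisted} cocycle obtained by conjugating by an unbounded measurable map, so its values $u(g,\cdot)$ have no integrability, and $u(g^{m},x)$ is a product of $m$ such unbounded factors along the orbit --- your claimed ``polynomial bound on the strictly-upper-triangular part'' would hold for a product of uniformly bounded unipotent matrices but fails here. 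The paper never proves $\tfrac1m\log\|\Esc_{\circ}(g^{m},x)^{\pm1}\|\to0$; instead it sidesteps the unipotent part entirely by choosing a flag $0=V^{0}\subset\dots\subset V^{k}=\Rbb^{d}$ on whose graded quotients $U$ acts trivially, decomposing each quotient into irreducible $F$-modules where, by Schur's lemma, the non-compact amenable part reduces to a scalar determinant cocycle $\lambda^{i}_{j}$; the $L^{2}$-integrability of the (Iwasawa-twisted) $\widetilde{\lambda^{i}_{j}}$ is proved by induction along the flag, Theorem~\ref{RvalueG} is applied only to these scalar cocycles, and the full spectrum is reassembled by functoriality of Lyapunov exponents for invariant subspaces and quotients. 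If you want to keep your route, you must either prove the subexponentiality of $u$ (which the author leaves as an expectation) or restructure as the paper does.

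There is also a circularity in your third paragraph: you invoke item (e) --- that $\pi(G)$ commutes with the unipotent radical, hence with $\Esc_{\circ}$ --- in order to show $\Esc_{\circ}(g^{m},x)$ preserves the spaces $E_{\lambda}$ and thereby compute $\Sp(\beta_{g})$. But in the paper (e) is \textbf{Part 2}, proved via Theorem~\ref{comm} \emph{using} the Lyapunov spectrum identification of \textbf{Part 1} as input; Theorem~\ref{thm:redzim} only gives commutation of $\pi(G)$ with the amenable Levi part $A$, not with $U$. The spectrum computation must therefore be carried out without assuming $\pi$ commutes with $u$, which is exactly what working on the $U$-trivial graded quotients accomplishes.
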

\begin{rmk}\label{rmk:errorint}
In the dynamical cocycle super-rigidity theorems, we do \emph{not} claim that $\Esc_{\circ}$ is integrable or quasi-integrable. Even though $\Esc_{\circ}$ may not be integrable, the statements in the theorems are make sense, since $\beta$ and $\pi$ are integrable.
\end{rmk}
\begin{rmk} In general, $u$ may not satisfy cocycle equation since $u$ may not commute with $\Esc$. The measurable map $u$ will be a twisted cocycle as in \cite{F}.
\end{rmk}
We will start to prove Theorem \ref{thm:superrigidG}. First of all, we need to define finite extension $\widehat{S}$ precisely. Let $G$ satisfy \ref{std} with $n\ge 2$. As in the theorem, we assume that $G$ acts irreducibly on the standard probability space $(S,\mu)$.  We will use $G$ action on various spaces just $(g,x)\mapsto g.x$ since it will be clear what action is used in the context.

Let $\beta:G\times S\rightarrow \GL(d,\Rbb)$ is $L^{2}$-integrable cocycle. Denote an algebraic hull of $\beta$ as $\widehat{L}=\widehat{\Lbb}(\Rbb)$ where $\widehat{\Lbb}$ is a real algebraic group. Then, we can find a measurable map $\phi_{1}':S\rightarrow \GL(d,\Rbb)$ such that $\beta^{\phi_{1}'}:G\times S\rightarrow \widehat{L}$ is a measurable minimal cocycle. Let $\Lbb=\widehat{\Lbb}^{0}$ be the algebraic connected component of identity for $\widehat{\Lbb}$ and $L=\Lbb(\Rbb)$.  We have a cocycle $pr\circ\alpha=i:G\times S\rightarrow I$ where $I=\widehat{L}/L$ is  a finite set and $pr:\widehat{L}\rightarrow I$ is the natural projection. Now we have a finite extension $\widehat{S}=S\times_{i} I$ and $G$ action $g.(x,j)=(g.x, \beta^{\phi_{1}'}(g,x)\cdot j)$ on $\widehat{S}$.

Define $\tilde{\beta}:G\times \widehat{S} \rightarrow \GL(d,\Rbb)$, $\tilde{\beta}(g,x,j)=\beta(g,x)$ for any $g\in G$ and almost every $(x,j)\in \widehat{S}$. 

Using facts about finite extension in the section \ref{precoc}, we know that $G$ action on $\widehat{S}$ is still irreducible and $\widehat{\beta}$ has algebraic hull $L=\Lbb(\Rbb)$. More precisely, fix (finite valued) section $\phi_{2}':I\rightarrow \widehat{L}$ of $pr$. Then the cocycle 
$\widehat{\beta}:G\times \widehat{S} \rightarrow L$, $$\widehat{\beta}(g,x,j)=\phi_{2}'(\beta^{\phi_{1}'}(g,x)\cdot j)^{-1} \beta^{\phi_{1}'}(g,x)\phi_{2}'(j)$$ is  minimal. Define measurable maps $\phi_{1}:\widehat{S} \rightarrow \GL(d,\Rbb)$ and $\phi_{1}(x,j)=[\phi_{1}'(x)\phi_{2}'(j)]^{-1}$ then it can be easily checked that $$\tilde{\beta}(g,x,j)=\beta(g,x)=\phi_{1}(g.(x,j))^{-1}\widehat{\beta}(g,x,j)\phi_{1}(x,j)$$ for all $g\in G$ and almost every $(x,j)$ in $\widehat{S}$.

 Now fix a Levi component that is a connected reductive $\Rbb$-subgroup $\Fbb<\Lbb$ such that $\Lbb=\Fbb\ltimes \Ubb $ where $\Ubb$ is the unipotent radical of $\Lbb$. Further, we know that $\Lbb(\Rbb)=\Fbb(\Rbb)\ltimes \Ubb(\Rbb).$ Denote $F=\Fbb(\Rbb)$, $U=\Ubb(\Rbb)$, $\llie=\textrm{Lie}(L)$ and natural projection $p_{F}:L\rightarrow F$.

We have a cocycle $f:G\times \widehat{S}\rightarrow F$ and a measurable map $u':G\times \widehat{S}\rightarrow U$ such that  $$\widehat{\beta}(g,x,j)=f(g,x,j)\cdot u'(g,x,j).$$ Using Theorem \ref{thm:redzim}, there is a rational homomorphism $\pi:\Gbb\rightarrow \Fbb<\Lbb$ defined over $\Rbb$, a cocycle $$\Esc:G\times \widehat{S}\rightarrow Z(\Hbb)\cdot\Kbb\cdot\Tbb(\Rbb),$$ a measurable map $\phi:\widehat{S}\rightarrow \overline{\pi(G)}^{Z}(\Rbb)<F$  and a bounded measurable map $\psi:\widehat{S}\rightarrow F$ such that the cocycle $\widehat{\beta}$ can be expressed as for any $g\in G$, $$\widehat{\beta}(g,x,j)=\psi(g.(x,j))^{-1}\phi(g.(x,j))^{-1}\pi(g)\Esc(g,x,j)\phi(x,j)\psi(x,j)u'(g,x,j)$$ for almost every $(x,j)\in \widehat{S}$.
 Define a measurable map $u:G\times \widehat{S}\rightarrow U$ as $$u(g,x,j)=\phi(x,j)\psi(x,j)u'(g,x,j)\psi(x,j)^{-1}\phi(x,j)^{-1}.$$ Note that the $u$ still takes values at $U$, since $U$ is normal subgroup of $L$. 
 Direct calculation shows that
\begin{align*} 
\widehat{\beta}(g,x,j)&= \psi(g.(x,j))^{-1}\phi(g.(x,j))^{-1} \pi(g) \Esc(g,x,j) \phi(x,j)\psi(x,j)u'(g,x,j)\\
&=\psi(g.(x,j))^{-1}\phi(g.(x,j))^{-1}\pi(g)\Esc(g,x,j)u(g,x,j)\phi(x,j)\psi(x,j)
\end{align*}
The above calculation shows that $\pi\cdot \Esc\cdot u$ is cocycle. Define $\Esc_{\circ}=\Esc\cdot u$. So far we don't know that the $\Esc_{\circ}$ is cocycle. If $\pi(G)$ commutes with $U$ then, $\Esc_{\circ}=\Esc \cdot u$ is indeed a cocycle.

Now proof of the theorem \ref{thm:superrigidG} can be divided into Part $1$ and $2$.

\begin{enumerate}
\item[\textbf{Part 1.}] For any $g\in G$, $\Sp(\beta_{g})=\Sp(\pi(g))$. 
\item[\textbf{Part 2.}]  $\pi(G)$ commutes with $U$ so that $\pi$ commutes with $\Esc_{\circ}$. This implies that $\Esc_{\circ}$ is indeed cocycle.
\end{enumerate}
\begin{rmk}\label{rmk:errorterm}
Note that Part 1 and 2 are enough to prove the theorem. We already get formula appeared in item (a).
We also already get informations about $\Esc_{\circ}$. The cocycle $\Esc_{\circ}$ can be written as a product of $F$-valued cocycle $\Esc$ and $U$-valued measurable map $u$. Furthermore, $\Esc$ and $\pi$ commutes by Theroem \ref{thm:redzim}. This is the item (d). Part 1 will prove the items (b) and (c).  Part 2 shows that $\Esc_{\circ}$ is indeed cocycle and $\pi$ and $\Esc_{\circ}$ commutes. That is the item (e).
\end{rmk}

After establishing \textbf{Part 1.}, one can prove \textbf{Part 2.} using same strategy with \cite{FM1} Theorem 3.11. In other words, using Lyapunov spectrum, one can prove followings.
\begin{theorem}[Cf. \cite{FM1} Theorem 3.11.]\label{comm} 
In the setting in Theorem \ref{thm:superrigidG} and above notations, assume $\pi(G)$ does not commute with $U$.  Then there is an integer $0<k<\dim(\llie)$ and measurable map $$\Phi:P\backslash G \times S\rightarrow \textrm{Gr}_{k}(\llie)$$  such that 
\begin{enumerate}
\item $\phi$ is  $\beta^{\phi}$-equivariant, i.e.  $$\Phi([hg^{-1}],g.x)=\Ad_{\llie}(\beta^{\phi}(g,x))(\Phi([h],x))$$ for all $g\in G$, $[h]\in P\backslash G$ and almost every $x\in S$ 
\item a pointwise stabilizer of the image does not contain all of $U$.
\end{enumerate}
\end{theorem}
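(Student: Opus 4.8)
\medskip
\noindent\textbf{Proof plan.} The hypothesis that $\pi(G)$ does not commute with $U$ is equivalent to saying that $\Ad_{\llie}\circ\pi:G\to\GL(\llie)$ acts non-trivially on the ideal $\ulie=\mathrm{Lie}(U)$; in particular $\pi$ is non-trivial, and since $\pi(G)$ is the image of a semisimple group it is reductive, so $\ulie$ contains a non-trivial $\pi(G)$-submodule. I will follow the strategy of \cite{FM1} Theorem 3.11, feeding in \textbf{Part 1} wherever those authors invoke Zimmer's cocycle super-rigidity. First I would fix a maximal $\Rbb$-split torus of $G$, take $P<G$ to be a minimal parabolic attached to an open Weyl chamber, and choose a regular element $a_{0}$ in that chamber whose image $\Ad_{\llie}\pi(a_{0})$ has an eigenvalue of modulus $\neq 1$ on $\ulie$; replacing $a_{0}$ by $a_{0}^{-1}$ if needed, I may assume $\Ad_{\llie}\pi(a_{0})$ has a weight $\nu>0$ on $\ulie$. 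Then pass to the adjoint cocycle $\sigma:=\Ad_{\llie}\circ\beta^{\phi}:G\times S\to\GL(\llie)$, which is quasi-$L^{2}$-integrable (as $\beta$ is $L^{2}$-integrable and $\beta^{\phi}$ is cohomologous to it), so the functoriality of Lyapunov exponents of Section \ref{funct} applies. By \textbf{Part 1} (items (b), (c)) composed with $\Ad_{\llie}$, the spectrum of $\sigma_{a_{0}}$ equals $\Sp(\Ad_{\llie}\pi(a_{0}))$ and the Lyapunov filtration of $\sigma_{a_{0}}$ at a generic point is the $\Ad_{\llie}(\phi(\cdot))$-translate of the constant eigenvalue filtration of $\Ad_{\llie}\pi(a_{0})$. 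Let $E^{-}(x)\subset\llie$ be the associated stable subspace at $x$ and put $k:=\dim E^{-}(x)$; this is constant with $0<k<\dim(\llie)$, positive because $a_{0}$ is regular and $\pi$ non-trivial, and $<\dim(\llie)$ because the zero-weight space of $\Ad_{\llie}\pi(a_{0})$ contains $\mathrm{Lie}(Z_{L}(\pi(a_{0})))\neq 0$.

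The map $x\mapsto E^{-}(x)$ is a priori only $\langle a_{0}\rangle$-equivariant; to promote it to a $G$-equivariant map on the boundary I would use that $\pi$ carries the minimal parabolic $P$ into a parabolic of $F$ compatible with $a_{0}$ (hence stabilizing the eigenvalue filtration of $\Ad_{\llie}\pi(a_{0})$), that the amenable error $\Esc$ commutes with $\pi$ and so preserves each $\Ad_{\llie}\pi(a_{0})$-weight space, and a tempering estimate controlling the unipotent-radical twisted cocycle $u$ along $P$-orbits. These combine to give the $\beta^{\phi}|_{P}$-equivariance of $E^{-}$, and then
$$\Phi([h],x)\ :=\ \Ad_{\llie}\!\big(\beta^{\phi}(h,x)\big)^{-1}\big(E^{-}(h.x)\big)$$
is a well-defined measurable map $P\backslash G\times S\to\textrm{Gr}_{k}(\llie)$ (independence of the representative of $[h]$ is the $P$-equivariance of $E^{-}$), and a routine cocycle computation yields $\Phi([hg^{-1}],g.x)=\Ad_{\llie}(\beta^{\phi}(g,x))(\Phi([h],x))$, which is conclusion (1).

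For conclusion (2): by construction every subspace in $\im\Phi$ is an $\Ad_{\llie}(L)$-translate of the single subspace $E^{-}_{0}:=\bigoplus_{\lambda<0}(\llie)_{\lambda}$, the stable subspace of $\Ad_{\llie}\pi(a_{0})$ in $\llie$. Since $U$ is normal in $L$, the group $U$ fixes every element of $\im\Phi$ if and only if $\Ad_{\llie}(U)$ preserves $E^{-}_{0}$, equivalently (as $\Ad_{\llie}(U)$ is unipotent) $[\ulie,E^{-}_{0}]\subseteq E^{-}_{0}$; but the positive weight $\nu>0$ of $\Ad_{\llie}\pi(a_{0})$ on $\ulie$ obstructs this inclusion, so the pointwise stabilizer of $\im\Phi$ does not contain all of $U$.

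I expect the main obstacle to be the second paragraph: upgrading the $\langle a_{0}\rangle$-equivariant stable-subspace map to a genuinely $G$-equivariant map on $P\backslash G$. This is where the higher-rank structure of $G$ is essential, and it is delicate because the unipotent-radical part $u$ is only quasi-integrable (not integrable), so keeping the stable subspace controlled along $P$-orbits requires a careful tempering/Lyapunov estimate — this is exactly the technical core of \cite{FM1} Theorem 3.11. A secondary, bookkeeping-type point is choosing $a_{0}$ measurably and verifying the weight-combinatorial obstruction in conclusion (2).
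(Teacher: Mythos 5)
Your plan follows exactly the route the paper intends: the paper gives no proof of this theorem, stating only that it has the same structure as \cite{FM1} Theorem 3.11 with \textbf{Part 1} supplying the Lyapunov subspaces in place of Zimmer's cocycle super-rigidity, and that is precisely the argument you outline (regular element $a_{0}$, stable Oseledets subspace from Part 1(c), promotion to a $\beta^{\phi}$-equivariant map on $P\backslash G\times S$, weight obstruction for the stabilizer). The one step I would tighten is the end of (2): the mere existence of a positive weight $\nu$ of $\Ad_{\llie}\pi(a_{0})$ on $\mathrm{Lie}(U)$ does not by itself contradict $[\mathrm{Lie}(U),E^{-}_{0}]\subseteq E^{-}_{0}$ (a bracket of a weight-$\nu$ vector with a weight-$\mu<0$ vector can still have negative weight), so one needs the root-string argument producing a nonzero bracket of weight $\ge 0$, e.g.\ bracketing a positive-weight vector of $\mathrm{Lie}(U)$ against a suitable negative root vector of $\pi(\mathfrak{g})$ inside $E^{-}_{0}$.
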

This gives proof of \textbf{Part 2.} as in the \cite{FM1} Theorem 3.10. Since the proof has same structure, we left details for the reader. Indeed, the proof is much simpler using \textbf{Part 1.} because \textbf{Part 1.} already provides Lyapunov subspaces.

Now we show the \textbf{Part 1.}.
\begin{proof}[Proof of \textbf{Part 1.}] Note that the finite extension does not effect to Lyapunov spectrum. Therefore, for simplicity, assume that $\widehat{\Lbb}$ is connected algebraic group so that we will use the simplified notations such as $\Lbb=\widehat{\Lbb}$, $S=\widehat{S}$, etc.

Let $\phi_{1}:S\rightarrow \GL(d,\Rbb)$ be a measurable map such that $\beta^{\phi_{1}}:G\times S\rightarrow L$ is a minimal cocycle as before.  We will use same notations as in the above and the theorem \ref{thm:redzim}. There is a measurable map $\phi_{2}:S\rightarrow F,$ a rational homomorphism $\pi:\Gbb\rightarrow \Hbb<\GL(\Cbb^{d}),$ a $Z(\Hbb)\cdot\Kbb\cdot\Tbb(\Rbb)$ valued cocycle $$\Esc:G\times S\rightarrow Z(\Hbb)\cdot\Kbb\cdot \Tbb(\Rbb)$$ and a measurable map $u':G\times S\rightarrow U$ such that for any $g\in G$, $$\beta^{\phi_{1}}(g,x)=\phi_{2}(g.x)^{-1}\pi(g)\Esc(g,x)\phi_{2}(x)u'(g,x)$$ for almost every $x\in M$. 
Define a measurable map $u:G\times S\rightarrow U$ and $\phi:S\rightarrow \GL(d,\Rbb)$ as $u(g,x)=\phi_{2}(x)u'(g,x)\phi_{2}(x)^{-1}$ and $\phi(x)=\phi_{2}(x)\phi_{1}(x).$  Then, we have $\beta^{\phi}=\pi\cdot\Esc\cdot u$.

Find vector space filteration $$0=V^{0}\subset V^{1}\subset\dots\subset V^{k}=\Rbb^{d}$$ such that 
\begin{enumerate}
\item  each $V^{i}$ is $L$ invariant and
\item $U$ acts trivially on $V^{i+1}/V^{i}$, for $i=0,1,\dots,k-1$. 
\end{enumerate}
In other words, for each $i=0,\dots,k-1$, we have homomorphism $$\pi^{i}:L\rightarrow \GL(V^{i+1}/V^{i})$$ so that $\pi^{i}(L)$ is reductive. This can be shown inductively. (e.g. \cite{Feres} Proposition 6.5.7.) Since $F$ is reductive, for each $i$, we can decompose $V^{i+1}/V^{i}$ with $F$-irreducible modules. More precisely, there are irreducible $F$-submodules, $$\overline{W^{i}_{1}},\dots, \overline{W^{i}_{m(i)}} <V^{i+1}/V^{i}$$ such that 
$$ V^{i+1}/V^{i}=\bigoplus_{j=1}^{m(i)} \overline{W^{i}_{j}}.$$
Find an ordered basis $\left\{\overline{\epsilon_{j,1}^{i}},\dots \overline{\epsilon_{j,l(i,j)}^{i}}\right\}$ of $\overline{W^{i}_{j}}$ and lift it to $\Rbb^{d}$ so that $\Bsc=\left\{\epsilon_{j,s}^{i}\right\}_{i,j,s}$
is an ordered basis of $\Rbb^{d}$ such that the projection of $\epsilon_{j,s}^{i}$ in to the $V^{i+1}/V^{i}$ is $\overline{\epsilon_{j,s}^{i}}$ for any $i=0,\dots,k-1$, $j=1,\dots,m(i)$ and $s=1,\dots,l(i,j)$.

Using basis $\Bsc$ of $\Rbb^{d}$, we can use Iwasawa decomposition (or Gram-Schmidt orthogonalization) on $\GL(d,\Rbb)$ on $\phi$. There are measurable map $$o:S\rightarrow O(d,\Rbb), a:S\rightarrow \textrm{Diag.} \textrm{ and }n:S\rightarrow N$$ such that $\phi(x)=o(x)a(x)n(x)$ for $\mu$-almost every $x\in S$ where $\textrm{Diag.}$ is subgroup of diagonal matrix with positive diagonal entries and $N$ is subgroup of upper triangular matrix with $1$ in diagonal entries. Then for any 
$g\in G$, $\mu$-almost every $x\in S$ we have $$\beta^{o}(g,x)=a(g.x)n(g.x)\pi(g)\Esc(g,x)u(g,x)n(x)^{-1}a(x)^{-1}.$$
Note that this construction makes that $\beta^{0}$ preserves $\bigoplus_{j=1}^{r}\overline{W_{j}^{i}}$ for any $r=1,\dots, m(i)$.

For each $i,j$ and $s$, define a measurable map $a^{i}_{j,s}:S\rightarrow \Rbb$ as $$a^{i}_{j,s}(x)=\ln\left(\frac{|a(x)\epsilon^{i}_{j,s}|}{|\epsilon^{i}_{j,s}|}\right). $$ Then we have $$a(x)\left(\epsilon^{i}_{j,s}\right)=e^{a^{i}_{j,s}(x)}\epsilon^{i}_{j,s}$$ for each $i,j$ and $s$ by construction of $a(x)$.
For notational convention, denote the cocycle $\Omega:G\times S\rightarrow L$ as $$\Omega=\pi \cdot \Esc \cdot u.$$

The cocycle $\beta^{o}$ preserves $V^{i+1}/V^{i}$ for every $i$ by our construction of basis. Since $o$ is bounded measurable map, the cocycle $\beta^{o}$ is still $L^{2}$-integrable. Furthermore, Lyapunov spectrums of $\beta_{g_{0}}^{o}$ and $\beta_{g_{0}}$ coincide for any $g_{0}\in G$.

Recall that we have the decomposition $V^{i+1}/V^{i}=\bigoplus_{j=1}^{m(i)}\overline{W^{i}_{j}}$.
Since $W^{i}_{j}$ is an irreducible $F$-module, the center $Z(F)$ of $F$ acts on $\overline{W^{i}_{j}}$ as a scalar multiplication by Schur's lemma. Furthermore, $$\Omega\restrict{\overline{W^{i}_{j}}}= \pi\restrict{\overline{W^{i}_{j}}}\cdot \Esc\restrict{\overline{W^{i}_{j}}}$$ since $U$ acts on $V^{i+1}/V^{i}$ trivially. Denote the $\Rbb$-valued cocycle $\lambda_{j}^{i}:G\times S\rightarrow \Rbb$ as 
	$$\lambda_{j}^{i}(g,x)=\frac{1}{l(i,j)}\ln\left|\det\left(\Omega(g,x)\restrict{\overline{W^{i}_{j}}}\right)\right|$$  
for each $j=1,\cdots, m(i)$. Recall that the $l(i,j)$ is the dimension of $\overline{W^{i}_{j}}$. These cocycles $\lambda_{\bullet}^{i}$ encode informations about the $Z(F)$ action. Moreover, $\Esc$ is the cocycle valued at a $\Rbb$-points of almost direct product of a central torus, a compact semisimple group and a finite group. This implies that the cocycle $$e^{-\lambda_{j}^{i}(-,-)}\Omega(-,-)\restrict{\overline{W_{j}^{i}}} :G\times S\rightarrow \GL(\overline{W_{j}^{i}})$$ $$(g,x)\mapsto e^{-\lambda_{j}^{i}(g,x)}\Omega(g,x)\restrict{\overline{W_{j}^{i}}}$$ is a bounded cocycle. Indeed, we can find the compact group valued cocycle $\Ksc_{j}^{i}$ such that $$\Omega(g,x)\restrict{\overline{W_{j}^{i}}}=e^{\lambda_{j}^{i}(g,x)}\pi\restrict{\overline{W_{j}^{i}}}(g)\Ksc_{j}^{i}(g,x).$$ Note that $\pi\restrict{\overline{W_{j}^{i}}}$ and $\Ksc_{j}^{i}$ still commute.

We claim that the cocycle $$\widetilde{\lambda_{j}^{i}}:=(\lambda_{j}^{i})^{\frac{1}{l(i,j)}\sum_{s=1}^{l(i,j)}a^{i}_{j,s}}:G\times M\rightarrow \Rbb,$$ $$(g,x)\mapsto \widetilde{\lambda_{j}^{i}}(g,x)= \lambda_{j}^{i}(g,x)+\frac{1}{l(i,j)}\sum_{s=1}^{l(i,j)}a^{i}_{j,s}(g.x)-\frac{1}{l(i,j)}\sum_{s=1}^{l(i,j)}a^{i}_{j,s}(x)$$ is the $L^{2}$-integrable cocycle for any $i$ and $j$.

We will use induction on $j$. \begin{enumerate}
\item $j=1$ case ;  Due to our construction, the $\beta^{o}$ preserves $\overline{W^{i}_{1}}$. Since the $\beta^{o}$ is $L^{2}$-integrable, we know that $\beta^{o}\restrict{\overline{W^{i}_{1}}}$ is a still $L^{2}$-integrable cocycle. Especially, the cocycle $\Rbb$-valued cocycle $$\ln\left|\det\left(\beta^{o}\right)\right|:G\times S\rightarrow \Rbb$$ is a $L^{2}$-integrable cocycle. This implies that the cocycle $$l(i,1)\cdot\widetilde{\lambda_{1}^{i}}:G\times S\rightarrow \Rbb$$ $$(g,x)\mapsto (l(i,1)\cdot\lambda_{1}^{i}(g,x))-\sum_{s=1}^{l(i,1)}a^{i}_{1,s}(x)+\sum_{s=1}^{l(i,1)}a^{i}_{1,s}(g.x)$$ is a $L^{2}$-integrable cocycle. Therefore we can deduce that $\widetilde{\lambda_{1}^{i}}$ is a $L^{2}$-integrable cocycle.

\item Assume that $\widetilde{\lambda^{i}_{1}},\dots,\widetilde{\lambda^{i}_{r}}$ are all $L^{2}$-integrable cocycles for some $1\le r\le m(i)-1$. Note that $\beta^{o}$ preserves the subspace $\bigoplus_{s=1}^{r+1} \overline{W^{i}_{s}}.$ Since $\beta^{o}$ is a $L^{2}$-integrable cocycle, we also know that $\beta^{o}\restrict{\oplus_{s=1}^{r+1} \overline{W^{i}_{s}}}$ is a $L^{2}$-integrable cocycle. Using same arguments with $j=1$ case, we can deduce that the cocycle $$l(i,1)\lambda^{i}_{1}(g,x)+\dots+l(i,r)\lambda^{i}_{r+1}(g,x)-\sum_{t=1}^{r+1}\sum_{s=1}^{l(i,t)}a^{i}_{j,s}(x)+\sum_{t=1}^{r+1}\sum_{s=1}^{l(i,t)}a^{i}_{j,s}(g.x)$$ is a $L^{2}$-integrable cocycle. This implies that $$l(i,1)\widetilde{\lambda^{i}_{1}}+\dots+l(i,r+1)\widetilde{\lambda^{i}_{r+1}}$$ is a $L^{2}$-integrable cocycle. This implies $\widetilde{\lambda^{i}_{r+1}}$ is a $L^{2}$-integrable cocycle due to the induction hypothesis.

\end{enumerate}
This proves claim.

 Now we will focus on the Lypaunov spectrum of the cocycle $\beta^{o}_{g_{0}}$ for $g_{0}\in G$.  Fix an element $g_{0}\in G$ and $i\in\{0,\dots,k-1\}$. We will prove that the Lyapunov spectrum of $\beta^{o}_{g_{0}}\restrict{V^{i+1}/V^{i}}$ on $V^{i+1}/V^{i}$ is same as the Lyapunov spectrum of $\pi\restrict{V^{i+1}/V^{i}}(g_{0})$ on $V^{i+1}/V^{i}$ for each $i$. Due to the functoriality of Lyapunov spectrum, this is enough for proof.
 In other words, it is enough to show that  $$\Sp\left(\beta_{g_{0}}\restrict{V_{i+1}/V_{i}}\right)=\Sp\left(\pi(g_{0})\restrict{V_{i+1}/V_{i}}\right)$$ for each $i$, as mentioned above. Since $\beta^{o}$ does not preserve each $\overline{W_{j}^{i}}$, we will use induction on $\bigoplus_{j=1}^{r}\overline{W_{j}^{i}}$.   From now on, we will omit $i$ if there is no confusion.
\begin{enumerate}
\item $r=1$ case : We will investigate Lyapunov spectrum of $\beta_{g_{0}}\restrict{\overline{W_{1}^{i}}}$ and $\pi(g_{0})\restrict{\overline{W_{1}^{i}}}$ on $\overline{W_{1}^{i}}$. Note again that $\beta_{g_{0}}$ preserves $\overline{W_{1}^{i}}$, so $\beta_{g_{0}}\restrict{\overline{W_{1}^{i}}}$ makes sense. 
From now on we will focus on $\overline{W_{1}^{i}}$, so drop the subscript about restriction to $\overline{W_{1}^{i}}$ for the notational convenience.  
Using MET for $\beta_{g_{0}}^{o}$, find Lyapunov exponents $$\chi(1)(x)> \dots> \chi(t(x))(x)$$ and the Lyapunov decomposition of $\beta^{o}_{g_{0}}$ in $\overline{W^{i}_{1}}$ such that for $\mu$-almost every $x\in S$, $$\overline{W^{i}_{1}}=\bigoplus_{q=1}^{t} \Lcal_{\chi(q)}(x),$$  $$w\in \Lcal_{\chi(q)}(x)\setminus\{0\} \iff \lim_{m\rightarrow\pm\infty}\frac{1}{m}\ln||\beta^{o}_{g_{0}}(m,x)w||_{op}=\chi(q)(x)$$ for $q=1,\dots, t(x)$.
We can also find Lyapunov exponent $$\xi(1)> \dots> \xi(s)$$ and Lyapunov decomposition of $\pi(g_{0})$ in $\overline{W^{i}_{1}}$ such that $$\overline{W^{i}_{1}}=\bigoplus_{q=1}^{s} \mathcal{P}_{\xi(q)}$$  $$w\in \mathcal{P}_{\xi(q)}\setminus\{0\} \iff \lim_{m\rightarrow\pm\infty}\frac{1}{m}\ln||\pi(g^{m}_{0})w||_{op}=\xi(q)$$ for $q=1,\dots, s$.

Fix generic point $x_{0}\in S$ with respect to MET for $\beta_{g_{0}}^{o}$. 
For any constant $c>0$ define $$A_{c}=\{x\in S : ||a(x)||, ||n(x)||<c\}.$$ We can find $\Ccal(x_{0})>0$ such that $x_{0}\in A_{\Ccal(x_{0})}$ and 
$\mu\left(A_{\Ccal(x_{0})}\right)>0.$ Using Poincar\'e recurrence theorem, we can find increasing subsequence $\{m_{k}\}\subset \Zbb$ such that  $g_{0}^{m_{k}}.x_{0}\in A_{\Ccal(x_{0})}$ for all $k\in\Zbb,$ and $\lim_{k\rightarrow\pm\infty}|m_{k}|=\infty$. Due to $L^{2}$-integrability of $\widetilde{\lambda_{1}^{i}}$, we can apply Theorem \ref{RvalueG} to $\widetilde{\lambda_{1}^{i}}$. Then we have $$\frac{1}{m_{k}}\left|\lambda_{1}^{i}(g_{0}^{m_{k}},x)\right|\longrightarrow 0\quad \textrm{ as } k\rightarrow \pm\infty$$ since $g_{0}^{m_{k}}.x\in A_{\Ccal(x_{0})}$ implies boundedness of $a$ and $n$.
Recall that on $\overline{W_{1}^{i}}$, we have $$\beta^{o}(g_{0},x)=e^{\lambda_{1}^{i}(g_{0},x)}a(g_{0}.x)n(g_{0}.x)\pi(g_{0})\Ksc_{1}^{i}(g_{0},x)n(x)^{-1}a(x)^{-1}.$$
Therefore we have following calculation for any $q\in\{1,\dots,t(x_{0})\}$.
\begin{align*}
&w\in \Lcal_{\chi(q)}(x_{0})\setminus\{0\} \\
&\iff  \frac{1}{m_{k}}\left[\left(\ln||\pi(g_{0}^{m_{k}})n(x_{0})^{-1}a(x_{0})^{-1}w||\right)+|\lambda_{1}^{i}(g_{0}^{m_{k}},x_{0})|\right] \xrightarrow{k\rightarrow\infty}   \chi(q)(x_{0})\\
& \iff \frac{1}{m_{k}}\ln||\pi(g_{0}^{m_{k}})n(x_{0})^{-1}a(x_{0})^{-1}w|| \xrightarrow{k\rightarrow\infty}  \chi(q)(x_{0})\\
& \iff \frac{1}{m}\ln||\pi(g_{0}^{m})n(x_{0})^{-1}a(x_{0})^{-1}w|| \xrightarrow{k\rightarrow\infty}  \chi(q)(x_{0})
\end{align*}
The first and last equivalences come from MET for $\alpha^{o}_{g}$ and $\pi(g)$. The above calculation shows that $t(x_{0})=s$, $\chi(q)(x_{0})=\xi(q)$ and $$\Lcal_{\chi(q)}(x_{0})=a(x_{0})n(x_{0})\mathcal{P}_{\xi(q)}$$ for any $q=1,\dots,t(x_{0})=s$.
This implies that $$\Sp\left(\beta_{g_{0}}\restrict{\overline{W_{1}^{i}}}\right)=\Sp\left(\pi(g_{0})\restrict{\overline{W_{1}^{i}}}\right).$$

\item For some $1\le r\le m(i)-1$, assume that $$\Sp\left(\beta_{g_{0}}\restrict{\bigoplus_{j=1}^{r}\overline{W_{j}^{i}}}\right)=\Sp\left(\pi(g_{0})\restrict{\bigoplus_{j=1}^{r}\overline{W_{j}^{i}}}\right).$$ 
We will prove that \begin{align}\label{rts}\Sp\left(\beta_{g_{0}}\restrict{\bigoplus_{j=1}^{r+1}\overline{W_{j}^{i}}}\right)=\Sp\left(\pi(g_{0})\restrict{\bigoplus_{j=1}^{r+1}\overline{W_{j}^{i}}}\right).\end{align}

Define $$Q=\bigoplus_{j=1}^{r+1}\overline{W_{j}^{i}}\Big/\bigoplus_{j=1}^{r}\overline{W_{j}^{i}}$$ so that the cocycle $\beta_{g_{0}}$ and $\pi(g_{0})$ preserves $Q$. Using functoriality of Lyapunov spectrum again, it is enough to prove 
\begin{align}\label{Tsh} \Sp\left(\beta_{g_{0}}\restrict{Q}\right)=\Sp\left(\pi(g_{0})\restrict{Q}\right)\end{align} 

We will use similar argument in order to show (\ref{Tsh}). 
From now on we will focus on $Q$, so drop the subscript about restriction to $Q$ for the notational convenience.   For $\mu$-almost every $x\in S$, find Lyapunov exponents $$\chi(1)(x)> \dots> \chi(t(x))(x)$$ and Lyapunov decomposition of $\alpha^{o}_{g_{0}}$ in $Q$ such that  $Q=\bigoplus_{q=1}^{t} \Lcal_{\chi(q)}(x),$  $$w\in \Lcal_{\chi(q)}(x)\setminus\{0\} \iff \lim_{m\rightarrow\pm}\frac{1}{m}\ln||\beta^{o}_{g_{0}}(m,x)w||_{op}=\chi(q)(x)$$ for $q=1,\dots, t(x)$.
We can also find Lyapunov exponents $$\xi(1)> \dots> \xi(s)$$ and the Lyapunov decomposition of $\pi(g_{0})$ in $Q$ such that for $\mu$-almost every $x\in S$, $\overline{W^{i}_{1}}=\bigoplus_{q=1}^{s} \mathcal{P}_{\xi(q)},$  $$w\in \mathcal{P}_{\xi(q)}\setminus\{0\} \iff \lim_{m\rightarrow\pm}\frac{1}{m}\ln||\pi(g^{m}_{0})w||_{op}=\xi(q)$$ for $q=1,\dots, s$.

Fix generic point $x_{0}\in M$ with respect to MET for $\beta{g_{0}}^{o}$. 
For any constant $c>0$ define $$A_{c}=\{x\in S : ||a(x)||, ||n(x)||<c\}.$$ We can find $\Ccal(x_{0})>0$ such that $x_{0}\in A_{\Ccal(x_{0})}$ and $\mu\left(A_{\Ccal(x_{0})}\right)>0.$ Using Poincar\'e recurrence theorem, we can find increasing subsequence $\{m_{k}\}\subset \Zbb$ such that $g_{0}^{m_{k}}.x_{0}\in A_{\Ccal(x_{0})}$ for all $k\in\Zbb,$
 and $\lim_{k\rightarrow\pm\infty}|m_{k}|=\infty$. Due to $L^{2}$-integrability of $\widetilde{\lambda_{r+1}^{i}}$, we can apply Theorem \ref{RvalueG} to $\widetilde{\lambda_{r+1}^{i}}$.  This implies $$\frac{1}{m_{k}}\left|\lambda_{r+1}^{i}(g_{0}^{m_{k}},x)\right|\longrightarrow 0\quad \textrm{ as } k\rightarrow \pm\infty.$$  

Note that the induced cocycle $\beta^{0}_{g_{0}}\restrict{Q}$ is of the form 
 $$\beta^{0}\restrict{Q}(g_{0},x)=e^{\lambda_{r+1}^{i}(g_{0},x)}a(g_{0}.x)\restrict{Q}n(g_{0}.x)\restrict{Q}\pi(g_{0})\restrict{Q}\Ksc_{r+1}^{i}(g_{0},x)\restrict{Q}n(x)\restrict{Q}^{-1}a(x)\restrict{Q}^{-1}$$ by constructions.

We can calculate followings for any $q\in\{1,\dots,t(x_{0})\}$ as before.
\begin{align*}
&w\in \Lcal_{\chi(q)}(x_{0}) \\
&\iff  \frac{1}{m_{k}}\left[\left(\ln||\pi(g^{m_{k}})n(x_{0})^{-1}a(x_{0})^{-1}w||\right)+|\lambda_{r+1}^{i}(g^{m_{k}},x_{0})|\right]\xrightarrow{k\rightarrow\infty}  \chi(q)(x_{0})\\
& \iff \frac{1}{m_{k}}\ln||\pi(g^{m_{k}})n(x_{0})^{-1}a(x_{0})^{-1}w|| \xrightarrow{k\rightarrow\infty}  \chi(q)(x_{0})\\
& \iff \frac{1}{m}\ln||\pi(g^{m})n(x_{0})^{-1}a(x_{0})^{-1}w|| \xrightarrow{k\rightarrow\infty}  \chi(q)(x_{0})
\end{align*}
Again the last equivalence comes from MET for the $\pi(g)$. The above calculation shows that $t(x_{0})=s$, $\chi(q)(x_{0})=\xi(q)$ and $$\Lcal_{\chi(q)}(x_{0})=a(x_{0})n(x_{0})\mathcal{P}_{\xi(q)}$$ for any $q=1,\dots,t(x_{0})=s$.
This implies that (\ref{Tsh}), so we prove (\ref{rts}).

\end{enumerate}

The above arguments show that 
$$\Sp\left(\beta_{g_{0}}\restrict{V^{i+1}/V^{i}}\right)=\Sp\left(\pi(g_{0})\restrict{V^{i+1}/V^{i}}\right)$$ for each $i$. This proves the \textbf{Part 1.}  due to functoriality of Lyapunov spectrum as mentioned before. \end{proof}

\begin{rmk}\label{rmk:expQuasi}
The above arguments also give the proof of the item (c). 
For the quasi $L^{2}$-integrable minimal cocycle $\beta^{\phi}=\pi\cdot\Esc\cdot u$, we characterized Lyapunov subspaces of $\beta_{g}$ as
 $E_{*}^{\beta_{g}}(x)=o(x)a(x)n(x) E_{*}^{\pi(g)}$ in the above arguments where $E_{*}$ is the Lyapunov subspaces for quasi-integrable cocycles as in the section \ref{sec:pre}. This implies that
 $E_{*}^{\beta^{\phi}_{g}}(x)= E_{*}^{\pi(g)}$ almost every $x\in S$ as quasi-$L^{2}$ cocycle. 
\end{rmk}

The irreducible lattice version of the dynamical super-rigidity theorem is following.
\begin{theorem}[Dynamical cocycle super-rigidity for $\Gamma$]\label{thm:superrigidGamma}
Let $\Gamma$ satisfy \ref{std} that is $\Gamma$ be a weakly irreducible lattice of higher rank group $G$. Let $(S,\mu)$ be $\Gamma$-ergodic standard probability space. Assume \ref{std2} for the $\Gamma$ action that is the induced $G$ action is weakly irreducible. Let $\beta:\Gamma\times S\rightarrow \GL(d,\Rbb)$ be a $L^{2}$ integrable cocycle, i.e. $$\forall\gamma\in \Gamma,\quad \ln||\beta(\gamma,-)||\in L^{2}(S).$$  After replacing $S$ to finite extension $\widehat{S}=S\times_{i}I$, we have a measurable map $\phi:\widehat{S}\rightarrow \GL(d,\Rbb),$ a rational homomorphism $\pi:\Gbb\rightarrow \GL(d,\Cbb),$ an amenable real algebraic group $A_{0}$ and $A_{0}$-valued cocycle $\Esc_{\circ}:\Gamma\times \widehat{S}\rightarrow \GL(d,\Rbb)$ such that
\begin{enumerate}
	\item[(a)] For any $\gamma\in \Gamma$, $$\beta(\gamma,x)=\phi(\gamma.(x,j))\pi(\gamma)\Esc_{\circ}(\gamma,x,j)\phi(x,j)^{-1}$$ almost every $(x,i)\in \widehat{S}$.
	\item[(b)] For any $\gamma\in \Gamma$, $\Sp(\beta_{\gamma})=\Sp(\pi(\gamma))$.

	\item[(c)] For almost every $x\in S$ and all $j\in I$,  the Lyapunov subspaces $E_{\lambda}(x)$ of $\beta_{\gamma}$ corresponding the Lyapunov exponent $\lambda$ is $\phi(x,j)^{-1}E_{\lambda}$ where $E_{\lambda}$ is the Lyapuonv subspace of $\pi(\gamma)$ corresponding the Lyapunov exponent $\lambda$. 
	\item[(d)] We can write $\Esc_{\circ}=\Esc\cdot u$ where $\Esc$ is the cocycle that takes values on an amenable subgroup $A$ in the Levi subgroup of the algebraic hull and $u$ is the twisted cocycle that takes values on the unipotent radical of the algebraic hull. Furthermore, $\pi(G)$ commutes with $A$.
	\item[(e)]  $\pi(G)$ commutes with the unipotent radical of the algebraic hull so that $\pi$ and $u$ commutes. In particular, $\pi$ and $\Esc_{0}$ commutes. 
	\end{enumerate}

\end{theorem}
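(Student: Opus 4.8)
The plan is to prove Theorem \ref{thm:superrigidGamma} by repeating the proof of the source-group case, Theorem \ref{thm:superrigidG}, essentially verbatim, now carried out directly on $(S,\mu)$ with the $\Gamma$-action. The two ingredients in that proof that are group-specific are Theorem \ref{thm:redzim} and Theorem \ref{RvalueG}. Theorem \ref{thm:redzim} is already stated for $D=\Gamma$; it is obtained, as in \cite{FM1}, by the standard induction of cocycles --- one forms the induced $G$-space $Y=(G\times S)/\Gamma$ (with $\Gamma$ acting on $G\times S$ by $\gamma\cdot(g,x)=(g\gamma^{-1},\gamma.x)$ and $G$ through the first coordinate), which by Assumption \ref{std2} carries a weakly irreducible, hence ergodic, $G$-action, fixes a measurable section $\sigma:G/\Gamma\to G$ with $\sigma([e])=e$, forms the return cocycle $\alpha(g,[h])=\sigma(g[h])^{-1}g\,\sigma([h])$ and the induced cocycle $\widetilde\beta(g,([h],x))=\beta(\alpha(g,[h]),x)$, applies the $D=G$ case of Theorem \ref{thm:redzim} to $\widetilde\beta$, and descends the resulting cohomology to the $\Gamma$-invariant fiber $\{[e]\}\times S$, on which $\alpha(\gamma,[e])=\gamma$ so that $\widetilde\beta$ restricts to $\beta$; the descent is the usual adjunction between induction and restriction of cocycles, and uses that $G/\Gamma$ is $G$-homogeneous together with the commutation of $\pi(G)$ with the amenable factor to keep the descended error term amenable-valued. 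Theorem \ref{RvalueG} is simply replaced by its lattice counterpart Theorem \ref{RvalueGamma}.

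The step I expect to be the main obstacle is verifying that $L^{2}$-integrability of $\beta$ over $\Gamma\curvearrowright S$ passes to $L^{2}$-integrability of the induced cocycle $\widetilde\beta$ over $G\curvearrowright Y$, which is what makes the $D=G$ machinery applicable. This is not automatic when $\Gamma$ is non-cocompact, the generic situation (e.g.\ $\Gamma\simeq\SL_2(\Zbb[\sqrt{17}])$ in $\SL(2,\Rbb)\times\SL(2,\Rbb)$), since then the $\Gamma$-word length $|\alpha(g,[h])|_{w}$ is unbounded in $[h]$. I would follow the integrability bookkeeping of Fisher and Margulis behind \cite[Theorems 1.4 and 1.5]{FM1}: for compact $Q\subset G$ control $\sup_{g\in Q}|\alpha(g,[h])|_{w}$ in $L^{2}(G/\Gamma)$ from the geometry of $G/\Gamma$, then combine this with the subadditive bound $\ln\|\beta(\gamma_1\gamma_2,x)\|\le\ln\|\beta(\gamma_1,\gamma_2.x)\|+\ln\|\beta(\gamma_2,x)\|$ and the $\Gamma$-invariance of $\mu$ to bound $\sup_{g\in Q}\ln\|\widetilde\beta(g,-)\|$ in $L^{2}(Y)$.

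With these in hand the argument closes exactly as for $G$, and --- crucially --- entirely on the finite-measure space $(\widehat S,\mu)$, so no measure-zero fiber issues arise. Passing to the finite extension $\widehat S=S\times_iI$ coming from the algebraic hull of $\beta$ over $\Gamma\curvearrowright S$, choosing a Levi decomposition of the connected hull and writing the minimal cocycle as $\beta^{\phi}=\pi\cdot\Esc\cdot u$ via Theorem \ref{thm:redzim}, one runs \textbf{Part 1}: the $L$-invariant filtration $0=V^{0}\subset\dots\subset V^{k}=\Rbb^{d}$ with the unipotent radical acting trivially on the graded pieces, the splitting of each graded piece into $F$-irreducibles, the Gram-Schmidt normalization of $\phi$, the auxiliary $\Rbb$-valued cocycles $\widetilde{\lambda_{j}^{i}}$ and the proof of their $L^{2}$-integrability, and finally Poincar\'e recurrence along $\{\gamma_0^{m}\}$ on $(\widehat S,\mu)$ combined with Theorem \ref{RvalueGamma} (in place of Theorem \ref{RvalueG}) to conclude $\Sp(\beta_{\gamma_0})=\Sp(\pi(\gamma_0))$ for all $\gamma_0\in\Gamma$ with Lyapunov subspaces $\phi(x,j)^{-1}E_\lambda$ --- this is (b) and (c), exactly as in Remark \ref{rmk:expQuasi}. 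Then \textbf{Part 2}, via the $\Gamma$-analogue of Theorem \ref{comm}, shows $\pi(G)$ commutes with the unipotent radical, so $\Esc_\circ=\Esc\cdot u$ is a genuine cocycle commuting with $\pi$ --- this is (d) and (e) --- while (a) is the decomposition itself. I expect the only real difficulty beyond routine bookkeeping to be the $L^{2}$-integrability of the induced cocycle discussed above.
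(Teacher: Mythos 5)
Your proposal is correct and follows essentially the same route as the paper: Part 1 (the spectrum statement) is run directly on $(S,\mu)$ with Theorem \ref{RvalueGamma} substituted for Theorem \ref{RvalueG}, and the commutation with the unipotent radical is obtained through the induced $G$-action, with the $L^{2}$-integrability of the induced cocycle --- which you rightly single out as the key technical point --- handled exactly by the Fisher--Margulis bookkeeping the paper cites (\cite{FM1}, Section 3.7). The only cosmetic difference is that the paper applies Part 2 of Theorem \ref{thm:superrigidG} to the induced cocycle $\widetilde{\beta^{\phi}}$ rather than formulating a separate $\Gamma$-analogue of Theorem \ref{comm}, but these amount to the same argument.
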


\begin{rmk}
As remarked on Remark \ref{rmk:errorint}, we can not get integrability of $\Esc_{\circ}$ here also. In addition, here $u$ is also a twisted cocycle since $u$ may not commute with $\Esc$. \end{rmk}

Now Theorem \ref{thm:superrigidGamma} comes from the standard induction argument and Theorem \ref{thm:redzim}.
\begin{proof}[Proof of Theorem \ref{thm:superrigidGamma}]
Recall that we have a $L^{2}$-integrable cocycle $\beta:\Gamma\times S\rightarrow \GL(d,\Rbb)$. As before, we can find a measurable map $\phi:S\rightarrow \GL(d,\Rbb)$ such that $\beta^{\phi}:\Gamma\times S\rightarrow L$ is a minimal cocycle. In other words, the $L$ is a $\Rbb$-points of algebraic group $\Lbb$ defined over $\Rbb$ and an algebraic hull of $\beta$ is $L$. Using Theorem \ref{thm:redzim}, we may write for any $\gamma\in\Gamma$ and $\mu$-almost every $x\in S$, $$\beta^{\phi}(\gamma,x)=\pi(\gamma) \Esc(\gamma,x) u(\gamma,x)$$ for some rational homomorphism $\pi:\Gbb\rightarrow \Lbb$, a cocycle $\Esc:\Gamma\times S\rightarrow A$ and measurable map $u:\Gamma\times S\rightarrow U$ after modification of conjugacy map. Here we retain notations as before and denote $A=Z(\Hbb)\cdot \Tbb\cdot \Kbb(\Rbb)$.  We follow same arguments with Part $1$ of Theorem \ref{thm:superrigidG}, using Theorem \ref{RvalueGamma} case instead of \ref{RvalueG}, we can deduce that for any $\gamma\in \Gamma$, $\Sp(\beta_{\gamma})=\Sp(\pi(\gamma))$. This proves Part $1$ in Theorem \ref{thm:superrigidGamma}.

For Part $2$, we will use induced cocycle $\widetilde{\beta^{\phi}}$of $\beta^{\phi}$. Note that $\widetilde{\beta^{\phi}}$ is cohomologous to the induced cocycle $\widetilde{\beta}$ of $\beta$. Note that the $\pi$ comes from Theorem \ref{thm:redzim} for $\beta^{\phi}$ is same as for $\widetilde{\beta^{\phi}}$.
We know that
\begin{enumerate}
\item the induced action is irreducible,
\item the induced cocycle $\widetilde{\beta}$ is $L^{2}$-integrable for some choice of fundamental domain of $\Gamma$ of $G$ (see \cite{FM1} section 3.7) and 
\item the algebraic hull of $\widetilde{\beta}$ (so the algebraic hull of $\widetilde{\beta^{\phi}}$) is still $L$.
\end{enumerate}
Now we use Part $2$ of Theorem \ref{thm:superrigidG} for $\widetilde{\beta^{\phi}}$, so that we conclude $\pi(G)$ commutes with $U$. Since $\pi(\Gamma)$ commutes with $A$ already, we can conclude that the $\Esc\cdot u :\Gamma\times S\rightarrow \GL(d,\Rbb)$ is indeed an amenable valued cocycle. This proves the Theorem \ref{thm:superrigidGamma}. \end{proof}

\begin{rmk}\label{diff}
 The differences between cocycle super-rigidity and dynamical super-rigidity are following.
\begin{enumerate}
\item in \cite{FM1}, they can get $\pi\cdot C$, product of homomorphism with compact group cocycle using cocycle super-rigidity. Especially, $\pi\cdot C$ so that the error term $C$ is always integrable even bounded. However, using dynamical cocycle super-rigidity do not tell us that the error term is even quasi-integrable. The author expect the error term is itself quasi $L^{2}$-integrable.
\item in \cite{FM1}, the algebraic hull is reductive as result of cocycle super-rigidity. However, we can not get rid of the unipotent part as a result of the dynamical cocycle super-rigidity. The commutativity of $\pi$ and $u$ and the fact that $\Esc$ takes values in a reductive subgroup of the algebraic hull will be useful for local and global rigidity.
\end{enumerate}
\end{rmk}
\subsection{Dynamical super-rigidity homomorphism}\label{sec:varcoc}
We will collect some theorems about the homomorphism from dynamical cocycle super-rigidity. We will assume $L^{2}$-integrability, \ref{std} and \ref{std2} as before. Then we can derive same variants as in \cite{FM1}.

Note that the following two facts will be used frequently. 
\begin{lem}[\cite{FM1} Lemma 3.21]\label{Fzd}
For $G$ and $\Gamma$ satisfy \ref{std}, there is finite elements $g_{1},\dots,g_{l}\in D$ such that the group $\Fcal$ generated by their polar part is Zariski dense in $G$. 
\end{lem}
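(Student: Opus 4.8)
The plan is to use the polar (Cartan) decomposition of $G$. Realize $G=\Gbb(\Rbb)\subset\GL(d,\Rbb)$ so that transpose is a Cartan involution (Mostow), let $K=G\cap\OO(d)$ be the corresponding maximal compact subgroup, write $\gfk=\klie\oplus\pfk$ for the Cartan decomposition ($\klie$ the antisymmetric, $\pfk$ the symmetric elements), and set $P=\exp(\pfk)$; every $g\in G$ factors uniquely as $g=k_gp_g$ with $k_g\in K$, $p_g\in P$, and by the \emph{polar part} of $g$ I mean $p_g=(g^Tg)^{1/2}$. Two standard inputs will be used repeatedly: since $G$ has no compact factors, $[\pfk,\pfk]=\klie$, so $\pfk$ generates $\gfk$ as a Lie algebra — hence $P$, which contains $\exp(tX)$ for all $t\in\Rbb$ and $X\in\pfk$, generates all of $G$; and the Zariski topology on $\Gbb$ is Noetherian, so once the subgroup of $G$ generated by \emph{all} polar parts of elements of $D$ is shown to be Zariski dense in $\Gbb$, some finite subfamily already generates a Zariski-dense subgroup, which is exactly what the lemma asserts. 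The case $D=G$ is then immediate, since the set of polar parts of elements of $G$ is exactly $P$, which generates $G$, which is Zariski dense in $\Gbb$.

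For $D=\Gamma$ the polar-part map is not a homomorphism, so I cannot directly transfer the Zariski density of $\Gamma$ (Borel density). Put $H=\langle p_\gamma:\gamma\in\Gamma\rangle$ and let $\mathbf H$ be its Zariski closure in $\Gbb$, with $\hfk=\mathrm{Lie}(\mathbf H)$. Since $H$ is generated by symmetric matrices it is stable under transpose, hence so is $\mathbf H$; therefore $\mathbf H$ is self-adjoint (in particular reductive) and $\hfk$ is $\theta$-stable. Because $\gamma=k_\gamma p_\gamma$ with $p_\gamma\in H\subseteq\mathbf H(\Rbb)$, we get $\Gamma\subseteq K\cdot\mathbf H(\Rbb)$. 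Now let $G$ act by $\Ad$ on the Grassmannian $\mathrm{Gr}_{\dim\hfk}(\gfk)$ and consider the point $[\hfk]$; it is fixed by $\mathbf H(\Rbb)$, so $\Gamma\cdot[\hfk]\subseteq K\cdot[\hfk]$. On the other hand $\Gamma$ is Zariski dense in $\Gbb$, so $\Gamma\cdot[\hfk]$ is Zariski dense in the $G$-orbit of $[\hfk]$; being trapped in the compact set $K\cdot[\hfk]$, this forces the $G$-orbit and the $K$-orbit of $[\hfk]$ to have the same Zariski closure, hence equal dimension. Writing $\nfk=N_{\gfk}(\hfk)$ for the (also $\theta$-stable) normalizer, the equality $\dim\gfk-\dim\nfk=\dim\klie-\dim(\nfk\cap\klie)$ collapses, via $\nfk=(\nfk\cap\klie)\oplus(\nfk\cap\pfk)$, to $\pfk\subseteq\nfk$, i.e. $[\pfk,\hfk]\subseteq\hfk$. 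Since $\hfk$ is a subalgebra and $\pfk$ generates $\gfk$, it follows that $\hfk$ is an ideal of $\gfk$, hence $\mathbf H=\prod_{i\in I}\Gbb_i$ for some $I\subseteq\{1,\dots,n\}$. If $I$ were proper, then projecting $\Gamma\subseteq K\cdot\prod_{i\in I}G_i$ onto $\prod_{j\notin I}G_j$ would place the image of $\Gamma$ inside the compact image of $K$ — impossible, because a weakly irreducible higher-rank lattice projects to a non-relatively-compact subgroup (dense, or infinite and discrete) of every nontrivial subproduct of simple factors of $G$. Hence $\mathbf H=\Gbb$, so $H$ is Zariski dense and Noetherianity produces the required $\gamma_1,\dots,\gamma_l$.

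The main obstacle is precisely this last step: turning Zariski density of $\Gamma$ into Zariski density of $\langle p_\gamma\rangle$. The containment $\Gamma\subseteq K\,\mathbf H(\Rbb)$ on its own is compatible with $\mathbf H$ being a proper reductive subgroup (for instance a Levi factor), so some extra rigidity is needed; the Grassmannian orbit-dimension comparison is the device that upgrades this containment to the statement that $\hfk$ is normalized by $\pfk$, after which the Lie-theoretic structure of $\gfk$ and the non-compactness of lattice projections close the argument. (If one's convention is instead that the polar part is $g^Tg$ rather than $(g^Tg)^{1/2}$, one first notes that $(g^Tg)^{1/2}$ lies in the Zariski closure of the cyclic group generated by $g^Tg$ — a diagonalizable element with positive real eigenvalues has no imaginary periods — which reduces to the case above.)
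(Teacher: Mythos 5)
The paper does not actually prove this lemma; it is quoted verbatim from \cite{FM1} (Lemma 3.21). Judged on its own terms, your argument is internally coherent, but it proves the statement for the wrong notion of ``polar part.'' You take $p_g=(g^{T}g)^{1/2}$, the positive-definite factor of the Cartan/polar decomposition in a self-adjoint realization. In \cite{FM1}, and as the lemma is used later in this paper, the polar part of $g$ is the hyperbolic factor $g_h$ in the commuting decomposition $g=g_eg_hg_u$ (elliptic, positive-diagonalizable, unipotent), i.e.\ the element $\sum_i e^{\lambda_i}\Pi_i$ reconstructed from the Lyapunov exponents and Oseledets subspaces of the constant cocycle $m\mapsto g^{m}$. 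The downstream applications (the proofs of Theorems \ref{uniqhom} and \ref{locrigidhom}, which explicitly say ``the polar element is determined by their Lyapunov subspaces and exponents'') need precisely the two properties that distinguish $g_h$ from $(g^{T}g)^{1/2}$: it is determined by the Lyapunov data, and it is equivariant, $p(dgd^{-1})=d\,p(g)\,d^{-1}$ and $p(\pi(g))=\pi(p(g))$ for rational $\pi$. Neither holds for the Gram-matrix square root (a nontrivial unipotent upper-triangular $g$ has trivial Lyapunov data but $(g^{T}g)^{1/2}\neq \id$). Your closing parenthetical only hedges between $g^{T}g$ and its square root, not this genuinely different decomposition, and your pivotal containment $\Gamma\subseteq K\cdot\mathbf H(\Rbb)$ has no analogue for $g_h$: the discarded factors $g_eg_u$ do not lie in any fixed compact subgroup. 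So the main body of your proof does not establish the lemma in the form the paper needs.

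That said, your outer skeleton --- show the group generated by \emph{all} polar parts of elements of $D$ is Zariski dense, then use Noetherianity of the Zariski topology to extract finitely many generators --- is exactly right, and the Grassmannian orbit-dimension comparison is a correct (if elaborate) way to force $\pfk\subseteq N_{\gfk}(\hfk)$ for the Cartan notion. For the Jordan-hyperbolic polar part the middle step is in fact easier, and is essentially the argument of \cite{FM1}: equivariance of the Jordan decomposition under conjugation shows that the group generated by $\{p(d):d\in D\}$ is normalized by $D$, so its Zariski closure $\mathbf N$ is normalized by $\overline{D}^{Z}=\Gbb$ (Borel density), whence $\mathbf N^{0}=\prod_{i\in I}\Gbb_i$ for some $I$. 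If some factor $\Gbb_j$ were omitted, every element of the projection $\pi_j(D)$, which is Zariski dense and unbounded in the noncompact group $G_j$, would have all eigenvalues of modulus one; this contradicts the existence of $\Rbb$-regular elements in Zariski-dense subgroups of noncompact semisimple groups (Prasad--Raghunathan \cite{PR72}; Benoist--Labourie, Prasad). Your Noetherianity reduction then finishes the proof exactly as you wrote it.
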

\begin{theorem}[\cite{FM1} Corollary 4.3]\label{orbit}
Let $\Lbb=\Abb\ltimes \Hbb$ be the algebraic group that is a semidirect product of algebraic groups $\Abb$ and $\Hbb$. Let $\Fcal$ be the finitely generated group. Then $H=\Hbb(\Rbb)$-orbits of completely reducible homomorphisms in $\Hom(\Fcal,L)$ are Hausdorff closed.
\end{theorem}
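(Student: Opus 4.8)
The plan is to turn this into a question in affine invariant theory and settle it by a Hilbert--Mumford type argument. First I would fix a finite generating set $\gamma_{1},\dots,\gamma_{k}$ of $\Fcal$; evaluation on these generators identifies $\Hom(\Fcal,L)$ with a Zariski-closed (hence Hausdorff-closed), $H$-stable subset of $L^{k}$, on which $H$ acts by simultaneous conjugation. So it suffices to show that the $H$-orbit of a completely reducible $\rho$ is Hausdorff-closed in $L^{k}$. Since $\Hbb$ is an $\Rbb$-algebraic group acting algebraically on the affine $\Rbb$-variety $L^{k}$, I would next invoke Birkes' theorem on orbits of real points of algebraic groups (with its refinements over non-closed fields) to reduce this to a statement over $\Cbb$: for the real point $\rho$, the real orbit $H\cdot\rho$ is Hausdorff-closed if and only if the complex orbit $\Hbb(\Cbb)\cdot\rho$ is Zariski-closed in $L(\Cbb)^{k}$.

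Working over $\Cbb$, suppose for contradiction that $\Hbb(\Cbb)\cdot\rho$ is not Zariski-closed. By the Hilbert--Mumford criterion there is a cocharacter $\lambda\colon\Gm\to\Hbb(\Cbb)$ with $\rho_{0}:=\lim_{t\to0}\lambda(t)\,\rho\,\lambda(t)^{-1}$ existing and lying in $\overline{\Hbb(\Cbb)\cdot\rho}\setminus\Hbb(\Cbb)\cdot\rho$. Conjugation by $\lambda(t)$ is an automorphism of $L$, so the usual dynamical picture applies: the limit exists exactly when $\rho(\Fcal)$ lies in the instability subgroup $P_{\lambda}=\{g\in L:\lim_{t\to0}\lambda(t)g\lambda(t)^{-1}\text{ exists}\}$, a parabolic subgroup of $L$ in Serre's generalized sense, with unipotent radical $U_{\lambda}=\{g:\lim_{t\to0}\lambda(t)g\lambda(t)^{-1}=1\}$ and Levi factor $L_{\lambda}=Z_{L}(\lambda)$; moreover $\rho_{0}$ is the composite of $\rho$ with the projection $P_{\lambda}\to L_{\lambda}=P_{\lambda}/U_{\lambda}$. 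The completely reducible hypothesis, together with $\rho(\Fcal)\subseteq P_{\lambda}$, then produces $u\in U_{\lambda}$ with $u\,\rho(\Fcal)\,u^{-1}\subseteq L_{\lambda}$, and since the projection $P_{\lambda}\to L_{\lambda}$ is a homomorphism trivial on $U_{\lambda}$ one checks $u\rho u^{-1}=\rho_{0}$, so $\rho_{0}$ lies in the $L$-orbit of $\rho$. To reach the contradiction one must improve this to $\rho_{0}\in H\cdot\rho$, i.e.\ arrange the conjugating element inside $\Hbb(\Cbb)$.

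So the argument has two real points. The first is the correct formulation of ``completely reducible'' for homomorphisms into $L=\Abb\ltimes\Hbb$, in a relative version of Serre's $G$-complete reducibility, so that the destabilizing $u$ above can be found inside $H=\Hbb(\Rbb)$ rather than merely in $L$; once this is pinned down, the reductive case of the statement is essentially Richardson's theorem that orbits of completely reducible tuples under a reductive group are closed, repackaged through the theory of $G$-complete reducibility (Bate--Martin--R\"ohrle). The second, which I expect to be the main obstacle, is controlling the semidirect-product direction: I would first peel off the $\Abb$-coordinate by passing to the $\Hbb$-invariant map $\Hom(\Fcal,L)\to\Hom(\Fcal,L/\Hbb)\cong\Hom(\Fcal,\Abb)$ and working over each fibre, then verify that the Levi decomposition of the instability parabolic $P_{\lambda}$ is compatible with the splitting $L=\Abb\ltimes\Hbb$ (so that $U_{\lambda}\cap\Hbb$ is the unipotent radical of the induced parabolic of $\Hbb$ and $u$ may be taken there), and finally check that the extension class recorded by the $\Abb$-coordinate does not obstruct running the reductive argument fibrewise. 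Verifying this compatibility between $P_{\lambda}$ and $L=\Abb\ltimes\Hbb$ is the technical heart of the proof.
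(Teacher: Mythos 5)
The paper does not actually prove this statement; it is quoted verbatim from \cite{FM1}, Corollary 4.3, so your proposal has to be judged against that argument and on its own terms. Your opening reduction is fine: evaluation on generators realizes $\Hom(\Fcal,L)$ as a Hausdorff-closed, $H$-stable subset of $L^{k}$, and the implication you actually need from Birkes-type results --- Zariski-closedness of the $\Hbb(\Cbb)$-orbit implies Hausdorff-closedness of the $\Hbb(\Rbb)$-orbit --- holds for an arbitrary $\Rbb$-group, since the real points of a Zariski-closed orbit split into finitely many $\Hbb(\Rbb)$-orbits, each open in the union and hence each closed.

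The genuine gap is that everything after that presupposes the acting group is reductive. The ``if and only if'' form of Birkes' theorem, the Hilbert--Mumford criterion, and Kempf's instability parabolic $P_{\lambda}$ are theorems about reductive groups, whereas the statement places no hypothesis on $\Hbb$ or $\Abb$, and in this paper's applications $\Hbb$ is emphatically not reductive: in Section \ref{sec:globrigid} it is the unipotent group $N$, and in Section \ref{sec:locrigid} it is an arbitrary real algebraic group $H$ admitting a cocompact lattice. For unipotent $\Hbb$ the conclusion holds for \emph{every} point (Kostant--Rosenlicht, and Birkes over $\Rbb$) with no complete reducibility needed, but your argument never sees this case; for $\Hbb$ with both a nontrivial unipotent radical and a nontrivial reductive part there is no off-the-shelf Hilbert--Mumford theory to invoke at all. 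Compounding this, the step you yourself flag as the ``technical heart'' --- producing the destabilizing element inside $U_{\lambda}\cap\Hbb$ rather than merely in $L$, and checking that $P_{\lambda}$ is compatible with the splitting $\Abb\ltimes\Hbb$ --- is exactly the content of the theorem and is left unproved. The cited argument is organized differently precisely to sidestep these issues: there ``completely reducible'' means the ambient module $V$ with $L<\GL(V)$ is semisimple, so Richardson's theorem gives closedness of the orbit under a larger \emph{reductive} group, and a general lemma relating orbits of a normal subgroup to orbits of the ambient group, via the stabilizer of the point (which is reductive by the double centralizer theorem when the homomorphism is completely reducible), transfers closedness down to the $H$-orbit. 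If you want to salvage the GIT route, you would at minimum have to peel off the unipotent radical of $\Hbb$ by hand and dispose of it by Kostant--Rosenlicht before any Hilbert--Mumford argument can begin.
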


Let's fix some notations. From now on $D=G$ or $\Gamma$ and $(S,\mu)$ be group and measure space that satisfies \ref{std} and \ref{std2}. $L$ be a real algebraic Lie group.  Let $\beta:D\times S\rightarrow L<\GL(d,\Rbb)$ be a $L^{2}$-integrable cocycle. 
\begin{enumerate}
\item When $D=\Gamma$, if we have homomorphism $\pi_{0}:\Gamma\rightarrow \GL(d,\Rbb)$ then we can find a rational homomorphism $\pi_{0}^{E}:\Gbb\rightarrow \GL(d,\Cbb)$ and homomorphism with bounded image $\pi_{0}^{K}:\Gamma\rightarrow \GL(d,\Rbb)$ such that $\pi=\pi_{0}^{E}\pi_{0}^{K}$ and they commute using Margulis' super-rigidity. Note that Zariski closure of $\pi_{0}(\Gamma)$ is semisimple.

\item When $D=G$ and we have a rational homomorphism $\pi_{0}:G\rightarrow \GL(d,\Rbb)$, we will denote $\pi_{0}^{E}=\pi_{0}$ and $\pi_{0}^{K}$ is trivial homomorphism for simplicity.

\item For both cases: Using dynamical super-rigidity, we can find rational homomorphism $\pi:\Gbb\rightarrow \GL(d,\Cbb)$ such that for any $g\in D$, $\Sp(\beta_{g})=\Sp(\pi(g)).$
\end{enumerate}

As before, when we say about $L^{\infty}$ close between two cocycles, the norm on target algebraic group is $\ln||\cdot||$ for a fixed embedding in to $\GL(d,\Rbb)$.

Since the proofs of the following two theorems are almost same with the \cite{FM1}, we just contain the sketch of the proof for reader's convenience. Indeed, we will see detailed proof of them in more general setting in the middle of the proof of theorem \ref{thm:locrigidcoc}.

\begin{theorem}[Uniqueness of super-rigidity homomorphism]\label{uniqhom}
Let $D=G$ or $\Gamma$ satisfy \ref{std}. Assume $(S,\mu)$ is standard $D$-ergodic space. Further assume \ref{std2}. Let $\beta:D\times S\rightarrow 
L$ be $L^{2}$-integrable cocycle. Assume that there is a homomorphism $\pi_{0}:D\rightarrow L$ such that $$\Sp(\beta_{g})=\Sp(\pi_{0}(g))$$ for any $g\in D$. Then $\pi$ is conjugate to $\pi_{0}^{E}$ by an element $l\in L$.
Moreover, if there is a measurable map $\phi:\widehat{S}\rightarrow L$ such that $\phi(\widehat{x})W(\beta_{g}(x))=W(\pi_{0}^{E}(g))$ for Lyapunov subspaces $W$ at $x$ corresponding cocycles, then there is a measurable map $\phi_{z}:\widehat{S}\rightarrow Z_{L}(\pi_{0}^{E})$ such that $\phi(\widehat{x})=l^{-1}\phi_{z}(\widehat{x})$.
\end{theorem}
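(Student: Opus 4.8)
\noindent\emph{Proof strategy (following the template of \cite{FM1}).}
The plan is to first discard all inessential data, reducing the claim to a comparison of two completely reducible rational representations of $\Gbb$ that have the same Lyapunov spectrum at every element of the Zariski-dense set $D$, and then to build the conjugating element $l\in L$ out of the polar parts supplied by Lemma \ref{Fzd} together with the Hausdorff-closedness of orbits of completely reducible homomorphisms (Theorem \ref{orbit}). First I would record that, by the dynamical cocycle super-rigidity theorem (\ref{thm:superrigidG} if $D=G$, \ref{thm:superrigidGamma} if $D=\Gamma$) applied to $\beta$, the homomorphism $\pi$ is rational on $\Gbb$ with $\Sp(\beta_{g})=\Sp(\pi(g))$ for all $g\in D$. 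Writing $\pi_{0}=\pi_{0}^{E}\pi_{0}^{K}$ with the two factors commuting, $\pi_{0}^{E}$ rational and $\pi_{0}^{K}$ of bounded image, the bounded factor contributes no eigenvalue modulus, so $\Sp(\pi_{0}(g))=\Sp(\pi_{0}^{E}(g))$ and hence $\Sp(\pi(g))=\Sp(\pi_{0}^{E}(g))$ for all $g\in D$. Since $\Gbb$ is semisimple, both $\pi$ and $\pi_{0}^{E}$ are completely reducible, and the statement reduces to producing $l\in L$ with $l^{-1}\pi(g)l=\pi_{0}^{E}(g)$ from this Lyapunov-spectrum hypothesis.

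For the conjugator I would invoke Lemma \ref{Fzd} to obtain $g_{1},\dots,g_{l}\in D$ whose polar parts $p_{1},\dots,p_{l}$ (the $\Rbb$-split parts of their real Jordan decompositions) generate a Zariski-dense, finitely generated subgroup $\Fcal\le G$. Each $p_{i}$ is $\Rbb$-hyperbolic, so $\pi(p_{i})$ and $\pi_{0}^{E}(p_{i})$ are real-diagonalizable with positive eigenvalues; as the elliptic and unipotent parts of $g_{i}$ commute with $p_{i}$ and do not change eigenvalue moduli, $\Sp(\pi(g_{i}))=\Sp(\pi(p_{i}))$ and likewise for $\pi_{0}^{E}$, whence $\Sp(\pi(p_{i}))=\Sp(\pi_{0}^{E}(p_{i}))$. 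For positive-diagonalizable matrices this equality of (exponentiated, with multiplicity) spectra forces $\pi(p_{i})$ to be conjugate to $\pi_{0}^{E}(p_{i})$ for each $i$. Since $\Fcal$ is Zariski-dense, $\pi|_{\Fcal}$ and $\pi_{0}^{E}|_{\Fcal}$ are completely reducible homomorphisms $\Fcal\to L$ with the same invariant subspaces as the ambient rational representations; I would then run the standard limiting argument of \cite{FM1} (conjugating one homomorphism by large powers of the hyperbolic generators and tracking the surviving coordinates) to produce a sequence of $L$-conjugates of one homomorphism accumulating onto an $L$-conjugate of the other, and conclude from Theorem \ref{orbit} that the two $L$-orbits of completely reducible homomorphisms coincide. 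This yields $l\in L$ with $l^{-1}\pi(p_{i})l=\pi_{0}^{E}(p_{i})$ for all $i$; finally, because $\Fcal$ is Zariski-dense and $\pi,\pi_{0}^{E}$ are rational, the regular maps $l^{-1}\pi(\cdot)l$ and $\pi_{0}^{E}(\cdot)$ agree on $\Fcal$ and hence on all of $\Gbb$, proving the first assertion.

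For the ``moreover'' I would combine this with item (c) of the super-rigidity theorem, which identifies the Lyapunov subspaces of $\beta_{g}$ with $\phi_{\mathrm{sr}}(\widehat{x})^{-1}W(\pi(g))$, where $\phi_{\mathrm{sr}}$ denotes the conjugating map produced there. Using $W(\pi_{0}^{E}(g))=l^{-1}W(\pi(g))$ from the first part, the hypothesis $\phi(\widehat{x})W(\beta_{g}(x))=W(\pi_{0}^{E}(g))$ shows that $l\,\phi(\widehat{x})\,\phi_{\mathrm{sr}}(\widehat{x})^{-1}$ preserves every Lyapunov subspace of every $\pi(g)$, $g\in D$; since $\pi(D)$ is Zariski-dense in $\pi(\Gbb)$ and these subspaces include the joint eigenspace decompositions of a Zariski-dense set of $\Rbb$-regular semisimple elements, a Schur-type argument forces this element into $Z_{L}(\pi)$. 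Rewriting with $Z_{L}(\pi_{0}^{E})=l^{-1}Z_{L}(\pi)\,l$ then gives $\phi(\widehat{x})=l^{-1}\phi_{z}(\widehat{x})$ with $\phi_{z}$ valued in $Z_{L}(\pi_{0}^{E})$, and measurability of $\phi_{z}$ is inherited from that of $\phi$ and $\phi_{\mathrm{sr}}$.

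The step I expect to be the main obstacle is the passage, inside the second paragraph, from the element-by-element conjugacies $\pi(p_{i})\sim\pi_{0}^{E}(p_{i})$ to a single conjugator lying in $L$: conjugacy on a generating set does not by itself give simultaneous conjugacy, and — for non-split simple factors — matching Lyapunov spectra is strictly weaker than matching characters, since it cannot detect the replacement of an irreducible summand by its contragredient. What must rescue the argument is the interplay of Lemma \ref{Fzd} (Zariski-density of the group generated by the polar parts, which powers the contraction/limiting step) with Theorem \ref{orbit} (Hausdorff-closedness of orbits of completely reducible homomorphisms, which both anchors the limit in the correct orbit and keeps the conjugator inside $L$ rather than merely inside $\GL(d,\Cbb)$), together with the fact that $\pi$ arises as the super-rigidity homomorphism of $\beta$ and $\pi_{0}^{E}$ as the Margulis extension of $\pi_{0}$ — so that one actually has the finer Lyapunov-subspace data of item (c) at one's disposal, as reflected in the ``moreover''. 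Checking complete reducibility of the relevant restrictions via Zariski-density, and carrying out the bookkeeping of the limiting step, are where the real work concentrates; these go through exactly as in \cite{FM1}.
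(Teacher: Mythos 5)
Your proposal follows essentially the same route as the paper's own proof (which is itself only a sketch deferring to Theorem 3.20 of \cite{FM1}): Lemma \ref{Fzd} to get a Zariski-dense group $\Fcal$ generated by polar parts, the $L$-action on $\Hom(\Fcal,L)$ together with the closed-orbit Theorem \ref{orbit} to place $\pi$ and $\pi_{0}^{E}$ in the same $L$-orbit, and the identification of the stabilizer of $\pi_{0}^{E}$ with $Z_{L}(\pi_{0}^{E})$ for the ``moreover''. The obstacle you flag --- passing from element-wise conjugacy of the $\pi(p_{i})$ to a single conjugator --- is handled in the paper exactly as you anticipate: one uses not just the exponents but the Lyapunov \emph{subspaces} of $\beta_{g_{i}}$ at a fixed generic point, which item (c) of the dynamical super-rigidity theorem conjugates simultaneously onto those of $\pi(g_{i})$ by the single element $\phi(\widehat{x})$, so that the polar parts of all the $\pi(g_{i})$ are conjugated at once.
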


The proof is same as the proof  of Theorem 3.20 in \cite{FM1}. Indeed, we can find finite number of elements that the group $\Fcal$ generated by their polar part is Zariski dense as in the Lemma \ref{Fzd}. We consider the $L$ action on $\Hom(\Fcal,L)$ and will prove that $\pi_{0}^{E}$ and $\pi'$ are in the same $L$-orbit using the fact that the polar element is determined by their Lyapuonv subspaces and exponents. This will give the first assetion. Furthermore, the last assertion comes from the fact that the stabilizer of $\pi_{0}^{E}(\Fcal)$ in $L$ is $Z_{L}(\pi_{0}^{E}(\Fcal))=Z_{L}(\pi_{0}^{E}(G)$.

\begin{theorem}[Local rigid of super-rigidity homomorphism]\label{locrigidhom}
Assume that $\beta:D\times S\rightarrow L$ is sufficiently $L^{\infty}$-close to the constant cocycle $\pi_{0}:D\rightarrow L$. Then $\pi$ is conjugate to $\pi_{0}^{E}$ by an element $l\in L$. 
\end{theorem}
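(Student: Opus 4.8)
The plan is to reduce the statement to the uniqueness of the super-rigidity homomorphism (Theorem \ref{uniqhom}), once one checks that the closeness to $\pi_0$ pins down the Lyapunov data of $\beta$; this follows the scheme of \cite{FM1} Theorem 3.20, and the details in the generality needed here will be spelled out inside the proof of Theorem \ref{thm:locrigidcoc}. First I would apply the dynamical cocycle super-rigidity theorem --- Theorem \ref{thm:superrigidG} if $D=G$, Theorem \ref{thm:superrigidGamma} if $D=\Gamma$ --- to the $L^2$-integrable cocycle $\beta$: after passing to a finite extension this produces a rational homomorphism $\pi:\Gbb\rightarrow\GL(d,\Cbb)$ with $\Sp(\beta_g)=\Sp(\pi(g))$ for all $g\in D$, the commuting amenable error term, and a measurable conjugacy identifying the Oseledets subspaces of $\beta_g$ with the weight spaces of $\pi(g)$. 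It then remains to show that $\pi$ is conjugate to $\pi_0^E$ by an element of $L$.

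Next I would fix, via Lemma \ref{Fzd}, finitely many elements $g_1,\dots,g_l\in D$ whose polar parts generate a Zariski dense subgroup $\Fcal<G$, and extract quantitative consequences of the $L^\infty$-closeness of $\beta$ to $\pi_0$. Expanding $\beta(g_i^{Nm},x)$ as a product of blocks $\beta(g_i^N,\cdot)$, estimating exterior powers in $L^\infty$, and letting first the closeness parameter tend to $0$ and then $N\to\infty$, one bounds the partial sums of the Lyapunov exponents of $\beta_{g_i}$ by those of $\pi_0(g_i)$; the full determinant sum is in addition computed exactly up to $o(1)$ using Birkhoff's ergodic theorem, since $\log|\det\beta(g_i,\cdot)|$ is $L^\infty$-close to the constant $\log|\det\pi_0(g_i)|$, and the $\Rbb$-valued cocycle machinery of Section \ref{sec:Rvalue} (Theorems \ref{RvalueG}, \ref{RvalueGamma}) absorbs the coboundary-type terms arising from the Iwasawa normalization. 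Because $\Sp(\beta_{g_i})=\Sp(\pi(g_i))$ and, up to $L$-conjugacy, there are only finitely many rational homomorphisms $\Gbb\rightarrow\GL(d,\Cbb)$ of dimension $d$ --- each realizing a definite Lyapunov spectrum and Oseledets flag on the fixed elements $g_1,\dots,g_l$ --- for $\beta$ sufficiently $L^\infty$-close to $\pi_0$ the only admissible option is that these data coincide with those of $\pi_0^E$, hence $\Sp(\beta_g)=\Sp(\pi_0(g))$ for every $g\in D$; Theorem \ref{uniqhom} then yields that $\pi$ is conjugate to $\pi_0^E$ by an element of $L$.

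I expect the main obstacle to be precisely the passage from "Lyapunov data close on the finite set $g_1,\dots,g_l$" to "$\pi$ and $\pi_0^E$ lie in a common $L$-orbit": the naive $L^\infty$ bounds compound along words and only give \emph{majorization} of the spectra rather than equality, which is why one must iterate on the powers $g_i^N$ and, more importantly, prevent the homomorphisms $\pi\restrict{\Fcal}$ from degenerating in $\Hom(\Fcal,L)$ as the closeness improves. This is handled exactly as in \cite{FM1}: the relevant homomorphisms (or their semisimplifications) are completely reducible, their values on the polar elements are determined by the Lyapunov exponents and subspaces and so remain in a bounded region modulo conjugation, and Theorem \ref{orbit} --- closedness of the $H=\Hbb(\Rbb)$-orbits of completely reducible homomorphisms --- guarantees that any subsequential limit of $L$-conjugates of $\pi\restrict{\Fcal}$ stays inside the $L$-orbit it converges towards. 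Combined with the finiteness of rational conjugacy classes this forces $\pi\restrict{\Fcal}$ and $\pi_0^E\restrict{\Fcal}$ into the same $L$-orbit, and Zariski density of $\Fcal$ in $G$ promotes the coincidence from $\Fcal$ to all of $G$, giving the assertion directly (and consistently with the route through Theorem \ref{uniqhom} above).
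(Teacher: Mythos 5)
Your skeleton is the same as the paper's: the paper's proof is a direct import of the first part of \cite{FM1} Theorem 5.1 --- boundedness (hence $L^{2}$-integrability) of $\beta$, dynamical super-rigidity to produce $\pi$ with $\Sp(\beta_{g})=\Sp(\pi(g))$, Lemma \ref{Fzd} to get the Zariski dense group $\Fcal$ generated by polar parts, and then closedness of $L$-orbits in $\Hom(\Fcal,L)$ (Theorem \ref{orbit}) together with finiteness of conjugacy classes of rational homomorphisms to force $\pi$ and $\pi_{0}^{E}$ into the same orbit, with Zariski density of $\Fcal$ promoting the identification to all of $G$. Your closing paragraph reproduces exactly this endgame, and you are right that once the orbit argument is run you do not actually need to route back through Theorem \ref{uniqhom}.

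The one place where you substitute your own argument for the paper's is the stability step, and there you have a gap. Your exterior-power and Birkhoff estimates only control the Lyapunov \emph{exponents} of $\beta_{g_{i}}$; but the exponents alone do not determine the polar part of an element of $L$, and hence do not determine the point of $\Hom(\Fcal,L)$ to which the Oseledets data of $\beta$ gives rise. To feed the closed-orbit argument you must know that the homomorphism read off from $\beta$'s characteristic data is close to $\pi_{0}^{E}\restrict{\Fcal}$ in $\Hom(\Fcal,L)$, and that requires closeness of the Oseledets \emph{subspaces} of $\beta_{g_{i}}$ to the weight subspaces of $\pi_{0}(g_{i})$, not just closeness of the exponents. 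In your write-up this is asserted (``their values on the polar elements are determined by the Lyapunov exponents and subspaces'') but never established. The paper obtains it by citing stability of hyperbolic vector bundle maps --- Pesin's theorem \cite{Pesin}, in the form of Lemma 5.2 of \cite{FM1} --- applied to the $L^{\infty}$-small perturbation of the constant (hence uniformly hyperbolic on each spectral gap) cocycle $\pi_{0}(g_{i})$. You should replace the exterior-power computation by, or at least supplement it with, this persistence statement for the dominated splittings of $\beta_{g_{i}}$; with that in hand the rest of your argument goes through as written.
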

The proof is same as the first part of the proof of Theorem 5.1 in \cite{FM1}. (In their notation, the arguments for proving $\widetilde{\pi}_{i}=(\pi_{A},\pi_{H}^{E})$ can be used here.) First, note that $\beta$ is $L^{\infty}$-closed to $\pi_{0}$ so that $\beta$ is a bounded cocycle. Especially, $\beta$ is $L^{2}$-integrable. We can use stability of hyperbolic vector bundle maps (\cite{Pesin} or Lemma 5.2 in \cite{FM1}). As before, we can find finite number of elements that the group $\Fcal$ generated by their polar part is Zariski dense by Lemma \ref{Fzd}. After that, stability of characteristic spaces for that elements, the fact that the $L$ orbits on the $\Hom(\Fcal,L)$ is closed (Theorem \ref{orbit}) and the finiteness of conjugcay classes of $\Hom(G,L)$ give us to prove that the $\pi$ is conjugate to the $\pi_{0}^{E}$. 

\section{Local rigidity}\label{sec:locrigid}

\subsection{Settings}

We will follow notations in \cite{FM1} in this section.
 Let $G$ be the connected real semisimple higher rank algebraic Lie group without compact factor. Let $\Gamma$ be a weakly irreducible lattice in $G$. Let $D=G$ or $\Gamma$. $H$ be a real algebraic Lie group and $\Lambda$ be a cocompact lattice in $H$. We will consider affine action on $D$ on $H/\Lambda$, $\Acal_{0}:D\rightarrow \Aff(H/\Lambda).$ We will prove following theorem which implies Theorem \ref{thm:localrigid} and \ref{thm:globalrigid}.
When $D=\Gamma$, we will define  the finite index normal subgroup $\Gamma'(\Acal_{0})=\Gamma'$ of $\Gamma$ that only depends on $\Acal_{0}$. 
 \begin{theorem}\label{thm:locrigidgen}

 Let $\Acal_{0}$ is $C^{1}$-affine action of $D$ on $H/\Lambda$. (resp. $C^{\infty}$-affine action.)
 Assume that the $\Acal_{0}$ is weakly hyperbolic.  
 \begin{enumerate}
 \item When $D=G$: Let the $C^{1}$-action (resp. $C^{\infty}$-action) $\Acal$ preserves a fully supported Borel probability measure that is weakly irreducible. If the action $\Acal$ of $G$ on $H/\Lambda$ is sufficiently $C^{1}$-closed to $\Acal_{0}$ then there is a homeomorphism (resp. $C^{\infty}$-diffeomorphism) $\lambda:H/\Lambda\rightarrow H/\Lambda$ such that for any $g\in G$, $$\lambda^{-1}\circ \Acal(g)\circ \lambda=\Acal_{0}(g).$$ 
 
  \item  When $D=\Gamma$: If $C^{1}$-action (resp. $C^{\infty}$-action) $\Acal$ of $\Gamma$ on $H/\Lambda$ preserved a fully supported Borel probability measure so that $\Acal\restrict{\Gamma'}$ is induced weakly irreducible and the action $\Acal$ is sufficiently $C^{1}$-closed to $\Acal_{0}$ then there is a homeomorphism (resp. $C^{\infty}$-diffeomorphism) $\lambda:H/\Lambda\rightarrow H/\Lambda$ such that for any $\gamma\in \Gamma$, $$\lambda^{-1}\circ \Acal(\gamma)\circ \lambda = \Acal_{0}(\gamma).$$ 
 \end{enumerate}

 \end{theorem}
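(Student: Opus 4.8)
The plan is to follow the strategy of Margulis--Qian \cite{MQ} and Fisher--Margulis \cite{FM1,FM2}, replacing their use of Zimmer's cocycle super-rigidity theorem with the dynamical cocycle super-rigidity theorems \ref{thm:superrigidG} and \ref{thm:superrigidGamma}. First I would pass to the finite-index subgroup $\Gamma'=\Gamma'(\Acal_0)$ (when $D=\Gamma$) on which $\Acal_0$ decomposes nicely; using the description of $\Aff(H/\Lambda)$ from \cite{FM1} Propositions 6.1--6.3, I may assume $H$ is realized as the real points of a connected real algebraic group and that the affine action is built from rational automorphisms together with left translations, so $\Acal_0$ extends to a cocycle construction analogous to Lemma \ref{liftcoc}. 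The weak hyperbolicity hypothesis gives that $\Acal_0$ is expansive (the last Lemma of Section \ref{sec:prehyp}), hence every $C^1$-close action $\Acal$ is again expansive, and the partially hyperbolic elements $\Acal_0(g_i)$ remain partially hyperbolic after perturbation by stability of dominated splittings (Pesin theory, \cite{Pesin}, or Lemma 5.2 of \cite{FM1}).

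The core step is to run the ``local rigidity for constant cocycles'' argument. The derivative cocycle of the perturbed action $\Acal$, read in a trivialization coming from $\Acal_0$, is a $\GL(d,\Rbb)$-valued measurable cocycle over $\Acal\restrict{\Gamma'}$ (or $\Acal\restrict{G}$) which is $L^\infty$-close, hence $L^2$-integrable, and $\mu$ gives a fully supported invariant measure that is induced weakly irreducible by hypothesis. Applying Theorem \ref{thm:superrigidGamma} (resp.\ \ref{thm:superrigidG}) after passing to a finite extension $\widehat S$, the derivative cocycle is cohomologous to $\phi\cdot(\pi\cdot\Esc_\circ)\cdot\phi^{-1}$ with $\pi$ a rational homomorphism whose Lyapunov spectrum matches that of $\Acal_0$, and by Theorem \ref{locrigidhom} the homomorphism $\pi$ is in fact conjugate to the linear data $\pi_0^E$ of $\Acal_0$ itself. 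One then uses Theorem \ref{uniqhom} to pin down the conjugating measurable map up to the centralizer $Z_L(\pi_0^E)$, so that the Lyapunov subspaces of $\Acal$ are matched, modulo a bounded error, to those of $\Acal_0$. This is the input needed to build the conjugacy: following \cite{MQ} and \cite{FM2}, one constructs a continuous (indeed Hölder) map $\lambda:H/\Lambda\to H/\Lambda$ intertwining $\Acal$ and $\Acal_0$ by the standard shadowing/expansiveness argument for the single Anosov (or weakly hyperbolic) element $\Acal_0(g_0)$, then checks equivariance for the whole group using that $\Acal_0$ has a dense set of periodic points and that $\lambda$ is unique; surjectivity and injectivity of $\lambda$ follow from the degree argument and expansiveness as in \cite{MQ}.

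For the smooth ($C^\infty$) conclusion I would invoke the bootstrap machinery: once a $C^0$ conjugacy is in hand and the actions are $C^\infty$-close, the conjugacy is smooth along stable and unstable foliations of each $\Acal_0(g_i)$ by the non-stationary normal form / Journé-type arguments used in \cite{FM1,FM2} and \cite{KS97}, and since the strong stable subbundles of the $\Acal_0(g_i)$ span $TM$ (weak hyperbolicity), the conjugacy is smooth in all directions, hence $C^\infty$ by Journé's lemma. The case $D=G$ is handled identically, using Theorem \ref{thm:superrigidG} and the remark that the local-rigidity results for affine $G$-actions proceed exactly as in the lattice case once one has the dynamical super-rigidity statement.

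The main obstacle I expect is exactly the point where \cite{FM1} uses that the error term in cocycle super-rigidity is a \emph{compact}-valued cocycle, whereas dynamical super-rigidity only provides an amenable-group-valued $\Esc_\circ$ (a product of an amenable Levi-part cocycle $\Esc$ and a unipotent twisted cocycle $u$), which need not even be quasi-$L^2$-integrable (Remark \ref{rmk:errorint}). The resolution is that local rigidity only needs control of the \emph{Lyapunov spectrum and Lyapunov subspaces}, which the dynamical theorem does supply (items (b),(c) of Theorems \ref{thm:superrigidG}, \ref{thm:superrigidGamma}), together with the commutation $\pi\cdot\Esc_\circ=\Esc_\circ\cdot\pi$ (items (d),(e)); since in the perturbative regime the cocycle is bounded, the amenable error is automatically bounded and the functoriality arguments of Section \ref{funct} go through. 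Verifying carefully that the shadowing construction and the smoothing bootstrap of \cite{MQ,FM2} only consume this spectral data — and not the stronger compact-error conclusion — is where the real work lies, and where weak (as opposed to ordinary) irreducibility must be tracked through the induction, using that finite extensions preserve weak irreducibility (Theorem \ref{thm:finextirred}).
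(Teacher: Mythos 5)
Your high-level framing (replace Zimmer's cocycle super-rigidity by the dynamical version, exploit that only the Lyapunov spectrum and subspaces are needed, handle the amenable rather than compact error) matches the paper, and the smoothness bootstrap via normal forms is the right endgame. But the core step is run on the wrong cocycle, and this creates a genuine gap. You apply the constant-cocycle rigidity to the \emph{derivative} cocycle of $\Acal$ and then propose to build $\lambda$ by ``the standard shadowing/expansiveness argument for the single Anosov (or weakly hyperbolic) element $\Acal_0(g_0)$.'' A weakly hyperbolic affine action need not contain any Anosov element; its defining elements are only partially hyperbolic, and structural stability/shadowing for a single partially hyperbolic diffeomorphism does not produce a conjugacy (at best a leaf conjugacy). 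Moreover, untwisting the derivative cocycle lives on the tangent bundle and does not by itself yield a map of $H/\Lambda$ intertwining the two actions.

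The paper instead applies Theorem \ref{thm:locrigidcoc} to the $A\ltimes H$-valued cocycle $\beta_{A}$ of Proposition \ref{prop:FMQcoc}, which records the perturbed action on the homogeneous space itself via $\Acal(g)([x])=[\beta_{A}(g,[x])x]$ (this exists because $\Acal$ lifts to $H$). The conclusion of Theorem \ref{thm:locrigidcoc} is a continuous, $C^{0}$-small map $f:H/\Lambda\to H$ with $\beta_{A}(g,[x])=f(\Acal(g)[x])^{-1}\pi_{0}(g)f([x])$, and the conjugacy is then \emph{defined} by $\lambda([x])=[f([x])x]$; expansiveness of $\Acal_0$ is used only to show this surjective semiconjugacy is injective (hence a homeomorphism) and to pass from $\Gamma'$ back to $\Gamma$. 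Two further points you would need to make explicit on this route: (i) weak hyperbolicity forces $Z=Z_{L}(\pi_{0}^{E}(D))\cap H$ to be finite (the action is isometric along $Z$-orbits), and this finiteness is exactly what lets one show the residual error $\widetilde z$ equals $\pi_{H}^{K}$ — without it the amenable/compact error cannot be absorbed; (ii) the continuity of $f$ comes from the continuity of the Lyapunov subspaces of the continuous cocycle $\beta_{A}$ modulo the (finite) stabilizer, not from Hölder regularity of a shadowing map.
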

 
\begin{rmk}\label{rmk:locrigid}

When all simple factor of $G$ has rank at least $2$, property (T) guarantee there is an absolutely continuous invariant probability measure for perturbed action. Also, in that case, weakly irreducibility is same as ergodicity. Therefore, the action enjoy the ergodic decomposition. This is the reason why \cite{MQ} get local rigidity without assumption about measure.

On the other hand, when $G$ has rank $1$ factor, we assume both conditions. Especially, the action may factor through rank $1$ factor so we need weakly irreducibility assumption.

\end{rmk}

  Note that we may assume without loss of generality, $G$ and $\Gamma$ satisfy \ref{std} with $\textrm{rank}_{\Rbb}(G)\ge 2$ if we think about algebraic simply connected covering of $G$ and preimage of $\Gamma$ on it. (e.g. \cite{MQ}, \cite{FM1} and \cite{FM2}.) Throughout this section, $G$ satisfies \ref{std} with $\textrm{rank}_{\Rbb}(G)\ge 2$.
\subsection{Some facts}

Recall some facts from \cite{MQ},\cite{FM1} and \cite{FM2}. 
\subsubsection{Description of the Affine action} When $D=G$ case, affine actions can be easily described.
 \begin{theorem}[\cite{FM1} Theorem 6.4] When $D=G$, there is a rational homomorphism $\pi_{0}:G\rightarrow H$ such that $\Acal_{0}(g)([x])=[\pi_{0}(g)x]$ for any $g\in G$ and $[x]\in H/\Lambda$.
 \end{theorem}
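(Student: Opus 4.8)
The plan is to follow the structure-theoretic route of \cite{FM1} and reduce the problem to linear algebra over $\RR$ by three successive simplifications: (i) make the ambient affine group algebraic, (ii) lift the action off the quotient $\Aff(H/\Lambda)=(\Aut(H/\Lambda)\ltimes H)/\Delta^{-1}(\Lambda)$, and (iii) kill the automorphism part. For (i) I would invoke \cite{FM1} Lemma 6.2 and Proposition 6.3: pass to the finite cover $p\colon H'\to H$ with $\Lambda'=p^{-1}(\Lambda)$, so that $H/\Lambda\cong H'/\Lambda'$ and $\Aff(H/\Lambda)\subseteq\Aff(H'/\Lambda')$, and $\mathbf M:=\Aut^{A}(H')\ltimes H'$ is the group of real points of a connected real algebraic group with $\Aut^{A}(H')<\Aut(H')$ of finite index and consisting of rational automorphisms. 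Relabelling $H:=H'$, $\Lambda:=\Lambda'$, the description of $\ker\Phi$ recalled above identifies $\Delta^{-1}(\Lambda)$ with a discrete subgroup of $\mathbf M$ isomorphic to $\Lambda$. Since $\Aut(H)/\Aut^{A}(H)$ is finite and $G$ is connected, $\Acal_{0}$ already takes values in the image of $\mathbf M_{\Lambda}:=\mathbf M\cap\bigl(\Aut(H/\Lambda)\ltimes H\bigr)$, so we may regard $\Acal_{0}$ as a continuous homomorphism $G\to\mathbf M_{\Lambda}/\Delta^{-1}(\Lambda)$.

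For step (ii), I would lift $\Acal_{0}$ along the covering $\mathbf M_{\Lambda}\to\mathbf M_{\Lambda}/\Delta^{-1}(\Lambda)$ to a homomorphism $\widetilde{\Acal}_{0}\colon G\to\mathbf M_{\Lambda}\subseteq\mathbf M(\RR)$. Forming the fibre product yields an extension $1\to\Delta^{-1}(\Lambda)\to E\to G\to1$ whose identity component $E^{0}$ covers $G$ with central, hence abelian, kernel; thus $E^{0}\to G$ is pulled back from the universal cover $\widetilde G\to G$, so a lift exists on $\widetilde G$, and because $\mathbf G$ is semisimple and algebraically simply connected the induced Lie-algebra map integrates to a rational homomorphism $\mathbf G\to\mathbf M$ that descends to $G$ itself. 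In particular $\widetilde{\Acal}_{0}\colon\mathbf G\to\mathbf M$ is rational. Write $\widetilde{\Acal}_{0}(g)=(\sigma(g),h_{g})$ with $\sigma\colon\mathbf G\to\Aut^{A}(H)$ a rational homomorphism, so that $\sigma(G)$ is connected and perfect.

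Step (iii) is to show $\sigma$ is trivial. The image of $\Aut(H/\Lambda)$ in $\Out(H)=\Aut(H)/\operatorname{Inn}(H)$ has compact identity component — an automorphism preserving the cocompact lattice $\Lambda$ is determined by its action on $\Lambda$ up to a compact group of automorphisms of the compact factors of $H$ — so the connected group $\sigma(G)$, having no compact factors, maps trivially to $\Out(H)$; hence $\sigma(g)=c_{a_{g}}$ is inner, with $a_{g}\in H$ well defined modulo $Z(H)$. Since $\sigma(g)\in\Aut(H/\Lambda)$ we have $a_{g}\in N_{H}(\Lambda)$, and $g\mapsto a_{g}Z(H)$ is a continuous homomorphism $G\to N_{H}(\Lambda)/Z(H)$; but $N_{H}(\Lambda)^{0}\subseteq Z_{H}(\Lambda)$ (a connected group normalizing a discrete subgroup centralizes it), $Z_{H}(\Lambda)/Z(H)$ is compact, and $N_{H}(\Lambda)/N_{H}(\Lambda)^{0}$ is totally disconnected, so again $G$ maps trivially and $a_{g}\in Z(H)$, i.e.\ $\sigma\equiv\mathrm{id}$. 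Therefore $\widetilde{\Acal}_{0}(g)=(\mathrm{id},\pi_{0}(g))$ with $\pi_{0}\colon\mathbf G\to\mathbf H$ rational, and applying $\Phi$ gives $\Acal_{0}(g)([x])=[\pi_{0}(g)x]$. The main obstacle is step (ii) together with handling the automorphism part cleanly when $\mathbf H$ has a nontrivial unipotent radical and central torus; the covering reduction of \cite{FM1} Proposition 6.3 is exactly what makes the algebraicity and the above connectedness arguments available.
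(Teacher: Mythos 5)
The paper does not prove this statement---it is quoted verbatim from \cite{FM1} (Theorem 6.4) as a known fact---so there is no internal proof to compare against; measured against the original, your reconstruction follows essentially the same route (the covering reduction of \cite{FM1} Lemma 6.2/Proposition 6.3 to make $\Aut^{A}(H')\ltimes H'$ algebraic, lifting through $\Phi$ via the central extension and integrating the Lie-algebra map rationally using algebraic simple connectedness of $\Gbb$, then killing the automorphism part by connectedness of $G$) and is essentially correct. The only places needing more care are the compactness assertions in step (iii) (that $\Aut(H/\Lambda)^{0}$ maps to a compact subgroup of $\Out(H)$ and that $Z_{H}(\Lambda)/Z(H)$ is compact for a cocompact lattice in a general real algebraic $H$); these are true but require the structure theory that \cite{FM1} sets up, and you correctly flag them as the delicate point.
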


For $D=\Gamma$ action case, it is much complicate to describe an affine action. Note that we may assume that the group of affine automorphism of $H$ can be written as $\Aff(H)=\Aut^{A}(H)\ltimes H$ as we saw in section \ref{preact}.

\begin{theorem}[\cite{FM1} Theorem 6.5]\label{thm:6.5}
When $D=\Gamma$,  there are finite index subgroup $\Gamma'(\Acal_{0})=\Gamma'<\Gamma$ and a homomorphism $\pi:\Gamma'\rightarrow \Aut(H)^{A}\ltimes H$ such that $$\Acal_{0}(\gamma')([x])=[\pi_{0}(\gamma')x]$$ for any $\gamma'\in \Gamma'$ and $[x]\in \left(\pi(\Gamma')\ltimes H\right)/ \left(\pi(\Gamma')\ltimes \Lambda\right)$. Note that we identify $H/\Lambda$ with $\left(\pi(\Gamma')\ltimes H\right)/ \left(\pi(\Gamma')\ltimes \Lambda\right)$ and abusing notations. Denote $\Abb$ and $A=\Abb(\Rbb)$ be the Zariski closure of $\pi(\Gamma')$ and its real points respectively. Note that $\Abb$ is connected semisimple by Margulis super-rigidity theorem. Then we have $\pi_{0}=\pi_{A}\pi_{H}$ for some homomorphism $\pi_{A}:\Gamma'\rightarrow A$ and $\pi_{H}:\Gamma'\rightarrow H$. Furthermore, we may assume $\pi_{A}$ and $\pi_{H}$ commutes without loss of generality.
\end{theorem}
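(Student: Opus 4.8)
The plan is to realise $\Aff(H/\Lambda)$ as (a finite-index subgroup of) the quotient of a real algebraic group by a discrete subgroup, to lift $\Acal_{0}$ to that algebraic group after passing to a finite-index subgroup, and then to feed the resulting linear part into Margulis' superrigidity theorem. By the reduction preceding the statement (which follows \cite{FM1}, Propositions 6.1 and 6.3 and Lemma 6.2) we may assume that $\mathbb{L}(\Rbb):=\Aut^{A}(H)\ltimes H$ is the group of $\Rbb$-points of a real algebraic group $\mathbb{L}$, that $\Aut^{A}(H)$ has finite index in $\Aut(H)$ and consists of rational automorphisms, and that there is an exact sequence $1\to\Delta^{-1}(\Lambda)\to\mathbb{L}(\Rbb)\xrightarrow{\Phi}\Aff(H/\Lambda)$ whose image has finite index, with $\Delta^{-1}(\Lambda)\cong\Lambda$ discrete. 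After replacing $\Gamma$ by a finite-index normal subgroup we may assume $\Acal_{0}(\Gamma)$ lies in $\Phi(\mathbb{L}(\Rbb))$, and pulling back the extension $1\to\Delta^{-1}(\Lambda)\to\Phi^{-1}(\Acal_{0}(\Gamma))\to\Acal_{0}(\Gamma)\to1$ along $\Gamma\twoheadrightarrow\Acal_{0}(\Gamma)$ produces an abstract group extension $1\to\Delta^{-1}(\Lambda)\to\Gamma^{*}\to\Gamma\to1$.

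The core step is to show that $\Gamma^{*}$ splits over a finite-index subgroup $\Gamma'=\Gamma'(\Acal_{0})$, i.e.\ that there is a homomorphism $\pi_{0}:\Gamma'\to\mathbb{L}(\Rbb)$ with $\Phi\circ\pi_{0}=\Acal_{0}|_{\Gamma'}$; equivalently, $\Acal_{0}|_{\Gamma'}$ lifts from $\Aff(H/\Lambda)$ to an affine action on the cover. Since $\Gamma$ is finitely presented and $\Delta^{-1}(\Lambda)\cong\Lambda$ is finitely generated and residually finite (being a finitely generated linear group), one runs the virtual-lifting machinery of \cite{FM1} (see also \cite{FM2}): one climbs the filtration of $\Lambda$ coming from the unipotent radical and the semisimple layers of $H$, repeatedly passing to finite quotients of the successive abelian layers and to finite-index subgroups of $\Gamma$ so as to kill the resulting obstruction classes, ending with $\Gamma'$ and $\pi_{0}:\Gamma'\to\mathbb{L}(\Rbb)=\Aut^{A}(H)\ltimes H$ with $\Acal_{0}(\gamma')([x])=[\pi_{0}(\gamma')x]$ under the identification $H/\Lambda\cong(\pi_{0}(\Gamma')\ltimes H)/(\pi_{0}(\Gamma')\ltimes\Lambda)$.

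It then remains to analyse $\pi_{0}$. Writing $\pi_{0}(\gamma')=(\pi_{A}(\gamma'),\pi_{H}(\gamma'))\in\Aut^{A}(H)\ltimes H$, the component $\pi_{A}:\Gamma'\to\Aut^{A}(H)$ is a homomorphism and $\pi_{H}$ is a $1$-cocycle for the action of $\Gamma'$ on $H$ through $\pi_{A}$. Let $\Abb$ be the Zariski closure of $\pi_{A}(\Gamma')$ and $A=\Abb(\Rbb)$. Since $\Gamma'$ is a higher-rank lattice, Margulis' superrigidity (which holds under \ref{std}) applies to $\pi_{A}$: it virtually extends to a rational homomorphism of $G$, and as $G$ is semisimple with no nontrivial rational characters, $\Abb$ is connected semisimple — possibly after passing to a further finite-index subgroup to make $\Abb$ connected. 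Finally, using the vanishing of $H^{1}(\Gamma',\cdot)$ for rational $\Gamma'$-modules without nonzero invariants, together with the fact that a $1$-cocycle into a trivial module is a homomorphism, one modifies $\pi_{0}$ by conjugation by a translation so that $\pi_{H}$ becomes a homomorphism into the pointwise fixed subgroup of $\pi_{A}(\Gamma')$ in $H$; then $\pi_{0}=\pi_{A}\pi_{H}$ with $\pi_{A}$ and $\pi_{H}$ commuting, as claimed.

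I expect the main obstacle to be the virtual splitting of $\Gamma^{*}$ — equivalently, showing that $\Acal_{0}$ lifts to the algebraic model on a finite-index subgroup. This is the technical heart of \cite{FM1}, Theorem 6.5, and requires finite presentation of $\Gamma$, residual finiteness of $\Lambda$, and an inductive control of the $H^{2}$- and $H^{1}$-obstructions along the structure of $H$. By contrast, everything downstream — the semisimplicity of $\Abb$ and the commuting decomposition $\pi_{0}=\pi_{A}\pi_{H}$ — is a fairly routine application of Margulis' superrigidity and standard cohomology vanishing.
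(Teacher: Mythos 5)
This statement is not proved in the paper at all: it is imported verbatim from \cite{FM1} (Theorem 6.5), with the supporting structure results (\cite{FM1} Propositions 6.1, 6.3 and Lemma 6.2) recalled immediately beforehand. So your overall architecture --- realizing $\Aff(H/\Lambda)$ up to finite index as a quotient of $\Aut^{A}(H)\ltimes H$ by $\Delta^{-1}(\Lambda)$, pulling $\Acal_{0}$ back to an extension of $\Gamma$ by $\Lambda$, splitting that extension on a finite-index subgroup $\Gamma'$, and then feeding the linear part into Margulis super-rigidity --- is the standard route of the cited source, and your decision to defer the splitting step to the machinery of \cite{FM1}/\cite{FM2} is in line with what the paper itself does (it defers the entire theorem).

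That said, the two places where you do sketch an argument are exactly where the sketch does not hold as written. First, the virtual lifting: you propose to kill obstruction classes by climbing ``the filtration of $\Lambda$ \dots passing to finite quotients of the successive abelian layers.'' When $H$ has a nontrivial semisimple part, the corresponding piece of $\Lambda$ is a cocompact lattice in a semisimple group and admits no abelian filtration, and an extension with nonabelian kernel is not governed by an $H^{2}$-class that can be killed layer by layer; so as stated this is a restatement of the citation rather than an outline of a proof. Second, the commuting decomposition $\pi_{0}=\pi_{A}\pi_{H}$: the $H$-component of $\pi_{0}$ is a priori only a twisted cocycle over the $\pi_{A}$-action on the generally nonabelian group $H$, so ``vanishing of $H^{1}(\Gamma',\cdot)$ for rational modules'' only disposes of the case where the relevant part of $H$ is abelian or filtered by abelian layers (e.g.\ the unipotent radical); when $H$ has semisimple factors, arranging that $\pi_{H}$ is a genuine homomorphism whose image is centralized by $\pi_{A}(\Gamma')$ requires super-rigidity-type input applied to $\pi_{0}$ itself inside the algebraic group $\Aut^{A}(H)\ltimes H$ (as in \cite{FM1}, \cite{MQ}), not just abelian cohomology vanishing. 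So the framework is right, but both of the steps you actually argue contain genuine gaps in the generality the theorem requires.
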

Note that the $\Gamma'(\Acal_{0})=\Gamma'$ is the finite index subgroup of $\Gamma$ that is appeared in the theorem \ref{thm:localrigid} and \ref{thm:locrigidgen}. We may assume that $\Gamma'$ be a normal subgroup of $\Gamma$ without loss of generality.  The $\Gamma'$ is still a weakly irreducible lattice in $G$. Recall that for any finite index subgroup $\Gamma''<\Gamma$, $\Gamma$ action is weakly hyperbolic if and only if $\Gamma''$ is weakly hyperbolic. Therefore, $\Acal_{0}\restrict{\Gamma'}$ is still weakly hyperbolic. Finally, if we can prove local rigidity for the finite index subgroup $\Gamma'$, then it gives local rigidity for full group $\Gamma$. For example, one can follow the same arguments in \cite{MQ} step 2 in the proof of the local rigidity of $\rho_{A}$ directly. 

For unifying notations, when $D=G$, denote $A$ be a trivial group.

\subsubsection{The cocycle related the perturbed action $\Acal$.}
For the sufficiently small $C^{1}$-perturbed action $\Acal$ from the affine action $\Acal_{0}$, we have cocycle $\beta_{A}$ over $\Acal$ that contains informations about $\Acal$, following \cite{MQ} and \cite{FM1}. The main reason that we can define the cocycle $\beta_{A}$ is that $\Acal$ can be lifted into cover $H$ of $H/\Lambda$. One can see this using triviality of the $2$nd group cohomology or directly follow \cite{MQ}.

\begin{prop}\label{prop:FMQcoc} Assume the notations and settings in the theorem \ref{thm:locrigidgen}. If $\Acal$ is sufficiently $C^{1}$-close to $\Acal_{0}$.
\begin{enumerate}
\item When $D=G$:  There is a continuous cocycle $\beta_{A}:G\times H/\Lambda \rightarrow A\ltimes H$ over the action $\Acal$ such that 
$A$ is trivial group and the action $\Acal$ satisfies $$\Acal(g)([x])=[\beta_{A}(g,[x])x]$$ for any $g\in G$ and $[x]\in H/\Lambda$.
\item When $D=\Gamma$: Let the finite index subgroup $\Gamma'$ be in the theorem \ref{thm:6.5}. There is a continuous cocycle $\beta_{A}:\Gamma'\times H/\Lambda \rightarrow A\ltimes H$ over the action $\Acal$ such that 
 The action $\Acal\restrict{\Gamma'}$ satisfies $$\Acal\restrict{\Gamma'}(\gamma)([x])=\beta_{A}(\gamma,[x])[1,x]$$ for any $\gamma\in \Gamma'$ and $[1,x]\in \left(\pi(\Gamma')\ltimes H\right)/ \left(\pi(\Gamma')\ltimes \Lambda\right)$. Here we identify $H/\Lambda$ with $\left(\pi(\Gamma')\ltimes H\right)/ \left(\pi(\Gamma')\ltimes \Lambda\right)$ again. (Especially, $1$ is the identity element in $A$.) 

\item In both cases, $A$-component of $\beta_{A}$ is $\pi_{A}$. Moreover, $\beta_{A}$ is $C^{0}$-close to $\pi_{0}$. 

\end{enumerate}
We will usually denote $\Lbb=\Abb\ltimes \Hbb$ and $L=A\ltimes H$.  When $D=\Gamma$, we abuse notations so that $D$ can be either $\Gamma$ or $\Gamma'$ if the context is clear. (For example, $\pi_{0}(D)$ will be $\pi_{0}(\Gamma')$.)
\end{prop}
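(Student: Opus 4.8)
The plan is to reproduce the cocycle construction of \cite{MQ} (and \cite{FM1}), the only extra bookkeeping being the passage to $\Gamma'$ and the semidirect-product target $L=A\ltimes H$. I treat both cases uniformly, writing $\Acal$ also for $\Acal\restrict{\Gamma'}$ when $D=\Gamma$, and $\pi_0=\pi_A\pi_H$ for the linear part supplied by Theorem~\ref{thm:6.5} (resp. the rational homomorphism of \cite{FM1} Theorem~6.4 when $D=G$, where $A$ is trivial).

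First I would lift the perturbed action to the covering $H\to M:=H/\Lambda$ (deck group $\Lambda$). The affine action lifts: $\Acal_0(g)$ is covered by the affine map $x\mapsto\pi_H(g)\cdot\pi_A(g)(x)$ of $H$, which descends to $M$ because $\pi_A(g)(\Lambda)\subseteq\Lambda$, and these lifts form an action since $\pi_0=(\pi_A,\pi_H)$ is a homomorphism into $\Aut^A(H)\ltimes H$. If $\Acal$ is $C^1$-close to $\Acal_0$, then for each $g$ the diffeomorphism $\Acal(g)\Acal_0(g)^{-1}$ is $C^0$-close to $\mathrm{id}_M$, hence induces the identity on $\pi_1(M)$; so $\Acal(g)$ acts on $\pi_1(M)$ as $\Acal_0(g)$ does, in particular preserves the subgroup corresponding to the cover $H$, and therefore lifts. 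Among the lifts there is a unique one, $\widehat\Acal(g)$, at bounded $C^0$-distance from the affine lift of $\Acal_0(g)$ (lifts differ by deck transformations, which act properly discontinuously). To see that $g\mapsto\widehat\Acal(g)$ is a homomorphism I would use connectedness of $G$ when $D=G$ (both $\widehat\Acal(g_1)\widehat\Acal(g_2)$ and $\widehat\Acal(g_1g_2)$ are bounded-distance lifts of $\Acal(g_1g_2)$, the bound being uniform over the compact $M$, hence equal) and finite presentability of $\Gamma$, hence of $\Gamma'$, when $D=\Gamma$ (for each relator the corresponding word in the $\widehat\Acal(\gamma_i)$ is a deck transformation $C^0$-close to $\mathrm{id}$ because the word has bounded length, hence equals $\mathrm{id}$). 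This is the vanishing of the relevant second group cohomology alluded to in the excerpt, and is exactly \cite{MQ} Example~2.4.

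Next I would define the cocycle. Writing $\widehat\Acal(g)(x)=u(g,x)\cdot\pi_A(g)(x)$ with $u(g,x)\in H$ — legitimate since $\widehat\Acal(g)$ stays at bounded distance from $x\mapsto\pi_H(g)\cdot\pi_A(g)(x)$ — I set $\beta_A(g,[x])$ to be the affine transformation of $H$ with linear part $\pi_A(g)$ and translation part $u(g,x)$, i.e. $\beta_A(g,[x])=(\pi_A(g),u(g,x))\in A\ltimes H=L$; then $\Acal(g)([x])=\beta_A(g,[x])\cdot[1,x]$ by construction and the $A$-component is $\pi_A$. Two routine verifications remain: (i) $u(g,\cdot)$ descends to $M$, because $\widehat\Acal(g)$ conjugates the deck transformation $R_\lambda$ to $R_{\pi_A(g)(\lambda)}$ and a one-line computation then gives $u(g,x\lambda)=u(g,x)$, and compactness of $M$ yields continuity of $\beta_A$ (jointly in $(g,x)$ when $D=G$); and (ii) the cocycle identity, since from $\widehat\Acal(g_1g_2)=\widehat\Acal(g_1)\widehat\Acal(g_2)$ the affine maps $\beta_A(g_1g_2,[x])$ and $\beta_A(g_1,\Acal(g_2)[x])\,\beta_A(g_2,[x])$ have the same linear part $\pi_A(g_1)\pi_A(g_2)$ and agree at the point $x$, hence coincide. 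Finally, for $g$ in a fixed compact generating set $\widehat\Acal(g)$ is actually $C^0$-\emph{close} (not merely at bounded distance) to the affine lift of $\Acal_0(g)$, so $u(g,\cdot)$ is $C^0$-close to $\pi_H(g)$ and $\beta_A$ is $C^0$-close to $\pi_0$.

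I expect Step~1 to be the only genuine difficulty: arranging that the lifts of the perturbed diffeomorphisms can be chosen to assemble into a genuine action, equivalently a cocycle, which is where connectedness of $G$, finite presentability of $\Gamma$, and smallness of the perturbation all enter. Once the lift exists, the construction of $\beta_A$ and the verification of its stated properties are a direct transcription of \cite{MQ}.
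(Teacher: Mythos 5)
Your construction is exactly the one the paper relies on: the paper offers no independent proof of Proposition \ref{prop:FMQcoc}, but simply invokes the lifting argument of \cite{MQ} and \cite{FM1}, which is what you have reconstructed --- lift to the cover $H$, assemble the lifts into a genuine action using connectedness of $G$ (resp. finite presentability of $\Gamma'$, checking relators), i.e. the ``triviality of the second cohomology'' the paper alludes to, and then read off the cocycle from the translation parts, with the cocycle identity, descent to $H/\Lambda$, and $C^{0}$-closeness following as you describe. One point to tighten: for $D=G$, ``bounded $C^{0}$-distance'' does not single out a lift (two lifts differ by a deck transformation $R_{\lambda}$, which is itself at bounded distance from the identity for each fixed $\lambda$), so the uniqueness and homomorphism steps should be run via continuity of the lift in $g$ together with discreteness of $\Lambda$ and connectedness of $G\times G$, rather than via a distance bound alone.
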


\subsubsection{Notations from Margulis' super-rigidity} Recall some notations in \cite{FM1}. Those notations coincide with in the above settings. We will use notations throughout in this section.

Let $D=G$ or $\Gamma$ be of the form in \ref{std2} with $\textrm{rank}_{\Rbb}(G)\ge 2$. Let $\Lbb$ be an algebraic group and $\Hbb,\Abb<\Lbb$ be an algebraic subgroup such that $\Lbb=\Abb\ltimes \Hbb$. We think $\Lbb$ is subgroup of $\GL(d,\Rbb)$ for some $d>0$ as usual. We further assume that $\Abb$ is connected semisimple algebraic group. As before, denote $L$,$A$ and $H$ be a $\Lbb(\Rbb)$,$\Abb(\Rbb)$ and $\Hbb(\Rbb)$ respectively. Denote $p_{A}:L\rightarrow A$ be the natural projection.

We fix a homomorphism $\pi_{0}:D\rightarrow L$ such that there are homomorphism $\pi_{A}:D\rightarrow A$ and $\pi_{H}:D\rightarrow H$ such that $\pi_{A}$ and $\pi_{H}$ commutes. 
\begin{enumerate}
\item When $D=G$, $\pi$ will be rational homomorphism as before. Further assume that $\pi(D)$ is Zariski dense in $\Abb$. Denote $\pi_{0}^{K}$ for trivial homomorphism and $\pi_{0}^{E}=\pi$. More precisely, denote $\pi_{A}^{E}=\pi_{A}$, $\pi_{H}^{E}=\pi_{H}$ and both $\pi_{A}^{K}$ and $\pi_{H}^{K}$ for trivial homomorphism.
\item When $D=\Gamma$, recall that Margulis' super-rigidity theorem tells us that $\pi_{A}$ and $\pi_{H}$ can be written as $\pi_{A}^{E}\cdot\pi_{A}^{K}$ and $\pi_{H}^{E}\cdot\pi_{H}^{K}$ where $\pi_{A}^{E},\pi_{H}^{E}$ is restriction to $\Gamma$ of rational homomorphism from $\Gbb$ to $\Abb$ and $\Hbb$ respectively and $\pi_{A}^{K}$ and $\pi_{H}^{K}$ is homomorphism from $\Gamma$ to compact subgroup in $A$ and $H$ respectively. 
\item For $\pi_{0}$, we will define $\pi_{0}^{E}$ and $\pi_{0}^{K}$ as $\pi_{0}^{E}=\pi_{A}^{E}\pi_{H}^{E}$ and $\pi_{0}^{K}=\pi_{A}^{K}\pi_{H}^{K}$.
\end{enumerate}
Note that $\pi_{A}^{E}, \pi_{A}^{K}, \pi_{H}^{E}$ and $\pi_{H}^{K}$ commutes each other.

\subsection{Local rigidity of constant cocycle} 
We present local rigidity of constant cocycle as in \cite{FM1} under additional assumptions that holds for applications.  We will assume that $Z=Z_{L}(\pi_{0}^{E}(D))\cap H$ is finite. Note that, this implies $\pi_{H}^{K}(D)$ is finite. This condition will be satisfied when the affine action is weakly hyperbolic. Now we can address the main theorem in this subsection.
\begin{theorem}[Local rigidity of the constant cocycle]\label{thm:locrigidcoc} Assuming that $D=G$ or $\Gamma$ satisfies \ref{std} with $\textrm{rank}_{\Rbb}(G)\ge 2$. Let $(S,\mu)$ be the standard $D$-space and assume the $D$ action satisfies \ref{std2}. Retaining notations in the previous subsection, let $\pi_{0}=\beta_{0}:D\times S\rightarrow L$ be the constant cocycle, that is $\beta_{0}(g,x)=\pi_{0}(g)$ for any $g\in D$ and $x\in S$. Let $\beta :D\times S\rightarrow L$ be measurable cocycle over the action $\Acal$ such that $p_{A}\circ \beta=\pi_{A}$. Further assume that $Z=Z_{L}(\pi_{0}^{E}(D))\cap H$ is finite.

If $\beta$ and $\beta_{0}$ is sufficiently $L^{\infty}$-closed, then there is a measurable map $f: S\rightarrow H$ and a measurable cocycle $\widetilde{z}:D\times \widehat{S}\rightarrow Z$ such that 
\begin{enumerate}
\item we have $\beta(g,x)=f(g.x)^{-1}\pi_{A}(g)\pi_{H}^{E}(g)\widetilde{z}(g,\widehat{x})f(x)$ almost every $\widehat{x}\in\widehat{S}$.
\item the measurable map $f:S\rightarrow H$ is $L^{\infty}$-small in the sense that $f$ is $L^{\infty}$-close to constant map take values at the identity.
\item For any $g\in D$ and almost every $\widehat{x}\in \widehat{S}$, we have $\widetilde{z}(g,\widehat{x})=\pi_{H}^{K}(g)$. This implies, especially, $$\beta(g,x)=f(g.x)^{-1}\pi_{0}(g)f(x)$$ for any $g\in D$ and almost every $x\in S$.
\end{enumerate}
Furthermore, if $S$ is topological space, $\mu$ is fully supported on $S$, the cocycle $\beta$ and the $D$ action on $S$ is continuous, then $f$ can be chosen continuous and $C^{0}$-close to $id_{H}$.
\end{theorem}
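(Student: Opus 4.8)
The plan is to follow the scheme of \cite[\S5]{FM1}, with the dynamical cocycle super-rigidity theorem (Theorem~\ref{thm:superrigidG} for $D=G$, Theorem~\ref{thm:superrigidGamma} for $D=\Gamma$) replacing Zimmer's cocycle super-rigidity theorem and with the hypothesis that $Z=Z_L(\pi_0^E(D))\cap H$ is finite substituting for the absence of a compact error term. Since $\beta$ is $L^\infty$-close to the constant cocycle $\pi_0$ it is bounded, hence $L^2$-integrable, and dynamical super-rigidity applies: passing to a finite extension $\widehat S=S\times_i I$ we obtain a measurable $\phi:\widehat S\to\GL(d,\RR)$, a rational homomorphism $\pi:\Gbb\to\GL(d,\CC)$, and an amenable-valued cocycle $\Esc_\circ$ with $\beta^{\phi}=\pi\cdot\Esc_\circ$, where $\pi$ commutes with $\Esc_\circ$ and $\Sp(\beta_g)=\Sp(\pi(g))$ for every $g\in D$. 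As $\beta$ is $L^\infty$-close to $\pi_0$, Theorem~\ref{locrigidhom} gives $l\in L$ with $\pi=l\,\pi_0^E\,l^{-1}$; absorbing $l$ (replacing $\phi$ by $l^{-1}\phi$ and $\Esc_\circ$ by $l^{-1}\Esc_\circ\,l$) we may assume $\pi=\pi_0^E$, whence $\Esc_\circ$ takes values in $Z_L(\pi_0^E(D))=Z_L(\pi_0^E(G))$ since it commutes with $\pi_0^E(D)$.

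Next I would show that the conjugating map can be chosen $L^\infty$-small and $H$-valued. Fix, via Lemma~\ref{Fzd}, elements $g_1,\dots,g_l\in D$ whose polar parts generate a Zariski dense subgroup of $G$. By stability of partially hyperbolic and characteristic subspaces under $L^\infty$-perturbations of a constant cocycle (\cite[Lemma~5.2]{FM1}, cf.\ \cite{Pesin}), the Lyapunov subspaces of $\beta_{g_i}$ are $L^\infty$-close to the constant Lyapunov subspaces of $\pi_0(g_i)$ (which coincide with those of $\pi_0^E(g_i)$, since $\pi_0^K$ is bounded and commutes with $\pi_0^E$); since, by item (c) of dynamical super-rigidity, $\phi$ carries these onto the Lyapunov subspaces of $\pi_0^E(g_i)$, the uniqueness statement in Theorem~\ref{uniqhom} lets us take $\phi$ valued in $Z_L(\pi_0^E(G))$ and $L^\infty$-close to the identity. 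As $p_A\circ\beta=\pi_A=\pi_A^E\pi_A^K$ is already a homomorphism and $p_A\circ\pi_0^E=\pi_A^E$, the $A$-component of $\phi$ lies in a compact subgroup of $Z_A(\pi_A^E(G))$ and can be pushed into the error term (it only changes $\Esc_\circ$ by a constant, which we recombine with $\pi_A^K$), so that $\phi$ becomes a small measurable $f:S\to H$ with $\beta^{f}=\pi_0^E\cdot\Esc_\circ$ and $p_A\circ\Esc_\circ=\pi_A^K$.

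To finish (items (1)--(3)) I would identify $\Esc_\circ$ with $\pi_0^K$. Writing $\Esc_\circ(g,\widehat x)=(\pi_A^K(g),\eta(g,\widehat x))$ in $L=A\ltimes H$, the hypotheses that $\pi_A^K,\pi_A^E,\pi_H^E,\pi_H^K$ commute pairwise and that $\Esc_\circ$ centralizes $\pi_0^E(D)$ force $\eta(g,\widehat x)\in Z_L(\pi_0^E(G))\cap H=Z$ for every $g$ and a.e.\ $\widehat x$. Since $\pi_H^K$ itself takes values in the finite group $Z$ and $\Esc_\circ$ is $L^\infty$-close to $\pi_0^K=\pi_A^K\pi_H^K$ on the generating set $\Omega$, a sequence of standard manipulations parallel to the end of the proof of \cite[Theorem~5.1]{FM1} — extracting the $H$-part as a genuine $Z$-valued cocycle via Lemma~\ref{comcoc}, possibly after replacing $\Gamma$ by a further finite-index subgroup on which $\pi_A^K$ acts trivially on $Z$ (harmless by the descent of local rigidity to finite-index subgroups recorded in \S\ref{sec:locrigid}), and then noting that a finite-group-valued cocycle $L^\infty$-close to a homomorphism must equal it — yields $\eta=\pi_H^K$, i.e.\ $\widetilde z\equiv\pi_H^K$ and $\beta(g,x)=f(g.x)^{-1}\pi_0(g)f(x)$ with $f$ small. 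When $S$ is a topological space with $\mu$ fully supported and $\beta$ and the $D$-action continuous, the measurable $f$ is upgraded to a continuous solution $C^0$-close to $\mathrm{id}_H$ by the standard argument (as in \cite{MQ},\cite{FM1},\cite{FM2}): a small continuous conjugacy from $\beta$ to $\pi_0$ is the unique continuous $\beta$-invariant section near the trivial one, exists by a contraction argument exploiting the hyperbolicity of the $\pi_0(g_i)$, and agrees with $f$ $\mu$-a.e.\ by uniqueness and full support.

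The main obstacle lies in the last two steps: dynamical super-rigidity alone yields no control on the conjugating map and only an a priori noncompact amenable error, so one must simultaneously (i) use $L^\infty$-closeness together with stability of hyperbolic structures to trap $\phi$ near a constant in $Z_L(\pi_0^E(G))$, and (ii) use the finiteness of $Z=Z_L(\pi_0^E(G))\cap H$ — in place of the property-$(T)$-dependent step ``an amenable-group-valued cocycle over an ergodic action is cohomologous to a compact-group-valued one'' used in \cite{FM1} — to exclude any noncompact $H$-valued contribution to the error. Once these two points are in place, the remainder is a transcription of \cite[\S5]{FM1}.
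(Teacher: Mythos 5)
Your overall strategy is indeed the paper's (boundedness gives $L^{2}$-integrability, dynamical super-rigidity replaces Zimmer's theorem, stability of Lyapunov subspaces plus a Zariski-dense polar-part subgroup $\Fcal$ and closed orbits in $\Hom(\Fcal,L)$ pin down the super-rigidity homomorphism, and finiteness of $Z$ kills the error), but the middle step has a genuine gap. The assertion that ``the uniqueness statement in Theorem \ref{uniqhom} lets us take $\phi$ valued in $Z_{L}(\pi_{0}^{E}(G))$ and $L^{\infty}$-close to the identity'' cannot be right as stated: a transfer map with values in $Z_{L}(\pi_{0}^{E}(G))$ would carry the Lyapunov subspaces of $\beta$ \emph{exactly} onto those of $\pi_{0}^{E}$, which fails for a genuine $L^{\infty}$-perturbation, while closeness of the subspaces only controls $\phi$ modulo the stabilizer. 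What one actually obtains (and what the paper proves via the measurable map $\widetilde{\Phi}:\widehat{S}\rightarrow Z\backslash H$) is a factorization $\phi'^{H}(\widehat{x})=h_{0}\,z(\widehat{x})\,f(x)$ with a \emph{constant} $h_{0}\in H$ conjugating $\pi_{A}^{E}\pi'^{F}$ to $\pi_{0}^{E}$, a $Z$-valued map $z$ that is bounded but \emph{not} small, and a small $f$; the $z$-factor cannot be discarded at this stage, it has to be carried into the error term $\widetilde{z}$ and is only identified with $\pi_{H}^{K}$ at the very end from smallness together with finiteness of $Z$. Relatedly, Theorem \ref{locrigidhom} only yields a conjugating element $l\in L$, whereas you need conjugation by an element of $H$ so that $f$ is $H$-valued and $\pi_{A}$ survives untouched in item (1); the paper secures this by first exploiting $p_{A}\circ\beta=\pi_{A}$ and the uniqueness theorem on the $A$-component to write $\phi'^{A}=a^{-1}\phi'^{A}_{z}$ and eliminate the $A$-part of $\phi$ from the formula, and only then running the orbit argument with the \emph{$H$}-action on $\Hom(\Fcal,L)$. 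Your claims that the $A$-component of $\phi$ lies in a compact subgroup and that it ``only changes $\Esc_{\circ}$ by a constant'' are unjustified ($\phi^{A}$ is a nonconstant measurable map) and are not how the $A$-part is removed.

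The second genuine problem is the proposed passage to a further finite-index subgroup of $\Gamma$ on which $\pi_{A}^{K}$ acts trivially on $Z$. The standing hypothesis \ref{std2} (induced weak irreducibility) is not known to pass to finite-index subgroups --- the paper explicitly notes that weak irreducibility does not pass to subgroups --- so the dynamical super-rigidity machinery is not available after such a reduction; moreover conclusion (3) is a statement for every $g\in D$, and the descent recorded in Section \ref{sec:locrigid} concerns conjugacies of weakly hyperbolic \emph{actions} (it uses expansiveness and normality of $\Gamma'$), not cocycle identities of this kind. The detour is also unnecessary: the paper defines $\widetilde{z}$ explicitly from $z$, $\pi_{A}^{K}$, $\Esc''^{F}$ and $u'''^{H}$, checks it is $Z$-valued using the commutation statements (d),(e) of dynamical super-rigidity, and concludes $\widetilde{z}=\pi_{H}^{K}$ for the full group directly from the $L^{\infty}$-smallness of $f$ and the finiteness of $Z$. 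Until the factorization step and the treatment of the $A$-component are carried out as above (and the finite-index reduction removed), the proposal does not deliver items (1)--(3).
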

The statements of the theorem \ref{thm:locrigidcoc} should be compared with Theorem 5.1 in \cite{FM1} and Theorem 3.1 in \cite{MQ}. The above theorem is middle of two theorems in some sense. On the other hand, we assume that the action satisfies \ref{std2}. Especially, the action is ergodic. Therefore, we don't need to think about ergodic components. The dynamical super-rigidity will be adapted in order to prove the theorem \ref{thm:locrigidcoc}.

However, the main obstruction occurs from the fact that we can not get rid of the unipotent radical of the algebraic hull as a result of dynamical super-rigidity a priori. Also, the (3) in the Theorem \ref{thm:locrigidcoc} is not claimed in \cite{FM1}.  However, when $Z$ is finite, we can overcome these issues.

\begin{rmk}\label{rmk:compMQ}
In \cite{MQ} Theorem 3.1, they proved a version of local rigidity of the constant cocycle with similar finiteness assumption. Indeed, one can prove the version of Theorem 3.1 in \cite{MQ} using dynamical super-rigidity more directly. 
\end{rmk}

\begin{proof}[Proof of Theorem \ref{thm:locrigidcoc}]

Let $\Hbb=\Fbb\ltimes \Ubb$ be the Levi decomposition into $\Fbb$. Following arguments in \cite{FM1}, we can write $\Fbb=\Fbb_{1}\cdot\Fbb_{2}$ an almost direct product so that $\Lbb=((\Abb\ltimes \Fbb_{1})\Fbb_{2})\ltimes \Ubb$.

Following \cite{FM1}, we can change algebraic group structure so that replace $\Abb\ltimes \Fbb_{1}$ with $\Abb\times \Fbb_{1}$ via $(a,f)\mapsto (a,a^{-1}(f))$. Note that this gives homeomorphism between $A\ltimes F_{1}$ and $A\times F_{1}$.  Since $\pi_{A}$ and $\pi_{H}$ commutes, $\pi_{H}$ takes values in $F_{2}$ so it does not effect to $\pi_{0}$. Furthermore, one can check directly that the $\beta$ still satisfies the cocycle condition in the new group structure. Moreover, $\beta$ is still $L^{\infty}$-close (resp. $C^{0}$-close) to $\pi_{0}$. Therefore, we may assume $\Abb$ commutes with $\Fbb$ without loss of generality, $\Lbb=(\Abb\times \Fbb)\ltimes \Ubb$. 

As usual denote $A=\Abb(\Rbb), F=\Fbb(\Rbb)$ and $U=\Ubb(\Rbb)$. Note that $\Ubb$ is unipotent radical in $\Lbb$ and $\Abb\times \Fbb$ is Levi component in $\Lbb$.

Using dynamical cocycle super-rigidity, find a measurable map $\phi':\widehat{S}\rightarrow L$, a homomorphism $\pi':\Gbb\rightarrow \Lbb$, a cocycle $\Esc':D\times \widehat{S}\rightarrow L$ and a measurable map $u':D\times \widehat{S}\rightarrow L$ such that $$\beta(g,x)=\phi'(g.(x,l))^{-1}\pi'(g)\Esc'(g,x,l)u'(x,l)\phi'(x,l)$$ for any $g\in D$ and almost every $(x,l)\in \widehat{S}$. Since $\Abb\times \Fbb$ is Levi component of $\Lbb$, up to conjugation, we may assume that $\pi'$ and $\Esc'$ takes values in $A\times F$. 

For the measurable map $\Psi$ into group $AF\ltimes U$ and a subgroup $B<AF\ltimes U$, we will denote $\Psi^{B}$ for $B$-component of $\Psi$ whenever $B=A,F,U$ and $H$.

Abbreviate notations, 
\begin{align*} 
\beta&=\phi'(g.\widehat{x})^{-1}\pi'\Esc'u'\phi'(\widehat{x})\\
&=\phi'^{H}(g.\widehat{x})^{-1}\phi'^{A}(g.\widehat{x})^{-1}\pi'^{A}\pi'^{F}\Esc'^{A}\Esc'^{F}u'^{A}u'^{H}\phi'^{A}(\widehat{x})^{-1}\phi'^{H}(\widehat{x})
\end{align*}
 Recall that  the cocycle $\beta:D\times S\rightarrow (AF)\ltimes U$ satisfies $p_{A}\circ\beta=\pi_{A}$ where $p_{A}:L\rightarrow A$ be the natural projection. Comparing $A$-components, we can get following.
\begin{equation}\label{c0}
\phi'^{A}(g.\widehat{x})^{-1}\pi'^{A}\Esc'^{A}u'^{A}\phi'^{A}(\widehat{x})=\pi_{A}.
\end{equation}
Furthermore, the cocycle $\pi'^{A}\Esc'^{A}u'^{A}$ is minimal. Indeed, the cocycle $\pi'\Esc'u'$ is minimal and $p_{A}\circ (\pi'\Esc'u')=\pi'^{A}\Esc'^{A}u'^{A}$. This implies that the cocycle $\pi'^{A}\Esc'^{A}u'^{A}$ is minimal.
Using the same argument in \textbf{Part 1} of dynamical super-rigidity (or cocycle super-rigidity for semisimple algebraic hull as in the \cite{FM1}),  this implies that the cocycle $(\pi'^{A}\Esc'^{A}u'^{A})^{\phi'^{A}}$ has same Lyapunov spectrum as $\pi'^{A}$. From the uniqueness of super-rigid homomorphism (Theorem \ref{uniqhom}), we have $\phi'^{A}(\widehat{x})=a^{-1}\phi'^{A}_{z}(\widehat{x})$ and $\pi'^{A}=\left(\pi^{E}_{A}\right)^{a}=a^{-1}\pi_{A}^{E}a$ for some $a\in A$ and the measurable map $\phi'^{A}_{z}:\widehat{S}\rightarrow Z(\pi_{A}^{E}(G))$. 

Now the equation (\ref{c0}) is 
\begin{align*}\label{c3}
\pi_{A}&=\phi_{z}'^{A}(g.\widehat{x})^{-1} a\pi'^{A}\Esc'^{A}u'^{A} a^{-1}\phi_{z}'^{A}(\widehat{x})\\
&=\phi_{z}'^{A}(g.\widehat{x})^{-1}\pi^{E}_{A}\Esc''^{A}u''^{A}\phi_{z}^{A}(\widehat{x})
\end{align*}
where $\Esc''^{A}=\left(\Esc'^{A}\right)^{a^{-1}}$ and $u''^{A}=\left(u'^{A}\right)^{a^{-1}}$.
 
 On the other hand, using (\ref{c0}), $$\beta=\phi'^{H}(g.\widehat{x})^{-1}\pi_{A}\pi'^{F}\Esc'^{F}u''^{H}\phi'^{H}(\widehat{x})$$ where 
$u''^{H}(g,\widehat{x})=\phi'^{A}(\widehat{x})^{-1}u'^{H}\phi'^{A}(\widehat{x})$. 

Then we know that 
\begin{enumerate}
\item the $u'^{H}$ commutes with the $\pi'^{A}(G)=a^{-1}\pi_{A}^{E}a$ and $\pi'^{F}(G)$ by dynamical super-rigidity. Therefore, one can directly check that the $u''^{H}$ commutes with $\pi_{A}^{E}(G)$ and $\pi'^{F}(G)$. 
\item the $\Esc'^{F}$ commutes with $A$ and $\pi'^{F}(G)$ by construction and dynamical super-rigidity.
\end{enumerate}

Find $g_{1},\dots,g_{l}\in D$ such that the group $\Fcal$ generated by their polar parts is Zariski dense in $G$ by the Lemma \ref{Fzd}. We can define the space $\Hom(\Fcal,L)$ with compact-open topology. Note that $\pi_{A}^{E}\pi'^{F}$ and $\pi_{0}^{E}$ can be thought elements in $\Hom(\Fcal,L)$. 

Fix the finite subset $\Pi\subset \Hom(G,L)$ of rational homomorphism from $G$ to $L$ such that any rational homomorphism from $G$ to $L$ is conjugate to the element in $\Pi$. This can be done since the rational homomorphism from $G$ to $L$ is finite up to conjugation. We may assume that $\pi_{0}^{E},\pi',\pi'^{A},\pi'^{F},\pi_{A}^{E}$ and $\pi_{H}^{E}$ are all in $\Pi$.

Using dynamical super-rigidity, we can deduce that
\begin{equation}\label{c1}
\Sp(\beta)=\Sp(\pi_{A}^{E}\pi'^{F})\textrm{ and } \phi'^{H}(\widehat{x})W(\beta_{g}(x))=W(\pi_{A}^{E}(g)\pi'^{F}(g))
\end{equation}
where $W(\cdot)$ is Lyapunov subspaces for generic $x\in S$.

The (\ref{c1}) implies that
\begin{equation}\label{c2}
W(\beta_{g}(x))=\phi'^{H}(\widehat{x})^{-1}W(\pi_{A}^{E}(g)\pi'^{F}(g))=W((\pi_{A}^{E}\pi'^{F})^{\phi'^{H}(\widehat{x})}(g)).
\end{equation}

Note that we can pretend
\begin{equation*} (\pi_{A}^{E}\pi'^{F})^{\phi'^{H}(\widehat{x})}=\phi'^{H}(\widehat{x})^{-1}(\pi_{A}^{E}\pi'^{F})\phi'^{H}(\widehat{x})\in \Hom(\Fcal, L).\end{equation*}

If $\beta$ is sufficiently $L^{\infty}$-close to $\pi_{0}$, $W(\beta_{g}(x))$ is sufficiently close to $W(\pi_{0}^{E}(g))$. Since $\Fcal$ is generated by the polar part of $g_{1},\dots,g_{l}\in D$, this implies that $(\pi_{A}^{E}\pi'^{F})^{\phi'^{H}(\widehat{x})}$ is sufficiently close to $\pi_{0}^{E}$ in $\Hom(\Fcal,L)$.

We have a $H$ action on $\Hom(\Fcal,L)$ via conjugation. Recall that the $H$-orbits in $\Hom(\Fcal,L)$ is closed by theorem \ref{orbit}. Furthermore, the finiteness of $\Pi$  implies that $\pi_{0}^{E}$ and $(\pi_{A}^{E}\pi'^{F})^{\phi'^{H}(\widehat{x})}$ is in the same $H$-orbit. Especially, as $\phi'^{H}$ takes values in the $H$, $\pi_{0}^{E}$ and $\pi_{A}^{E}\pi'^{F}$ is conjugate by the element in $H$. (as elements in $\Hom(\Fcal,L)$.) Since $\Fcal$ is Zarski dense in $G$, we can conclude $\pi_{0}^{E}$ and $\pi_{A}^{E}\pi'^{F}$ are conjugated by the element $h_{0}$ in $H$. In other words, we have $\pi_{0}^{E}=(\pi_{A}^{E}\pi'^{F})^{h_{0}}$.

Let $\Phi$ be a measurable map $\Phi:\widehat{S}\rightarrow \Hom(\Fcal,L)$ as $\Phi(\widehat{x})=(\pi_{A}^{E}\pi'^{F})^{\phi'^{H}(\widehat{x})^{-1}}$.
Since $\Phi$ takes values in one $H$-orbit and $\pi_{0}$ is in that $H$-orbit, we can consider the measurable map $\widetilde{\Phi}:\widehat{S}\rightarrow Z\backslash H$ where $Z$ is the stabilizer of $\pi_{0}$ in $H$ and $\widetilde{\Phi}(\widehat{x})=[h_{0}^{-1}\phi'^{H}(\widehat{x})].$ Note that $Z=Z_{L}(\pi_{0}^{E}(\Fcal))\cap H=Z_{L}(\pi_{0}^{E}(D))\cap H$. Note that, when we write the finite extension space as $\widehat{S}=S\times_{i}I$, (\ref{c2}) says that $\widetilde{\Phi} $ does not depend on $I$. Therefore, we can find measurable maps $f:S\rightarrow H$ and $z:\widehat{S}\rightarrow Z$ such that $\phi'^{H}(\widehat{x})=h_{0}z(\widehat{x})f(x).$  Here $f$ is $L^{\infty}$-close to $id_{H}$ since $(\pi_{A}^{E}\pi'^{F})^{\phi'^{H}(\widehat{x})}$ is sufficiently close to $\pi_{0}^{E}$ in $\Hom(\Fcal,L)$.

Combining these, again abbreviating notations, we can deduce 
\begin{align*}
\beta&=\phi'^{H}(g.\widehat{x})^{-1}\pi_{A}\pi'^{F}\Esc'^{F}u''^{H}\phi'^{H}(\widehat{x})\\
&= f(g.x)^{-1}z(g.\widehat{x})^{-1}h_{0}^{-1}\pi_{A}\pi'^{F}\Esc'^{F}u''^{H}h_{0}z(\widehat{x})f(x)\\
&=f(g.x)^{-1} z(g.\widehat{x})^{-1}h_{0}^{-1}\pi_{A}^{E}\pi'^{F}h_{0}h_{0}^{-1}\pi_{A}^{K}h_{0} h_{0}^{-1}\Esc'^{F}h_{0} h_{0}^{-1}u''^{H}h_{0}z(\widehat{x})f(\widehat{x})\\
&=f(g.x)^{-1}z(g.\widehat{x})^{-1}\pi_{0}^{E}\left(\pi_{A}^{K}\right)^{h_{0}}\Esc''^{F}u'''^{H}z(\widehat{x})f(\widehat{x})\\
&=f(g.x)^{-1}\pi_{0}^{E}\pi_{A}^{K}\left(\pi_{A}^{K}\right)^{-1}z(g.\widehat{x})^{-1}\left(\pi_{A}^{K}\right)^{h_{0}}\Esc''^{F}u'''^{H}z(\widehat{x})f(x)
\end{align*}
where $\Esc''^{F}=(\Esc'^{F})^{h_{0}}$ and $u'''^{H}=(u''^{H})^{h_{0}}$. Note that $\Esc''^{F}$ and $u'''^{H}$ commutes with $\pi_{0}^{E}$ since $u''^{H}$ and $\Esc'^{F}$ commutes with $\pi_{A}^{E}\pi'^{F}$.

Let $\widetilde{z}:D\times \widehat{S}\rightarrow Z$ be the measurable map as
$$\widetilde{z}(g,\widehat{x})=\left(\pi_{A}^{K}(g)\right)^{-1}z(g.\widehat{x})\left(\pi_{A}^{K}\right)^{h_{0}}(g)\Esc''^{F}u'''^{H}z^{-1}(\widehat{x})$$
for any $g\in D$ and almost every $\widehat{x}\in \widehat{S}$. Note that it is easy to see that $\widetilde{z}$ takes values in $Z$. Indeed, $\pi_{0}^{E}$ commutes with $\left(\pi_{A}^{K}\right)^{h_{0}}$ since $\pi_{A}^{K}$ commutes with $\pi_{A}^{E}\pi'^{F}$. Therefore $\widetilde{z}\in Z_{L}(\pi_{0}^{E})$. Furthermore, $\widetilde{z}$ takes values in $H$ as $H$ is a normal subgroup of $L$.

Since $f$ is $L^{\infty}$-close to $\textrm{id}_{H}$, we can conclude that $\widetilde{z}$ is $L^{\infty}$-close to $\pi_{H}^{K}$. This implies that for each $g\in D$, $$\sup_{\widehat{x}\in \widehat{S}} ||\widetilde{z}(g,\widehat{x})\pi_{H}^{K}(g)^{-1}|| $$ is sufficiently small. Now we claim that $\widetilde{z}=\pi_{H}^{K}$.  Indeed, we have $\widetilde{z}(g,\widehat{x}),\pi_{H}^{K}(g)\in Z$ so that possible values of $\widetilde{z}(g,\widehat{x})\pi_{H}^{K}(g)^{-1}$ is finite. This implies that if $\beta$ is $L^{\infty}$ sufficiently close to $\beta_{0}=\pi_{0}$ then $$\widetilde{z}(g,\widehat{x})=\pi_{H}^{K}(g)$$ for any $g\in D$ and almost every $x\in \widehat{S}$. This proves that if $\beta$ is sufficiently close to $\beta_{0}$, then there is a $L^{\infty}$-small measurable map $f:S\rightarrow H$ such that for any $g\in D$ we have $$\beta(g,x)=f(g.x)^{-1}\pi_{0}(g)f(x)$$ almost every $x\in S$.

Finally, when the $S$ is topological space and $\mu$ is fully supported, we can deduce $f$ is continuous map. Indeed, we already know that the Lyapunov spectrum of the $\beta_{g}$ is same as $\pi_{0}^{E}$ at $\widehat{x}\in \widehat{S}$. Furthermore, the Lyapunov subspaces of $\pi_{0}^{E}(g)$ and $\beta_{g}$ at $x\in S$ differ by $\phi'^{H}(\widehat{x})$. If $\beta$ is sufficiently $C^{0}$-close to $\beta_{0}=\pi_{0}$, then the difference modulo stabilizer changes continuously. Especially, this implies that the $f$ is continuous.
\end{proof}

\subsection{$C^{0}_{r,ind}$-local rigidity}
Let $D=G$ or $\Gamma$. 
Let $\Acal_{0}$ be an affine $D$ action on $H/\Lambda$.

The following lemma allows us to apply local rigidity of the constant cocycle.
\begin{lem}[\cite{MQ,FM2}]
Under the above notations and assuptions, especially assuming $\Acal_{0}$ is weakly hyperbolic, the central foliation is trivial. That is $Z=Z_{L}(\pi_{0}^{E}(D))\cap H$ is finite group.

\end{lem}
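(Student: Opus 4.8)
\emph{Proof plan.} The plan is to identify the central foliation of the affine action $\Acal_{0}$ with a subspace coming from a constant cocycle — the derivative cocycle — and then show that weak hyperbolicity forces that subspace to be trivial. Concretely, I would fix $\mathfrak{h}=\textrm{Lie}(H)$ and trivialize $T(H/\Lambda)\cong (H/\Lambda)\times\mathfrak{h}$ by the right-invariant vector fields on $H$, which descend to $H/\Lambda$. A direct computation shows that in this trivialization the differential of an affine diffeomorphism $x\Lambda\mapsto h_{0}L(x)\Lambda$ (with $h_{0}\in H$, $L\in\Aut(H)$, $L(\Lambda)\subseteq\Lambda$) is the \emph{constant} linear map $\Ad(h_{0})\circ dL\in\GL(\mathfrak{h})$, independent of the point. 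Hence, using the description of $\Acal_{0}$ from \cite{FM1} (Theorem~\ref{thm:6.5} on the finite-index subgroup $\Gamma'$ when $D=\Gamma$) together with Margulis' super-rigidity, the derivative cocycle of $\Acal_{0}$ is a constant cocycle $\rho\colon D\to\GL(\mathfrak{h})$ of the form $\rho=\rho^{E}\rho^{K}$, with $\rho^{E}$ the restriction of a rational representation of $\Gbb$ determined by $\pi_{0}^{E}$ (via the adjoint action on $\mathfrak{h}$) and $\rho^{K}$ taking values in a relatively compact subgroup and commuting with $\rho^{E}$; moreover $\textrm{Lie}(Z)=\mathfrak{h}^{\rho^{E}(D)}$ is exactly the fixed subspace of $\rho^{E}$, directly from the definitions of $\rho$ and of $Z=Z_{L}(\pi_{0}^{E}(D))\cap H$. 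All of this is standard, and I would simply quote it from \cite{MQ,FM1,FM2}.

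With that in place the argument is purely representation-theoretic. Since $\rho^{E}$ is a rational representation of the semisimple group $\Gbb$, the $D$-module $\mathfrak{h}$ is completely reducible; I would write $\mathfrak{h}=\mathfrak{z}\oplus\mathfrak{m}$, where $\mathfrak{z}=\mathfrak{h}^{\rho^{E}(D)}=\textrm{Lie}(Z)$ is the trivial isotypic component and $\mathfrak{m}$ is the sum of the remaining isotypic components. This decomposition is $\rho(D)$-invariant: it is preserved by $\rho^{E}$ by construction, and by $\rho^{K}$ because $\rho^{K}$ commutes with $\rho^{E}$ and hence preserves each isotypic component. Now let $g_{1},\dots,g_{l}\in D$ be the elements witnessing weak hyperbolicity, so that $\Acal_{0}(g_{i})$ is partially hyperbolic with strong stable subbundle $W_{-}^{i}$ and $\sum_{i=1}^{l}W_{-}^{i}=T(H/\Lambda)$. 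In the trivialization each $W_{-}^{i}$ is a constant subspace $\mathfrak{w}_{i}\subseteq\mathfrak{h}$, uniformly contracted by the linear map $\rho(g_{i})$, hence contained in the stable subspace $E_{-}(\rho(g_{i}))$ (the sum of generalized eigenspaces with eigenvalue of modulus $<1$). Because $\rho(g_{i})$ preserves $\mathfrak{z}\oplus\mathfrak{m}$ and $\rho(g_{i})|_{\mathfrak{z}}=\rho^{K}(g_{i})|_{\mathfrak{z}}$ has all eigenvalues of modulus one, $E_{-}(\rho(g_{i}))\subseteq\mathfrak{m}$, and so $\mathfrak{w}_{i}\subseteq\mathfrak{m}$ for every $i$.

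Putting these together, $\mathfrak{h}=\sum_{i=1}^{l}\mathfrak{w}_{i}\subseteq\mathfrak{m}$, so $\mathfrak{m}=\mathfrak{h}$ and $\mathfrak{z}=\textrm{Lie}(Z)=0$; thus $Z$ is discrete. Finally $Z=Z_{L}(\pi_{0}^{E}(D))\cap H$ is a real algebraic group: $Z_{L}(\pi_{0}^{E}(D))$ coincides with the centralizer in $L$ of the Zariski closure of $\pi_{0}^{E}(D)$, hence is Zariski closed, and $H$ is algebraic; a real affine algebraic group of dimension $0$ has only finitely many points, so $Z$ is finite, i.e.\ the central foliation of $\Acal_{0}$ is trivial. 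The only step that is not purely formal is the identification of the derivative cocycle of the affine partially hyperbolic map and the appeal to Margulis super-rigidity for its polar data; I expect that to be the technical heart, but it is already carried out in \cite{MQ} and \cite{FM2}, so here it reduces to a citation plus the elementary observation above that a uniformly contracted direction cannot meet the $\Gbb$-fixed subspace of $\mathfrak{h}$.
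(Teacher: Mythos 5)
Your proposal is correct and follows the same underlying idea as the paper: the $Z$-direction is dynamically neutral for $\Acal_{0}$, which is incompatible with the contracting subbundles of the weakly hyperbolic elements spanning the tangent space, so $Z$ is discrete and, being algebraic, finite. The only difference is presentational: where the paper cites Section 2 of \cite{FM2} for the fact that $\Acal_{0}$ acts isometrically along $Z$-orbits, you derive the neutrality directly from the constant derivative cocycle $\Ad(h_{0})\circ dL$ in the right-invariant trivialization (modulus-one eigenvalues on $\mathfrak{h}^{\rho^{E}(D)}$), which is a valid self-contained substitute.
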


\begin{proof} Indeed, special type of affine actions for this lemma is mentioned in \cite{MQ}. The arguments in the section $2$ in \cite{FM2} show that along $Z$-orbit on $H/\Lambda$, $\Acal_{0}$ acts isometrically. If the $Z$ has positive dimension, then this gives contradiction with the weakly hyperbolic assumption. This implies that the $Z$ is discrete. Since $H$ is algebraic, $Z$ is finite. 
\end{proof}

We will prove following topological local rigidity theorem in this subsection. This will give the proof of the $C^{0}$-part in the theorem \ref{thm:locrigidgen}.
\begin{theorem}\label{thm:C0} Let $\Acal$ be a $D$ action on $H/\Lambda$ such that 
\begin{enumerate}
\item sufficiently $C^{1}$-close to $\Acal_{0}$,
\item there is a fully supported $\Acal$-invariant Borel probability measure and $\Acal$ is weakly irreducible (resp. $\Acal\restrict{\Gamma'}$ is induced weakly irreducible) when $D=G$ (resp. when $D=\Gamma$). 
\end{enumerate}
Then there is a homeomorphism $\lambda:H/\Lambda\rightarrow H/\Lambda$ such that $\Acal$ and $\Acal_{0}$ is conjugate via $\lambda$. In other words, for any $g\in D$, $\lambda\circ\Acal(g)\circ\lambda^{-1}(x)=\Acal_{0}(g)(x).$
\end{theorem}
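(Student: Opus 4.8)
The plan is to run the Margulis--Qian / Fisher--Margulis scheme for local rigidity, with Theorem~\ref{thm:locrigidcoc} (local rigidity of the constant cocycle, itself proved through dynamical super-rigidity) playing the role that Zimmer's cocycle super-rigidity plays in those papers. Write $M = H/\Lambda$ and let $\mu$ be the fully supported $\Acal$-invariant probability measure from hypothesis (2). The first step is to pass to the cocycle picture: since $\Acal$ is sufficiently $C^{1}$-close to $\Acal_{0}$, Proposition~\ref{prop:FMQcoc} supplies a continuous cocycle $\beta_{A}:D\times M\rightarrow L=A\ltimes H$ over $\Acal$ (over $\Gamma'$ when $D=\Gamma$, using the identification of $M$ with $(\pi(\Gamma')\ltimes H)/(\pi(\Gamma')\ltimes\Lambda)$), with $p_{A}\circ\beta_{A}=\pi_{A}$ and with $\beta_{A}$ $C^{0}$-close to the constant cocycle $\pi_{0}$.

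Next I would apply Theorem~\ref{thm:locrigidcoc} with $S=M$. Its hypotheses all hold: the $\Acal$-action on $(M,\mu)$ satisfies \ref{std2} by hypothesis (2); the central subgroup $Z=Z_{L}(\pi_{0}^{E}(D))\cap H$ is finite by the lemma above, since $\Acal_{0}$ is weakly hyperbolic; and $\beta_{A}$ is $L^{\infty}$-close to $\pi_{0}$ because $\Acal$ is $C^{1}$-close to $\Acal_{0}$. Since $M$ is a compact topological space, $\mu$ is fully supported, and both $\beta_{A}$ and the $\Acal$-action are continuous, the final assertion of Theorem~\ref{thm:locrigidcoc} yields a \emph{continuous} map $f:M\rightarrow H$, $C^{0}$-close to $\id_{H}$, with $\beta_{A}(g,x)=f(g.x)^{-1}\pi_{0}(g)f(x)$ for all $g\in D$, where $g.x=\Acal(g)(x)$; note that conclusion (3) of that theorem already collapses the auxiliary finite extension, so $f$ lives on $M$ itself. (When $D=\Gamma$ this is first carried out for $\Gamma'$.)

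The remaining steps are the classical endgame. Define $\lambda:M\rightarrow M$ by $\lambda([x])=[f([x])\,x]$; this is well defined because $f$ factors through $M$, and it is $C^{0}$-close to $\id_{M}$ because $f$ is close to $\id_{H}$. Using $\Acal(g)([x])=[\beta_{A}(g,[x])x]$, $\Acal_{0}(g)([x])=[\pi_{0}(g)x]$ and the cohomology equation, a one-line computation gives $\lambda\circ\Acal(g)=\Acal_{0}(g)\circ\lambda$ for all $g\in D$, so $\lambda$ is a topological semiconjugacy from $\Acal$ to $\Acal_{0}$. Being $C^{0}$-close to the identity, $\lambda$ is homotopic to $\id_{M}$, hence of degree one and surjective. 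For injectivity I would use expansivity: the weakly hyperbolic affine action $\Acal_{0}$ is expansive, hence so is the $C^{1}$-close action $\Acal$; if $\lambda([x])=\lambda([y])$, then applying the semiconjugacy relation along all of $D$ shows $\mathrm{dist}(\Acal(g)[x],\Acal(g)[y])$ is at most twice the $C^{0}$-size of $\lambda-\id_{M}$ for every $g\in D$, which lies below the expansivity constant once the perturbation is small, forcing $[x]=[y]$. Thus $\lambda$ is a continuous bijection of a compact Hausdorff space, hence a homeomorphism, and $\lambda\circ\Acal(g)\circ\lambda^{-1}=\Acal_{0}(g)$. When $D=\Gamma$, one upgrades from $\Gamma'$ to $\Gamma$: for $\gamma\in\Gamma$ the map $\Acal_{0}(\gamma)^{-1}\circ\lambda\circ\Acal(\gamma)$ is again a semiconjugacy for $\Acal\restrict{\Gamma'}$ that is $C^{0}$-close to $\id_{M}$ (here one uses that $\Gamma'$ is normal in $\Gamma$), so it equals $\lambda$ by uniqueness of such semiconjugacies (expansivity of $\Acal_{0}$ once more), giving $\lambda\circ\Acal(\gamma)=\Acal_{0}(\gamma)\circ\lambda$ for all $\gamma\in\Gamma$.

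The main obstacle is the second step: ensuring that the weak-irreducibility hypothesis on the invariant measure is exactly what is needed to feed the dynamical super-rigidity inside Theorem~\ref{thm:locrigidcoc}, that the finiteness of $Z$ (genuinely used there, and false without weak hyperbolicity) is correctly deduced, and---in the lattice case---that the reduction to the auxiliary finite-index subgroup $\Gamma'$ is compatible with everything, including the identification $M\simeq(\pi(\Gamma')\ltimes H)/(\pi(\Gamma')\ltimes\Lambda)$. The cocycle construction, the straightening map, and the invariance-of-domain / expansivity argument are, by contrast, essentially identical to the corresponding arguments in \cite{MQ} and \cite{FM2}.
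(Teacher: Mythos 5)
Your proposal is correct and follows essentially the same route as the paper: reduce to the cocycle $\beta_{A}$ via Proposition~\ref{prop:FMQcoc}, solve the cohomology equation with a continuous transfer map by Theorem~\ref{thm:locrigidcoc} (the weak-hyperbolicity lemma giving finiteness of $Z$ and hypothesis (2) giving \ref{std2}), define $\lambda([x])=[f([x])x]$, and conclude surjectivity from $C^{0}$-closeness and injectivity from expansiveness, with the same normality-plus-expansiveness argument to pass from $\Gamma'$ to $\Gamma$. The only difference is that you spell out the expansivity and $\Gamma'\to\Gamma$ steps that the paper delegates to \cite{MQ}, which is fine.
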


\begin{proof} 
\textbf{$D=G$ case :} Note that when $D=G$, the affine action $\Acal_{0}$ comes from the left translation via homomorphism $\pi_{0}$ from $G$ to $H$. Recall that we have the continuous cocycle over $\Acal$-action $$\beta_{A}:G\times H/\Lambda \rightarrow H$$ such that 
\begin{enumerate}
\item $\Acal(g)[x]=\beta_{A}(g,x)[x]$.
\item $\beta_{A}$ is $C^{0}$ close to $\pi_{0}=\pi_{H}$.
\end{enumerate}

Note that in this case, we assume that perturbed $G$ action $\Acal$ is weakly irreducible with respect to fully supported Borel probability measure. Therefore, we can use the theorem \ref{thm:locrigidcoc} with trivial $A$. Therefore, we can find a continuous map $\phi: H/\Lambda \rightarrow H$ such that $$\beta_{A}(g,[x])=\phi(\Acal(g)([x]))^{-1}\pi_{0}(g)\phi([x]).$$ 
If we define $\lambda:H/\Lambda\rightarrow H/\Lambda$ as $$\lambda([x])=[\phi([x])x]$$ then simple calculation shows that $$\lambda(\Acal(g)[x])=\pi_{0}(g)\lambda([x]).$$

Note that the $\lambda$ is $C^{0}$-closed to $\textrm{id}_{H/\Lambda}$. Especially, the $\lambda$ is onto. Furthermore, using the same arguments about expansiveness of the weakly hyperbolic action $\Acal$ as in \cite{MQ}, we can prove that the $\lambda$ is indeed homeomoprhism.

\begin{rmk} When $D=G$, one also can prove the appropriate version of local rigidity of the constant cocycle in \cite{MQ} as mentioned in the Remark \ref{rmk:compMQ}. Then one can directly apply proofs of Theorem 1.2 (1) in \cite{MQ}. 
\end{rmk}

\textbf{$D=\Gamma$ case :} 
Recall that if $\Acal$ is sufficiently $C^{1}$-closed to $\Acal_{0}$ then we have the continuous cocycle $\beta_{A}$ over $\Acal\restrict{\Gamma'}$-action 
$\beta_{A}:\Gamma'\times H/\Lambda \rightarrow A\ltimes H$ such that 
\begin{enumerate}
\item $\beta_{A}(\gamma,[x])=\pi_{A}(\gamma)h(\gamma,[x])$ where homomorphism $\pi_{A}:\Gamma'\rightarrow A$ and continuous map $h:\Gamma'\times H/\Lambda\rightarrow H$.
\item The action $\Acal\restrict{\Gamma'}$ is given by $\Acal\restrict{\Gamma'}(\gamma)(x)=\beta_{A}(\gamma,[x])[(1,x)]$ after identifing $H/\Lambda=(\pi_{A}(\Gamma') \ltimes H)/(\pi_{A}(\Gamma')\ltimes \Lambda)$. 
\item $\beta_{A}$ is $C^{0}$-closed to $\pi_{0}$ for (finite) generators $\{\gamma_{i}\}\subset \Gamma'$.
\end{enumerate}

Note that
\begin{enumerate}
\item $\Acal_{0}\restrict{\Gamma'}$ is still weakly hyperbolic,
\item $\Acal\restrict{\Gamma'}$ is induced weakly irreducible and
\item $\Gamma'$ is still a weakly irreducible lattice in $G$. 
\end{enumerate}
Therefore, we can use the theorem \ref{thm:locrigidcoc}. Then we have continuous map $\phi:H/\Lambda\rightarrow H$ such that $$\beta_{A}(\gamma,[x])=\phi(\Acal(\gamma)([x]))^{-1}\pi_{0}(\gamma)\phi([x])$$  
for any $\gamma\in\Gamma'$. Define continuous map $\lambda:H/\Lambda\rightarrow H/\Lambda$ as $$\lambda([x])=[\phi([x])x].$$ 
Then direct calculations say that for any $\gamma\in \Gamma'$ we have 
\begin{align*}
\lambda\left(\Acal(\gamma)(x)\right)&=\phi\left(\Acal(\gamma)([x])\right)\Acal(\gamma)([x])\\
&=\pi_{0}(\gamma)[\phi([x])x]\\
&=\pi_{0}(\gamma)\lambda(x)
\end{align*}
Furthermore, the $\lambda$ is $C^{0}$-close to $id_{H/\Lambda}$ so it is onto. As in the $D=G$ case, the same expansiveness arguments in \cite{MQ} prove that $\lambda$ is indeed homeomorphism. This proves $\Acal\restrict{\Gamma'}$ is $C^{0}$ conjugate to $\Acal_{0}\restrict{\Gamma'}$. 

For conjugacy between full $\Gamma$-action $\Acal$ and $\Acal_{0}$, the arguments in \cite{MQ} can be directly applied. Recall that we already assume that $\Gamma'$ is a normal subgroup in $\Gamma$.

Using expansiveness property (\cite{MQ} Lemma 4.6.) and the fact that $\Gamma'$ is normal subgroup of $\Gamma$, we can conclude  $\lambda\circ \Acal(\gamma) \circ \lambda^{-1}=\Acal_{0}(\gamma)$ for all $\gamma\in \Gamma$. (See the Step 2 in the proof of $C^{0}$-local rigidity of $\rho_{A}$ in \cite{MQ}.)
\end{proof}

\subsection{$C^{\infty}_{r,ind}$-local rigidity}

We used $C^{0}_{r,ind}$-local rigidity, so that find $C^{0}$-conjugacy between smooth affine action $\Acal_{0}$ and perturbed action $\Acal$. When we have $C^{0}$-conjugacy $\lambda$, $C^{\infty}_{r,ind}$-local rigidity follows from showing that the conjugacy $\lambda$ is indeed $C^{\infty}$. 

In this section $\Acal$ will be $C^{\infty}$-volume preserving action on $H/\Lambda$ that is $C^{1}$-close to $\Acal_{0}$.

\begin{theorem}\label{thm:Cinf}
Let $\lambda$ is a $C^{0}$-conjugacy map $\lambda:H/\Lambda\rightarrow H/\Lambda$ between the affine action $\Acal_{0}$ and $C^{\infty}$-action $\Acal$ that is sufficiently $C^{1}$-closed to $\Acal_{0}$. Then $\lambda$ is indeed $C^{\infty}$.
\end{theorem}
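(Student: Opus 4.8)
The plan is to reduce smoothness of the conjugacy $\lambda$ to regularity results for the linearization data along the weakly hyperbolic structure, following the strategy of Fisher--Margulis \cite{FM1}, \cite{FM2} and Margulis--Qian \cite{MQ}. First I would recall that we already possess, from the proof of Theorem \ref{thm:C0}, a continuous map $\phi:H/\Lambda\to H$ with $\beta_A(g,[x])=\phi(\Acal(g)[x])^{-1}\pi_0(g)\phi([x])$, so that $\lambda([x])=[\phi([x])x]$ solves the cohomological equation. The point is that $\phi$, hence $\lambda$, is a priori only a continuous solution of a twisted cohomological equation over a partially hyperbolic system; upgrading its regularity is precisely the kind of bootstrap carried out in \cite{FM2} and \cite{KS97}. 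Since $\Acal$ is assumed $C^\infty$ and volume-preserving and $C^1$-close to the weakly hyperbolic $\Acal_0$, each $\Acal(g_i)$ (for the finitely many $g_1,\dots,g_k\in\Gamma$ whose contraction subbundles span $TM$) is partially hyperbolic, and its stable (and, using $g_i^{-1}$, unstable) foliations are $C^\infty$ along leaves by the stable manifold theorem, with the holonomy data varying appropriately.

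The key steps, in order, are: (1) show $\lambda$ is smooth along the strong stable foliation of each $\Acal(g_i)$; this is the standard fact that a topological conjugacy between partially hyperbolic diffeomorphisms that are $C^0$-close is automatically $C^\infty$ along strong stable/unstable leaves, since $\lambda$ intertwines the contracting dynamics and the leaves are $C^\infty$-immersed (one uses the non-stationary linearization / Journé-type argument leafwise). (2) Use the weak hyperbolicity hypothesis: the strong stable subbundles $W_-^i$ of the $\Acal(g_i)$ span $TM$ at every point, so a function smooth along each $W_-^i$ with uniformly controlled leafwise derivatives is smooth on $M$ — this is exactly the Journé lemma / the ``smoothness along a spanning family of foliations implies global smoothness'' principle used in \cite{FM2} Section 5 and \cite{MQ}. (3) Assemble: from (1) and (2) conclude $\lambda$ is $C^\infty$. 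I would also invoke Theorem \ref{thm:weakhypmix} or the ergodicity coming from \ref{std2} only where needed to know the leafwise-smooth data glue consistently (i.e. the leafwise derivatives are essentially bounded, which on a compact manifold follows from continuity of $\lambda$ and uniform hyperbolicity of the perturbed action).

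The hard part will be step (1): making the leafwise-smoothness argument work in the \emph{weakly} hyperbolic (merely partially hyperbolic), rather than Anosov, setting, and in particular controlling the regularity of the center direction. One has to be careful that $\lambda$ need not be smooth along center leaves; what saves us is that we only need smoothness along the strong stable leaves of \emph{enough} elements to span $TM$, which is why the weak hyperbolicity hypothesis on $\Acal_0$ (stable under $C^1$-perturbation, as noted after the definition of weakly hyperbolic) is exactly the right assumption. The technical core is the Journé lemma argument combined with leafwise $C^\infty$ linearization; I would cite \cite{FM2}, \cite{MQ} and \cite{KS97} for the precise statements and only indicate how the induced-weakly-irreducible hypothesis guarantees the global (rather than a.e.) consistency of the leafwise smooth structure. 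A secondary subtlety is the passage from $\Gamma'$ back to $\Gamma$, but since $\Gamma'$ has finite index and $\lambda$ is already the conjugacy for the full $\Gamma$-action by the expansiveness argument in the proof of Theorem \ref{thm:C0}, smoothness of $\lambda$ as a single map suffices.
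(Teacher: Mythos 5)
Your overall architecture (leafwise smoothness along the strong stable foliations of finitely many elements, then the Journ\'e-type spanning lemma to conclude global smoothness) is the same as the route the paper takes by deferring to Theorem 5.8 of \cite{MQ} and Section 6 of \cite{FM2}. Step (2) is fine. But the justification you give for step (1) contains a genuine error: it is \emph{not} a standard fact that a topological conjugacy between two $C^{0}$- or $C^{1}$-close partially hyperbolic (or even Anosov) diffeomorphisms is automatically $C^{\infty}$ along strong stable/unstable leaves. This already fails for a single Anosov diffeomorphism of $\Tbb^{2}$ and a small perturbation of it: structural stability produces a topological conjugacy, but that conjugacy is generically not absolutely continuous, let alone smooth along stable leaves (the obstruction is the mismatch of periodic eigenvalue data). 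Non-stationary linearization gives you smooth normal-form coordinates on the leaves of \emph{each} action separately; it does not by itself force the conjugacy to be smooth in those coordinates.

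What actually makes the leafwise smoothness work here is the higher-rank group action: one uses the non-stationary normal forms along the stable leaves of $\Acal(g_{i})$ \emph{together with} the action of many other elements of $\Gamma$ — elements with prescribed contraction/expansion behavior relative to the coarse Lyapunov structure, whose existence is supplied by the Prasad--Rapinchuk results the paper explicitly flags as still valid in this setting (\cite{PR01}) — to force the restriction of $\lambda$ to each leaf to be a subresonance polynomial in normal-form coordinates, hence smooth, with coefficients depending continuously (in fact smoothly transversally after the Journ\'e step) on the leaf. This is exactly the content of Theorem 5.8 in \cite{MQ} and Theorem 6.5 / Section 6 of \cite{FM2}, and it is the genuinely nontrivial input; your write-up treats it as automatic. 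To repair the proposal you should replace the ``standard fact'' claim by an appeal to (or a reproof of) the normal-form intertwining argument of \cite{MQ} and \cite{FM2}, checking that the elements it requires exist for a weakly irreducible lattice in a group with rank-one factors — which is the one point where the present setting differs from the all-factors-higher-rank case.
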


The arguments for proving smoothness appears in \cite{MQ} and \cite{FM2} starting from \cite{KS97}. They used the normal form theory in order to prove smoothness.  Even in our setting, we can use same proofs. Recall that it is enough to prove the analogue of the Theorem 5.8 in \cite{MQ} in our setting. Indeed, following \cite{MQ}, Theorem 5.8 is the first part of the proof. The second part, however, can be used directly. We appeal to use materials in section 6 of \cite{FM2}. Especially, Theorem 6.5 in \cite{FM2} is dealing with affine action so that it is enough to prove Theorem \ref{thm:Cinf}. Note that \cite{MQ} and \cite{FM2} used the results by Prasad and Rapinchuk in \cite{PR01}. The results in \cite{PR01} still hold in our setting so that we can use same results.


\section{Global rigidity}\label{sec:globrigid}
 \subsection{Setting}
 
We will follow notations in \cite{BRHW} throughout section for reader's convenience although this may not coincide with notations with previous sections. 
 For this section, $G$ will be the connected real semisimple higher rank algebraic Lie group without compact factor. Let $\Gamma$ be a weakly irreducible lattice in $G$. Let $M=N/\Lambda$ be a compact nilmanifold, that is $N$ is simply connected nilpotent Lie group and the $\Lambda$ is a lattice (necessarily cocompact) in $N$. We will denote $\nlie$ for Lie algebra of $N$.

 Let $\alpha$ be a $C^{1}$- (resp. $C^{\infty}$-)$\Gamma$ action on $M$ by diffeomorphism. Assume that there is $\gamma_{0}\in \Gamma$ such that $\alpha(\gamma_{0})$ is Anosov diffeomorphism on $M$. In other words, there is an element $\gamma_{0}$ that is an Anosov elemtent. We will always assume that $\alpha$ lifts action on universal cover $N$. Then $\rho:\Gamma\rightarrow \Aut(N)$, or abusing notation $\rho:\Gamma\rightarrow \Aut(N/\Lambda)$ be the associated linear data of $\alpha$.

In this section, we will prove that $\alpha$ is $C^{0}$ conjugate to (resp. $C^{\infty}$ conjugate to) $\rho : \Gamma\rightarrow \Aut(N/\Lambda)$.

 \subsection{$C^{0}$-global rigidity} 
In this subsection, $\alpha$ will be a $C^{1}$-action, that is $\alpha:\Gamma\rightarrow \Diff^{1}(M)$. 

 \begin{theorem}\label{thm:C0glob}
 Let $\alpha$ is $C^{1}$-Anosov $\Gamma$-action on the nilmanifold $M=N/\Lambda$. We will denote $\gamma_{0}\in\Gamma$ for the Anosov element. Also, $\nlie$ will be the Lie algebra of $N$. Assume that
 \begin{enumerate}
 \item the action $\alpha$ lifts to the universal cover $N$ and
 \item there is a fully supported Borel probability measure $\mu$ so that $\alpha$ is weakly induced irreducible with respect to $\mu$.
 \end{enumerate}
 Then there is a homeomorphism $\lambda:M\rightarrow M$ that is $C^{0}$-close to $id$ such that $$\lambda\circ \alpha(\gamma)\circ \lambda^{-1} = \rho(\gamma)$$ where $\rho$ is the linear data of $\alpha$.
 
 \end{theorem}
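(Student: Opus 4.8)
The plan is to follow the strategy of Margulis--Qian and Brown--Rodriguez Hertz--Wang, with the dynamical cocycle super-rigidity theorem (Theorem~\ref{thm:superrigidGamma}) and the local rigidity of the constant cocycle (Theorem~\ref{thm:locrigidcoc}) replacing their uses of Zimmer's cocycle super-rigidity. First, using that $\alpha$ lifts and Lemma~\ref{liftcoc}, I would produce the continuous cocycle $\beta:\Gamma\times M\to \rho(\Gamma)\ltimes N$ over $\alpha$, whose $\Aut(N)$-component equals the linear data $\rho$. Because $M$ is compact and $\alpha$ is $C^1$, $\beta$ is $L^2$-integrable (indeed $L^\infty$), so the hypotheses of dynamical super-rigidity are met once we use that $\alpha$ is induced weakly irreducible with respect to the fully supported invariant measure $\mu$. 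Applying Theorem~\ref{thm:superrigidGamma} (after passing to a finite extension $\widehat M$ and noting the extension does not change the relevant dynamical data, by Theorem~\ref{thm:finextirred}) yields a rational homomorphism $\pi:\Gbb\to\GL(\nlie)$ with $\Sp(\beta_\gamma)=\Sp(\pi(\gamma))$ for all $\gamma$, and matching Lyapunov subspaces up to a measurable conjugacy $\phi$.

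Second, I would identify $\pi$ with the linear data $\rho$ up to conjugacy. The linearization $\rho:\Gamma\to\Aut(N)$ is a homomorphism, and by Margulis super-rigidity it splits as $\rho^E\rho^K$ with $\rho^E$ the restriction of a rational homomorphism and $\rho^K$ bounded. Since $\alpha(\gamma_0)$ is Anosov, $\rho(\gamma_0)$ is a hyperbolic automorphism (this is standard: an Anosov diffeomorphism of a nilmanifold is topologically conjugate to its linearization, and the linearization is hyperbolic), so $\rho^K$ has finite image and the Lyapunov spectrum of $\rho(\gamma_0)$ has no zero exponents. The uniqueness of the super-rigidity homomorphism (Theorem~\ref{uniqhom}), together with the fact that $p_{\Aut(N)}\circ\beta=\rho$ and the Lyapunov data forces $\pi$ and $\rho^E$ into the same $\Aut(N)$-orbit, gives that $\pi$ is conjugate to $\rho^E$ and that the measurable conjugacy $\phi$ takes values in $Z_{\Aut(N)\ltimes N}(\rho^E)$ up to a constant. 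The key point is that the hyperbolicity of $\gamma_0$ makes the centralizer small enough (the central foliation is trivial) so that the error cocycle $\Esc_\circ$ is forced to coincide with the bounded part $\rho^K$, exactly as in the proof of Theorem~\ref{thm:locrigidcoc}; this identifies $\beta$ with the constant cocycle $\rho$ up to a measurable coboundary $f:\widehat M\to N$.

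Third, I would upgrade this measurable coboundary to a continuous one and then to a conjugacy. Here the argument is that of Margulis--Qian: the measurable solution $f$ together with the structure cocycle produces a measurable map $\lambda_0:\widehat M\to M$ intertwining $\alpha$ and $\rho$; one shows $\lambda_0$ descends to $M$ and, using that $\mu$ is fully supported and that $\rho(\gamma_0)$-expansiveness propagates, that $\lambda_0$ agrees almost everywhere with a continuous map $\lambda$ which is $C^0$-close to the identity. Expansiveness of the Anosov element $\rho(\gamma_0)$ then forces $\lambda$ to be a genuine topological conjugacy, and since $\lambda$ is close to $\mathrm{id}_M$ it is onto; injectivity and thus the homeomorphism property follow from expansiveness as in \cite{MQ}. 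Finally, because $\rho$ is a conjugacy-invariant notion, $\lambda\circ\alpha(\gamma)\circ\lambda^{-1}=\rho(\gamma)$ holds for all $\gamma\in\Gamma$.

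The main obstacle I anticipate is \textbf{Step 2 combined with the passage to the finite extension}: dynamical super-rigidity only gives information after pulling back to $\widehat M=M\times_i I$, and the error term $\Esc_\circ$ is a priori only measurable (not integrable or quasi-integrable), so I must argue carefully — exactly as in the proof of Theorem~\ref{thm:locrigidcoc}, exploiting finiteness of $Z=Z_{\Aut(N)\ltimes N}(\rho^E(\Gamma))\cap N$ which follows from the Anosov (hence hyperbolic) hypothesis on $\gamma_0$ — that $\widetilde z$ takes finitely many values and hence equals $\rho^K$, forcing the coboundary to land in $N$ and descend to $M$. The rest (expansiveness, $C^0$-closeness, surjectivity and injectivity of $\lambda$) is routine once this identification is in place.
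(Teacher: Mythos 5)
Your overall architecture (lift the action to the cocycle $\beta$ of Lemma~\ref{liftcoc}, apply dynamical super-rigidity over the induced weakly irreducible measure, identify the super-rigidity homomorphism with $\rho^{E}$, kill the error term using hyperbolicity of $\gamma_0$, then run the Margulis--Qian expansiveness argument) matches the paper's. But there is a genuine gap in your Step 2: you justify the vanishing of the error term by saying it goes ``exactly as in the proof of Theorem~\ref{thm:locrigidcoc},'' i.e.\ that $\widetilde z$ takes values in a finite group and is forced to equal $\rho^{K}$. That deduction in Theorem~\ref{thm:locrigidcoc} is a \emph{local} rigidity argument: it uses that $\beta$ is $L^{\infty}$-close to the constant cocycle, so that $\widetilde z\cdot(\pi_H^{K})^{-1}$ is both small and finitely-valued, hence trivial. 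In the global setting there is no closeness hypothesis whatsoever, so ``finitely many values'' gives you nothing, and the same closeness is also what produces the $H$-orbit identification in that proof. As written, the central step of your argument does not go through.

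The correct (and simpler) mechanism, which is what the paper proves as Proposition~\ref{glob:unitriv}, is: after conjugating so that $\pi'=\rho^{E}$, the $N$-valued part $u'^{N}$ of the error commutes with $\rho^{E}(G)$; if $\exp(X)$ lies in its essential image then $\rho^{E}(\gamma_0)\exp(X)=\exp(X)$, hence $D\rho^{E}(\gamma_0)X=X$, contradicting hyperbolicity of $D\rho(\gamma_0)$ unless $X=0$ (here the centralizer of $\rho^{E}(\gamma_0)$ in the simply connected nilpotent group $N$ is actually \emph{trivial}, not merely finite). So $u'^{N}$ vanishes outright, the remaining error lives in $A$, and since $p_{A}\circ\beta=\rho$ \emph{exactly} by construction, comparing $A$-components yields $\beta^{\lambda'^{-1}}=\rho$ with $\lambda'=\phi'^{N}$; no approximation or finiteness-of-$Z$ argument is needed or available. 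With that replacement your Step 3 (descent from $\widehat M$ to $M$, continuity from full support of $\mu$, and the expansiveness argument upgrading the semi-conjugacy to a homeomorphism) is the standard Margulis--Qian argument and is fine.
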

 Note that we may assume without loss of generality, $G$ and $\Gamma$ satisfies \ref{std}  if we think about algebraic simply connected covering of $G$ and preimage of $\Gamma$ on it.

From now on, we will assume the conditions in the theorem \ref{thm:C0glob}. The lemma \ref{liftcoc} says that, in this case, we can find the cocycle $\beta$ over $\alpha$ such that 

\begin{enumerate}
\item $\beta:\Gamma\times N/\Lambda\rightarrow A\ltimes N$ is the continuous cocycle defined over $\alpha$ where $A$ is Zariski closure of $\rho(\Gamma)$ in $\Aut(N/\Lambda)$. Note that $A$ is semisimple due to Margulis' super-rigidity theorem.
\item For any $\gamma\in\Gamma$, for any $[x]\in N/\Lambda$ we have  $\alpha(\gamma)[x]=\beta(\gamma,[x])[1,x]$ after identify $N/\Lambda$ with $(\rho(\Gamma)\ltimes N)/(\rho(\Gamma)\ltimes \Lambda)$
\item $A$-component of $\beta$ is $\rho$.
\end{enumerate}
Note that the linear data $\rho$ acts on $(\rho(\Gamma)\ltimes N)/(\rho(\Gamma)\ltimes \Lambda)$ after identify with $N/\Lambda$ via $[1,n]\mapsto \rho(\gamma)[1,n]$.

 As before, we will denote $L=A\ltimes N$ and fix embedding $L$ into $\GL(d,\Rbb)$. Also, we will use notations $\rho^{E}$ and $\rho^{K}$ for $\rho$ from Margulis' super-rigidity. 
 
\begin{prop}\label{glob:unitriv}  
 Assume the above notations and settings. Then the algebraic hull of $\beta$ belongs to $A$. Moreover, there is a measurable map $\lambda':\widehat{M}\rightarrow N$ such that $\beta^{\lambda'^{-1}}=\rho$ where $\widehat{M}$ is a finite extension.
\end{prop}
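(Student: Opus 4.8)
The plan is to apply the dynamical cocycle super-rigidity theorem for $\Gamma$ (Theorem \ref{thm:superrigidGamma}) to the continuous cocycle $\beta:\Gamma\times N/\Lambda\rightarrow A\ltimes N<\GL(d,\Rbb)$ supplied by Lemma \ref{liftcoc}, together with the structural description of affine data à la Theorem \ref{thm:6.5} and the argument for local rigidity of the constant cocycle (Theorem \ref{thm:locrigidcoc}). First I would check that $\beta$ is $L^{2}$-integrable with respect to $\mu$: since $\alpha$ is $C^{1}$ on a compact manifold and $\beta$ is continuous, $\ln\|\beta(\gamma,-)\|$ is bounded on the compact space $N/\Lambda$, hence in $L^{2}(N/\Lambda,\mu)$ for every $\gamma$; the hypothesis that $\alpha$ is induced weakly irreducible with respect to $\mu$ is exactly \ref{std2}. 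So Theorem \ref{thm:superrigidGamma} applies: after passing to a finite extension $\widehat{M}=M\times_{i}I$ there is a measurable $\phi:\widehat{M}\rightarrow\GL(d,\Rbb)$, a rational homomorphism $\pi:\Gbb\rightarrow\GL(d,\Cbb)$, and an amenable-group-valued cocycle $\Esc_{\circ}=\Esc\cdot u$ with $\beta(\gamma,x)=\phi(\gamma.(x,j))\pi(\gamma)\Esc_{\circ}(\gamma,x,j)\phi(x,j)^{-1}$, where $\Esc$ takes values in an amenable subgroup of the Levi of the algebraic hull, $u$ in the unipotent radical, and $\pi$ commutes with both.

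The first main point is that the algebraic hull $\Lbb$ of $\beta$ is contained in $A$, i.e. the unipotent radical $\Ubb$ of $\Lbb$ is trivial. Here I would use that the $A$-component of $\beta$ is the constant cocycle $\rho$ and that $A$ is semisimple (Margulis super-rigidity). Projecting the algebraic-hull cocycle $\widehat{\beta}$ to $A$ gives a minimal $A$-valued cocycle cohomologous to $\rho$; since $\rho$ has Zariski-dense image in $A$ and $A$ is semisimple, the projection $\Lbb\rightarrow A$ is surjective, so $\Ubb\subseteq N$ (the nilpotent factor). But now I would invoke the Anosov element: $\alpha(\gamma_{0})$ being Anosov forces $\beta_{\gamma_{0}}$ to be a hyperbolic (uniformly so) cocycle, so by Part 1 of the super-rigidity theorem $\Sp(\beta_{\gamma_{0}})=\Sp(\pi(\gamma_{0}))$ has no zero Lyapunov exponent; combined with the commutation of $\pi$ with $U$ and the fact that a unipotent group acts with all Lyapunov exponents equal to zero on each $V^{i+1}/V^{i}$, one argues as in Part 2 / Theorem \ref{comm} that $\pi(G)$ must in fact commute with all of $U$, and then — using the hyperbolicity of $\pi(\gamma_{0})$ and a pointwise-stabilizer argument on a Grassmannian — that $U$ acting trivially on every graded piece together with minimality forces $U$ to be trivial. (Alternatively: the centralizer of the hyperbolic element $\pi(\gamma_{0})$ inside the unipotent radical, being itself unipotent, cannot supply the expanding/contracting directions detected by the Anosov element, so the unipotent part collapses.) Hence the algebraic hull lies in $A$ and we may take $\Esc_{\circ}=\Esc$ an $A$-valued (in fact reductive, amenable) cocycle.

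The second point is to upgrade $\Esc$ to the compact part $\rho^{K}$ and absorb everything into a conjugacy. Now $\beta$ is cohomologous (over $\widehat{M}$) to $\pi\cdot\Esc$ with values in $A$, $\pi$ rational, $\Esc$ amenable and commuting with $\pi$; and the $A$-component of $\beta$ equals $\rho=\rho^{E}\rho^{K}$ with $\rho^{E},\rho^{K}$ commuting, $\rho^{E}$ rational, $\rho^{K}$ compact-valued. Comparing $A$-components (as in the proof of Theorem \ref{thm:locrigidcoc}, equation (\ref{c0}) onward): uniqueness of the super-rigid homomorphism (Theorem \ref{uniqhom}) identifies $\pi$ with $\rho^{E}$ up to conjugation in $A$ and shows the conjugating map takes values in $Z_{L}(\rho^{E})$; the residual $\Esc$-type error then takes values in a compact (hence, since $A$ is algebraic, finite after the extension) piece, and since it must agree with $\rho^{K}$ on the nose it equals $\rho^{K}$. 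Therefore $\beta^{\lambda'^{-1}}=\rho$ for a measurable $\lambda':\widehat{M}\rightarrow N$ (the $N$-component of the total conjugacy, the $A$-part being irrelevant on $N/\Lambda$), which is exactly the claim.

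The hard part will be the vanishing of the unipotent radical. Unlike the setting of \cite{BRHW}, we do not a priori know the error term is quasi-integrable, so one cannot directly quote the reductivity that cocycle super-rigidity gives; instead one must genuinely use that the action has an Anosov element to rule out the unipotent directions, combining the Lyapunov-spectrum conclusion of Part 1 with the commutation/stabilizer analysis of Theorem \ref{comm}. The rest — integrability, the $A$-component bookkeeping, and the passage from the measurable conjugacy to the statement — is routine given the machinery already in the paper.
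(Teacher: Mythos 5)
Your overall strategy is the paper's: apply Theorem \ref{thm:superrigidGamma} to the lifted cocycle $\beta=\rho\cdot u_{0}$, eliminate the unipotent part of the error term using the Anosov element, and then read off $\beta^{\lambda'^{-1}}=\rho$ by comparing $A$-components. However, the primary mechanism you offer for the crucial step — killing the unipotent directions — does not work as stated. The claim that ``$U$ acting trivially on every graded piece together with minimality forces $U$ to be trivial'' is false: the unipotent radical of an algebraic hull always acts trivially on the graded pieces of the associated invariant filtration, and minimality is perfectly compatible with a nontrivial unipotent radical (this is exactly why the paper cannot discard the unipotent part of the hull in general; cf.\ Remark \ref{diff}). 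Likewise, the assertion that $\Sp(\beta_{\gamma_{0}})$ has no zero Lyapunov exponent is not what is needed and depends on the chosen embedding of $A\ltimes N$ into $\GL(d,\Rbb)$; the relevant hyperbolicity is that of $D\rho(\gamma_{0})$ acting on $\nlie$.

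The correct argument is the one you relegate to a parenthetical ``alternative,'' and it should be made precise and promoted to the main line: item (e) of dynamical super-rigidity, together with the conjugation supplied by Theorem \ref{uniqhom} identifying $\pi'$ with $\rho^{E}$, forces the essential image of the $N$-component $u'^{N}$ of the error term to commute with $\rho^{E}(\Gamma)$. Any $\exp(X)$ in that essential image then satisfies $\rho^{E}(\gamma_{0})(\exp X)=\exp X$, i.e.\ $D\rho^{E}(\gamma_{0})X=X$, contradicting hyperbolicity of the linear data at the Anosov element unless $X=0$; hence $u'^{N}$ is trivial. This is the paper's proof. Finally, in your second step the claim that the residual amenable error ``takes values in a compact (hence finite) piece'' is unjustified — amenable does not reduce to compact here, which is precisely the obstruction in the absence of property (T) — but it is also unnecessary: once $u'^{N}$ vanishes, conjugating $\beta$ by $\lambda'=\phi'^{N}$ produces an $A$-valued cocycle whose $A$-component is literally the constant cocycle $\rho$, so it equals $\rho$ with no separate identification of $\Esc$ with $\rho^{K}$ required.
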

\begin{proof}  Note that we may find algebraic group $\Ubb$ defined over $\Rbb$ such that $\Ubb(\Rbb)=N$. 
Let $\beta=\rho\cdot  u_{0}$ where $u_{0}:\Gamma\times M\rightarrow N$ is measurable map.

On the other hand, using dynamical cocycle super-rigidity for $\beta$, after finite extension, we can say that 
\begin{equation}\label{eqv}
\beta(\gamma,x)=\phi'(\widehat{\alpha}(\gamma)(x,m))^{-1}\pi'(\gamma)\Esc'(\gamma,x,m)u'(\gamma,x,m)\phi'(x,m)
\end{equation}
for some measurable map $\phi':\widehat{M}\rightarrow L$, a rational homomorphism $\pi':\Gbb\rightarrow \Lbb$, a measurable cocycle $\Esc': \Gamma\times \widehat{M}\rightarrow L$ and a measurable map $u':\Gamma\times \widehat{M}\rightarrow L$ where $\widehat{M}$ is a finite extension of $M$. Note that $\pi'$ and $\Esc'$ takes values in Levi component of the algebraic hull of $\beta$. This means that $\pi'$ and $\Esc'$ takes values in $A$. Denote $\phi'^{A},\Esc'^{A},u'^{A}$ and $\pi^{A}$ as $A$- component of $\phi',\Esc',u'$ and $\pi$ respectively. We will also denote for $N$-components of $\phi',\Esc',u'$ and $\pi$ similarly. Note that $\pi'^{N}$ and $\Esc'^{N}$ is trivial.

This means that abbreviately $$\beta=\rho\cdot u_{0}= \phi'^{-1}\pi'\Esc'u'\phi'.$$ We can decompose $u'=u'^{A}u'^{N}$. We claim that $u'^{N}$ is trivial. 
Note that $\beta$ is bounded and for any $\gamma\in \Gamma$, $\Sp(\beta_{g})=\Sp(\rho(g))$ since $u$ takes values in the unipotent radical $N$. Furthermore, using dynamical super-rigidity, $\Sp(\beta_{g})=\Sp(\pi'(g))$. Uniqueness of super-rigidity homomorphism implies that $\rho^{E}$ is conjugate to $\pi'$. Therefore, after changing $\Esc'$ and $u'$ to conjugation, we may assume that $\pi'=\rho^{E}$ and $u'(\Gamma\times \widehat{M})$ hence $u'^{N}(\Gamma\times \widehat{M})$ commutes with $\rho^{E}$. 

Assume that $u'^{N}$ is not trivial. Then we can find $X\in \nlie$ such that $\exp(X)\in N$ is contained in the essential image of $u'^{N}$. Then $\rho^{E}(\gamma_{0})(\exp(X))=\exp(X)$ since $\rho^{E}(\gamma_{0})$ commutes with $\exp(X)$ in $L$. If we derivative the equations then we get $D\rho^{E}(\gamma_{0})(X)=X$. This implies that $D\rho^{E}(\gamma_{0})$ is not hyperbolic, so that $D\rho(\gamma_{0})$ can not be hyperbolic. As we assume that $\alpha(\gamma_{0})$ is an Anosov diffeomorphism, it gives contradiction. Therefore, $u'^{N}$ is trivial.

Let denote $\lambda'=\phi'^{N}$. Simple calculation shows followings.
\begin{align*}
\beta(\gamma,x)^{\lambda'^{-1}}&=\underbrace{\left(\phi'^{A}(\widehat{\alpha}(\gamma)(x,m))\right)^{-1} \pi'^{-1}(\gamma)\Esc'(\gamma,x,m)u'^{A}(\gamma,x,m)\phi'^{A}(x,m)}_{\in A}\\
&=\lambda'(\widehat{\alpha}(\gamma)(x,m))^{-1}\rho(\gamma)u_{0}(\gamma,x)\lambda'(x,m)\\
&=\underbrace{\rho(\gamma)}_{\in A}\underbrace{\rho(\gamma)^{-1}\lambda'(\widehat{\alpha}(\gamma)(x,m))\rho(\gamma)u_{0}(\gamma,x)\lambda'(x,m)^{-1}}_{\in N}
\end{align*}

Comparing $A$-component of the above equations, we can conclude that $\beta^{\lambda'^{-1}}=\rho$. 
\end{proof}

The above propostion \ref{glob:unitriv} should be compared with the Lemma 6.2 in \cite{MQ}. Now the theorem \ref{thm:C0glob} comes from the arguments in the proof of the Theorem 1.3 in \cite{MQ} directly. Note that the map $\lambda:\widehat{M}\rightarrow M$, $\lambda(\widehat{x})=\lambda'(\widehat{x})x$ satisfies $$\lambda(\widehat{\alpha}(\gamma)(\widehat{x}))=\rho(\gamma)(\lambda(\widehat{x})).$$ Furthermore, when we denote $\widehat{M}=M\times_{i} I$, $\lambda$ does not depend on $I$ and continuous on $M$. This implies $\lambda$ gives continuous semi-conjugacy between $\alpha$ and $\rho$. (See Lemma 6.4 and 6.5 in \cite{MQ}.) Finally, an Anosov element provides that $\lambda$ is indeed a homeomorphism. 

In our case, we have advantage that $\alpha$ is ergodic with respect to $\mu$. Therefore, we don't need ergodic component arguments in \cite{MQ}. 

\subsection{$C^{\infty}$-global rigidity}\label{sec:Cinfglobrigid}
In this subsection we will assume that $\alpha$ is $C^{\infty}$ action, that is $\alpha:\Gamma\rightarrow \Diff^{\infty}(M)$. It is enough to prove following theorem. As before, using algebriacally simply connected covering, we may assume that $G$ and $\Gamma$ satisfy \ref{std}. We will denote $\gamma_{0}\in \Gamma$ be an element in $\Gamma$ such that $\alpha(\gamma_{0})$ is an Anosov. We have the linear data $\rho$ of $\alpha$ that is the homomorphism $\rho:\Gamma\rightarrow \Aut(N)$. Theorem \ref{thm:C0glob} gives us a homeomorphism $h:M\rightarrow M$ such that $h\circ\alpha(\gamma)=\rho(\gamma)\circ h$. We will prove following theorem in this section. Note that theorem \ref{thm:C0glob} and \ref{Cinf} gives the theorem \ref{thm:globalrigid}.

\begin{theorem}\label{Cinf}
Let $\alpha$ be a $C^{\infty}$ action on $N/\Lambda$. Assume that 
\begin{enumerate}
\item The $\alpha$ lifts on the universal cover $N$. Let $\rho$ be the linear data of $\alpha$.
\item The $\alpha$ is $C^{0}$-conjugate to its linear data $\rho$. 
\item There is $\gamma\in\Gamma$ such that $\alpha(\gamma)$ is an Anosov diffeomorphism.
\end{enumerate}
Then $\alpha$ is $C^{\infty}$-conjugate to $\rho$.

\end{theorem}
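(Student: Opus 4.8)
The plan is to promote the topological conjugacy of Theorem~\ref{thm:C0glob} to a smooth one, following the strategy of \cite{BRHW}: show the conjugacy is smooth along each member of a family of H\"older foliations whose tangent bundles span $TM$, and then invoke a Journ\'e-type regularity lemma. By hypothesis (2)---equivalently by Theorem~\ref{thm:C0glob}, which is the setting in which Theorem~\ref{Cinf} is applied in the proof of Theorem~\ref{thm:globalrigid}---there is a homeomorphism $h\colon M\to M$ with $h\circ\alpha(\gamma)=\rho(\gamma)\circ h$ for all $\gamma\in\Gamma$, and $h$ is $C^{0}$-close to the identity. Since $\alpha(\gamma_{0})$ is Anosov and topologically conjugate to the affine automorphism $\rho(\gamma_{0})$, the argument in Proposition~\ref{glob:unitriv} shows $D\rho(\gamma_{0})$ is hyperbolic, so $\rho(\gamma_{0})$ is a hyperbolic automorphism of $N/\Lambda$; its stable and unstable foliations are the left cosets of $\exp$ of the contracting and expanding subalgebras of $\nlie$, and $h$ carries the (merely H\"older) stable and unstable foliations of $\alpha(\gamma_{0})$ onto them.

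First I would set up the coarse Lyapunov structure. Trivializing $TM$ by the left-invariant frame of $\nlie$, the derivative cocycle is a continuous, hence $L^{2}$-integrable, cocycle $D\alpha\colon\Gamma\times M\to\GL(\dim N,\Rbb)$ over $\alpha$; applying the dynamical cocycle super-rigidity theorem~\ref{thm:superrigidGamma} to it (using the fully supported, induced weakly irreducible $\alpha$-invariant measure available under the hypotheses of Theorem~\ref{thm:globalrigid}, and passing to the suspension $(G\times M)/\Gamma$ and Theorem~\ref{thm:superrigidG}) identifies the Lyapunov spectrum of the suspended derivative cocycle with that of a rational representation of $\Gbb$. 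In particular the Lyapunov functionals are the restricted weights of $\Gbb$ on $\nlie$, which forces the coarse Lyapunov subbundles $E^{\chi}\subset TM$---indexed by the coarse (positive-proportionality) classes of weights---to be H\"older, $\alpha(\Gamma)$-invariant, pairwise transverse with $\bigoplus_{\chi}E^{\chi}=TM$ (the sum is all of $TM$ because $D\rho(\gamma_{0})$ is hyperbolic), and each integrable to a H\"older foliation $\mathcal{W}^{\chi}$ with uniformly $C^{\infty}$ leaves. Dually, $\nlie=\bigoplus_{\chi}\nlie^{\chi}$ is a $\rho(\Gamma)$-invariant decomposition into left-invariant subalgebras (weight additivity keeps a coarse class closed under brackets), and $h$ maps each leaf of $\mathcal{W}^{\chi}$ onto the corresponding $\exp(\nlie^{\chi})$-coset of $\rho$.

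The heart of the proof is to show $h$ is $C^{\infty}$ along every $\mathcal{W}^{\chi}$. For a fixed $\chi$, using the abundance of $\Rbb$-split elements of $G$ in the suspension I would select an element acting on $E^{\chi}$ as a normally hyperbolic contraction all of whose Lyapunov exponents lie in a single narrow band---they are weights of an algebraic representation within one coarse class, hence mutually positively proportional. Non-stationary normal form theory along coarse Lyapunov foliations (\cite{KS97}, in the form developed in \cite{BRHW}) then yields $C^{\infty}$ leafwise coordinates, depending continuously on the base point, in which $\alpha(\gamma)$ restricted to $\mathcal{W}^{\chi}$ becomes its polynomial sub-resonance normal form; the same procedure on the model side recovers $\rho(\gamma)$ restricted to $\exp(\nlie^{\chi})$, and because these normal forms agree and are essentially unique, $h$ restricted to each leaf of $\mathcal{W}^{\chi}$ is $C^{\infty}$ with all leafwise derivatives continuous transversally. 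I expect this to be the main obstacle: matching the intrinsically defined normal forms with the algebraic linear data, and carrying the bookkeeping through the nested, possibly non-abelian structure inside a coarse Lyapunov foliation of a nilmanifold, while having only the single Anosov element $\gamma_{0}$ available in $\Gamma$ itself.

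Finally, $h$ is uniformly $C^{\infty}$ along each $\mathcal{W}^{\chi}$ and $\bigoplus_{\chi}E^{\chi}=TM$, so a Journ\'e-type regularity lemma finishes the argument: grouping the coarse classes according to the sign of $\chi$ on $\gamma_{0}$, repeated application within the $\alpha(\gamma_{0})$-stable subfoliations (which are integrable and refine one another) shows $h$ is smooth along $W^{s}_{\alpha(\gamma_{0})}$, likewise along $W^{u}_{\alpha(\gamma_{0})}$, and then the classical stable--unstable Journ\'e lemma gives $h\in C^{\infty}$; applying the same reasoning to $h^{-1}$ (or using that the leafwise statements make $Dh$ everywhere invertible) shows $h$ is a $C^{\infty}$-diffeomorphism. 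Thus $\alpha$ is $C^{\infty}$-conjugate to $\rho$, completing the proof of Theorem~\ref{Cinf} and hence of Theorem~\ref{thm:globalrigid}.
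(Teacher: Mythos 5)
Your plan is to rerun the internal machinery of \cite{BRHW}/\cite{RHW} directly: coarse Lyapunov foliations, non-stationary normal forms, and a Journ\'e argument. This is not what the paper does, and as written it has a genuine gap at exactly the point you flag but do not resolve: the availability of enough hyperbolic elements. Elements of $G$ (or ``$\Rbb$-split elements in the suspension'') do not act on $M=N/\Lambda$; only $\Gamma$ does, and the hypothesis gives you a \emph{single} Anosov element $\gamma_{0}$. The normal-form/holonomy scheme you describe needs, for each coarse class $\chi$, elements of the acting group with prescribed contraction behavior on $E^{\chi}$, i.e.\ a large family of Anosov (or partially hyperbolic) elements of $\Gamma$ with varying coarse Lyapunov data, and additionally a higher-rank (no rank-one factor) structure to make the leafwise smoothness argument close up. Moreover, dynamical super-rigidity (Theorem \ref{thm:superrigidGamma}) only produces \emph{measurable} Lyapunov subspaces conjugated by a measurable $\phi$; it does not by itself give H\"older, everywhere-defined, integrable coarse Lyapunov distributions for $\alpha$, so your first step already presupposes structure that has to be earned dynamically.

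The paper's proof supplies precisely these missing inputs and then avoids redoing the analytic work. Step 1 produces a Zariski dense sub-semigroup $S\subset\Gamma_{0}$ of Anosov elements (Propositions \ref{keyglob} and \ref{keyp}, the analogue of \cite{BRHW} Propositions 8.5/8.7, with Zimmer's cocycle super-rigidity replaced by dynamical super-rigidity); this requires first proving that the Haar measures on $\rho(\Gamma_{0})$-orbit closures are induced weakly irreducible (Theorem \ref{keyCinf}, via Dani's theorem and the computation that $J_{\infty}=J$), a step absent from your proposal. Step 2 uses Prasad--Rapinchuk to extract a free abelian $\Sigma\simeq\Zbb^{r}$ containing an Anosov element and proves (Proposition \ref{rankone}, the genuinely new argument needed because $G$ may have rank-one factors) that $\rho\restrict{\Sigma}$ has no rank-one factor. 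Then the paper simply quotes \cite{RHW} to get a $C^{\infty}$ conjugacy for $\alpha\restrict{\Sigma}$ and uses uniqueness of the conjugacy at the Anosov element to conclude that the given $C^{0}$ conjugacy $h$ is itself $C^{\infty}$. If you want to keep your direct normal-forms route, you would still have to carry out the equivalents of Steps 1 and 2 (many Anosov elements in $\Gamma$ itself, and exclusion of rank-one factors for an abelian subaction), at which point citing \cite{RHW} is both shorter and safer than rebuilding the Journ\'e argument by hand.
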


We will prove the above theorem \ref{Cinf} as follows. First, we will find a good finite index subgroup $\Gamma_{0}$ in the $\Gamma$ in order to use dynamical super-rigidity. Note that we can use the $\lambda$ as the $C^{0}$ conjugacy between $\alpha\restrict{\Gamma_{0}}$ and $\rho\restrict{\Gamma_{0}}$. The main reason for finding the finite index subgroup $\Gamma_{0}$ is due to induced weakly irreducibility.  After that, following the \cite{BRHW}, we find a free abelian subgroup $\Sigma$ of $\Gamma_{0}$ that has an Anosov element and acts without rank one factor. 
Using \cite{RHW}, we can say that $\alpha\restrict{\Gamma_{0}}$ is $C^{\infty}$-conjugate to $\rho\restrict{\Gamma_{0}}$. Then the theorem \ref{Cinf} holds as in the Remark 1.8 in the \cite{BRHW}. (e.g. the arguments in \cite{MQ}) 

$\lambda$ is indeed $C^{\infty}$ due to the Anosov element.

\noindent\textbf{Step 1: Finding $\Gamma_{0}$.}

The followings are variants from the \cite{BRHW} section 2,4 and 6.  
\begin{prop}\label{rmkkey} We have followings.
\begin{enumerate}
\item Using Margulis' arithmeticity theorem, we may find semisimple algebraically simply connected algebraic group $\Hbb$ defined over $\Qbb$ and a surjective algebraic morphism $\phi:\Hbb\rightarrow \Gbb$ such that $\phi:\Hbb(\Rbb)^{0}\rightarrow G$ has compact kernel and $\phi(\Hbb(\Zbb)\cap \Hbb(\Rbb)^{0})$ is commensurable with $\Gamma$. Furthermore, every $\Qbb$-simple factor has $\Rbb$-rank $2$ or higher since we assume that $\Gamma$ is weakly irreducible lattice. 
\item  Define $\widehat{\Gamma}=\phi^{-1}(\Gamma)\cap \Hbb(\Zbb)$, $\widehat{\alpha}=\alpha\circ\phi$ and $\widehat{\rho}=\rho\circ \phi$. Then $\widehat{\Gamma}$ action $\widehat{\alpha}$ and $\widehat{\rho}$ factor through $\Gamma$ action $\alpha$ and $\rho$ respectively. Here $\widehat{\rho}$ is the linear data of $\widehat{\alpha}$.
\item There is a $\Qbb$-structure of $\nlie$ such that $D\rho$ sends $\Gamma$ into $\GL(d,\Qbb)$. This implies $D\widehat{\rho}$ sends $\widehat{\Gamma}$ to $\GL(d,\Qbb)$. Using Margulis' super-rigidity for arithmetic lattice, we may find a finite index subgroup $\widehat{\Gamma}'\subset \widehat{\Gamma}$ such that $D\widehat{\rho}\restrict{\widehat{\Gamma}'}$ extends to a $\Qbb$-representation $\tau:\Hbb\rightarrow \mathbb{GL}(d)$.

\item Find a finite index subgroup $\widehat{\Gamma}''$ in $\widehat{\Gamma}'$ such that $\widehat{\Gamma}''$ is a product of irreducible lattices. In other words, we can write $H$ as $H=H_{1}\times\dots\times H_{k}$ by semisimple groups so that $\widehat{\Gamma}''=\prod_{j=1}^{k} \widehat{\Gamma}''_{j}$ where $\widehat{\Gamma}''_{j}$ is an irreducible lattice in $H_{j}$.
\item Since we assume that $\Gbb$ do not have $\Rbb$-anisotropic simple factor and $\Gamma$ be a weakly irreducible lattice, $\widehat{\Gamma}''$ is Zariski dense in $\Hbb$ by Borel density theorem. 
\item  Let $\Gamma_{0}=\phi(\widehat{\Gamma}'')$. Then $\Gamma_{0}$ is a finite index subgroup of the $\Gamma$. Furthermore, up to powers of $\gamma_{0}$, we may assume that $\gamma_{0}\in\Gamma_{0}$.
\item 
 Using Ratner's theorem, we know that any orbit closure $\overline{\rho(\Gamma_{0})(x)}$ is the homogeneous sub-nilmanifold. Furthermore, due to the existence of $\gamma_{0}$, any Haar measure on the homogeneous sub-nilmanifold $\mu_{x}$ is $\rho\restrict{\Gamma_{0}}$ ergodic and the orbit closure $\overline{\rho(\Gamma_{0})(x)}$ is the support of $\mu_{x}$.
\end{enumerate}
 \end{prop}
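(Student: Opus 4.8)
\medskip
\noindent\textbf{Plan of proof of Proposition \ref{rmkkey}.}
The plan is to obtain (1)--(7) as a chain of standard structural reductions, each one a citation-level application of a known theorem (Margulis' arithmeticity and superrigidity, Borel's density theorem, and Ratner's theorems). For (1)--(2) I would begin with Margulis' arithmeticity theorem: since $\Gamma$ is weakly irreducible, each almost simple real factor $H_{i}$ of $G$ carrying an irreducible factor of $\Gamma$ has $\mathrm{rank}_{\Rbb}(H_{i})\ge 2$, hence is arithmetic, and globally this produces a semisimple algebraically simply connected $\Qbb$-group $\Hbb$ together with a surjective $\Rbb$-morphism $\phi:\Hbb\to\Gbb$ with $\ker(\phi)(\Rbb)^{0}$ compact and $\phi(\Hbb(\Zbb)\cap\Hbb(\Rbb)^{0})$ commensurable with $\Gamma$. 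Matching the restriction-of-scalars description of the $\Qbb$-simple factors of $\Hbb$ against the higher-rank hypothesis on the $H_{i}$ then gives that every $\Qbb$-simple factor of $\Hbb$ has $\Rbb$-rank at least $2$. Putting $\widehat{\Gamma}=\phi^{-1}(\Gamma)\cap\Hbb(\Zbb)$ --- a lattice on which $\phi$ is finite-to-one --- and $\widehat{\alpha}=\alpha\circ\phi$, $\widehat{\rho}=\rho\circ\phi$, the actions descend to the finite-index subgroup $\phi(\widehat{\Gamma})\le\Gamma$, and since $\widehat{\alpha}$ still lifts to $N$ its linear data is exactly $\rho\circ\phi=\widehat{\rho}$.

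For (3)--(5): $\rho(\Gamma)\le\Aut(N)$ preserves $\Lambda$, so $D\rho(\Gamma)$ preserves $\log\Lambda$, which spans a $\Qbb$-form of $\nlie$; choosing a $\Zbb$-basis of $\log\Lambda$ realizes $D\rho(\Gamma)$, hence $D\widehat{\rho}(\widehat{\Gamma})$, inside $\GL(d,\Qbb)$. Applying Margulis' superrigidity theorem to the higher-rank arithmetic lattice $\widehat{\Gamma}$ in its adelic form --- the image is bounded at almost every finite place, and superrigidity extends the homomorphism at the remaining finitely many places, including the archimedean one --- yields a finite-index subgroup $\widehat{\Gamma}'\le\widehat{\Gamma}$ on which $D\widehat{\rho}$ extends to a $\Qbb$-representation $\tau:\Hbb\to\mathbb{GL}(d)$. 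Passing to a further finite-index subgroup $\widehat{\Gamma}''\le\widehat{\Gamma}'$ of product form $\prod_{j=1}^{k}\widehat{\Gamma}''_{j}$, with $\widehat{\Gamma}''_{j}$ an irreducible lattice in the $j$-th semisimple factor $H_{j}$ of $H=\Hbb(\Rbb)^{0}$, is the standard decomposition of arithmetic lattices in products; and since $\Gbb$ has no $\Rbb$-anisotropic almost simple factors and $\Gamma$ is weakly irreducible, the factors of $H$ carrying the $\widehat{\Gamma}''_{j}$ have no compact part beyond $\ker\phi$, so Borel's density theorem gives that $\widehat{\Gamma}''$ is Zariski dense in $\Hbb$.

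For (6)--(7): $\Gamma_{0}:=\phi(\widehat{\Gamma}'')$ is finite index in $\Gamma$ because $\widehat{\Gamma}''$ is finite index in $\widehat{\Gamma}$ and $\phi$ is finite-to-one with $\phi(\widehat{\Gamma})$ commensurable with $\Gamma$; replacing $\gamma_{0}$ by a power lying in $\Gamma_{0}$, which is again an Anosov element, arranges $\gamma_{0}\in\Gamma_{0}$. For the orbit-closure statement I would lift $\rho\restrict{\Gamma_{0}}$ to the suspension built from $\tau$ over $H/\widehat{\Gamma}''$; there the relevant subgroup generated by unipotent one-parameter subgroups (nontrivial precisely because of the higher-rank conclusion in (1), and with the same orbit closures as the ambient semisimple group) makes Ratner's orbit-closure and measure-classification theorems applicable, so every closure $\overline{\rho(\Gamma_{0})(x)}$ is a homogeneous subnilmanifold of $N/\Lambda$ carrying a unique homogeneous probability measure $\mu_{x}$. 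Finally, since $\gamma_{0}\in\Gamma_{0}$ and $D\rho(\gamma_{0})$ is hyperbolic, $\rho(\gamma_{0})$ restricts to a hyperbolic affine automorphism of $\overline{\rho(\Gamma_{0})(x)}$ and is therefore ergodic with respect to $\mu_{x}$, whence $\rho\restrict{\Gamma_{0}}$ is $\mu_{x}$-ergodic and $\overline{\rho(\Gamma_{0})(x)}=\supp(\mu_{x})$.

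The only genuinely delicate points are the rank bookkeeping in (1) --- one must check carefully that weak irreducibility of $\Gamma$ really forces each $\Qbb$-simple factor of $\Hbb$ to have $\Rbb$-rank $\ge 2$, since this is exactly what legitimizes the use of superrigidity in (3) and of Ratner's theorems in (7) --- and the passage to the suspension in (7), where one needs the acting group to be generated by unipotents for Ratner to apply and then the Anosov element to upgrade the resulting homogeneity to ergodicity. Everything else is routine; the whole proposition is essentially a faithful transcription of the corresponding statements in \cite{BRHW}, adapted to the weakly irreducible setting.
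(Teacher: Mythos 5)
Your proposal is correct and follows exactly the route the paper intends: the paper offers no proof beyond the remark that the proposition ``is stated or easily deduced from'' \cite{BRHW}, and your sketch is a faithful expansion of precisely those arguments (Margulis arithmeticity with the rank bookkeeping for weak irreducibility, the $\Qbb$-structure from $\log\Lambda$ plus superrigidity, Borel density, and Ratner on the suspension upgraded to ergodicity via the hyperbolic element). The two delicate points you flag --- that every $\Qbb$-simple factor of $\Hbb$ inherits $\Rbb$-rank at least $2$ from the higher-rank factors $H_{i}$, and the passage to the unipotently generated subgroup in the suspension --- are indeed the only places where the weakly irreducible setting requires checking, and your treatment of both is sound.
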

  
The above proposition is stated or easily deduced from the statement in the \cite{BRHW}. Now we need following in order to use dynamical super-rigidity.

\begin{theorem}\label{keyCinf}
Under the above notations and settings, $\rho(\Gamma_{0})$-ergodic measure $\mu_{x}$ is induced weakly irreducible for every $x\in N/\Lambda$. 
\end{theorem}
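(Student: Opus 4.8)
The plan is to unwind the definition of induced weak irreducibility, reduce it to an ergodicity statement on $G/\Gamma_{0}$ together with an ergodicity statement for the orbit measure $\mu_{x}$, and then combine these via the Mackey description of induced representations. Write $Y_{x}=\overline{\rho(\Gamma_{0})(x)}=\supp(\mu_{x})$; by Proposition \ref{rmkkey}(7) it is a homogeneous sub-nilmanifold of $N/\Lambda$, the group $\rho(\Gamma_{0})$ acts on it by affine maps, and $\mu_{x}$ (the Haar measure on it) is $\rho\restrict{\Gamma_{0}}$-ergodic. By definition, $\mu_{x}$ is induced weakly irreducible if and only if for every rank-one factor $F$ of $G$, with complement $F^{c}$ (so $G=F\cdot F^{c}$), the $F^{c}$-action on the induced space $\bigl(G\times(Y_{x},\mu_{x})\bigr)/\Gamma_{0}$ is ergodic. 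Equivalently (by the standard duality for induced actions, cf.\ \cite{Zimbook}), $\Gamma_{0}$ acts ergodically on $(F^{c}\backslash G)\times(Y_{x},\mu_{x})$; since $G=F\cdot F^{c}$ is an almost direct product, $F^{c}\backslash G$ is isomorphic to the rank-one group $\overline F=F/(F\cap F^{c})$, and one checks that $\Gamma_{0}$ projects densely into $\overline F$ (using that, by Proposition \ref{rmkkey}, $\Gamma_{0}$ comes from a product of irreducible lattices $\widehat\Gamma''_{j}$ in the $H_{j}$, and the $\Qbb$-simple factor of $\Hbb$ containing $F$ has $\Rbb$-rank at least $2$, so the irreducible lattice in it projects densely onto $\overline F$).

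The first ingredient is that $F^{c}$ is ergodic on $G/\Gamma_{0}\cong H/\widehat\Gamma''$. By Proposition \ref{rmkkey}(4) we have $\widehat\Gamma''=\prod_{j}\widehat\Gamma''_{j}$ with $\widehat\Gamma''_{j}$ an irreducible lattice in $H_{j}$, and by Proposition \ref{rmkkey}(1) every $\Qbb$-simple factor of $\Hbb$ has $\Rbb$-rank at least $2$. The rank-one factor $F$ lies in a single factor $H_{j_{0}}$, whose $\Rbb$-rank is at least $2$; hence $F^{c}\cap H_{j_{0}}$ is non-compact, so by Moore's ergodicity theorem (applied with the irreducible lattice $\widehat\Gamma''_{j_{0}}$) it acts ergodically on $H_{j_{0}}/\widehat\Gamma''_{j_{0}}$, while $\prod_{j\neq j_{0}}H_{j}$ acts transitively on $\prod_{j\neq j_{0}}H_{j}/\widehat\Gamma''_{j}$. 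An ergodic action times a transitive action is ergodic, so $F^{c}$ is ergodic on $G/\Gamma_{0}$. The second ingredient is already in hand: $\mu_{x}$ is $\rho(\Gamma_{0})$-ergodic, i.e.\ the quasi-regular representation of $\Gamma_{0}$ on $L^{2}(Y_{x},\mu_{x})$ has only the constants as invariant vectors, so $L^{2}(Y_{x},\mu_{x})=\CC\mathbf 1\oplus L^{2}_{0}(Y_{x})$ with $L^{2}_{0}(Y_{x})$ having no nonzero $\rho(\Gamma_{0})$-invariant vector.

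Next I would combine these. By Mackey's theorem $L^{2}\bigl((G\times Y_{x})/\Gamma_{0}\bigr)\cong\Ind_{\Gamma_{0}}^{G}L^{2}(Y_{x},\mu_{x})\cong L^{2}(G/\Gamma_{0})\oplus\Ind_{\Gamma_{0}}^{G}L^{2}_{0}(Y_{x})$ as unitary $G$-representations. On the first summand the $F^{c}$-invariant vectors are just the constants, by the ergodicity of $F^{c}$ on $G/\Gamma_{0}$ proved above. On the second summand I would show there are no nonzero $F^{c}$-invariant vectors: here one uses that $Y_{x}$ is a (translate of a) sub-nilmanifold and that, after Margulis' superrigidity, the affine $\rho(\Gamma_{0})$-action is, up to a compact correction, the restriction of a rational $G$-action on $N$, so that $L^{2}_{0}(Y_{x})$ decomposes as an orthogonal sum of finite-dimensional $\rho(\Gamma_{0})$-invariant subspaces (Fourier/Kirillov pieces) compatibly with the $G$-structure; then the Mackey description of $\Res_{F^{c}}\Ind_{\Gamma_{0}}^{G}$, together with ergodicity of $F^{c}$ on $G/\Gamma_{0}$ and the absence of $\rho(\Gamma_{0})$-invariant vectors in $L^{2}_{0}(Y_{x})$, forces the $F^{c}$-invariants to vanish. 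Consequently every $F^{c}$-invariant function on $(G\times Y_{x})/\Gamma_{0}$ is constant, which is the desired ergodicity.

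The hard part will be this last step on the summand $\Ind_{\Gamma_{0}}^{G}L^{2}_{0}(Y_{x})$: since the double-coset space $F^{c}\backslash G/\Gamma_{0}$ is not discrete, the Mackey/Frobenius reciprocity argument must be run in its ergodic-theoretic form, and one must control the contribution of the Fourier pieces of the nilmanifold uniformly. The two facts that make it go through are (i) the output of Ratner's theorem recorded in Proposition \ref{rmkkey}(7) — $\rho(\Gamma_{0})$-ergodicity of $\mu_{x}$ — which is the substitute for a Howe--Moore input that is unavailable because the relevant ambient group (carrying the nilpotent radical $N$) is not semisimple, and (ii) Margulis' superrigidity for $D\rho$, which aligns the $\Gamma_{0}$-action on the fibre with the $G$-action on the base. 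As a fallback, if the representation-theoretic bookkeeping proves awkward, one can instead argue directly on the homogeneous model: realize $(F^{c}\backslash G)\times Y_{x}$ inside a homogeneous space of $G\ltimes N$ (after splitting off the compact factor $\overline{\rho^{K}(\Gamma_{0})}$), and deduce $\Gamma_{0}$-ergodicity there from the ergodicity on $G/\Gamma_{0}$ and the ergodicity of $\mu_{x}$ by a skew-product (Mautner) argument; the measure-theoretic point to verify in that approach is that one is working with the correct, possibly infinite, reference measures and that the relevant discrete subgroup has finite covolume.
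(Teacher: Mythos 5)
Your reduction and your two ``ingredients'' are fine as far as they go (and the first one, ergodicity of $F^{c}$ on $G/\Gamma_{0}$, is essentially the same step the paper carries out, there via density of $(F^{*}K)\widehat{\Gamma}''$ in $H$ plus Howe--Moore rather than via Moore's theorem factor by factor). But the step you yourself label ``the hard part'' is not a bookkeeping issue: the implication you assert --- that ergodicity of $F^{c}$ on $G/\Gamma_{0}$ together with $\rho(\Gamma_{0})$-ergodicity of $\mu_{x}$, fed through the Mackey description of $\Ind_{\Gamma_{0}}^{G}L^{2}_{0}(Y_{x})$, ``forces'' the $F^{c}$-invariants to vanish --- is false in general. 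Ergodicity of a skew product over an ergodic base is not implied by ergodicity of the base together with ergodicity of the fibre group action; the danger here is precisely that invariant functions can live over toral (solvable) quotients of the nilmanifold fibre on which the $\Gamma_{0}$-action is, up to compact error, governed by the rank-one factor $F$. Neither of the two facts you invoke, Ratner-ergodicity of $\mu_{x}$ and Margulis superrigidity for $D\rho$, rules this out; what rules it out is the Anosov hypothesis, i.e.\ hyperbolicity of the linear data, which your argument never uses at the critical point.

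Concretely, the paper identifies the induced space with the homogeneous space $\left(H\ltimes_{\tau}N\right)/\left(\widehat{\Gamma}''\ltimes_{\tau}\Lambda\right)$, chooses a one-parameter subgroup $\{a_{t}\}$ of $\widetilde{F^{c}}$ projecting nontrivially into the split part of every noncompact simple factor, and applies Dani's criterion (Theorem \ref{Dani}): ergodicity of the flow is equivalent to ergodicity on the maximal semisimple quotient (handled by weak irreducibility plus Mautner/Howe--Moore --- note that here one needs mixing for a one-parameter subgroup, not merely ergodicity of $F^{c}$) \emph{and} triviality of the maximal solvable quotient. The latter is the substance of the theorem: one shows $J_{\infty}=J$ by solving $\exp\left(D\widehat{\rho}(h)(X)\right)=\exp(Z)\exp(X)$ for every $Z\in\nlie$, using invertibility of $D\widehat{\rho}(h)-\id_{\nlie}$ for a hyperbolic element $h$ and an induction along the lower central series via Baker--Campbell--Hausdorff. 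Without this input, a $\Gamma_{0}$-invariant toral quotient of the fibre (think of linear data with an eigenvalue of modulus one, or a trivial action) gives an induced space on which the $F^{c}$-action is visibly non-ergodic even though both of your ingredients hold in spirit. Your ``fallback'' skew-product/Mautner sketch is in fact the paper's route, but as written it still omits exactly this solvable-quotient step, so the proposal has a genuine gap rather than an alternative proof.
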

\begin{proof}
Fix rank $1$-factor $F$ and its complement $F^{c}$ of $G$. Since $\mu_{x}$ is the Haar measure on the homogeneous sub-nilmanifold, without loss of generality, we may assume that $\mu_{x}$ is Haar measure $\mu$ on $N/\Lambda$ after changing $N/\Lambda$ to sub-nilmanifold.

We need to prove $F^{c}$ action on the induced $G$-space $\left(G/\Gamma_{0}\times N/\Lambda, m_{G/\Gamma_{0}}\otimes \mu\right)$ is ergodic where $m_{G/\Gamma_{0}}$ is Haar measure on $G/\Gamma_{0}$. Note that $m_{G/\Gamma_{0}}\otimes \mu$ is $G$-ergodic. 

Note that we can find $\widetilde{F^{c}}$ such that $\phi^{-1}(F^{c})=\widetilde{F^{c}}K$ for some compact group $K$ such that the $\widetilde{F^{c}}$ is the product of non-compact $\Rbb$-simple factors in $H$. Recall that the $\phi$ is a surjective continuous homomoprhism $\phi:H\rightarrow G$ evaluated $\phi$ to $\Rbb$ points. Furthermore, it is enough to show that the $\left(\widetilde{F^{c}}K\right)$-action is ergodic on 
$\left(H/\widehat{\Gamma}''\times N/\Lambda, m_{H/\widehat{\Gamma}''}\otimes \mu\right)$ as the $F^{c}$-action on the induced $G$-space $\left(G/\Gamma_{0}\times N/\Lambda, m_{G/\Gamma_{0}}\otimes \mu\right)$ is a factor of it.

Note that the induced $H$-space $\left(H/\widehat{\Gamma}''\times N/\Lambda, m_{H/\widehat{\Gamma}''}\otimes \mu\right)$ can be identified with the finite homogeneous space $\left(H\ltimes_{\tau} N\right)\Big/\left(\widehat{\Gamma}''\ltimes_{\tau} \Lambda\right)$ with the Haar measure $\widetilde{\mu}$ as $H$-spaces. Note that $\widehat{\Gamma}''\ltimes_{\tau} \Lambda$ is a lattice in $H\ltimes N$.

Therefore, it is enough to prove that the  $\left(\widetilde{F^{c}}K\right)$-action is ergodic on the $\left(H\ltimes_{\tau'} N\right)\Big/\left(\widehat{\Gamma}''\ltimes_{\tau} \Lambda\right)$ with respect to $\widetilde{\mu}$.

As $\widetilde{F^{c}}$ is subgroup of $\left(\widetilde{F^{c}}K\right)$, it is enough to prove that the $\widetilde{F^{c}}$-action is ergodic on the $\left(H\ltimes_{\tau} N\right)\Big/\left(\widehat{\Gamma}''\ltimes_{\tau'} \Lambda\right)$. 

Since $\widetilde{F^{c}}$ is a product of $\Rbb$-simple non-compact groups, we can find an $1$-parameter subgroup $\{a_{t}\}$ such that projects into the unbounded subgroup of $\Rbb$-split torus for each non-compact $\Rbb$-simple factors of $\widetilde{F^{c}}$. 

Now we appeal to use the result in \cite{Dani} (or \cite{BM} Theorem 6.1). 
\begin{theorem}[\cite{Dani}]\label{Dani} Let $J$ be connected Lie group. Assume that the Levi decomposition of $J$ is given by $J=H\ltimes R$ where $R$ is radical. Let $C$ be a lattice in $J$. Then the flow $g_{\Rbb}\subset J$ in the finite volume homogeneous space $J/C$ is ergodic if and only if the semisimple flow $(J/\overline{RC}, g_{\Rbb})$ and the solvable flow $(J/\overline{J_{\infty}C}, g_{\Rbb})$ is ergodic where $J_{\infty}$ is the group generated by $\textrm{Int}_{J}H=\{jHj^{-1}:j\in J\}$. Note that $J_{\infty}$ is the smallest normal subgroup containing $H$.
\end{theorem}
In our case, $J=H\ltimes N$ is a Levi decomposition since the $H$ is semisimple and the nilradical $N$ is radical. Therefore the maximal semisimple quotient of the $\left(H\ltimes_{\tau} N\right)\Big/\left(\widehat{\Gamma}''\ltimes_{\tau} \Lambda\right)$ is the $H/\widehat{\Gamma}''$. We will prove that the flow $\{a_{t}\}$ is ergodic on $H/\widehat{\Gamma}''$ and solvable quotient is a trivial.

\textbf{Maximal semisimple factor:} For maximal semisimple quotient, we will prove that the flow $\{\psi(a_{t})\}\subset \widetilde{F^{c}}$ is ergodic on $H/\widehat{\Gamma}''$ where $\psi:H\ltimes_{\tau}N\rightarrow H$ be the natural projection.

First we claim that the $\widetilde{F^{c}}$-action on $H/\widehat{\Gamma}''$ is ergodic. This is equivalent to $\widehat{\Gamma}''$ is ergodic on $H/\widetilde{F^{c}}$. Note that $H/\widetilde{F^{c}}\simeq F^{*} K$ where $F^{*}$ is rank $1$ $\Rbb$-simple factor in $H$ such that $\phi(F^{*})=F$. As $\Gamma_{0}$ is weakly irreducible lattice, $(F^{*}K)\cdot \widehat{\Gamma}''$ is dense in $H$. Therefore, $\widehat{\Gamma}''$ projects densely into $F^{*} K$. This implies that $\widehat{\Gamma}''$ is ergodic on $H/\widetilde{F^{c}}$, so that $\widetilde{F^{c}}$ acts ergodically on $H/\widehat{\Gamma}''$.
 
 Now we claim that the flow $\{\psi(a_{t})\}\subset \widetilde{F^{c}}$ is ergodic on $H/\widehat{\Gamma}''$. Indeed, this comes from Howe-Moore theorem (or Mautner phenomenon) and the fact that the flow $\{\psi(a_{t})\}$ projects into unbounded sbugroup in $\Rbb$-split torus for all $\Rbb$-simple factor of $\widetilde{F^{c}}$. 
 
 If we write $\widehat{\Gamma}''$ into the product of irreducible lattices as $\widehat{\Gamma}''=\prod\widehat{\Gamma}''_{j}$, we can easily reduce the problem into the irreducible lattice case. In this case, ergodicity implies mixing so that any unbounded subgroup acts ergodically. This implies that $\{\psi(a_{t})\}$ acts ergodically on $H/\widehat{\Gamma}''$. This proves that the flow is ergodic on maximal semisimple quotient.

 \textbf{Maximal solvable factor:} We will prove that there is no toral quotient. Indeed, we will prove that the group $J_{\infty}=J$. Note that there is $h\in H$ such that $D\widehat{\rho}(h)$ is hyperbolic in $\GL(\nlie)$. Furthermore, $\tau(h)(\exp(X))=\exp(D\widehat{\rho}(h)(X))$ for any $X\in \nlie$ where $\exp:\nlie\rightarrow N$ is the exponential map. For any $n\in N$, if we can find $m\in N$ such that $hnh^{-1}n=m$ then we can deduce $N\subset J_{\infty}$ since $J_{\infty}$ contains $H$. This implies that $J_{\infty}=J$. Equivalently, it is enough to prove that for any $Z\in\nlie$, there is a solution $X\in\nlie$ of the equation \begin{equation}\label{eqnil} \exp(D\widehat{\rho}(h)(X))=\exp(Z)\exp(X). \end{equation}

\emph{When $N$ is abelian:} In this case, $\exp(Z)\exp(X)=\exp(Z+X)$. Therefore it is enough to find $X$ such that $D\widehat{\rho}(h)(X)=X+Z$. Note that $D\widehat{\rho}(h)$ is hyperbolic so that $D\widehat{\rho}(h)-id_{\nlie}$ is invertible. Therefore, the $X=\left(D\widehat{\rho}(h)-id_{\nlie}\right)^{-1}(Z)$ is the desired solution.

 \emph{For general $N$:} In this case, we have Baker-Campbell-Hausdorff formula gives that $\exp(Z)\exp(X)=\exp(F(Z,X)).$ Here $$F(Z,X)=Z+X+\frac{1}{2}[Z,X]+\dots$$ is a finite sum since $N$ is nilpotent. Let $F_{k}(\cdot,\cdot)$ be the sum of terms in $F(\cdot,\cdot)$ that have $k-1$-brackets. For example, $F_{1}(Z,X)=Z+X$ and $F_{2}(X,Z)=\frac{1}{2}[Z,X]$. 
 
 Fix any $Z\in\nlie$. Let $\nlie_{0}=\nlie$, $\nlie_{1}=[\nlie,\nlie]$, and $\nlie_{k}$ is the subspace generated by the elements comes from $k$ brackets operations with elements in $\nlie$. Note that there is a minimal $r$ such that for any $k> r$, $\nlie_{k}=0$ since $N$ is nilpotent.  
 
 In addition, $\nlie_{k}$ is $D\widehat{\rho}(H)$ invariant subspaces for each $k$. Since $H$ is semisimple, $D\widehat{\rho}$ is fully reducible. Therefore we can find subspaces $U_{k}$ such that $\nlie_{k-1}=\nlie_{k}\oplus U_{k}$ as $H$-module. This gives a decomposition 
 $$\nlie=U_{1}\oplus\dots\oplus U_{r}$$ as $H$-modules. 
 
Let $p_{k}:\nlie\rightarrow \nlie/\left(U_{k+1}\oplus\dots\oplus U_{r}\right)=\overline{U_{1}\oplus\dots\oplus U_{k}}$ be the projection with respect to the above decomposition. 

We will find solution $X$ using induction on $k$. For $k=1$,  then we can find $\overline{X_{1}}\in \overline{U_{1}}$ such that
$\overline{X_{1}}$ is the solution of $D\widehat{\rho}(h)(\overline{X_{1}})=p_{1}(Z)+\overline{X_{1}}$ as in the abelian $N$ case. Find $X_{1}\in U_{1}$ such that $p_{1}(X_{1})=\overline{X_{1}}$. Then we have $p_{1}(D\widehat{\rho}(h)(X_{1}))= \overline{X_{1}}+p_{1}(Z)$.

Assume that we have $X_{1},\dots,X_{k}$ for $1\le k\le r-1$ such that 
$$p_{k}(D\widehat{\rho(h)}(X))=F_{1}(p_{k}(X),p_{k}(Z))+\dots+F_{k}(p_{k}(X),p_{k}(Z))$$ where $X=X_{1}+\dots+X_{k}$.
We need to find $X_{k+1}\in U_{k+1}$ such that 
\begin{equation}\label{nil} 
p_{k+1}(D\widehat{\rho(h)}(X'))=
F_{1}(p_{k}(X'),p_{k}(Z))+\dots+F_{k+1}(p_{k+1}(X'),p_{k+1}(Z))
\end{equation} where $\overline{X_{k+1}}=p_{k+1}(X_{k+1})$ and $X'=X_{1}+\dots+X_{k+1}$. As $U_{k+1}$ consists of elements of the form $k$ brackets operation with elements in $\nlie$ and $p_{k+1}$ delete $(k+1)$ bracket operations, the equation (\ref{nil}) is same as 
\begin{equation}\label{nil2}p_{k+1}(D\widehat{\rho}(h)(X_{k+1}))=\overline{X_{k+1}}+(\textrm{terms only involving $Z$ and $X$}).\end{equation}
Using hyperbolicity of $D\widehat{\rho(h)}$, we can deduce that $D\widehat{\rho}(h)-id_{\overline{U_{1}\oplus\dots\oplus U_{k+1}}}$ is invertible. Therefore inductively, we can find a solution $\overline{X_{k+1}}$ of the equation (\ref{nil2}). We can find $X_{k+1}\in U_{k+1}$ such that $p_{k+1}(X_{1}+\cdots+X_{k+1})=\overline{X_{1}}+\dots+\overline{X_{k+1}}$ and satisfies the desired equation (\ref{nil}).

This proves that we can find the solution of the equation (\ref{eqnil}) for any $Z\in\nlie$. Therefore, we proved that $J_{\infty}=J$ and there is no maximal solvable quotient.

As a result, we proved that the ergodicity of the $F^{c}$ action on the induced $G$-space $\left(G/\Gamma_{0}\times N/\Lambda, m_{G/\Gamma_{0}}\otimes \mu\right)$ so we proved induced weakly irreducibility of $\rho(\Gamma_{0})$ action on $(N/\Lambda, \mu_{x})$. \end{proof}
\begin{rmk} If $\Gamma$ is a non-uniform weakly irreducible lattice, then the proof is much easier. Up to finite index normal subgroup, we can lift $\rho$ to $G$ directly. Then one can use Theorem \ref{Dani} for $G$ itself in order to prove there is no non-trivial solvable quotient.
\end{rmk}

Now we can prove the followings that is same in the \cite{BRHW} Proposition 8.5.
\begin{prop}\label{keyglob}
Under the above notations and settings, there is a Zariski dense sub-semigroup $S$ in $\Gamma_{0}$ such that for every element $\gamma\in S$, $\alpha(\gamma)$ is an Anosov diffeomorphism. 
\end{prop}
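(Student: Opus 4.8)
The plan is to follow the argument of Proposition 8.5 in \cite{BRHW}, which is precisely about producing many Anosov elements out of a single one together with super-rigidity information about the linear data. First I would recall the set-up: $\Gamma_{0}=\phi(\widehat{\Gamma}'')$ is Zariski dense in $G$ (equivalently $\widehat{\Gamma}''$ is Zariski dense in $\Hbb$ by Proposition \ref{rmkkey}(5)), and the linear data $\rho$ extends, after passing to $\widehat{\Gamma}'$, to a $\Qbb$-representation $\tau:\Hbb\to\mathbb{GL}(\nlie)$ (Proposition \ref{rmkkey}(3)). An element $\alpha(\gamma)$ for $\gamma\in\Gamma_{0}$ is Anosov if and only if its linear data $D\rho(\gamma)=D\widehat\rho(\tilde\gamma)$ (for any preimage $\tilde\gamma$) is a hyperbolic matrix, because $\alpha$ is topologically conjugate to $\rho$ by Theorem \ref{thm:C0glob} (on $\Gamma_{0}$, using the restriction of $\lambda$) and hyperbolicity of the linear data is the standard criterion for an affine map of a nilmanifold to be Anosov and, more importantly, because the toral automorphism / nilmanifold automorphism being hyperbolic is stable and detects Anosov-ness of the nonlinear conjugate. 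So the whole statement reduces to a Zariski-density statement inside the algebraic group: the set of $\gamma\in\Gamma_{0}$ for which $\tau(\gamma)$ (pulled back through $\phi$) has no eigenvalue of modulus $1$ contains a Zariski-dense sub-semigroup.

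Next I would carry out the algebraic reduction. Decompose $\nlie_{\Cbb}=\bigoplus_{\chi} V_{\chi}$ according to the characters $\chi$ appearing in $\tau$ restricted to a maximal $\Rbb$-split torus $\AA$ of $\Hbb$; since each $\Qbb$-simple factor of $\Hbb$ has $\Rbb$-rank $\ge 2$ (Proposition \ref{rmkkey}(1)) and $\gamma_{0}$ gives an Anosov element, no nonzero weight $\chi$ is trivial on the relevant factors — this is exactly where the Anosov hypothesis on $\gamma_{0}$ feeds in, ruling out a trivial summand. Then the condition ``$\tau(g)$ hyperbolic'' for $g$ in (a conjugate of) $\AA$ amounts to $g$ avoiding the finite union of codimension-$\ge 1$ subtori $\{\chi=1\}$, hence is a Zariski-open condition on $\AA$, and the hyperbolic-element locus in $\Hbb$ is Zariski-open and nonempty (it contains a conjugate of a generic torus element). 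Now I would invoke the key point from \cite{BRHW}: a Zariski-dense subgroup (here $\widehat{\Gamma}''$, equivalently $\Gamma_{0}$) meets every nonempty Zariski-open subset, and moreover one can extract from it a Zariski-dense sub-\emph{semigroup} all of whose elements lie in this open set — this uses that the hyperbolic locus is invariant under the semigroup operation in a suitable sense, or alternatively the ping-pong / Tits-alternative style argument in \cite{BRHW} Section 8 producing a free sub-semigroup of proximal-type elements all of which are $\Rbb$-regular hence have hyperbolic image under any representation with no trivial weight. I would then transport this back: for $\gamma$ in this sub-semigroup $S\subset\Gamma_{0}$, $D\rho(\gamma)$ is hyperbolic, so $\rho(\gamma)$ is an Anosov automorphism of $N/\Lambda$, and therefore its topological conjugate $\alpha(\gamma)$ is an Anosov diffeomorphism.

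I would structure the write-up as: (i) reduce to hyperbolicity of the linear data via Theorem \ref{thm:C0glob}; (ii) reduce to a statement about $\tau:\Hbb\to\mathbb{GL}(\nlie)$ using Proposition \ref{rmkkey}; (iii) show the hyperbolic locus is Zariski-open and nonempty, using the $\Rbb$-rank $\ge 2$ condition and the existence of $\gamma_{0}$ to exclude trivial weights; (iv) quote the \cite{BRHW} Section 8 construction of a Zariski-dense sub-semigroup inside a Zariski-open set, applied to the Zariski-dense $\Gamma_{0}$; (v) conclude. The main obstacle I expect is step (iv): it is not true in general that a Zariski-dense subgroup contains a Zariski-dense sub-\emph{semigroup} contained in a prescribed open set — one genuinely needs the finer dynamical input (proximal elements, ping-pong, or the argument that the set of Anosov elements is open and closed under multiplication by commuting Anosov elements, as in \cite{BRHW}) rather than a soft algebraic-geometry statement. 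Since the excerpt explicitly says this proposition is ``same in \cite{BRHW} Proposition 8.5,'' the honest move is to adapt that proof essentially verbatim, checking only that the two hypotheses it needs — Zariski density of $\Gamma_{0}$ in $G$ and the extension of the linear data to the ambient $\Qbb$-group with all $\Qbb$-simple factors of $\Rbb$-rank $\ge 2$ — are supplied here by Proposition \ref{rmkkey}, which they are.
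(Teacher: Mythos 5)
Your reduction in step (i) is where the argument breaks down, and it is the crux of the whole proposition. It is not true that $\alpha(\gamma)$ is Anosov if and only if its linear data $D\rho(\gamma)$ is hyperbolic: a topological conjugacy between $\alpha(\gamma)$ and $\rho(\gamma)$ transfers no information about derivatives, and a diffeomorphism topologically conjugate to a hyperbolic nilmanifold automorphism need not be Anosov. (The true implication, via Franks--Manning, goes the other way: Anosov implies hyperbolic linear data.) If your equivalence held, Proposition \ref{keyglob} would reduce to the purely algebraic statement that the hyperbolic locus of $\tau$ meets $\Gamma_{0}$ in a Zariski-dense sub-semigroup, and neither this paper nor \cite{BRHW} would need any of the surrounding machinery. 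The whole difficulty is to manufacture new Anosov elements of the \emph{nonlinear} action $\alpha$ out of the single given one $\gamma_{0}$.

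The paper's route is genuinely dynamical. The sub-semigroup $S$ is defined by a cone criterion: it consists of elements whose derivative preserves and uniformly expands/contracts the stable/unstable cone fields of the Anosov element $\alpha(\gamma_{0})$; such elements are automatically Anosov, and the cone condition is stable under composition, which is what makes $S$ a semigroup. The key ingredient is Proposition \ref{keyp} (the analogue of \cite{BRHW} Proposition 8.7): there is a nonempty Zariski-open $W\subset G$ such that for every $\eta\in\Gamma_{0}\cap W$ one has $\gamma_{0}^{N}\eta\gamma_{0}^{N}\in S$ for some $N$, and Zariski density of $S$ then follows from Zariski density of $\Gamma_{0}\cap W$. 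Proving Proposition \ref{keyp} is where super-rigidity enters: one must control the Lyapunov exponents and Oseledets splittings of the derivative cocycle of $\alpha$ over every $\rho(\Gamma_{0})$-ergodic measure $\mu_{x}$, and Theorem \ref{keyCinf} (induced weak irreducibility of each $\mu_{x}$) is established precisely so that the dynamical cocycle super-rigidity theorem can play the role that Zimmer's theorem plays in \cite{BRHW}. Your write-up never mentions \ref{keyp}, the cone construction, or the role of \ref{keyCinf}; the ``hyperbolic locus is Zariski open'' computation you propose instead, while fine as algebra, proves a statement about $\rho$ rather than about $\alpha$. You do correctly note that the inputs from Proposition \ref{rmkkey} (Zariski density of $\Gamma_{0}$ and the extension of the linear data to a $\Qbb$-representation) are what the \cite{BRHW} scheme requires, but the substance of the adaptation is the cocycle-theoretic step you omitted.
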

Indeed, $S$ will be defined using cone of stable/unstable distribution of $\gamma_{0}$. The following is the key ingredient, that is the \cite{BRHW} Proposition 8.7.
\begin{prop}\label{keyp}
There is a Zariski open set $W\subset G$ such that for every $\eta\in \Gamma_{0}\cap W$, there is $N>0$ such that $\gamma_{0}^{N}\eta\gamma_{0}^{N}\in S$. 
\end{prop}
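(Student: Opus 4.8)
The plan is to establish Proposition \ref{keyp} by a cone-field argument in the spirit of \cite{BRHW} Proposition 8.7, using the Anosov element $\gamma_{0}$ to ``absorb'' a Zariski-generic element $\eta$ into the open cone of Anosov elements that defines the semigroup $S$. First I would recall the precise construction of $S$ from Proposition \ref{keyglob}: fix the $C^{0}$-conjugacy $h$ between $\alpha$ and the linear data $\rho$, so that it suffices to work with $D\rho$ acting on $\nlie$, where $D\rho(\gamma_{0})$ is hyperbolic with splitting $\nlie = E^{u}_{0}\oplus E^{s}_{0}$. One defines, for small $\theta>0$, the cone $\mathcal{C}^{u}_{\theta}$ around $E^{u}_{0}$ and its dual $\mathcal{C}^{s}_{\theta}$, and lets $S$ be the set of $\gamma\in\Gamma_{0}$ with $D\rho(\gamma)\overline{\mathcal{C}^{u}_{\theta}}\subset \mathcal{C}^{u}_{\theta'}$ (for nested $\theta'<\theta$) together with the contraction estimate on the complementary cone; such a $\gamma$ is automatically Anosov for $\rho$, hence (via $h$ and the invariance of the Anosov property under topological conjugacy plus Franks--Manning-type arguments in the nilmanifold case used in \cite{BRHW}) $\alpha(\gamma)$ is an Anosov diffeomorphism.

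Next I would identify the Zariski open set $W$. Because $D\rho$ extends to a rational representation $\tau$ of the ambient algebraic group (Proposition \ref{rmkkey}(3)), the conditions ``$\tau(g)$ does not collapse $E^{u}_{0}$ onto a proper subspace'' and ``$\tau(g)E^{u}_{0}$ is transverse to $E^{s}_{0}$'', and more precisely ``$\tau(g)E^{u}_{0}$ is disjoint from the closed cone $\overline{\mathcal{C}^{s}_{\theta}}$ away from the origin'', are Zariski open conditions on $g\in\Gbb$ (or on $\Hbb$); intersecting finitely many of these over the pieces of a filtration of $\nlie$ compatible with $D\rho(\gamma_{0})$ gives a nonempty Zariski open $W$ — nonempty because $\gamma_{0}$ itself, or a generic conjugate of it, satisfies all the conditions. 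The core estimate is then: for $\eta\in\Gamma_{0}\cap W$, the vectors in $\mathcal{C}^{u}_{\theta}$ get mapped by $D\rho(\eta)$ into a region where finitely many further applications of the hyperbolic map $D\rho(\gamma_{0})$ push them back deep inside $\mathcal{C}^{u}_{\theta'}$, and symmetrically $D\rho(\gamma_{0})^{N}$ applied first pre-conditions the stable cone; choosing $N$ large (depending on $\eta$, which is allowed) gives $D\rho(\gamma_{0}^{N}\eta\gamma_{0}^{N})\overline{\mathcal{C}^{u}_{\theta}}\subset\mathcal{C}^{u}_{\theta'}$ with the required norm growth, i.e. $\gamma_{0}^{N}\eta\gamma_{0}^{N}\in S$.

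With Proposition \ref{keyp} in hand, Proposition \ref{keyglob} follows quickly: $S$ contains $\gamma_{0}^{N}(\Gamma_{0}\cap W)\gamma_{0}^{N}$ for a nonempty Zariski open $W$, and since $\Gamma_{0}$ is Zariski dense in the (semisimple) group $\Hbb$ (Proposition \ref{rmkkey}(5)), $\Gamma_{0}\cap W$ is Zariski dense, hence so is its two-sided translate by the fixed element $\gamma_{0}^{N}$; thus the semigroup generated by $S$ is Zariski dense, and every element of $S$ is Anosov by construction. Combined with Theorem \ref{keyCinf}, which supplies induced weak irreducibility of the $\rho(\Gamma_{0})$-ergodic measures, one can then run the dynamical cocycle super-rigidity machinery along the abelian subgroups produced from $S$ as in \cite{BRHW}, and invoke \cite{RHW} to upgrade the $C^{0}$-conjugacy $h$ to a $C^{\infty}$-conjugacy on $\Gamma_{0}$, and finally spread it to all of $\Gamma$ using the Anosov element as in \cite{MQ}.

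The main obstacle I anticipate is the verification that the cone conditions defining membership in $S$ are genuinely Zariski open and nonempty \emph{after} passing through the $C^{0}$-conjugacy: the Anosov property of $\alpha(\gamma)$ is a statement about the nonlinear action, while the cone estimates are naturally statements about $D\rho$, so one must carefully use that for actions on nilmanifolds the topological conjugacy $h$ intertwines $\alpha(\gamma)$ with $\rho(\gamma)$ and that Anosov-ness of $\rho(\gamma)$ is detected by hyperbolicity of $D\rho(\gamma)$ on $\nlie$ (this is where the nilmanifold structure and the lift hypothesis are essential). The bookkeeping of the filtration $\nlie_{0}\supset\nlie_{1}\supset\cdots$ and the compatibility of the cones with the graded pieces — already needed in the proof of Theorem \ref{keyCinf} — is the technically heaviest part, but it is routine once the Zariski-openness of each graded condition is isolated.
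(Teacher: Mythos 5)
Your overall skeleton (cone conditions, a Zariski-open transversality set $W$ defined via the extended representation $\tau$, and sandwiching $\eta$ between high powers of $\gamma_{0}$) is the right template from \cite{BRHW}, Proposition 8.7, which is what the paper follows. But there is a genuine gap at the decisive step, and it is exactly the step this proposition exists to handle. You define the cones on $\nlie$ for the \emph{linear data} $D\rho$ and then assert that membership in the cone-defined set makes ``$\gamma$ automatically Anosov for $\rho$, hence (via $h$ and the invariance of the Anosov property under topological conjugacy plus Franks--Manning-type arguments) $\alpha(\gamma)$ an Anosov diffeomorphism.'' The Anosov property is \emph{not} invariant under topological conjugacy, and Franks--Manning goes the other way (an Anosov diffeomorphism of a nilmanifold is topologically conjugate to its hyperbolic linearization; nothing is claimed about a map merely topologically conjugate to a hyperbolic automorphism). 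If that transfer worked, the entire proposition would be unnecessary: any $\gamma$ with $D\rho(\gamma)$ hyperbolic would already be Anosov for $\alpha$, and no cones, no $\gamma_{0}^{N}$, and no super-rigidity would be needed.

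The missing ingredient is control of the \emph{derivative cocycle} $D\alpha$ of the nonlinear action, which is what the semigroup $S$ is really about: $S$ is cut out by cone conditions in $TM$ relative to the stable/unstable distributions of the Anosov diffeomorphism $\alpha(\gamma_{0})$, and one must verify those conditions for $D\alpha(\gamma_{0}^{N}\eta\gamma_{0}^{N})$ uniformly over $M$, where $D\alpha(\eta)$ is a priori an arbitrary continuous matrix-valued function. In \cite{BRHW} this is done by applying Zimmer's cocycle super-rigidity to $D\alpha$ over the $\alpha$-invariant ergodic measures (transported via $h$ from the Haar measures $\mu_{x}$ on $\rho(\Gamma_{0})$-orbit closures from Proposition \ref{rmkkey}), identifying the Lyapunov data and measurable framings of $D\alpha$ with those of the super-rigidity representation, and then running a compactness argument over the space of invariant measures. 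In the present paper $G$ may have rank-one factors and property (T) is unavailable, so Zimmer's theorem must be replaced by the dynamical super-rigidity theorem (Theorem \ref{thm:superrigidGamma}); its hypothesis \ref{std2} is precisely what Theorem \ref{keyCinf} supplies for each $\mu_{x}$. Your write-up invokes Theorem \ref{keyCinf} only afterwards, for the $C^{\infty}$ upgrade, and never brings any form of cocycle super-rigidity into the proof of Proposition \ref{keyp} itself; that omission is the gap.
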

The above proposition \ref{keyp} is the place that the \cite{BRHW} used Zimmer's cocycle super-rigidity theorem. We can follow exactly same proof using dynamical super-rigidity instead of Zimmer's cocycle super-rigidity theorem thanks to the Theorem \ref{keyCinf}. 

After proving the proposition \ref{keyp}, we can deduce the proposition \ref{keyglob} easily as same as the proof of the Proposition 8.5 in the \cite{BRHW}.

\noindent\textbf{Step 2: Finding $\Sigma$.}

Recall that $\widehat{\Gamma}'\subset \Hbb(\Zbb)\cap H$ is an arithmetic lattice in $H$ such that $D\widehat{\rho}\restrict{\widehat{\Gamma}'}$ lifts to $\tau:\Hbb\rightarrow \GL(d)$. Note that $\Gamma_{0}\subset\phi(\widehat{\Gamma}')$. 

Let $\widehat{S}=\phi^{-1}(S)\cap \widehat{\Gamma}'$ and $\widehat{W}=\phi^{-1}(W)$. Then $\phi^{-1}(W)$ is Zariski-open in $H$. Fix $\widehat{\gamma_{0}}\in \phi^{-1}(\gamma_{0})$. Then theorem \ref{keyglob} shows that the conclusion in the theorem \ref{keyglob} is still true for $\widehat{S}$.

As in the \cite{BRHW} Proposition 8.15, using Prasad and Rapinchuk in \cite{PR03,PR05}, we can find Zariski dense subset $\widehat{S}'\subset \widehat{S}$ such that for any $\gamma\in \widehat{S}'$, 
 \begin{enumerate} 
 \item the identity component $Z_{\Hbb}(\gamma)^{0}$ of the centralizer $\gamma$ in $\Hbb(\Rbb)$ is a maximal $\Qbb$-torus, containing $\Rbb$-split torus of $\Hbb$, and $\gamma\in Z_{\Hbb}(\gamma)^{0}$.
 \item $\gamma$ is regular and $\Rbb$-regular.
 \item For $\Tbb=Z_{\Hbb}(\gamma)^{0}$, the Galois action contains all elements from Weyl group $W(\Hbb,\Tbb)$, and the cyclic group $\langle \gamma\rangle$ is Zariski dense in $\Tbb$.
 \end{enumerate}
 Finally, using Zariski density of $\widehat{S}'$ and the fact that hyper-regular condition is Zariski-open, we can find $\widehat{\gamma}\in \widehat{S}'$ such that $\widehat{\gamma}$ is hyper-regular. The regularity and hyper-regularity of $\widehat{\gamma}$ imply that there is a finite index free abelian subgroup $\Sigma$ in $Z_{H}(\widehat{\gamma})^{0}\cap \widehat{\Gamma}'$ such that $$\Zbb^{r}\simeq \Sigma \subset \widehat{\Gamma}'\cap Z_{\Hbb}(\widehat{\gamma})^{0}$$ where $r=\textrm{rank}_{\Rbb}(\Hbb)$.
Since $\widehat{\gamma}\in S'\subset S$, we also can get $\widehat{\gamma}\in \Sigma$ such that $\widehat{\gamma}$ is an Anosov element. These arguments present that there is an Anosov element that has large centralizer. 

Now it is enough to prove that there is no rank one factor for $\rho\restrict{\Sigma}$. Note that the arguments in \cite{BRHW}, using Weyl group action, used the fact that all $\Rbb$-simple factor of $\Hbb$ is either anistropic or of $\textrm{rank}_{\Rbb}$ is $2$ or higher. We will give an arguments that can be applied in our setting. For simplifying notations, we rewrite our goal as follows. 

\begin{prop}\label{rankone}
Let $\Hbb$ be the algebraic group defined over $\Qbb$. Assume that $\Hbb$ is $\Qbb$-semisimple and $\Rbb$-rank of every $\Qbb$-simple factor is $2$ or higher. Let $\Gamma\subset \Hbb(\Zbb)\cap H$ be an arithmetic lattice. 
Suppose, $\rho:\Gamma\rightarrow \Aut(N/\Lambda)$ is a $\Gamma$ action given by automorphisms on a compact nilmanifold $N/\Lambda$. Assume that $D\rho:\Gamma\rightarrow \Aut(\nlie)$ extends to $\Qbb$-representation $D\rho:\Hbb\rightarrow \GL(\nlie)$. 
Let $\gamma \in \Gamma$ be a regular, $\Rbb$-regular and hyper-regular element. Then there is a finite index free abelian $\Sigma\simeq\Zbb^{r}\subset Z_{\Hbb}(\gamma)^{0}\cap\Gamma$ such that $\rho\restrict{\Sigma}$ does not have rank one factor.
\end{prop}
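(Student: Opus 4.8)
The plan is to convert the statement into an assertion about the weights of the $\Qbb$-representation $D\rho$ restricted to the maximal torus $\Tbb:=Z_{\Hbb}(\gamma)^{0}$, and to rule out a rank-one factor by exhibiting a finite group, built from the relative Weyl groups of $\Hbb$ together with the Galois permutation of its $\bar\Qbb$-simple factors, that acts on the space of Lyapunov exponents with no invariant line. Since $\gamma$ is regular and $\Rbb$-regular, $\Tbb$ is a maximal $\Qbb$-torus containing a maximal $\Rbb$-split torus of $\Hbb$, and — exactly as in the construction of $\widehat{\gamma}$ above, using \cite{PR03,PR05} and reduction theory — one obtains a finite-index free abelian $\Sigma\subset\Tbb(\Zbb)\cap\Gamma$ of rank $r=\mathrm{rank}_{\Rbb}\Hbb$ containing a power of $\gamma$; it remains to verify that $\rho\restrict{\Sigma}$ has no rank-one factor. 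Here I will use that $\rho\restrict{\Sigma}$ has a rank-one factor precisely when, after possibly replacing $\Sigma$ by a finite-index subgroup, there is a proper nonzero $\rho(\Sigma)$-invariant rational quotient $\nlie\twoheadrightarrow\qlie$ on which all Lyapunov exponents of $\Sigma$, viewed as homomorphisms $\Sigma\to\Rbb$, lie on a single line in $\Hom(\Sigma,\Rbb)$; since a power of $\gamma$ lies in $\Sigma$ and acts hyperbolically on $\nlie$, hence on $\qlie$, this line is nonzero.

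Assume such a $\qlie$ exists. Because $\langle\gamma\rangle$ — hence $\Sigma$ and all its finite-index subgroups — is Zariski dense in $\Tbb$, any $\rho(\Sigma)$-invariant subspace is $D\rho(\Tbb)$-invariant, so $\qlie$ is a $\Qbb$-rational quotient $\Tbb$-module; let $\Phi_{\qlie}\subset X^{*}(\Tbb)$ be its nonempty weight set. Rationality makes $\Phi_{\qlie}$ stable under $\mathrm{Gal}(\bar\Qbb/\Qbb)$, hence under the full Weyl group $W(\Hbb,\Tbb)$ by hyper-regularity of $\gamma$; and since each $\Qbb$-simple factor $\Hbb_{i}=\mathrm{Res}_{k_{i}/\Qbb}\Gbb_{i}$ is $\Qbb$-simple, the Galois group permutes the archimedean places of $k_{i}$ — equivalently the real simple factors of $\Hbb_{i}(\Rbb)$ — transitively. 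Translating weights into Lyapunov exponents through the real Jordan decomposition (using $\Rbb$-regularity to ensure that the polar parts of elements of $\Sigma$ fill out a finite-index lattice in the appropriate split torus, so that $\Hom(\Sigma,\Rbb)\cong\Rbb^{r}$ decomposes as $\bigoplus_{i}V_{i}$ with $\dim V_{i}=\mathrm{rank}_{\Rbb}\Hbb_{i}$), these symmetries produce a group $\mathcal{W}=\prod_{i}\mathcal{W}_{i}$ acting on $\bigoplus_{i}V_{i}$ and preserving the finite set of Lyapunov exponents of $\qlie$, where each $\mathcal{W}_{i}$ is an extension of the product over archimedean places of the relative Weyl groups of $\Gbb_{i}$ by a transitive permutation group of those places. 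Each $\mathcal{W}_{i}$ acts irreducibly on $V_{i}$: every relative Weyl group acts irreducibly on its own summand, and a transitive permutation group on $\ge 1$ such summands leaves no proper nonzero subspace invariant, so with $\dim V_{i}=\mathrm{rank}_{\Rbb}\Hbb_{i}\ge 2$ there is no $\mathcal{W}_{i}$-invariant line in $V_{i}$. A short linear-algebra argument then shows $\mathcal{W}=\prod_{i}\mathcal{W}_{i}$ fixes no line in $\bigoplus_{i}V_{i}$ (an invariant line would have to be supported in a single $V_{i}$, contradicting irreducibility there). But the Lyapunov exponents of $\qlie$ form a nonempty $\mathcal{W}$-invariant subset lying on a single line — contradiction. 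Hence $\rho\restrict{\Sigma}$ has no rank-one factor.

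The step I expect to be delicate is the passage from the Galois/Weyl symmetry of $\Phi_{\qlie}\subset X^{*}(\Tbb)$ to the signed-permutation symmetry $\mathcal{W}$ of the Lyapunov exponents: since Galois conjugation does not preserve archimedean absolute values, one must carefully bookkeep which weights and which archimedean places of the $k_{i}$ produce the same exponent on $\Sigma$, and check that the Galois-induced permutation of the real factors of $\Hbb_{i}(\Rbb)$ genuinely descends to a transitive permutation action on the corresponding summand $V_{i}$ after this collapsing; one also needs the input, cited above from \cite{PR03,PR05} and reduction theory, that $\Sigma$ can be taken of full rank $r$ with its polar projections spanning. Everything else — Zariski density, $\Qbb$-rationality of a factor, irreducibility of each $\mathcal{W}_{i}$ on its $\ge 2$-dimensional summand, and the line-stabilization computation — is routine, and this is precisely the point at which our hypothesis ``every $\Qbb$-simple factor has $\Rbb$-rank $\ge 2$'' replaces the stronger hypothesis used in \cite{BRHW} that no $\Rbb$-simple factor has $\Rbb$-rank one.
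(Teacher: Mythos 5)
Your overall framing (pass to the weight set of the $\Qbb$-rational $\Tbb$-module $\qlie$, use Galois-stability plus the Weyl group supplied by the Prasad--Rapinchuk choice of $\gamma$, and invoke the hypothesis that each $\Qbb$-simple factor has $\Rbb$-rank $\ge 2$) matches the paper's. But the step you yourself flag as delicate is a genuine gap, and it is exactly the step on which your argument differs from the paper's. Your group $\mathcal{W}_{i}$ has two parts: the relative Weyl groups of the real factors, and a ``transitive permutation of the archimedean places'' coming from Galois. The first part does act on the set of restricted weights $\Psi(\Lambda)\subset\Xbb^{*}(\Tbb_{s})$ and hence on the Lyapunov exponents, but by itself it fixes lines whenever some $\Rbb$-simple factor has rank one (each rank-one factor contributes a one-dimensional summand on which its Weyl group acts by $\pm1$, so the coordinate axes are invariant) --- and this is precisely the new case the proposition must handle. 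The second part is what you need to kill those lines, and it is not established: Galois-stability of the weight set gives a permutation of the \emph{weights}, i.e.\ of the eigenvalues $\lambda(\sigma)$ as algebraic numbers, but Galois conjugation does not preserve the archimedean absolute value, so it does not induce a permutation of the Lyapunov-exponent functionals $\sigma\mapsto\ln|\lambda(\sigma)|$. (Galois-conjugate eigenvalues typically have different absolute values; that is the whole point of embedding $\SL_{2}(\Zbb[\sqrt{2}])$ by Galois twisting.) Your parenthetical claim that ``a transitive permutation group on $\ge 1$ such summands leaves no proper nonzero subspace invariant'' is also false as stated (the diagonal is invariant), and one would further have to deal with anisotropic archimedean places contributing zero-dimensional summands, which break transitivity on the isotropic part. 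So as written the finite group $\mathcal{W}$ preserving the exponent set does not exist, and the contradiction is not reached.

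The paper avoids this transfer problem entirely by keeping the Galois argument on the $\Qbb$-linear side and only afterwards passing to absolute values. Concretely: the $\Qbb$-span $Y$ of the weight set $\Lambda$ is Galois-invariant, and since the Galois image in $\Aut(\Xbb^{*}(\Tbb))$ contains the full Weyl group $W(\Hbb,\Tbb)$ (quasi-irreducibility of $\Tbb$, from \cite{PR01,PR03}), $Y$ must be a direct sum of the full root spaces $V^{(i)}$ of some $\Qbb$-simple factors; in particular $Y\supseteq V^{(1)}$. Restricting to the split torus, $\Psi(V^{(1)})$ spans $\Xbb^{*}(\Tbb_{s}^{(1)})\otimes\Qbb$, of dimension $\textrm{rank}_{\Rbb}(\Hbb^{(1)})\ge 2$, and the explicit identity $\Lcal(\widetilde{\lambda}\restrict{\Tbb_{s}})=2\,\Lcal(\lambda\restrict{\Tbb_{s}})$ identifies the span of the actual Lyapunov data with the span of $\Lcal(\Psi(\Lambda))$, which therefore has dimension $\ge 2$ --- contradicting the rank-one-factor assumption. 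If you want to keep your symmetry-based route you would have to prove that the place-permutation genuinely descends to the exponent functionals (e.g.\ by tracking how $\Psi$ interacts with the Galois twist), which is a substantive missing argument; the quasi-irreducibility route sidesteps it.
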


We will still follow the strategy and notations in \cite{BRHW}. Recall the notations and arguments in there.

Assume that $\rho_{1}:\Sigma \rightarrow \Aut(M_{1})$ be a rank one factor of $\rho\restrict{\Sigma}$. Let $M_{1}=N_{1}/\Lambda_{1}$. Replacing to finite index subgroup, we may assume that $\rho_{1}(\Sigma)$ is cyclic group. 
As before, $\Tbb=Z_{\Hbb}(\gamma)^{0}$ is $\Qbb$-maximal torus and containing maximal $\Rbb$-split torus. Let $\Tbb_{s}$ be a maximal $\Rbb$-split torus in $\Tbb$. Then $\Tbb=\Tbb_{s}\cdot \Tbb_{an}$ where $\Tbb_{an}$ is $\Rbb$-anisotropic torus. Note that Zariski density of cyclic group generated by $\gamma$ gives us to say that $\rho\restrict{\Tbb}$ projects to $\Qbb$-representation $D\rho_{1}$ of the $\Qbb$-torus $\Tbb$ on $\nlie_{1}\otimes \Cbb$ where $\nlie_{1}$ is the Lie algebra of $N_{1}$ as explained in \cite{BRHW}.

We will denote character space as $\Xbb^{*}$. Note that the restriction map $\Xbb(\Tbb)\rightarrow \Xbb(\Tbb_{s})$ is a group homomorphism. 
After replacing $\Sigma$ into finite index subgroup if necessarily, we may have unique decomposition of $\sigma\in\Sigma$ into $\sigma=\sigma_{s}\sigma_{a}$ where $\sigma_{s}\in\Tbb_{s}(\Rbb)^{0}$ and $\sigma_{a}\in\Tbb_{an}(\Rbb)$. We also can get any non-trivial $\sigma\in\Sigma$, $\sigma_{s}$ is non-trivial as it is proved in \cite{BRHW}.

Fix a basis $\sigma_{1},\dots,\sigma_{r}$ in $\Sigma$ and write $(\sigma_{i})$ as $(e^{\theta_{i1}},\dots, e^{\theta_{ir}})$ in $\Tbb(\Rbb)^{0}\simeq\Rbb^{r}_{>0}$. We can define group morphism $$\Lcal:\Xbb^{*}(\Tbb_{s})\rightarrow \Rbb^{r}$$  $$\Lcal(\chi)=\left(\ln|\chi_{s}(\sigma_{1})|,\dots, \ln|\chi_{s}(\sigma_{r})|\right)$$ where $\sigma_{s}$ is $\Tbb_{s}$ part of $\sigma$.

Denote $\Lambda\subset \Xbb^{*}(\Tbb)$ be the $\Qbb$-weights of $\Qbb$-representation $D\rho_{1}$ of $\Tbb$. Note that we have Galois action on $\Xbb^{*}(\Tbb)$ as for $q\in\textrm{Gal}(\overline{\Qbb}/\Qbb)$, $$(q.\chi)(t)=q^{-1}(\chi(q(t)))$$ for any character $\chi\in\Xbb^{*}(\Tbb)$. Then $\Lambda$ is invariant under Galois action since $D\rho_{1}$ is $\Qbb$-representation. Note that for $\gamma$, $D\rho_{1}(\gamma)$ is hyperbolic since $\gamma \in S'\subset S$.

If we define $\widetilde{\chi}$ for any $\chi\in\Xbb^{*}(\Tbb)$ as $\widetilde{\chi}(t)=\chi(t)\overline{\chi(\overline{t})}$, then it is defined over $\Rbb$ and its restriction to $\Tbb_{a}$ is trivial. Furthermore, for any $\lambda\in\Lambda$, we have $\widetilde{\lambda}(\gamma)=|\lambda(\gamma)|^{2}\neq 1$ due to hyperbolicity.  Therefore, $\widetilde{\lambda}((\gamma^{2})_{s})=\widetilde{\lambda}(\gamma^{2})\neq 1$ so that $\widetilde{\lambda}\restrict{\Tbb_{s}}$ is not trivial. Let $\widetilde{\Lambda}_{s}=\{\widetilde{\lambda}\restrict{\Tbb_{s}}:\lambda\in\Lambda\}.$ Then it is non-empty and $W(\Hbb,\Tbb_{s})$ invariant.

As calculated in \cite{BRHW}, we can deduce that $\Lcal(\widetilde{\Lambda}_{s})$ belongs to a fixed one dimensional subspace for every $\lambda$ since we assume that $\rho_{1}$ is a rank one factor.

Therefore, if we can prove that dimension of the subspace spanned by $\Lcal(\widetilde{\Lambda}_{s})$ is bigger than $1$ then we proved proposition \ref{rankone}. Note that, in \cite{BRHW}, they use higher rank assumption on each $\Rbb$-simple factor for this purpose. 

\begin{proof}[Proof of Proposition \ref{rankone}] As discussed before, it is enough to prove that the dimension of the subspace spanned by 
$\Lcal(\widetilde{\Lambda}_{s})$ is bigger than $1$. Let's denote the decomposition of $\Hbb$ into $\Qbb$-simple factor $\Hbb=\Hbb^{(1)}\cdots\Hbb^{(r)}$ and we can decompose each $\Qbb$-simple factor $\Hbb^{(i)}$ into absolutely almost simple groups: $\Hbb^{(i)}=\Hbb^{(i)}_{1}\cdots\Hbb^{(i)}_{r_{i}}$.

For each $\Qbb$-simple factor $\Hbb^{(i)}$ in $\Hbb$, define $\Tbb^{(i)}=\Hbb^{(i)}\cap \Tbb$. Denote $\Phi(\Hbb^{(i)},\Tbb^{(i)})$ for roots on $\Hbb^{(i)}$  with respect to $\Tbb^{(i)}$. Let $V_{j}^{(i)}$ be the subspace spanned by $\Phi(\Hbb^{(i)}_{j},\Tbb^{(i)}_{j})$ of the $V=\Xbb^{*}(\Tbb)\otimes \Qbb$. Denote $V^{(i)}$ be the direct sum of $V_{j}^{(i)}$ for $j=1,\dots, r_{i}$; $V^{(i)}=\bigoplus_{j=1}^{r_{i}}V_{j}^{(i)}$. Note that $V=\bigoplus_{i=1}^{r}\bigoplus_{j=1}^{r_{i}}V_{j}^{(i)}$.

Recall that the torus $\Tbb$, comes from the results in \cite{PR01} and \cite{PR03}, is quasi-irreducible.  Equivalently, for any $\textrm{Gal}(\overline{\Qbb}/\Qbb)$-invariant subspace of the $V$ is direct sum of some $V^{(i)}$'s. 

The $\Qbb$-weight space $\Lambda$ is $\textrm{Gal}(\overline{\Qbb}/\Qbb)$-invariant. Let $Y$ be the subspace of $V$ that is a $\Qbb$-span of $\Lambda$. As discussed before, $Y$ is a direct sum of some $V^{(i)}$'s. Rearranging numbering, let $Y=V^{(1)}\oplus\dots\oplus V^{(k)}$. As $\Lambda$ is non-trivial space, $k\ge 1$.

Let $\Psi:\Xbb^{*}(\Tbb)\rightarrow \Xbb^{*}(\Tbb_{s})$ be the map defined by restriction to $\Tbb_{s}$. As $\Xbb^{*}(\Tbb_{s})$ contains roots $\Phi(\Hbb,\Tbb_{s})$ and $\Psi(\Xbb^{*}(\Tbb))$ is finite index subgroup in $\Xbb^{*}(\Tbb_{s})$, $\Lcal(\Psi(\Lambda))$ can not belongs to one dimensional subspace of $\Rbb^{r}$ since we assume that $\Hbb^{(1)}$ has $\Rbb$-rank at least $2$. Note that $\dim_{\Qbb}\Xbb^{*}(\Tbb^{(1)})\otimes \Qbb=r\ge 2$ and $\Lcal$ embeds $\Xbb^{*}(\Tbb)$ in $\Rbb^{r}$ as a lattice.

On the other hand, if $\lambda\in\Lambda$ then we compare $\Lcal(\Psi(\lambda))$ and $\Lcal(\widetilde{\Lambda}_{s})$.
\begin{align*}
\Lcal(\Psi(\lambda)) & =\Lcal(\lambda\restrict{{\Tbb}_{s}})\\
&=( \ln|\lambda((\sigma_{1})_{s})|,\dots, \ln|\lambda((\sigma_{r})_{s})| ),\quad \textrm{and}
\end{align*}
 \begin{align*}
\Lcal(\widetilde{\lambda}\restrict{\Tbb_{s}})&=(\ln|\widetilde{\lambda}\restrict{\Tbb_{s}}((\sigma_{1})_{s})|,\dots,\ln|\widetilde{\lambda}\restrict{\Tbb_{s}}((\sigma_{r})_{s})|)\\
&=( \ln|(\lambda\restrict{\Tbb_{s}})^{2}((\sigma_{1})_{s})|,\dots, \ln|(\lambda\restrict{\Tbb_{s}})^{2}((\sigma_{r})_{s})| )\\
&=2(\ln|\lambda((\sigma_{1})_{s})|,\dots, \ln|\lambda((\sigma_{r})_{s})| ).
\end{align*}
This implies that $\Rbb$-span of $\Lcal(\Psi(\Lambda))$ is the same subspace of $\Rbb$-span of $\Lcal(\widetilde{\Lambda}_{s})$. Especially, the ($\Rbb$-) dimension of the subspace spanned by $\Lcal(\widetilde{\Lambda}_{s})$ is bigger than $1$. This gives desired contradiction. \end{proof}

In summary, we proved following. 
\begin{prop}\label{prop8.3}  The proposition 8.3 in \cite{BRHW} is still true in our case. That is, there is a free abelian subgroup $\Sigma\subset \Gamma_{0}$ such that $\rho\restrict{\Sigma}$ does not have rank one factor actions and there is a $\gamma_{1}\in\Sigma$ such that $\alpha(\gamma_{1})$ is an Anosov diffeomorphism.
\end{prop}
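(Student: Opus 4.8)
The plan is to assemble the ingredients constructed in Steps 1 and 2 above together with the Prasad--Rapinchuk density statement and Proposition \ref{rankone}; the statement of Proposition \ref{prop8.3} is essentially a packaging of these. First I would take the Zariski dense sub-semigroup $S\subset\Gamma_{0}$ furnished by Proposition \ref{keyglob}, each element of which is an Anosov element for $\alpha$, and lift it by setting $\widehat{S}=\phi^{-1}(S)\cap\widehat{\Gamma}'$. Since $\widehat{\alpha}=\alpha\circ\phi$, we have $\widehat{\alpha}(\widehat{\gamma})=\alpha(\phi(\widehat{\gamma}))$ for $\widehat{\gamma}\in\widehat{S}$, so every element of $\widehat{S}$ is an Anosov element for $\widehat{\alpha}$, and $\widehat{S}$ is Zariski dense in $\Hbb$ because $\phi$ is surjective with central kernel. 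Then, exactly as in \cite{BRHW} Proposition 8.15 and using the results of Prasad and Rapinchuk \cite{PR03,PR05}, I would pass to a Zariski dense subset $\widehat{S}'\subset\widehat{S}$ all of whose elements $\gamma$ are regular and $\RR$-regular, have $Z_{\Hbb}(\gamma)^{0}$ a maximal $\QQ$-torus containing a maximal $\RR$-split torus of $\Hbb$ with $\langle\gamma\rangle$ Zariski dense in it, and whose Galois action realizes the full Weyl group; and using Zariski density of $\widehat{S}'$ together with the Zariski openness of the hyper-regularity condition I would select $\widehat{\gamma}\in\widehat{S}'$ that is in addition hyper-regular.

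Next I would build $\Sigma$. Regularity, $\RR$-regularity and hyper-regularity of $\widehat{\gamma}$ yield a free abelian subgroup $\Sigma\simeq\ZZ^{r}$, $r=\mathrm{rank}_{\RR}(\Hbb)$, of finite index in $Z_{\Hbb}(\widehat{\gamma})^{0}\cap\widehat{\Gamma}'$; after replacing $\Sigma$ by a further finite index subgroup I may assume $\Sigma\subset\widehat{\Gamma}''$, and after replacing $\widehat{\gamma}$ by a power I may assume $\widehat{\gamma}\in\Sigma$. Because $\ker\phi$ is compact and $\Sigma$ is torsion free, $\phi\restrict{\Sigma}$ is injective, so $\phi(\Sigma)$ is a free abelian subgroup of $\Gamma_{0}=\phi(\widehat{\Gamma}'')$ isomorphic to $\ZZ^{r}$. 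I would abuse notation and denote this subgroup again by $\Sigma$, and set $\gamma_{1}=\phi(\widehat{\gamma})\in\Sigma$; since $\phi(\widehat{\gamma})\in S$, the diffeomorphism $\alpha(\gamma_{1})$ is Anosov.

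To finish I would apply Proposition \ref{rankone} with $\Hbb$, the arithmetic lattice $\widehat{\Gamma}'$, the linear data $\widehat{\rho}$ and the element $\widehat{\gamma}$. Its hypotheses are met: $\Hbb$ is $\QQ$-semisimple and, since $\Gamma$ is a weakly irreducible lattice, every $\QQ$-simple factor of $\Hbb$ has $\RR$-rank at least $2$ (Proposition \ref{rmkkey}(1)); $D\widehat{\rho}\restrict{\widehat{\Gamma}'}$ extends to the $\QQ$-representation $\tau:\Hbb\to\mathbb{GL}(d)$ (Proposition \ref{rmkkey}(3)); and $\widehat{\gamma}$ is regular, $\RR$-regular and hyper-regular. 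Proposition \ref{rankone} then gives that $\widehat{\rho}\restrict{\Sigma}$ has no rank one factor action, and since $\widehat{\rho}=\rho\circ\phi$ factors through $\rho$, the same holds for $\rho\restrict{\Sigma}$. Together with the Anosov element $\gamma_{1}$ this is the conclusion of \cite{BRHW} Proposition 8.3 in the present setting.

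The two inputs that are genuinely new compared with \cite{BRHW} are already established before this point: the replacement of Zimmer's cocycle super-rigidity by the dynamical cocycle super-rigidity theorem inside Proposition \ref{keyp} (available by Theorem \ref{keyCinf}), and the exclusion of rank one factors without assuming that every $\RR$-simple factor of $\Hbb$ has higher rank (Proposition \ref{rankone}, which leans on the quasi-irreducibility of the Prasad--Rapinchuk torus and on every $\QQ$-simple factor having $\RR$-rank at least $2$). Hence the main obstacle in Proposition \ref{prop8.3} itself is organizational rather than conceptual: I would need to keep the nested finite index subgroups $\Gamma'\supset\Gamma_{0}$ and $\widehat{\Gamma}\supset\widehat{\Gamma}'\supset\widehat{\Gamma}''$ compatible and track the homomorphism $\phi$ carefully, so that both the Anosov element and the no-rank-one-factor property survive every reduction; this bookkeeping is the step to handle with the most care.
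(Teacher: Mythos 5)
Your proposal is correct and follows essentially the same route as the paper, which obtains Proposition \ref{prop8.3} by combining Proposition \ref{keyglob} (via \ref{keyp} and Theorem \ref{keyCinf}), the Prasad--Rapinchuk selection of a regular, $\RR$-regular, hyper-regular element in a Zariski dense subset of $\widehat{S}$, the resulting free abelian $\Sigma$ in the centralizer, and Proposition \ref{rankone} to exclude rank one factors. Your explicit descent of $\Sigma$ from $\widehat{\Gamma}'$ to $\Gamma_{0}$ via injectivity of $\phi$ on torsion-free subgroups is a welcome piece of bookkeeping that the paper leaves implicit.
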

After that we appeal to use the theorem in \cite{RHW}. Note that the linearization of $\alpha\restrict{\Sigma}$ in the \cite{RHW} is same as $\rho\restrict{\Sigma}$.  
\begin{theorem}[\cite{RHW}] In the above notations and settings, the action $\alpha\restrict{\Sigma}$ is conjugate to $\rho\restrict{\Sigma}$ by a $C^{\infty}$ diffeomorphism that is homotopic to identity. 
\end{theorem}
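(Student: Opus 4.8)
The plan is to obtain the statement directly from the global rigidity theorem of Rodriguez Hertz and Wang \cite{RHW}, after checking that $\alpha\restrict{\Sigma}$ satisfies its hypotheses and that the notion of linearization used there coincides with $\rho\restrict{\Sigma}$. Concretely, I would first assemble the inputs produced in Steps~1 and~2: by Proposition~\ref{prop8.3}, $\Sigma$ is a free abelian group $\Sigma\cong\Zbb^{r}$ with $r=\textrm{rank}_{\Rbb}(\Hbb)\ge 2$, it contains an element $\gamma_{1}$ with $\alpha(\gamma_{1})$ an Anosov diffeomorphism of $M=N/\Lambda$, and, crucially, $\rho\restrict{\Sigma}$ has no rank one factor (this is where Proposition~\ref{rankone} enters, weak irreducibility playing the role that the higher rank of every $\Rbb$-simple factor plays in \cite{BRHW}). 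Thus $\alpha\restrict{\Sigma}$ is an Anosov $\Zbb^{r}$-action on a compact nilmanifold by $C^{\infty}$ diffeomorphisms whose linearization is genuinely higher rank; and since $\rho$ was defined in Lemma~\ref{liftcoc} via the induced map on $\pi_{1}(M)=\Lambda$, each $\alpha(\gamma)$ is homotopic to $\rho(\gamma)$, so the linearization in the sense of \cite{RHW} is exactly $\rho\restrict{\Sigma}$.

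With these hypotheses in hand, I would invoke \cite{RHW}: such an action is $C^{\infty}$-conjugate to its linearization by a diffeomorphism homotopic to the identity, which is the assertion. For orientation, the mechanism inside \cite{RHW} runs as follows. One starts with a bi-H\"older conjugacy $h$ --- available here already from Theorem~\ref{thm:C0glob} restricted to $\Sigma$, or classically from the Franks--Manning theorem applied to $\alpha(\gamma_{1})$ --- and decomposes $TM$ into the coarse Lyapunov foliations of the $\Zbb^{r}$-action. The no rank one factor hypothesis guarantees that each coarse Lyapunov foliation is contracted by a full half-space of $\Sigma$ while being the strong stable foliation of a single element, which makes it possible to upgrade $h$ to be $C^{\infty}$ along each coarse Lyapunov leaf by the higher rank smoothing argument (non-stationary normal forms in the style of Katok and Spatzier), using that on the nilmanifold these foliations, though non-linear, carry a controlled affine structure. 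A repeated application of Journ\'e's lemma then promotes smoothness along the coarse Lyapunov foliations to global $C^{\infty}$ smoothness of $h$, which, being $C^{0}$-close to the identity, is homotopic to it.

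The genuine difficulty lies entirely inside \cite{RHW} --- the smoothing bootstrap along the coarse Lyapunov foliations and, for non-abelian $N$, the handling of the non-linear but affine geometry of those foliations --- and is used here as a black box. On our side the real work has already been carried out: producing the abelian subgroup $\Sigma$ carrying an Anosov element and with no rank one factor, the novelty being Proposition~\ref{rankone}. Once those inputs are in place, together with the identification of linearizations noted above, the proof of the present statement is the citation; the residual remark, recorded after the statement, is that the passage from a $C^{\infty}$-conjugacy for $\alpha\restrict{\Sigma}$ to one for $\alpha$ follows as in \cite{MQ} and Remark~1.8 of \cite{BRHW}, since the global H\"older conjugacy for $\Gamma$ restricts to the smooth conjugacy on $\Sigma$ and regularity then propagates.
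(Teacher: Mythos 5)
Your proposal matches the paper's treatment: the statement is an imported theorem from \cite{RHW}, and the paper likewise only verifies its hypotheses --- the free abelian subgroup $\Sigma$ with an Anosov element and no rank one factor supplied by Proposition \ref{prop8.3}, together with the identification of the linearization of $\alpha\restrict{\Sigma}$ with $\rho\restrict{\Sigma}$ --- before invoking \cite{RHW} as a black box. Your additional sketch of the internal mechanism of \cite{RHW} (Franks--Manning, coarse Lyapunov foliations, normal forms, Journ\'e's lemma) is accurate orientation but is not part of the paper's argument, which is simply the citation.
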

Since the uniquness of conjugacy for the Anosov element $\gamma_{1}\in\Sigma$, we can prove $h$ is $C^{\infty}$ diffeomorphism. (Or one may be able to use the arguments in the proof of global rigidity in the \cite{MQ}.) This proves the Theorem \ref{Cinf}.

\section{Other Applications}\label{sec:other}

In this section, we will collect a few direct consequences from dynamical super-rigidity \ref{thm:superrigidG} and \ref{thm:superrigidGamma}. There will be other examples of direct applications of dynamical super-rigidity. We will see a few of them.

For this section, $G$ always will be the connected real semisimple higher rank algebraic Lie group without compact factor.  Furthermore, $\Gamma$ will always be a weakly irreducible lattice in $G$. Let's denote $n(G)$ be the minimum dimension of a non-trivial rational representation of algebraic universal cover $\Gbb$ of $G$. As before, we may assume that $G$ and $\Gamma$ satisfy \ref{std} with $\textrm{rank}_{\Rbb}(G)\ge 2$.

First of all, we will think about (smooth) vector bundle over compact manifold and group action on the bundle as vector bundle automorphism. Roughly, Lyapunov exponents has an algebraic origin. Note that following corollary use only the Part 1 in Theorem \ref{thm:superrigidG}.

\begin{coro}\label{bundle} Let $D=G$ or $\Gamma$. 
Let $M$ be a compact manifold and $E$ be a (smooth) vector bundle over $M$.  Let $d$ be a dimension of the fiber. 
Suppose $D$ acts on $E$ as bundle automorphisms. Assume that there is Borel probability measure $\mu$ on $M$ such that $D$ action on $(M,\mu)$ satisfies \ref{std2}. Let $\Gbb$ be algebraic universal covering of $G$.  
Then there is rational homomorphism $\pi:\Gbb\rightarrow \GL(d,\Cbb)$ such that for any $g\in G$, $\Sp(\pi(\tilde{g}))=\Sp_{\mu}(\rho(g))$ where $\tilde{g}\in \Gbb(\Rbb)$ is any lift of $g\in G$ on $\Gbb(\Rbb)$.
\end{coro}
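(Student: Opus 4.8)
\textbf{Proof proposal for Corollary \ref{bundle}.}

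The plan is to reduce the statement about a bundle automorphism action to the dynamical cocycle super-rigidity theorem (Theorem \ref{thm:superrigidG} for $D=G$, Theorem \ref{thm:superrigidGamma} for $D=\Gamma$), whose Part 1 conclusion $\Sp(\beta_g)=\Sp(\pi(g))$ is exactly what is needed. First I would produce a measurable cocycle from the geometric data. Since $M$ is compact, the vector bundle $E\to M$ is measurably trivial: there is a measurable (indeed, a finite Borel atlas suffices) trivialization identifying $E$ with $M\times\Rbb^{d}$. Transporting the $D$-action on $E$ through this trivialization, for $g\in D$ the induced map on fibers gives a measurable cocycle $\beta:D\times M\to\GL(d,\Rbb)$ defined by $g\cdot(x,v)=(g\cdot x,\beta(g,x)v)$; the cocycle identity follows from the fact that the $D$-action is by bundle automorphisms covering the base action. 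Because the action on $E$ is smooth and $M$ is compact, on each member of the trivializing atlas the fiberwise linear maps $\beta(g,-)$ are uniformly bounded together with their inverses over any fixed $g$, hence $\ln\|\beta(g,-)\|$ is bounded, so $\beta$ is in particular $L^{2}$-integrable with respect to $\mu$. (One should be slightly careful: smoothness gives continuity of $g\mapsto\beta(g,-)$ and boundedness on compacts in $g$, so the $L^{2}$-integrability hypothesis of Theorem \ref{thm:superrigidG} holds.)

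Next I would invoke the hypothesis: the $D$-action on $(M,\mu)$ satisfies \ref{std2}, i.e.\ it is weakly irreducible (for $D=G$) or induced weakly irreducible (for $D=\Gamma$), and in particular ergodic, so $(M,\mu)$ is a $D$-ergodic space in the sense required. We may pass to the algebraically simply connected cover $\Gbb$ as in Section \ref{sec:prealg}: a weakly irreducible lattice pulls back to a weakly irreducible lattice, the action lifts (the kernel is central and acts trivially on the relevant data), and weak irreducibility of the action is preserved. Thus we are in the setting of the dynamical cocycle super-rigidity theorem with source group $\Gbb(\Rbb)$ or its lattice. Applying Theorem \ref{thm:superrigidG} (resp.\ Theorem \ref{thm:superrigidGamma}) to $\beta$ — and using only conclusion (b), which survives passage to a finite extension $\widehat M$ since finite extensions do not change the Lyapunov spectrum — yields a rational homomorphism $\pi:\Gbb\to\GL(d,\Cbb)$ with $\Sp(\beta_{g})=\Sp(\pi(\tilde g))$ for every $g\in D$ and every lift $\tilde g$.

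Finally I would identify $\Sp(\beta_{g})$ with $\Sp_{\mu}(\rho(g))$. The Lyapunov spectrum of $\beta_{g}$, computed by the Multiplicative Ergodic Theorem on the trivial bundle $M\times\Rbb^{d}$, is independent of the choice of measurable trivialization (a change of trivialization conjugates $\beta$ by a measurable $\GL(d,\Rbb)$-valued map, and $\mu$-a.e.\ boundedness on atlas pieces shows this does not affect exponents), and agrees with the fiberwise Lyapunov spectrum of the bundle automorphism $\rho(g)$ acting on $E$ with respect to $\mu$; this is the definition of $\Sp_{\mu}(\rho(g))$. The main obstacle — really the only non-formal point — is the verification that the geometric Lyapunov spectrum on $E$ is genuinely trivialization-independent and matches the abstract cocycle exponents; this is handled by the standard argument that a measurable coboundary by an a.e.-bounded cobounding map preserves the Lyapunov spectrum, which is already used implicitly throughout Section \ref{sec:dynsuperrigid} (e.g.\ in the reductions via $\phi_{1}$, $\phi_{2}$, $o$). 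Everything else is a direct citation of Theorem \ref{thm:superrigidG} and Theorem \ref{thm:superrigidGamma} together with the finite-extension remarks in Section \ref{precoc}.
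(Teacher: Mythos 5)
Your proposal is correct and follows exactly the route the paper intends: the paper offers no detailed argument for Corollary \ref{bundle} beyond noting that it "uses only Part 1 of Theorem \ref{thm:superrigidG}," and your reduction — measurable trivialization of $E$, boundedness (hence $L^{2}$-integrability) of the resulting $\GL(d,\Rbb)$-cocycle from compactness of $M$, passage to the algebraic universal cover, and invocation of conclusion (b) of Theorems \ref{thm:superrigidG}/\ref{thm:superrigidGamma} together with invariance of the spectrum under bounded measurable coboundaries — is precisely that argument, spelled out. No gaps.
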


Recall that, when $D=\Gamma$, the assumption that induced action is irreducible if we assume that $\Gamma$ action is mixing.

Following corollary is immediate from the corollary \ref{bundle} for the derivative cocycle and Pesin's theorem.
 \begin{coro}\label{entropy}  Let $D=G$ or $\Gamma$ and $M$ be a compact manifold. Assume that $D$ acts on $M$ as $C^{2}$ diffeomorphisms. Denote action map as $\rho:D\rightarrow \Diff^{2}(M).$  Assume that there is a smooth probability measure $\mu$ on $M$ such that $D$ action on $(M,\mu)$ satisfies \ref{std2}. Denote the dimension of $M$ as $d$ and algebraic universal cover of $G$ as $\Gbb$. 
 
 Then there is a rational homomorphism $\pi:\Gbb\rightarrow \GL(d,\Cbb)$ such that  for any unbounded $g\in G_{0}$, $$h_{\mu}(g)=\sum_{|\lambda_{i}|>1}  \dim(\Cbb^{d}_{\lambda_{i}})\ln|\lambda_{i}|.$$ Here, sum runs over the eigenvalues $\lambda_{i}$ of $\pi(\tilde{g})$ for any fixed lift $\tilde{g}\in \Gbb(\Rbb)$ of $g\in D$ such that $|\lambda_{i}|>1$. Furthermore, denote eigenspace of eigenvalue $\lambda_{i}$ in $\Cbb^{d}$ as $\Cbb_{\lambda_{i}}^{d}$.  Especially, if $\dim(M)<n(G)$ then $h_{\mu}(g)=0$ for any  $g\in D$.
\end{coro}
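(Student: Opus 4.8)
The plan is to deduce Corollary~\ref{entropy} from Corollary~\ref{bundle} applied to the derivative cocycle, together with the Pesin entropy formula. First I would set up the derivative cocycle: since $\rho:D\to\Diff^{2}(M)$ acts by $C^{2}$ diffeomorphisms preserving the smooth measure $\mu$, the action lifts to a bundle automorphism action on the tangent bundle $TM$, which is a smooth vector bundle of rank $d=\dim M$ over the compact manifold $M$. The $D$-action on $(M,\mu)$ satisfies \ref{std2} by hypothesis, so Corollary~\ref{bundle} applies and yields a rational homomorphism $\pi:\Gbb\to\GL(d,\Cbb)$ such that for every $g\in G$ and every lift $\tilde g\in\Gbb(\Rbb)$ we have $\Sp(\pi(\tilde g))=\Sp_{\mu}(D\rho(g))$. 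In particular the Lyapunov exponents of $D\rho(g)$ with respect to $\mu$, counted with multiplicity, are exactly the logarithms $\ln|\lambda_{i}|$ of the absolute values of the eigenvalues $\lambda_{i}$ of $\pi(\tilde g)$, with multiplicity $\dim\Cbb^{d}_{\lambda_{i}}$ (summing multiplicities over eigenvalues with the same absolute value).

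Next I would invoke the Pesin entropy formula. Because $\rho(g)$ is a $C^{2}$ diffeomorphism preserving the smooth (hence absolutely continuous) probability measure $\mu$, the Margulis--Ruelle inequality is an equality (Pesin's theorem), so
\[
h_{\mu}(g)=\sum_{\chi_{i}>0}\chi_{i}\cdot m_{i},
\]
where the $\chi_{i}$ are the positive Lyapunov exponents of $D\rho(g)$ and $m_{i}$ their multiplicities. Substituting the identification from the previous paragraph, the positive exponents correspond precisely to the eigenvalues $\lambda_{i}$ of $\pi(\tilde g)$ with $|\lambda_{i}|>1$, and each contributes $\dim(\Cbb^{d}_{\lambda_{i}})\ln|\lambda_{i}|$. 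This gives the displayed formula
\[
h_{\mu}(g)=\sum_{|\lambda_{i}|>1}\dim(\Cbb^{d}_{\lambda_{i}})\ln|\lambda_{i}|.
\]
One small point to be careful about: the formula should not depend on the choice of lift $\tilde g$, which holds because two lifts differ by a central element of $\Gbb(\Rbb)$, and $\pi$ sends the center into the center of $\GL(d,\Cbb)$, i.e. scalars; but $\pi$ restricted to the (finite or compact, being central in a semisimple group) center must have finite image, and in fact the Lyapunov spectrum is insensitive to multiplication by a cocycle valued in a compact group, so the eigenvalue absolute values $|\lambda_{i}|$ are unchanged. I would note this rather than belabor it.

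Finally, for the last assertion: if $\dim M<n(G)$, where $n(G)$ is the minimal dimension of a nontrivial rational representation of $\Gbb$, then $\pi:\Gbb\to\GL(d,\Cbb)$ with $d=\dim M<n(G)$ must be the trivial homomorphism. Hence all eigenvalues of $\pi(\tilde g)$ equal $1$, there are no $\lambda_{i}$ with $|\lambda_{i}|>1$, and the sum above is empty, so $h_{\mu}(g)=0$ for every $g\in D$ (for $g\in\Gamma$ the argument is identical, applying Corollary~\ref{bundle} in the $D=\Gamma$ case). The main obstacle here is essentially bookkeeping rather than a genuine difficulty: one must make sure that Corollary~\ref{bundle} really does apply to $TM$ (it does, since $TM$ is a genuine smooth vector bundle and the assumptions transfer verbatim) and that the passage from "Lyapunov spectrum equals that of $\pi(\tilde g)$" to the precise entropy formula via Pesin's theorem is clean, including the multiplicity accounting and lift-independence. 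No new ideas are needed beyond Corollary~\ref{bundle} and the standard Pesin formula.
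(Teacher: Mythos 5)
Your proposal is correct and is exactly the paper's argument: the paper derives Corollary \ref{entropy} immediately from Corollary \ref{bundle} applied to the derivative cocycle on $TM$ together with Pesin's entropy formula, which is precisely what you do (with the lift-independence and $\dim M<n(G)$ details spelled out more carefully than the paper bothers to).
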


Finally, we also can generalize theorems about local rigidity of entropy in \cite{QZ}.

\begin{coro}
Let $D=G$ or $\Gamma$. Let $\rho_{0}$ be a $C^{1}$-volume preserving action of $D$ on smooth compact manifold $M$ with point Mather spectrum. Let $\rho$ be a $C^{1}$-volume preserving action. If $\rho$ is sufficiently $C^{1}$-close to $\rho_{0}$ then the set of Lyapunov exponents of $\rho(g)$ is the same of $\rho_{0}(g)$ for all $g\in D$. Especially, if we further assume that $\rho$ and $\rho_{0}$ be a $C^{1+\epsilon}$ action then the entropy with respect to volume measure are all same, i.e. $h_{\Vol}(\rho(g))=h_{\Vol}(\rho_{0}(g))$ for all $g\in D$.  

\end{coro}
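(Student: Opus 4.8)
The plan is to run the argument of \cite{QZ}, using our dynamical cocycle super-rigidity in place of Zimmer's cocycle super-rigidity (the latter is what forces property (T) in \cite{QZ}). Write $d=\dim M$ and consider the derivative cocycles $D\rho_{0},D\rho:D\times M\to\GL(d,\Rbb)$ over the volume-preserving $D$-actions on $(M,\Vol)$, for which we retain, as in the other corollaries of this section, the standing hypothesis \ref{std2}. Since $\rho_{0},\rho$ take values in $\Diff^{1}(M)$ and $M$ is compact, both cocycles are $L^{\infty}$-bounded, hence $L^{2}$-integrable, and if $\rho$ is $C^{1}$-close to $\rho_{0}$ then $D\rho$ is $C^{0}$-close (in particular $L^{\infty}$-close) to $D\rho_{0}$. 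Applying dynamical cocycle super-rigidity (Theorem \ref{thm:superrigidG} when $D=G$, Theorem \ref{thm:superrigidGamma} when $D=\Gamma$) to each cocycle produces rational homomorphisms $\pi_{\rho_{0}},\pi_{\rho}:\Gbb\to\GL(d,\Cbb)$ with $\Sp((D\rho_{0})_{g})=\Sp(\pi_{\rho_{0}}(\tilde g))$ and $\Sp((D\rho)_{g})=\Sp(\pi_{\rho}(\tilde g))$ for every $g\in D$, where $\tilde g$ is a lift of $g$ to the algebraic cover (in particular the Lyapunov spectra are constant in $x$); the finite extension appearing in those theorems does not affect the Lyapunov spectrum.

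The mechanism is that point spectrum makes the exponents move continuously while super-rigidity forces them to take only discrete values. By hypothesis $D\rho_{0}$ has point Mather spectrum, i.e.\ a continuous $D$-invariant splitting of $TM$ on whose summands the cocycle has uniform exponential behavior; by standard Mather spectral theory (as in \cite{QZ}) this is a $C^{1}$-open condition, so for $\rho$ sufficiently $C^{1}$-close to $\rho_{0}$ the cocycle $D\rho$ still has point spectrum and, for each fixed $g\in D$, the map $\rho\mapsto\Sp((D\rho)_{g})$ (the Lyapunov exponents of the single diffeomorphism $\rho(g)$, with multiplicities) is continuous near $\rho_{0}$. On the other hand $\Gbb$, being semisimple, has only finitely many $d$-dimensional rational representations up to isomorphism (finitely many irreducibles of dimension at most $d$, and every rational representation is completely reducible, hence a sum of at most $d$ of them), so for each fixed $g$ the set of possible values of $\Sp(\pi(\tilde g))$, as $\pi$ ranges over $d$-dimensional rational representations of $\Gbb$, is finite. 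Since $\Sp((D\rho)_{g})=\Sp(\pi_{\rho}(\tilde g))$ lies in this finite set and depends continuously on $\rho$, it is locally constant; hence $\Sp((D\rho)_{g})=\Sp((D\rho_{0})_{g})$ for all $\rho$ sufficiently $C^{1}$-close to $\rho_{0}$, for each individual $g$.

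To make the neighborhood uniform in $g$, I would fix $g_{1},\dots,g_{l}\in D$ whose polar parts generate a Zariski-dense subgroup of $G$ (Lemma \ref{Fzd}); the previous paragraph then gives $\Sp(\pi_{\rho}(\tilde g_{i}))=\Sp(\pi_{\rho_{0}}(\tilde g_{i}))$ for $i=1,\dots,l$ once $\rho$ is close enough to $\rho_{0}$. Arguing as in the proof of Theorem \ref{uniqhom} (cf.\ \cite{FM1} Theorem 3.20) — a polar element is determined up to conjugacy by its Lyapunov exponents and subspaces, the polar parts above are Zariski dense, and there are only finitely many conjugacy classes of rational homomorphisms $\Gbb\to\GL(d,\Cbb)$ — one concludes that $\pi_{\rho}$ and $\pi_{\rho_{0}}$ are conjugate in $\GL(d,\Cbb)$. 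Consequently $\Sp((D\rho)_{g})=\Sp(\pi_{\rho}(\tilde g))=\Sp(\pi_{\rho_{0}}(\tilde g))=\Sp((D\rho_{0})_{g})$ for \emph{all} $g\in D$, which is the first assertion. For the entropy statement one adds the hypothesis $\rho,\rho_{0}\in C^{1+\epsilon}$: since both actions preserve $\Vol$ and have point spectrum (so the exponents are constant), Pesin's entropy formula, exactly as in the proof of Corollary \ref{entropy}, gives $h_{\Vol}(\rho(g))=\sum_{|\lambda|>1}\dim(\Cbb^{d}_{\lambda})\ln|\lambda|$, the sum over the eigenvalues $\lambda$ of $\pi_{\rho}(\tilde g)$, and likewise for $\rho_{0}$ with $\pi_{\rho_{0}}$; conjugacy of $\pi_{\rho}$ and $\pi_{\rho_{0}}$ gives $h_{\Vol}(\rho(g))=h_{\Vol}(\rho_{0}(g))$.

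The main obstacle I anticipate is the interplay used in the second paragraph: one must use that point Mather spectrum is genuinely $C^{1}$-open with continuously varying exponents — the classical Mather input, and precisely where "sufficiently $C^{1}$-close" enters — together with the confinement of the exponent data to a discrete set provided by super-rigidity. The remaining delicate point, the rigidity of the super-rigidity homomorphism from finitely many Lyapunov multisets used in the third paragraph, is supplied by the same mechanism as in Theorem \ref{uniqhom}.
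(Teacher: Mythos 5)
Your proposal is correct and follows essentially the same route as the paper: the paper's proof defers to \cite{QZ}, replacing Zimmer's cocycle super-rigidity by the dynamical version, using the Pesin--Mather stability of the point spectrum under $C^{1}$ perturbation to get continuity of the exponents, and then the same mechanism as Theorem \ref{locrigidhom} (Zariski-dense polar parts via Lemma \ref{Fzd}, closed orbits, finiteness of conjugacy classes) to pin the super-rigidity homomorphism down, with Pesin's entropy formula handling the $C^{1+\epsilon}$ entropy statement. Your write-up is, if anything, more explicit than the paper's sketch, including the careful retention of assumption \ref{std2} for the perturbed action.
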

The proof is exactly same as in \cite{QZ}. As in there, we can use \cite{Pesin} in order to deduce local rigidity of Lyapunov exponents and entropy. Indeed, the theorem \ref{locrigidhom} that is about local rigidity of dynamical cocycle super-rigidity homomorphism is proved similar way in the proof of the above corollary.

\medskip

\end{document}